\newtheorem{tw}{Theorem}[section]
\newtheorem{lm}[tw]{Lemma}
\newtheorem{wn}[tw]{Corollary}
\newtheorem{pr}[tw]{Proposition}
\theoremstyle{definition}
\newtheorem{uw}[tw]{Remark}
\newtheorem{df}{Definition}[section]
\newtheorem{qu}{Question}
\newcommand{\R}{\mathbb{R}}
\newcommand{\Z}{\mathbb{Z}}
\newcommand{\N}{\mathbb{N}}
\newcommand{\T}{\mathbb{T}}
\newcommand{\cS}{\mathcal{S}}
\newcommand{\C}{\mathbb{C}}
\newcommand{\Q}{\mathbb{Q}}
\newcommand{\cA}{\mathcal{A}}
\newcommand{\cB}{\mathcal{B}}
\newcommand{\cC}{\mathcal{C}}
\newcommand{\cF}{\mathcal{F}}
\newcommand{\cP}{\mathcal{P}}
\newcommand{\cT}{\mathcal{T}}
\newcommand{\cJ}{\mathcal{J}}
\newcommand{\cU}{\mathcal{U}}
\newcommand{\ca}{\mathcal{A}}
\newcommand{\cf}{\mathcal{F}}
\newcommand{\cj}{\mathcal{J}}
\newcommand{\cV}{\mathcal{V}}
\newcommand{\cR}{\mathcal{R}}
\newcommand{\raz}{\mathbbm{1}}
\newcommand{\xbm}{(X,{\cal B},\mu)}
\newcommand{\ycn}{(Y,{\cal C},\nu)}
\newcommand{\zdr}{(Z,{\cal D},\rho)}
\newcommand{\vep}{\varepsilon}
\newcommand{\gr}[1]{\underline{\bm{#1}}}
\def\blfootnote{\xdef\@thefnmark{}\@footnotetext}
\title{On embeddability of automorphisms into measurable flows from the point of view of self-joining properties (extended version)}
\author{Joanna Ku\l aga-Przymus}
\begin{document}

\bibliographystyle{abbrv}
\maketitle

\setcounter{tocdepth}{3}

\begin{abstract}
We compare self-joining- and embeddability properties. In particular, we prove that a measure preserving flow $(T_t)_{t\in\R}$ with $T_1$ ergodic is 2-fold quasi-simple (2-fold distally simple) if and only if $T_1$ is 2-fold quasi-simple (2-fold distally simple). We also show that the Furstenberg-Zimmer decomposition for a flow $(T_t)_{t\in\R}$ with $T_1$ ergodic with respect to any flow factor is the same for $(T_t)_{t\in\R}$ and for $T_1$. We give an example of a 2-fold quasi-simple flow disjoint from simple flows and whose time-one map is simple. We describe two classes of flows (flows with minimal self-joining property and flows with the so-called Ratner property) whose time-one maps have unique embeddings into measurable flows. We also give an example of a 2-fold simple flow whose time-one map has more than one embedding.
\end{abstract}

\tableofcontents

\section{Introduction}

\subsection{Embeddability}

The problem of embeddability of automorphisms into measurable flows has been studied in ergodic theory for more than 60 years. Even today,  the basic problem -- a necessary and sufficient condition for embeddability -- remains the most interesting open question in this area. The first articles on this subject dealt with a simpler problem, namely the one of the existence of roots. In~\cite{MR0005802} Halmos gave a necessary condition for the existence of a square root (the absence of $-1$ in the spectrum). In the discrete spectrum case this  is also sufficient. These results were later generalized in various ways -- both in the discrete~\cite{MR0095239,MR0204612,MR0215959,MR660821} and quasi-discrete spectrum~\cite{MR0230877,MR0229798,MR0244460,MR0285692} case. They include necessary and sufficient conditions for the existence of roots and for the existence of embedding into a flow in the discrete and quasi-discrete spectrum case.\footnote{Hahn and Parry~\cite{MR0230877} showed that automorphisms with quasi-discrete, non-discrete spectrum are not embeddable into flows.\label{foot2}} In particular, Lamperti~\cite{MR0095239} gave an example of a discrete spectrum automorphism with roots of all orders, which is however not embeddable into a flow. 

The problem turned out to be much more challenging in the weakly mixing case. Already in 1956, Halmos~\cite{MR0097489} (\cite{MR0111817}) asked about the existence of square roots of weakly mixing automorphisms, the existence of roots for Bernoulli shifts and the embeddability of Bernoulli shifts into flows. The answers came relatively fast. Chacon constructed a non-mixing automorphism with continuous spectrum having no square root~\cite{MR0207954} and a bit later one having no roots at all~\cite{MR0212158}.
Since the centralizer of an automorphism embeddable into a flow contains all time-$t$ automorphisms of this flow, a necessary condition for embeddability is that the centralizer of the considered automorphism is uncountable. Del Junco~\cite{MR531323}, continuing the study of the Chacon's type constructions, showed, in particular, that the centralizer of the classical Chacon's automorphism~\cite{MR0247028} consists only of its powers, whence it is too small for the automorphism even to admit roots. Further results were provided by Ornstein -- he answered positively the remaining two questions of Halmos on the Bernoulli embedding~\cite{MR0257322,MR0272985}. While all previous constructions were not mixing, and Bernoulli shifts have strong mixing properties, it seemed natural that sufficiently strong mixing properties could imply embeddability (in 1966 this problem was still open~\cite{MR0204612}). This, however, turned out to be false. Ornstein consecutively constructed a mixing automorphism~\cite{MR0399415} and a K-automorphism~\cite{MR0330416} without square roots. 

After that there seems to have been no substantial progress until 2000 when King~\cite{MR1784212} showed that a generic automorphism admits roots of all orders. Three years later de la Rue and de Sam Lazaro proved even more -- that a generic automorphism can be embedded into a flow~\cite{MR1959844}. This result was later generalized (in various ways) to the case of $\Z^d$-actions~\cite{MR2041884,MR2230134,MR2261246}. 
In 2004 Stepin and Eremenko~\cite{MR2138483} showed that typically there are uncountably many pairwise spectrally non-isomorphic embeddings, thus strengthening the result from~\cite{MR1959844}.

\subsection{Joining properties}\label{selfj1}
Joinings were introduced by Furstenberg~\cite{MR0213508} in 1967 and have been a very fruitful tool for studying the dynamical systems since then.  A joining of two (or more) systems (in our case automorphisms or flows) is a measure invariant under the product action, whose marginals are the invariant measures of the original systems. We deal with self-joinings, i.e.\ joinings of copies of a fixed system. Depending on the ``number of ways'' a system can be joined with itself in an ergodic way, one defines several classes of dynamical systems. We will be mostly interested in 2-fold joinings. The self-joining properties of higher order are defined using the so-called PID property~\cite{MR922364}.\footnote{Danilenko showed~\cite{MR2346554} that $\cT$ has the PID property if and only if $T_1$ has this property.} We distinguish systems with \emph{minimal self-joining property} (MSJ) (Rudolph~\cite{MR555301}), \emph{2-fold simple} systems (Veech~\cite{MR685378}, del Junco, Rudolph \cite{MR922364}), \emph{2-fold quasi-simple} (2-QS) systems (Ryzhikov, Thouvenot~\cite{MR2265691}), \emph{2-fold distally simple} (2-DS) systems (del Junco, Lema\'{n}czyk \cite{delJ-Lem}). A particular case of the 2-QS systems are systems whose 2-fold ergodic self-joinings other than the product measure are finite extensions of the marginal factors. We will denote them by ``$n:1$''  for $n\geq 1$. For $n=1$ this notion is equivalent to 2-fold simplicity. We also consider \emph{joining primeness property} (JP) (Lema\'{n}czyk, Parreau, Roy~\cite{MR2729082}). Contrary to the previous classes, systems with the JP property were originally not defined in terms of self-joinings -- instead, some restriction was imposed on the type of ergodic joinings with Cartesian products of weakly mixing systems. Precise definitions are given in Section~\ref{selfj2} and there is the following relation between the mentioned classes: $\text{MSJ}\subset \text{2-fold simple}\subset \text{2-QS}\subset \text{2-DS}\subset \text{JP}$.

One more property we will consider is the so-called \emph{R-property}, which was first observed by Ratner in~\cite{MR717825} for horocycle flows. It describes the behavior of orbits of nearby distinct points. Roughly speaking, sufficiently long pieces of their orbits are close to each other, up to a time shift. An important ingredient of the definition of the R-property is the restriction on how large this time shift can be. Precise conditions (see~\cite{MR717825,MR2237466,MR2753947,FKa}) form a variety of combinatorial properties, all yielding the same dynamical consequences in case of weakly mixing flow: each of them implies that the flow is ``$n:1$'' for some $n\in\N$.\footnote{Another version of R-property can be found in~\cite{MR796188} -- here however the notion was investigated from other point of view.} In fact it implies more: ergodic joinings with other flows, other than the product measure, are also finite extensions of the coordinate factor of the ``additional'' flow~\cite{MR717825} (see also~\cite{MR1325699}). Apart from the horocycle flows, several classes of flows satisfying the R-property are known, see~\cite{MR2237466,MR2753947,Kanigowski:2013fk,FKa}.

\subsection{Motivation and results}
\subsubsection{Between simplicity and JP: 2-QS and 2-DS}
A starting point for our investigations is a result of del Junco and Rudolph \cite{MR896795}  who showed that whenever $\cT=(T_t)_{t\in\R}$ (with $T_1$ ergodic) is 2-fold simple then $T_1$ is also 2-fold simple. On the other hand, it is not difficult to prove that given a flow $\cT=(T_t)_{t\in\R}$ (with $T_1$ ergodic),  $\cT$ has the JP property whenever $T_1$ has the JP property (see Section~\ref{JPsekcja}). Analogous results hold also for automorphisms and their powers. 

Since the 2-QS and 2-DS properties are ``intermediate'' between simplicity and the JP property, it is reasonable to expect that the 2-QS and 2-DS property will share either the features of the simple systems or the features of the JP systems, or both. Moreover, it is quite easy to see that if $T_1$ is 2-fold simple then $\cT$ is 2-QS. This makes it even less surprising that indeed either both $\cT$ and $T_1$ are 2-QS (2-DS) or none of them is:

\begin{tw}\label{ainthm1}
Let $\cT=(T_t)_{t\in\R}$ be an ergodic flow. The following conditions are equivalent:
\begin{itemize}
\item[(i)]
$\cT$ is 2-DS (2-QS).
\item[(ii)]
$T_t$ is 2-DS (2-QS) for $t\in\R$ such that $T_t$ is ergodic.
\item[(iii)]
There exists $t_0\in\R$ such that $T_{t_0}$ is ergodic and $T_{t_0}$ is 2-DS (2-QS).
\end{itemize}
\end{tw}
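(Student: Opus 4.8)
The plan is to prove the cycle (ii)$\Rightarrow$(iii)$\Rightarrow$(i)$\Rightarrow$(ii), treating the 2-QS and 2-DS cases in parallel: since a distal extension is a (possibly transfinite) tower of compact extensions, every step carried out in the ``compact'' (2-QS) case applies level by level to the ``distal'' (2-DS) case. Throughout I read the two properties off the relative Furstenberg--Zimmer structure: an ergodic $2$-fold self-joining $\lambda\neq\mu\times\mu$ witnesses 2-QS (2-DS) exactly when each coordinate projection $\pi_i\colon(X\times X,\lambda)\to(X,\mu)$ is a compact (distal) extension. The implication (ii)$\Rightarrow$(iii) is then immediate: the point spectrum of the ergodic flow $\cT$ is a countable subgroup of $\R$, so the set of $t$ for which $T_t$ fails to be ergodic is a countable union of discrete sets; hence there is a (co-countable) set of $t_0$ with $T_{t_0}$ ergodic, and for any such $t_0$ condition (ii) makes $T_{t_0}$ 2-DS (2-QS).

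For (iii)$\Rightarrow$(i) I start from an ergodic flow self-joining $\lambda\neq\mu\times\mu$ and disintegrate it into its $T_{t_0}\times T_{t_0}$-ergodic components $\lambda=\int_\Omega\lambda_\omega\,dP(\omega)$. Since $\mu$ is $T_{t_0}$-ergodic, a standard extreme-point argument shows each marginal of $\lambda_\omega$ equals $\mu$, so every $\lambda_\omega$ is an ergodic self-joining of $T_{t_0}$; because $\lambda$ is flow-ergodic while $T_{t_0}\times T_{t_0}$ fixes each of its components, the component space $\Omega$ carries the rotation flow on a circle $\R/p\Z$ with $p\mid t_0$, and flow-invariance of $\{\omega:\lambda_\omega=\mu\times\mu\}$ forces that none of the $\lambda_\omega$ equals $\mu\times\mu$. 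Now the hypothesis that $T_{t_0}$ is 2-QS makes $\pi_1\colon(X\times X,\lambda_\omega)\to(X,\mu)$ a compact extension for $T_{t_0}$, for $P$-a.e.\ $\omega$. Two transfers finish the argument. First, component-wise compactness over the fixed base $\mu$ glues to compactness of $\pi_1\colon(X\times X,\lambda)\to(X,\mu)$ for $T_{t_0}$ on the whole space, Furstenberg--Zimmer theory being compatible with ergodic decomposition over a common base. Second, I pass from $T_{t_0}$ to the flow: writing $s=nt_0+r$ with $r\in[0,t_0)$ and using strong continuity of the Koopman representation $s\mapsto U_{T_s\times T_s}$ on $L^2(\lambda)$, the relative orbit $\{U_{T_s\times T_s}f\}_{s\in\R}$ is the image of the compact set $[0,t_0]$ acting on the relatively precompact $\Z$-orbit $\{U_{T_{t_0}\times T_{t_0}}^{n}f\}_{n\in\Z}$, hence is itself relatively precompact. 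Thus relative almost periodicity over $\pi_1$ for $T_{t_0}$ upgrades to relative almost periodicity for $\cT\times\cT$, $\pi_1$ is a compact extension for the flow, and $\cT$ is 2-QS. This last step is exactly the $\cT$-versus-$T_{t_0}$ comparison of Furstenberg--Zimmer decompositions announced in the introduction.

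The implication (i)$\Rightarrow$(ii) runs symmetrically. Fix $t$ with $T_t$ ergodic and an ergodic self-joining $\rho\neq\mu\times\mu$ of $T_t$, form the flow self-joining $\bar\rho=\tfrac1t\int_0^t(T_s\times T_s)_*\rho\,ds$, and disintegrate it into flow-ergodic components $\sigma_y$. By (i), each $\sigma_y\neq\mu\times\mu$ has compact coordinate extensions for $\cT$, hence for $T_t$ by the trivial inclusion-of-orbits direction of the transfer; moreover $\rho$ is a $T_t\times T_t$-ergodic component of some $\sigma_{y_0}$. Restricting the compact relative structure from $\sigma_{y_0}$ to its ergodic component $\rho$ (again Furstenberg--Zimmer compatibility with ergodic decomposition) shows $\pi_1\colon(X\times X,\rho)\to(X,\mu)$ is compact for $T_t$. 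The only separate case is $\sigma_{y_0}=\mu\times\mu$, where $\rho$ is a nontrivial $T_t\times T_t$-ergodic component of $\mu\times\mu$; such a joining is an off-diagonal joining carried by the isometric Kronecker factor and is therefore already a compact-extension joining. Either way $\rho$ witnesses 2-QS, so $T_t$ is 2-QS.

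The main obstacle I anticipate is precisely the interface between the single transformation $T_{t_0}$ and the flow in the presence of non-ergodicity: a flow self-joining is essentially never ergodic for a single time-$t_0$ map, so the relative compact (distal) structure must be tracked through the ergodic decomposition over the circle $\Omega$ of components and then glued back with enough uniformity to produce a genuine compact (distal) extension of the whole joining. Making this precise --- proving that compactness (distality) of $\pi_1$ holds on $(X\times X,\lambda)$ if and only if it holds on $P$-a.e.\ component $\lambda_\omega$, uniformly enough to survive the passage to the flow via strong continuity --- is the technical heart of the argument; the remaining points (marginals of components, the circle structure of $\Omega$, and the level-by-level treatment of the distal tower for the 2-DS case) are bookkeeping.
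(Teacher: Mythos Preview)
Your overall architecture matches the paper's: reduce everything to comparing the relative (isometric/distal) structure of a joining $\lambda$ over a coordinate factor for the flow $\cT$ versus for the single map $T_{t_0}$, using the averaging map $\lambda\mapsto\gr{\lambda}=\int_0^1\lambda\circ(T_t\times T_t)\,dt$ and its inverse (picking an ergodic component). The paper packages this comparison as a single biconditional (Proposition~\ref{pr12}/\ref{pr12a}): for a flow factor $\cA$, the extension $(T_1,\widetilde\nu)\to(T_1|_\cA,\nu)$ is isometric (distal) if and only if $(\cT,\widetilde{\gr\nu})\to(\cT|_\cA,\gr\nu)$ is. Once that lemma is in hand, both directions of Theorem~\ref{ainthm1} are two-line applications; there is no separate ``gluing'' of ergodic components, because the proposition already relates \emph{one} $T_{t_0}$-ergodic component $\lambda_\omega$ directly to the whole flow joining $\gr{\lambda_\omega}=\lambda$.

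The substantive gap is in your ``upgrade'' step, where you argue that the $\R$-orbit $\{U_{T_s\times T_s}f\}_{s\in\R}$ is relatively precompact because it is the image of $[0,t_0]$ acting on the relatively precompact $\Z$-orbit. Strong continuity of $s\mapsto U_s$ controls the \emph{global} $L^2$ norm, but Furstenberg--Zimmer relative precompactness is a \emph{conditional} (fiber-wise) notion: one needs, for each $\varepsilon$, a finite set $\{g_i\}$ with $\min_i\|U_sf-g_i\|_{L^2(\mu_y)}<\varepsilon$ for almost every base point $y$, uniformly in $s$. Passing from $L^2$-closeness of $U_rg_i$ and $U_{r'}g_i$ to conditional closeness costs a Chebyshev estimate whose exceptional set depends on $(r,r')$, and there is no obvious uniformity. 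The paper circumvents this entirely by using the \emph{T-compactness} characterization of isometric extensions (Proposition~\ref{Tzwarte}), which is phrased purely in terms of higher self-joinings and the weak topology on $\cJ_2$; Lemmas~\ref{vep4}--\ref{vep44} then give exactly the uniform continuity of $\xi\mapsto\xi\circ(T_t\times T_t)$ on $\cJ_2(T_1)$ that your argument is missing. An alternative route, also in the paper, uses separating sieves (Appendix~\ref{apB}); neither route is the naive relative-orbit-compactness argument. Note also Remark~\ref{uw:4.6}: even when $T_1\to T_1|_\cA$ is a compact \emph{group} extension, the flow extension need not be one, so structural transfer here is genuinely delicate.

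Two minor corrections. In (i)$\Rightarrow$(ii), the averaged measure $\bar\rho$ is already flow-ergodic by Lemma~\ref{lm:dec}, so your ``disintegrate $\bar\rho$ into flow-ergodic components'' is superfluous. And the ``separate case'' $\sigma_{y_0}=\mu\times\mu$ cannot occur: if $\gr\rho=\mu\times\mu$ then $\cT$ is weakly mixing, hence so is $T_t$, hence $\mu\times\mu$ is $T_t\times T_t$-ergodic and $\rho=\mu\times\mu$. Your treatment of that case is also wrong as stated: a nontrivial $T_t$-ergodic component of $\mu\times\mu$ is a relatively independent extension over a shifted diagonal of the Kronecker factor, and its projection to a coordinate is \emph{not} a compact extension unless the whole system has discrete spectrum.
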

Therefore, even though rather few explicit examples of 2-QS and 2-DS systems are known, these notions seem very natural. 
We first provide the proof of Theorem~\ref{ainthm1} for the 2-QS property. Then, building on the obtained results, we pass to the 2-DS property (see Section~\ref{pierwszydowod} and~\ref{drugidowod}). An essential role is played by lemmas which describe the relation between an extension of a flow and the corresponding extension of its time-one map. Rougly speaking, these extensions are of the same character (e.g.\ either both relatively distal or both relatively weakly mixing, for the definitions see Section~\ref{se:extensions}). Our main tool is the so-called T-compactness property~\cite{MR1930996}. In Appendix~\ref{apB}, we provide another, more direct proof of Theorem~\ref{ainthm1} in case of 2-DS property. It uses the notion of so-called \emph{separating sieves}~\cite{MR0234443} which helps to avoid some technical difficulties.

\subsubsection{Mixing properties}
Recall the following classical result:
\begin{pr}
Let $\cT$ be a measurable flow. Then $\cT$ is weakly mixing (mildly mixing, lightly mixing, partially mixing, mixing, mixing of order $n$, rigid, partially rigid) if and only if $T_1$ has this property.
\end{pr}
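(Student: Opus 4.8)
The plan is to treat the two inclusions asymmetrically, since for each property one implication is essentially formal. For the ``$\liminf$ over all large times'' properties (mixing, mixing of order $n$, lightly mixing, partially mixing) the implication $\cT\text{ has the property}\Rightarrow T_1\text{ has it}$ is immediate: integer times form a subset of real times, and the infimum over the larger index set is the smaller one, so the defining asymptotic survives when restricted to $n\in\Z$. For the ``there exists a sequence'' properties (rigid, partially rigid) it is the reverse implication $T_1\Rightarrow\cT$ that is immediate, because an integer rigidity (resp.\ partial-rigidity) sequence is in particular a real one. In every case the substance lies in the remaining direction, and the recurring device is the strong continuity of $t\mapsto U_{T_t}$ (the Koopman operator $f\mapsto f\circ T_t$) together with compactness of $[0,1]$: writing $t=n+s$ with $n=\lfloor t\rfloor$ and $s\in[0,1)$, one has $U_{T_t}=U_{T_n}U_{T_s}$, and along a subsequence the fractional parts $s_k$ converge to some $s_\ast\in[0,1]$, so that $U_{T_{s_k}}\to U_{T_{s_\ast}}$ strongly.

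For the mixing-type properties I would argue by a subsequential contradiction. Supposing $T_1$ mixing but $\cT$ not, I choose $A,B$ and $t_k\to\infty$ with $\mu(A\cap T_{t_k}^{-1}B)$ bounded away from $\mu(A)\mu(B)$, pass to $s_k\to s_\ast$, and note $U_{T_{s_k}}\1_B\to\1_{T_{s_\ast}^{-1}B}$ in $L^2$. Since $\mu(T_{s_\ast}^{-1}B)=\mu(B)$, applying the time-one hypothesis to the pair $(A,T_{s_\ast}^{-1}B)$ gives the correct limit along $n_k\to\infty$, while the error from replacing $U_{T_{s_k}}\1_B$ by its limit is bounded by $\|U_{T_{s_k}}\1_B-\1_{T_{s_\ast}^{-1}B}\|_{2}$, uniformly in $n_k$ because $\|U_{T_{n_k}}\|\le 1$. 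The light- and partial-mixing cases are identical except that one carries the $\liminf$ lower bound $\alpha\mu(A)\mu(B)$ throughout, the point again being that $T_{s_\ast}^{-1}B$ has the same measure as $B$; mixing of order $n$ is the same argument on the cube $[0,1]^n$ of fractional parts.

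The recurrence-type properties are where the real obstacle sits: here the hard direction is $\cT\Rightarrow T_1$, i.e.\ \emph{producing} an integer sequence, and rounding $t_k$ to $n_k$ introduces a residual flow twist, so the integer times converge only to $J_0=J\,U_{T_{s_\ast}}$ rather than to $J$. I would work in the space of Markov operators (self-joinings) with the weak operator topology and use that the $\omega$-limit set $\mathcal W=\bigcap_N\overline{\{U_{T_n}:|n|\ge N\}}$ is a compact, $\ast$-closed subsemigroup. The twist is removed by passing to $J_0J_0^\ast=JJ^\ast$, which is self-adjoint and again lies in $\mathcal W$: if $J$ comes from partial rigidity, writing $J=\alpha I+(1-\alpha)J'$ with $J'$ Markov and using positivity yields $\langle JJ^\ast\1_A,\1_A\rangle\ge\alpha^2\mu(A)$, so $T_1$ is partially rigid with constant $\alpha^2$. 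The rigid case is the specialization $J=\mathrm{Id}$ (then $JJ^\ast=\mathrm{Id}\in\mathcal W$), upgraded from weak to strong convergence automatically because for unitaries $U_{T_1}^{m_j}\to\mathrm{Id}$ weakly forces $\|U_{T_1}^{m_j}\xi-\xi\|\to 0$. For mild mixing I would run the same computation on a putative nonconstant rigid function $f$ of the flow: the element $JJ^\ast\in\mathcal W$ fixes $f$, and weak convergence together with norm preservation turns $U_{T_1}^{m_j}f\to f$ into a norm limit, exhibiting $f$ as a nonconstant $T_1$-rigid function; the converse is the formal direction.

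Finally, weak mixing I would dispatch by spectral theory rather than by times: the spectral measure of $U_{T_1}$ is the image of the spectral measure of $\cT$ under the quotient $\R\to\T=\R/\Z$, and since this map is countable-to-one a continuous measure pushes forward to a continuous one (the countable fibres carry no mass), so continuity of the spectrum off the constants transfers both ways. The only point needing care is an integer flow-eigenvalue $\lambda\in\Z\setminus\{0\}$, whose eigenfunction would be $T_1$-invariant; this is excluded because weak mixing includes ergodicity. The main obstacle throughout is the fractional-part twist in the recurrence properties, and the $JJ^\ast$ device is exactly what neutralizes it.
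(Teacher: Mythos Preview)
The paper does not prove this proposition: it is introduced in Section~1 with ``Recall the following classical result'' and stated without argument. Your proposal supplies a correct proof. The decomposition $t_k=n_k+s_k$ together with strong continuity of $t\mapsto U_{T_t}$ and compactness of $[0,1]$ is exactly the right mechanism for transferring the mixing-type properties from $T_1$ to $\cT$, and the $J_0J_0^\ast$ device is a clean way to neutralise the residual twist $U_{T_{\pm s_\ast}}$ in the recurrence-type cases (rigidity, partial rigidity, mild mixing). The spectral argument for weak mixing is also correct, including the handling of a putative flow eigenvalue in $2\pi\Z\setminus\{0\}$ via ergodicity of $T_1$.

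Two small points you should make explicit when writing this up. First, your partial-rigidity argument produces the constant $\alpha^2$ rather than $\alpha$; this suffices for the qualitative statement but should be flagged. Second, the claim that $\mathcal W=\bigcap_N\overline{\{U_{T_n}:|n|\ge N\}}$ is a subsemigroup, while true, deserves one line of justification: multiplication is separately continuous in the weak operator topology on the unit ball, so for $P,Q\in\mathcal W$ one has $PU_{T_{m_j}}=\lim_k U_{T_{n_k+m_j}}\in\mathcal W$ for each $j$, and then $PQ=\lim_j PU_{T_{m_j}}\in\mathcal W$ by closedness.
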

In course of proving Theorem~\ref{ainthm1}, as a by-product, we show that a ``relative version'' of the above proposition is true in case of weak mixing.
In fact, we show more than this. In order to state the result, we need to recall first a classical theorem of Furstenberg and Zimmer (valid also for flows and other group actions):
\begin{tw}[Furstenberg-Zimmer]\label{fz}
Let $T$ be an ergodic automorphism on $\xbm$ with a factor $\cA\subset \cB$. Then there exists a unique intermediate factor $\cA\subset \cC\subset \cB$ such that the extension $ \cB\to \cC$ is relatively weakly mixing and $\cC\to\cA$ is relatively distal.
\end{tw}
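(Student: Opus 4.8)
The plan is to construct $\cC$ as the \emph{maximal relatively distal intermediate factor} over $\cA$ and then to check that the complementary extension $\cB\to\cC$ is automatically relatively weakly mixing. Recall that an extension is relatively distal precisely when it can be built as a (possibly transfinite) tower of relatively compact (relatively almost periodic) extensions, and that the basic structural input is Furstenberg's dichotomy: for any extension $\cA\subset\cD\subset\cB$, either $\cD\to\cA$ is relatively weakly mixing, or there is an intermediate factor $\cA\subsetneq\cD'\subset\cD$ with $\cD'\to\cA$ nontrivial and relatively compact. I would take this dichotomy as the engine of the whole argument, with ergodicity of $T$ guaranteeing that the relevant disintegrations and relative product measures are well defined.

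For existence I would first set up the relative machinery: disintegrate $\mu$ over $\cA$ and work in the Hilbert module $L^2(X)$ over $L^\infty(\cA)$, so that the relative product space $X\times_\cA X$ with the measure $\mu\times_\cA\mu$ and the diagonal action of $T$ is available. The dichotomy is established by examining $T$-invariant functions on $X\times_\cA X$: if every such function is measurable with respect to (the pullback of) $\cA$, the extension is relatively weakly mixing by definition; otherwise a nontrivial invariant function yields, through a spectral/compactness argument in the Hilbert module (finite-dimensional invariant submodules, or eigenspaces of an associated compact operator), a nonconstant relatively almost periodic function and hence a proper relatively compact extension of $\cA$. Granting the dichotomy, I would build an increasing transfinite chain $\cA=\cC_0\subset\cC_1\subset\cdots$ of factors inside $\cB$: at successor stages, whenever $\cC_\alpha\to\cA$ still admits a nontrivial relatively compact extension inside $\cB$, I adjoin it; at limit stages I take the join of the previous factors, which remains relatively distal since relative distality is preserved under increasing unions. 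A cardinality argument (the chain is strictly increasing while all factors live inside the fixed separable $\sigma$-algebra $\cB$) forces stabilization at some $\cC$; by construction $\cC\to\cA$ is relatively distal, while $\cB\to\cC$ admits no further relatively compact extension, i.e.\ $\cB\to\cC$ is relatively weakly mixing.

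For uniqueness the key observation is that an extension which is \emph{simultaneously} relatively weakly mixing and relatively distal must be an isomorphism, since a nontrivial relatively distal extension has a nontrivial relatively compact bottom step, which a relatively weakly mixing extension cannot contain. Now suppose $\cC'$ is any intermediate factor with $\cB\to\cC'$ relatively weakly mixing and $\cC'\to\cA$ relatively distal. By maximality of $\cC$ among relatively distal extensions of $\cA$ we get $\cC'\subseteq\cC$. Then $\cC\to\cC'$ is relatively distal, being an intermediate quotient of the relatively distal extension $\cC\to\cA$; and it is also relatively weakly mixing, because the relative product $\cC\times_{\cC'}\cC$ is a factor of the relatively ergodic $\cB\times_{\cC'}\cB$, and relative ergodicity passes to factors. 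Hence $\cC\to\cC'$ is trivial and $\cC'=\cC$.

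The main obstacle I anticipate is the dichotomy itself, namely producing a genuine relatively compact extension out of a nontrivial relative self-joining. This is the technical core: it requires the relative Hilbert-module formalism, the disintegration of $\mu$ over $\cA$, the theory of conditional Hilbert spaces, and a relative analogue of the spectral theorem showing that the diagonal action on $X\times_\cA X$ splits off finite-rank invariant pieces whenever it fails to be relatively ergodic. Secondary care is needed at the limit stages of the transfinite construction---verifying that an increasing union of relatively distal factors stays relatively distal and that the process terminates---but once the dichotomy and these closure properties are in hand, both existence and uniqueness follow from the bookkeeping sketched above.
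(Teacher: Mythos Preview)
The paper does not prove this statement: Theorem~\ref{fz} is merely \emph{recalled} as a classical result of Furstenberg and Zimmer, with no proof supplied. So there is no ``paper's own proof'' to compare against.

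That said, your outline is the standard route to the Furstenberg--Zimmer structure theorem and is essentially correct. A couple of small remarks on precision. First, your phrasing ``whenever $\cC_\alpha\to\cA$ still admits a nontrivial relatively compact extension inside $\cB$'' is slightly garbled: what you mean is that there exists $\cC_\alpha\subsetneq\cD\subset\cB$ with $\cD\to\cC_\alpha$ relatively compact, and in that case you set $\cC_{\alpha+1}=\cD$ (or, better, the maximal such $\cD$). Second, in the uniqueness step you invoke ``maximality of $\cC$ among relatively distal extensions of $\cA$'' without having proved it directly; your construction only shows that $\cB\to\cC$ admits no nontrivial relatively compact intermediate step. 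The maximality does follow, but you should spell out why: if some relatively distal $\cC'\to\cA$ were not contained in $\cC$, then $\cC\vee\cC'\to\cC$ would be a nontrivial relatively distal extension (joins of relatively distal extensions over a common base are relatively distal), and its first isometric step would contradict the relative weak mixing of $\cB\to\cC$. Alternatively, and more symmetrically, argue directly that $\cC\vee\cC'\to\cC$ is both relatively distal and relatively weakly mixing, hence trivial, and likewise for $\cC\vee\cC'\to\cC'$; this avoids appealing to maximality at all. With these clarifications your sketch is complete.
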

The factorization from the above theorem is called \emph{Furstenberg-Zimmer decomposition}. We show that for a flow and for its ergodic time-$t$ automorphism this decomposition  is the same with respect to any flow factor:\footnote{Again, an analogous result holds for automorphisms and their powers.}
\begin{tw}\label{ainthm2}
Let $\cT=(T_t)_{t\in\R}\colon (X,\mathcal{B},\mu)\to (X,\mathcal{B},\mu)$ be a flow with $T_1$ ergodic and let $\cA\subset \cB$ be $\cT$-invariant.
Then the Furstenberg-Zimmer decomposition for $\cT$ and for $T_1$ with respect to $\cA$ is the same. In particular, $\cT$ is relatively weakly mixing (relatively distal) over $\cA$ if and only if $T_1$ is relatively weakly mixing (relatively distal) over $\cA$.
\end{tw}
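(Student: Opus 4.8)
The plan is to exploit the uniqueness clause of Theorem~\ref{fz}. First I would observe that the theorem applies to the flow at all: since $T_1$ is ergodic and every $\cT$-invariant function is $T_1$-invariant, $\cT$ is ergodic, so $\cT$ (as a group action) has its own Furstenberg--Zimmer decomposition over $\cA$, with a unique intermediate factor. It therefore suffices to exhibit a single intermediate factor $\cA\subset\cC\subset\cB$ that simultaneously witnesses the decomposition for $T_1$ and for $\cT$; the two uniqueness statements then force the decompositions to coincide. The ``in particular'' assertion follows by specialising to $\cC=\cA$ (relative weak mixing) and to $\cC=\cB$ (relative distality).

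I would take $\cC$ to be the Furstenberg--Zimmer factor of the ergodic automorphism $T_1$ over $\cA$, that is, the maximal relatively distal factor, generated by the functions relatively almost periodic (relatively compact) over $\cA$ with respect to $T_1$. The first point to settle is that $\cC$ is $\cT$-invariant. This is a functoriality statement: for each $s\in\R$ the unitary $U_{T_s}$ commutes with $U_{T_1}$ and, because $\cA$ is $\cT$-invariant, preserves $\cA$ as a $\sigma$-algebra; hence $T_s$ is an automorphism of the system $(X,\cB,\mu,T_1)$ commuting with $T_1$ and fixing the factor $\cA$. Since the Furstenberg--Zimmer factor is canonically attached to the pair $(T_1,\cA)$, it must be fixed by every such automorphism, so $U_{T_s}\cC=\cC$ for all $s$, and $\cC$ is a $\cT$-invariant intermediate factor.

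It then remains to verify that $\cB\to\cC$ is relatively weakly mixing and $\cC\to\cA$ is relatively distal for the flow. The weakly mixing part follows by maximality: a nontrivial $\cT$-relatively-compact extension of $\cC$ inside $\cB$ would, a fortiori, be $T_1$-relatively compact (the $T_1$-orbit of a function is contained in its $\cT$-orbit, so relative precompactness for $\cT$ passes to $T_1$), contradicting the maximality of $\cC$ among relatively distal factors of $T_1$ over $\cA$; hence no such extension exists and $\cB\to\cC$ is $\cT$-relatively weakly mixing. For the distal part, recall that $\cC$ is generated over $\cA$ by the $T_1$-relatively-almost-periodic functions. Invoking the transfer lemma that every $T_1$-relatively-compact extension is $\cT$-relatively compact -- the point at which the lemmas relating an extension of the flow to the corresponding extension of its time-one map, and with them the T-compactness property~\cite{MR1930996}, enter -- these generators are also $\cT$-relatively almost periodic over $\cA$, so $\cC\to\cA$ is relatively distal for $\cT$. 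Uniqueness in Theorem~\ref{fz} applied to the ergodic flow $\cT$ then identifies $\cC$ with the Furstenberg--Zimmer factor of $\cT$ over $\cA$, and the proof is complete.

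The main obstacle is the direction ``$T_1$-relatively compact $\Rightarrow$ $\cT$-relatively compact'' used for the distal part; the easy inclusion runs the other way and only needs the $T_1$-orbit to sit inside the $\cT$-orbit. To prove the hard direction I would write the $\cT$-orbit of a function $f$ as $\bigcup_{s\in[0,1]}U_{T_s}K$, where $K$ is the relative closure of the $T_1$-orbit of $f$ (relatively compact by hypothesis), and use the $L^2$-continuity of $s\mapsto U_{T_s}$ together with compactness of $[0,1]$ to conclude that this union is again relatively precompact over $\cA$. Making ``relatively precompact'' precise in the module sense over $\cA$, and checking that $U_{T_s}$ respects it -- here $U_{T_s}$ merely twists the $L^\infty(\cA)$-module structure by $T_s$, which is harmless since $\cA$ is $\cT$-invariant -- is exactly what the T-compactness property is designed to handle, and is where the ergodicity of $T_1$ is consumed.
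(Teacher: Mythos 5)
Your overall architecture --- take the Furstenberg--Zimmer factor $\cC$ of $T_1$ over $\cA$, show it is $\cT$-invariant by canonicity under the commuting automorphisms $T_s$, check that $\cB\to\cC$ is relatively weakly mixing for $\cT$ (which only needs the easy transfer; indeed $\mu\otimes_{\cC}\mu$ ergodic for $T_1\times T_1$ already implies ergodic for $\cT\times\cT$), and reduce everything to the single transfer statement ``$T_1$-relatively compact $\Rightarrow$ $\cT$-relatively compact'' --- is essentially the paper's (Propositions~\ref{pr12} and~\ref{pr12a} and the corollary following the latter). But the proof you offer for that key transfer does not work. Conditional precompactness over $\cA$ is a \emph{fiberwise} notion: an $\vep$-net must work in $L^2(\mu_y)$ for almost every fiber $y$ simultaneously. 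Your covering argument via $L^2$-continuity of $s\mapsto U_{T_s}$ and compactness of $[0,1]$ only yields that $\|U_{T_s}g-U_{T_{s'}}g\|_{L^2(\mu)}$ is small, and smallness of the global $L^2$-norm gives no control of $\|U_{T_s}g-U_{T_{s'}}g\|_{L^2(\mu_y)}$ outside a set of $y$'s of small measure; so the finitely many nets $\{U_{T_{s_i}}g_j\}$ need not form a conditional $\vep$-net for the whole $\cT$-orbit. This is exactly the difficulty the paper's proof is built to avoid: Proposition~\ref{Tzwarte} replaces fiberwise compactness by the T-compactness criterion, phrased in the weak metric $\overline{d}$ on $N$-fold self-joinings, where $t\mapsto\xi\circ(T_t\times\dots\times T_t)$ genuinely is uniformly continuous --- and even that continuity is not free (Lemmas~\ref{vep4}--\ref{vep44} need the dyadic-rational relabelling trick because $\overline{d}$ is only invariant under integer times). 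A second, related omission: an $N$-fold self-joining that is ergodic for $\cT$ need not be ergodic for $T_1^{\times N}$ even though $\mu$ is $T_1$-ergodic, so before applying T-compactness for $T_1$ one must select the correct $T_1$-ergodic component (the one lying over the diagonal on $\cA^{\otimes N}$ with marginals exactly $\mu$); this is case (ii) in the proof of Proposition~\ref{pr12} and is absent from your sketch.

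There is also a smaller slip: the maximal distal factor $\cC$ is \emph{not} generated over $\cA$ by the $T_1$-relatively almost periodic functions --- those generate only the first isometric layer, i.e.\ the maximal isometric extension of $\cA$. Distality is a transfinite tower of isometric extensions, so the $\cT$-invariance and the isometricity transfer must be established at every level $\cC_{\alpha}\to\cC_{\alpha-1}$ of the tower (choosing each $\cC_{\alpha}$ maximal so that $T_t\cC_\alpha\vee\cC_\alpha\to\cC_{\alpha-1}$ is still isometric and hence $T_t\cC_\alpha\subset\cC_\alpha$, and passing to inverse limits at limit ordinals), which is how Proposition~\ref{pr12a} proceeds. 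That iteration is routine once the one-step transfer is available, but it cannot be collapsed into a single application.
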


\subsubsection{(Non-)uniqueness of the embedding}
Rudolph \cite{MR0414826} gave an example of two non-isomorphic K-automorphisms with isomorphic squares (see also \cite{MR0444907}). Also, Stepin and Eremenko~\cite{MR2138483} showed that for $T$ with simple spectrum, which is embeddable into a measurable flow, the embedding is either unique or $T$ has infinitely many spectrally non-isomorphic embeddings. In a typical situation the set of possible embeddings (or roots of a given order) has cardinality continuum and any two different members are not spectrally isomorphic.

In Section~\ref{uniq}, we discuss the problem of uniqueness of the embedding from the point of view of self-joining properties. We show that for time-one map of a flow with MSJ property the embedding is unique. A similar phenomenon can be observed for time-one maps of flows having the R-property, in particular for the horocycle flows. For flows with the MSJ property we provide two independent proofs. The first of them can be easily adapted to yield a similar result for roots. The second proof can be seen as a special case of a more general argument. Namely, we show that given a weakly mixing flow $\cT$, a sufficient condition for the uniqueness of the embedding for $T_1$ is that any ergodic joining of $\cT$ with a weakly mixing flow $\cS$ remains weakly mixing.  This is also the core of the argument we apply to the horocycle flows (using the theory of Ratner~\cite{MR721735,MR657240,MR717825}).

Except for the Bernoulli shifts (both in finite and infinite entropy case) \cite{MR0257322,MR0274716,MR0272985,MR0318452}, ergodic automorphisms being time-$t$ maps of MSJ flows or flows with the R-property\footnote{Recall that both flows with the MSJ and R-property are of zero entropy.} seem to be the first concrete examples of automorphisms with a unique embedding into a measurable flow.

Finally, we pass to ``negative'' examples. They are given in terms of compact group extensions of 2-fold simple systems. To construct a weakly mixing 2-fold simple automorphism which has two non-isomorphic square roots, we use the dihedral group of order $6$ (one of these roots is 2-fold simple, the other one is only 2-QS). To give an example of a weakly mixing automorphism which is 2-fold simple and embeddable into two non-isomorphic flows we use the group $SU(2)$ (again, one of the obtained flows is 2-fold simple, the other one is only 2-QS).\footnote{Contrary to what was claimed in~\cite{MR0229798}, it is possible to have non-uniqueness of embedding and roots also in the quasi-discrete spectrum case, see also footnote~\ref{foot2}. To see this, consider an irrational rotation on the circle $Tx=x+\alpha$. It has countably many non-isomorphic embeddings: $T_{n,t}x=x+t(\alpha+n)$. This yields immediately also countably many non-isomorphic roots for $T$. In a similar way, we can obtain non-isomorphic roots in the case of quasi-discrete, non-discrete spectrum. For $T(x,y)=(x+\alpha,x+y)$ we have $R\in C(T)$ for $R(x,y)=(x,y+\nicefrac12)$, whence $T^2=(TR)^2$.} 
Another natural class of automorphisms, which is of a different nature, consists of Gaussian automorphisms.\footnote{It was shown in~\cite{MR2729082} that JP systems are disjoint from the Gaussian systems. Other related results can be found in~\cite{MR1325699,MR1687223,1112.5545}. We refer the reader e.g. to~\cite{MR832433} or~\cite{MR0272042} for an introduction to Gaussian systems.}

\subsubsection{Counter-examples}
In Section~\ref{se5}, we focus on various types of counter-examples related to Theorem~\ref{ainthm1}. The examples mentioned at the end of the previous section show immediately that 2-fold simplicity of an (ergodic) time-one map of a flow does not imply that also the whole flow is 2-fold simple. Similarly, 2-fold simplicity of $T^2$ does not imply 2-fold simplicity of $T$. In other words, Theorem~\ref{ainthm1} is ``the most'' we can hope for, and the aforementioned result of del Junco and Rudolph~\cite{MR922364} cannot be reversed. In fact, already Danilenko~\cite{MR2346554} constructed an automorphism $T$ such that $T^2$ is 2-fold simple and $T$ is only ``$2:1$''. The key tool is the so-called (C, F)-construction procedure, which is an algebraic counterpart of the well-known cutting and stacking technique.\footnote{This method is a useful tool for producing examples and counter-examples of different types of behavior. For a detailed survey, see~\cite{MR2402408}.} A general idea behind this type of constructions is to use a larger (usually non-abelian) group with some special algebraic properties, which are reflected in the dynamics of a properly chosen sub-action. The example given in~\cite{MR2346554} actually has even stronger properties than we need: the constructed automorphism is 2-QS (it is ``2:1''), it is disjoint from simple systems and its square is 2-fold simple. Notice each of the actions constructed in the previous section has a factor which is 2-fold simple. In particular, they are not disjoint from 2-fold simple systems. On the other hand, these constructions are much simpler than the example from~\cite{MR2346554}.

The next example we consider has the same basic properties: $T^2$ is 2-fold simple whereas $T$ is only 2-QS. It is given in terms of a double group extension of an automorphism with the MSJ property (so it is again much simpler than the example from~\cite{MR2346554}, but not disjoint from 2-fold simple automorphisms). The second group extension in our construction is very explicit: we use the affine cocycle. This is however not necessarily a strong point: it is unclear how to adjust this example to obtain a 2-QS flow with a 2-fold simple time-one map.

In Section~\ref{se52}, using the construction from~\cite{MR2346554} as the starting point, we provide an example of a flow $\cT$ with $T_2$ ergodic and 2-fold simple, such that $\cT$ is only 2-QS (and not 2-fold simple). Moreover, the obtained flow has a factor with the same self-joining properties as the original flow and disjoint from 2-fold simple flows. To prove this, we show that given a 2-fold simple flow $\cT$ on $(X,\cB,\mu)$  with $T_t$ ergodic and a flow factor $\cA\subset \cB$, either both $\cT|_{\cA}$ and $T_t|_{\cA}$ are 2-fold simple or none of them is. In particular, the horocycle flows and their factors cannot serve as an example here. In our (C,F)-construction we deal with uncountable non-abelian groups. This results is additional technical difficulties related to equidistribution of sequences, cf.\ \cite{MR2402408,MR3227151} -- since this part of the paper is of a different flavor than the rest of it, we include necessary tools in Appendix~\ref{se:ud}.

\section{Preliminaries}\label{def+not}
We deal with measure-preserving automorphisms and flows, i.e. $\Z$- and $\R$-actions on standard Borel spaces, preserving a probability measure. In particular, we assume that all flows are measurable, i.e. the map $X\times\R\ni (x,t)\mapsto T_tx\in X$ is measurable. Sometimes the invariant measure is not fixed and we work with a standard Borel space $(X,\cB)$ (the existence of an invariant measure will be always guaranteed). The properties which are of our interest are invariant under measure-theoretical isomorphisms. Therefore,  without loss of generality, we can make a tacit assumption that the flows we consider are continuous flows on compact metric spaces.\footnote{In the case of flows one uses the special flow representation~\cite{MR0004730,MR0005800} and a result from~\cite{MR0430211} on the homology class of integrable functions.}

We formulate most of the definitions in terms of $\Z$-actions. However, they can be transferred directly (or almost directly) to other group actions.

\subsection{Topology, invariant measures, ergodic decomposition}\label{miary}
Let $X$ be a compact metric space. Denote by $M_1(X)$ the set of probability measures on $X$. $M_1(X)$ with the usual weak topology is compact and metrizable. 
For a flow $\cT=(T_t)_{t\in\R}$ and automorphism $T$ on $X$ we define:
\begin{align*}
& \cP_\R(\cT):=\{\mu\in M_1(X)\colon \mu\circ T_t=\mu \text{ for all }t\in\R\}, \\
& \cP_\R^e(\cT):=\{\mu\in \cP_\R(\cT)\colon \mu \text{ is  ergodic for }\cT\},\\
& \cP_\Z(T):=\{\mu\in M_1(X)\colon \mu\circ T=\mu\}, \\
& \cP_\Z^e(T):=\{\mu\in \cP_\Z(T)\colon \mu \text{ is  ergodic for } T\}.
\end{align*}

\begin{uw}\label{uw:ciaglosc}
Since $X$ is a compact metric space and $\cT$ is assumed to be continuous, it follows that the map $\R\ni t\mapsto \mu\circ T_t\in M_1(X)$ is continuous whenever $\cP_\R(\cT)\neq\varnothing$.
\end{uw}

We will identify the following spaces: $\R/\Z$, $[0,1)$, $\T=\{z\in\mathbb{C}\colon |z|=1\}$ ($[0,1)$ will be understood additively and $\T$ -- multiplicatively) and we will equip them with the Lebesgue measure. Finally, given a measurable flow $\cT=(T_t)_{t\in\R}$ and a measure $\nu\in\cP_\Z^e(T_1)$ we set 
\begin{equation}\label{dec}
\gr{\nu}=\int_0^1 \nu \circ T_t\ dt.
\end{equation}

Recall the following folklore result (see, e.g.,~\cite{MR2346554}):
\begin{lm}\label{lm:dec}
The measure $\gr{\nu}$ is $\cT$-invariant and ergodic. Moreover, the map
\begin{equation}\label{2}
\cP^e_\Z(T_1)\ni \nu \mapsto \gr{\nu}\in \cP^e_\R(\cT)
\end{equation}
is onto and~\eqref{dec} is the ergodic decomposition of $\gr{\nu}$ for $T_1$.
\end{lm}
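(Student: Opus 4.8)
The plan is to prove the four assertions in the natural order, leaving surjectivity --- the only genuinely non-formal point --- for last.

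\emph{Invariance and ergodicity of the components.} First I would record that $t\mapsto \nu\circ T_t$ is weak-$*$ continuous (for $f\in C(X)$ one has $\int f\,d(\nu\circ T_t)=\int f\circ T_t\,d\nu$, and $f\circ T_t\to f\circ T_{t_0}$ boundedly by continuity of the flow, cf.\ Remark~\ref{uw:ciaglosc}), so that $\gr{\nu}$ is a well-defined element of $M_1(X)$. The key elementary observation is that $u\mapsto \nu\circ T_u$ is $1$-periodic: since $\nu\circ T_1=\nu$, we get $\nu\circ T_{u+1}=(\nu\circ T_1)\circ T_u=\nu\circ T_u$. Hence, using linearity and continuity of the pushforward $\zeta\mapsto\zeta\circ T_s$, the change of variables $u=t+s$ together with periodicity gives $\gr{\nu}\circ T_s=\int_0^1 \nu\circ T_{t+s}\,dt=\int_s^{s+1}\nu\circ T_u\,du=\int_0^1\nu\circ T_u\,du=\gr{\nu}$, i.e.\ $\gr{\nu}\in\cP_\R(\cT)$. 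I would also note that each $\nu\circ T_t$ is $T_1$-ergodic: if $A$ is $T_1$-invariant then $T_t^{-1}A$ is $T_1$-invariant (the $T_u$ commute), so $(\nu\circ T_t)(A)=\nu(T_t^{-1}A)\in\{0,1\}$ by $T_1$-ergodicity of $\nu$.

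\emph{The ergodic decomposition.} Being $\cT$-invariant, $\gr{\nu}$ is in particular $T_1$-invariant; it is written in~\eqref{dec} as the Lebesgue average over $t\in[0,1)$ of the $T_1$-ergodic measures $\nu\circ T_t$, and $t\mapsto \nu\circ T_t$ is measurable by the continuity above. By uniqueness of the ergodic decomposition, this representation \emph{is} the ergodic decomposition of $\gr{\nu}$ for $T_1$, which proves the last assertion. Using this, $\cT$-ergodicity of $\gr{\nu}$ follows: any $\cT$-invariant $f\in L^2(\gr{\nu})$ is $T_1$-invariant, hence $\gr{\nu}$-a.e.\ constant on $T_1$-ergodic components, so $f(x)=c(t(x))$ where $t(x)$ labels the component; since $T_s$ carries the component $\nu\circ T_t$ to $\nu\circ T_{t+s}$ (genericity is transported by $T_s$), $\cT$-invariance forces $c(t+s)=c(t)$ for a.e.\ $t$ and all $s$, whence $c$ is constant and $\gr{\nu}$ is ergodic.

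\emph{Surjectivity (the main step).} Let $m\in\cP_\R^e(\cT)$ be arbitrary; I must produce $\nu\in\cP_\Z^e(T_1)$ with $\gr{\nu}=m$. Since $m$ is $T_1$-invariant, take its $T_1$-ergodic decomposition $m=\int \xi\,d\tau(\xi)$, a probability measure $\tau$ on $\cP_\Z^e(T_1)$. The point is the identity $\int \gr{\xi}\,d\tau(\xi)=m$: by Fubini together with linearity and continuity of $\zeta\mapsto\zeta\circ T_t$ one computes $\int \gr{\xi}\,d\tau(\xi)=\int_0^1\bigl(\int \xi\,d\tau(\xi)\bigr)\circ T_t\,dt=\int_0^1 m\circ T_t\,dt=m$, the last equality by $\cT$-invariance of $m$. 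Now every $\gr{\xi}$ is $\cT$-ergodic by the first steps, and $m$ is $\cT$-ergodic; since distinct ergodic measures are mutually singular --- equivalently, by uniqueness of the $\cT$-ergodic decomposition of the ergodic measure $m$, which is $\delta_m$ --- the equality $m=\int\gr{\xi}\,d\tau(\xi)$ forces $\gr{\xi}=m$ for $\tau$-a.e.\ $\xi$. Choosing any such $\xi=:\nu\in\cP_\Z^e(T_1)$ yields $\gr{\nu}=m$ and shows that the map~\eqref{2} is onto.

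I expect the surjectivity step to be the only delicate one: it hinges on recognizing that the $T_1$-ergodic components of a $\cT$-ergodic $m$ are permuted transitively by the flow, which here is captured cleanly by the averaging identity $\int\gr{\xi}\,d\tau(\xi)=m$ combined with mutual singularity of distinct ergodic measures. All the remaining assertions are formal consequences of the $1$-periodicity of $u\mapsto \nu\circ T_u$ and the uniqueness of the ergodic decomposition.
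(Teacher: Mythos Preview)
Your proof is correct. Note, however, that the paper does not actually prove this lemma: it is stated there as a folklore result with a reference to~\cite{MR2346554}, so there is no proof in the paper to compare against.

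One small remark on efficiency: your argument for $\cT$-ergodicity of $\gr{\nu}$ via the function $c(t)$ on the space of components is valid but more elaborate than necessary. A one-line alternative is available: if $A\in\cB$ is $\cT$-invariant then in particular $A$ is $T_1$-invariant, so $\nu(A)\in\{0,1\}$; and for every $t$ one has $(\nu\circ T_t)(A)=\nu(T_t^{-1}A)=\nu(A)$ by $T_t$-invariance of $A$, whence $\gr{\nu}(A)=\int_0^1\nu(A)\,dt=\nu(A)\in\{0,1\}$. Your surjectivity argument, using the $T_1$-ergodic decomposition of a given $m\in\cP_\R^e(\cT)$ together with the Fubini identity $\int\gr{\xi}\,d\tau(\xi)=m$ and mutual singularity of distinct $\cT$-ergodic measures, is exactly the standard route and is cleanly executed.
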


Note that the measures $\nu\circ T_t$ in decomposition~\eqref{dec} need not be pairwise orthogonal. However, we have the following:
\begin{lm}\label{lm:rozkladdokl}
In the decomposition~\eqref{dec} either all measures $\nu\circ T_t$ are equal to $\nu$ or there exists $k\geq 1$ such that
\begin{equation}\label{rozkl}
	\gr{\nu}=\int_{\T} \nu \circ T_t\ dt=k\int_0^{1/k} \nu\circ T_t\ dt
\end{equation}
and for $0\leq t<1/k$ all measures $\nu\circ T_t$ are mutually singular.
\end{lm}
\begin{proof}
Notice that any two measures in decomposition~\eqref{dec} are either equal or mutually singular. Let $A:=\{t\in \T\colon \eta=\eta\circ T_t\}$. It is clearly a subgroup of $\T$. Moreover, $A$ is closed by Remark~\ref{uw:ciaglosc}. Therefore either $A=\T$ or $A$ is finite. In the latter case we obtain formula~\eqref{rozkl} with $k=\#A$.
\end{proof}

\begin{uw}\label{widmodys}
If $\cT$ is a suspension flow with the first return time to the base equal to $a>0$ and a weakly mixing first return time map, then the discrete part of the spectrum is concentrated on $\frac{1}{a}\Z$.
\end{uw}

\begin{uw}\label{u:zawie}
Recall that a flow is a suspension flow (with a constant first return time to the base) if and only it has a rational point in its discrete spectrum~\cite{MR521638}. Therefore, whenever~\eqref{rozkl} holds, the flow $\cT$ can be represented as a suspension flow over its ${1/k}$-time map with common return time to the base equal to $1/k$. In other words, the whole space can be identified with $\left(X\times [0,1/k],\nu\otimes\lambda_{[0,1/k]}\right)$, where we glue together each pair of points $(x,1/k)$ and $(T_{1/k}x,0)$, 
with the action of $(T_t)_{t\in\R}$ given by $T_t(x,s)=(x,s+t)$. Moreover, by Remark~\ref{widmodys}, the discrete part of its spectrum is concentrated on $k\Z$.
\end{uw}

\subsection{Joinings I}\label{se:j1}
\subsubsection{Basic definitions and properties}
Let $T$ and $S$ be automorphisms of $(X,\mathcal{B},\mu)$ and $(Y,\cC,\nu)$ respectively. By $\mathcal{J}(T,S)$ we denote the set of all \emph{joinings} between $T$ and $S$, i.e. the set of all $T\times S$-invariant probability measures on $(X\times Y,\mathcal{B}\otimes \mathcal{C})$, whose projections on $X$ and $Y$ are equal to $\mu$ and $\nu$ respectively. For $\mathcal{J}(T,T)$ we write $\mathcal{J}(T)$ and we speak about \emph{self-joinings}. If we assume additionally that both $T$ and $S$ are ergodic, then $\mathcal{J}(T,S)$ is a simplex whose extreme points are joinings ergodic for $T\times S$. We write $\mathcal{J}^e(T,S)$ (or $\mathcal{J}^e(T)$ if $S=T$). If $\mathcal{J}(T,S)=\{\mu\otimes\nu\}$ we say that $T$ and $S$ are disjoint~\cite{MR0213508} and write $T\perp S$.

In a similar way one defines joinings of higher order, i.e. joinings of more than two automorphisms. We denote them by $\mathcal{J}(T_1,\dots,T_k)$, $k\geq 2$ and $\mathcal{J}(T_1,T_2,\dots)$ in case of a finite and an infinite number of automorphisms respectively. Again, we may be interested in the situation where all $T_i$ are isomorphic and speak about self-joinings. To distinguish self-joinings of different orders we write e.g. $\mathcal{J}_{\infty}^e(T)$ for infinite ergodic self-joinings of $T$ or $\mathcal{J}_3(T)$ for three-fold self-joinings of $T$. By $\mathcal{J}_2(T;\mathcal{A})$, where $\cA$ is a factor of $T$, we denote all 2-fold self-joinings of $T$ which project onto $\cA\otimes \cA$ as the diagonal measure. Sometimes, to simplify the notation, we will denote a joining between $T$ and $S$ by $T\vee S$.

Recall that 2-fold joinings are in one-to-one correspondence with Markov operators $\Phi\colon L^2(X,\mathcal{B},\mu)\to L^2(Y,\mathcal{C},\nu)$
such that $\Phi\circ T=S \circ \Phi$. This identification allows us to view $\mathcal{J}_2(T)$ as a metrizable compact semitopological semigroup endowed with the weak operator topology. A metric compatible with the weak topology on $\mathcal{J}_2(T)$ can be defined in the following way. Fix a dense subset $\{f_i'\colon i\geq 1\}\subset L^2(X,\cB,\mu)$, put $f_i:=f'_i/\|f'_i\|_{L^2(X,\mu)}$ for $i\geq 1$ and let $\overline{d}_{\{f_n\}_{n\in\N}}$ be given by
\begin{multline}\label{metryka}
\overline{d}_{\{f_n\}_{n\in\N}}(\xi,\nu)\\
=\sum_{i,j=1}^{\infty}\frac{1}{2^{i+j}}\left| \int_{X\times X} f_i(x)\overline{f_j(y)}\ d\xi(x,y)-\int_{X\times X} f_i(x)\overline{f_j(y)}\ d\nu(x,y) \right|.
\end{multline}
Notice that for $\xi,\nu\in \cJ_2(T)$ we always have 
\begin{equation}\label{eq:niezm}
\overline{d}_{\{f_n\}_{n\in\N}}(\xi\circ (T\times T),\nu\circ (T\times T))=\overline{d}_{(f_n)_{n\in\N}}(\xi,\nu).
\end{equation}

\begin{uw}\label{ro:jo}
We will often use Lemma~\ref{lm:dec} and Lemma~\ref{lm:rozkladdokl} for joinings. Notice that given a flow $\cT$ with $T_1$ ergodic, the map $\eta\mapsto \gr{\eta}$ has values in $\cJ_2^e(\cT)$ whenever $\eta\in \cJ^e_2(T_1)$. Moreover, it is surjective. 
\end{uw}

Recall that $S$ is a \emph{factor} of $T$ ($T$ is an \emph{extension} of $S$) if there exists $\pi\colon X\to Y$ such that $\pi\circ T=S\circ \pi$ and $\nu=\pi_\ast(\mu)$, i.e. for any $A\in\cC$ we have $\mu(\pi^{-1}A)=\nu(A)$. We write $T\to S$. Notice that when $S$ is a factor of $T$ then $\pi^{-1}(\cC)\subset \cB$ is $T$-invariant. On the other hand, every $T$-invariant sub-$\sigma$-algebra $\cA\subset \cB$ corresponds to some factor of $T$. If no confusion arises we will write $\cB\to \cA$.

We will use the following types of joinings:
\begin{itemize}
\item
For $S_2,\dots, S_N\in C(T)$\footnote{$C(T)$ stands for the centralizer of $T$, i.e. the group of all measure-preserving automorphisms which commute with $T$.} the measure $\nu=\mu_{S_2,\dots,S_N}$ defined by
$$
\mu_{S_2,\dots,S_N}(A_1\times\dots \times A_N)=\mu(A_1\cap S_2^{-1}A_2\cap \dots \cap S_N^{-1}(A_N))
$$
(with an obvious modification for $N=\infty$) is an $N$-self-joining of $T$ called a \emph{graph self-joining} (or \emph{off-diagonal self-joining}). If $S_2=\dots=S_N=Id$, we speak about the \emph{diagonal self-joining} and we denote it by $\Delta_N$. For $N=2$ we often write $\Delta$ instead of $\Delta_2$. When confusion might arise, we denote the marginals of $\Delta$ in parenthesis: $\Delta(\mu)$.
\item
Each pair of factors $\cA_1,\cA_2\subset \cB$, together with $\lambda\in \cJ(T|_{\cA_1},T|_{\cA_2})$, yields a self-joining of $T$ defined by
$$
\widehat{\lambda}(A_1\times A_2)=\int_{X/{\cA_1}\times X/{\cA_2}} E(A_1|\cA_1)(\overline{x})E(A_2|\mathcal{A}_2)(\overline{y})\ d\lambda(\overline{x},\overline{y}).
$$
It is called a \emph{relatively independent extension of $\lambda$}. In particular, when $\cA_1=\cA_2=\cA$ and $\lambda=\Delta$ we say that $\widehat{\Delta}$ is a relatively independent extension over $\cA$. We denote it by $\mu\otimes_{\cA}\mu$.
\item
Let $\lambda\in \cJ(T_1,\dots, T_n)$. Then $\lambda|_{X_i,X_j}=\lambda|_{X_i\times X_j}$ defined by
$$
\nu|_{X_i\times X_j}(C\times D)=\nu(\underbrace{X\times\dots\times X}_{i-1}\times C \times \underbrace{X\times \dots\times X}_{j-i-1}\times D\times \underbrace{X\times\dots\times X}_{n-j})
$$
is an element of $\cJ(T_i,T_j)$. Moreover, if $\lambda\in \cJ^e(T_1,\dots,T_n)$ then $\lambda|_{X_i,X_j}\in \cJ^e(T_i,T_j)$.
\item 
Let $\lambda\in \cJ(T_1,T_2)$ and let $\cA_1,\cA_2$ be factors of $T_1,T_2$, respectively. Then $\lambda|_{\cA_1\otimes\cA_2}\in \cJ((T_1)|_{\cA_1},(T_2)|_{\cA_2})$.
\end{itemize}

\subsection{Factors and extensions II}\label{se:extensions}

Any ergodic extension $\widetilde{T}\to T$ is isomorphic to a \emph{skew product} over $T$, i.e. to an automorphism $\overline{T}$ on $(X\times Y,\cB\otimes\cC,\mu\otimes \nu)$ of the form
\begin{equation}\label{skosny}
\overline{T}(x,y)=(Tx,S_xy),
\end{equation}
where $(S_x)_{x\in X}$ is a measurable family of automorphisms of some standard probability space $(Y,\cC,\nu)$~\cite{MR0140660}. A particular case of skew products are \emph{Rokhlin extensions}, i.e. automorphisms of the form $T_{S,\varphi}(x,y)=(Tx,S_{\varphi(x)}y)$,
where $(S_g)_{g\in G}$ is a measurable representation of a locally compact abelian group $G$ in the group of automorphisms of $\ycn$ and $\varphi\colon X \to G$ is a measurable function.

\subsubsection{Relatively weakly mixing extensions}\label{relsekcja1}
Following~\cite{MR603625}, we say that $T\colon \xbm\to \xbm$ is \emph{relatively weakly mixing} with respect to factor $\cA\subset\cB$ 
if the relatively independent extension of the diagonal self-joining of $T|_\cA$ is ergodic. In terms of the skew product representation~\eqref{skosny}, the property of relative weak mixing is equivalent to ergodicity of automorphism $\overline{\overline{T}}$ acting on $ (X\times Y\times Y,\cB\otimes \cC\otimes \cC,\mu\otimes\nu\otimes\nu)$, given by the formula
$\overline{\overline{T}}(x,y_1,y_2)=(Tx,S_xy_1,S_xy_2)$.

\subsubsection{Relatively isometric and relatively distal extensions}\label{relsekcja}
Relative distality is a concept which is of ``opposite nature'' to relative weak mixing (cf. Theorem~\ref{fz}).
\paragraph{Compact group extensions}
Let $G$ be a compact metrizable group with the normalized Haar measure $\lambda_G$ and let $\varphi\colon X\to G$ be measurable. Then $T_\varphi\colon (x,g)\mapsto (Tx,\varphi(x)g)$ is an automorphism of $(X\times G,\cB\otimes\cB(G),\mu\otimes \lambda_G)$. It is called a (compact) \emph{group extension} of $T$. We have the following:
\begin{pr}[see e.g.\ \cite{MR922364}]\label{pr:veech1}
If $T\colon (X,\mathcal{B},\mu)\to (X,\mathcal{B},\mu)$ is an ergodic compact group extension of its factor $\cA$ then $J^e_2(T;\cA)$ consists of graphs joinings.
\end{pr}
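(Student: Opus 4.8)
The plan is to pass to the concrete skew-product model and reduce everything to an invariance statement about fibre measures. First I would fix the factor $R:=T|_{\cA}$ on $(Y,\cC,\nu)$ and identify $X\cong Y\times G$, $\mu\cong\nu\otimes\lambda_G$, with $T(y,g)=(Ry,\varphi(y)g)$ for a measurable cocycle $\varphi\colon Y\to G$; note that ergodicity of $T$ forces $R$ to be ergodic. Given any $\lambda\in\cJ^e_2(T;\cA)$, its projection onto $\cC\otimes\cC$ is by definition the diagonal self-joining of $R$, so $\lambda$ is concentrated over the diagonal $\{(y,y)\}\subset Y\times Y$. Disintegrating over this base I would obtain a measurable field $y\mapsto\lambda_y$ of probability measures on $G\times G$, each with both marginals equal to $\lambda_G$, and I would read $\lambda$ as a measure on $Y\times G\times G$ with $Y$-marginal $\nu$.

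The key step is to extract an invariant coordinate. Writing $\Phi_h(g_1,g_2)=(hg_1,hg_2)$ for simultaneous left translation, invariance of $\lambda$ under $T\times T$ becomes the cocycle identity $\lambda_{Ry}=(\Phi_{\varphi(y)})_\ast\lambda_y$. Since $\Phi_h$ cancels in $g_1^{-1}g_2$, i.e.\ $(hg_1)^{-1}(hg_2)=g_1^{-1}g_2$, the map $\Theta\colon(y,g_1,g_2)\mapsto(y,g_1^{-1}g_2)$ intertwines $T\times T$ with the skew-free transformation $R\times\mathrm{Id}_G$ on $Y\times G$. Thus $\Theta$ realizes $(Y\times G,\theta,R\times\mathrm{Id}_G)$, with $\theta:=\Theta_\ast\lambda$, as a factor of the ergodic system $(X\times X,\lambda,T\times T)$.

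Ergodicity then finishes the argument. As a factor of an ergodic system, $\theta$ is ergodic for $R\times\mathrm{Id}_G$; but the $G$-coordinate is invariant under $R\times\mathrm{Id}_G$, so it must be $\theta$-almost everywhere equal to a single $g_0\in G$, whence $\theta=\nu\otimes\delta_{g_0}$. Pulling this back gives $g_1^{-1}g_2=g_0$ for $\lambda$-almost every point, i.e.\ $\lambda$ is supported on the graph of the right translation $V_{g_0}\colon(y,g)\mapsto(y,gg_0)$. Since right translations commute with $T$ (the cocycle acting on the left), $V_{g_0}\in C(T)$, and as $\lambda$ projects to $\mu$ on the first coordinate I would conclude $\lambda=\mu_{V_{g_0}}$, a graph joining.

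The one point that genuinely needs care is the left/right bookkeeping in the non-abelian case: the cocycle must act on one side so that it is annihilated by the chosen invariant $g_1^{-1}g_2$, while the other side is exactly what produces a centralizer element $V_{g_0}$; getting these two roles to be compatible is what makes both the factor map $\Theta$ equivariant and the resulting graph legitimate. The remaining ingredients—measurability of the disintegration, matching of marginals, and the identification $\theta=\nu\otimes\delta_{g_0}$—are routine and I would not expect any difficulty there.
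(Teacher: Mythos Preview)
The paper does not give its own proof of this proposition; it is simply quoted from the literature with the reference \cite{MR922364}, so there is no in-paper argument to compare against. Your proof is correct and is essentially the standard one: pass to the skew-product model $Y\times G$, note that the relative diagonal condition confines $\lambda$ to $Y\times G\times G$ over the $Y$-diagonal, and exploit the fact that simultaneous left translation by the cocycle fixes $g_1^{-1}g_2$, so this quantity is a $T\times T$-invariant function and hence constant by ergodicity of $\lambda$. Your closing remark about the left/right bookkeeping is exactly the point that matters in the non-abelian case; you handled it correctly, since the paper's convention $T_\varphi(x,g)=(Tx,\varphi(x)g)$ has the cocycle on the left, right translations $\sigma_{g_0}$ commute with $T_\varphi$, and $g_1^{-1}g_2$ is the appropriate invariant. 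One minor aside: the claim that each fibre measure $\lambda_y$ has Haar marginals is true (it follows from disintegrating $\mu$ over $\cA$) but is not actually needed for the argument---all you use at the end is that the first $X$-marginal of $\lambda$ equals $\mu$.
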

\begin{pr}[$\Z$-actions:~\cite{MR1325699,MR685378}, actions of other groups:~\cite{MR922364}]\label{pr:veech2}
If $T\colon (X,\mathcal{B},\mu)\to (X,\mathcal{B},\mu)$ is ergodic, $\cA$ is a factor of $T$ and $J^e_2(T;\cA)$ consists of graphs joinings then $\cB\to\cA$ is a compact group extension.
\end{pr}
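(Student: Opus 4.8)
The plan is to test the extension $\cB\to\cA$ against its relatively independent self-joining over $\cA$ and to read a compact group off the resulting ergodic decomposition. Consider $\rho:=\mu\otimes_{\cA}\mu\in\cJ_2(T;\cA)$. Its associated Markov operator is the conditional expectation $E(\,\cdot\,|\cA)$, a self-adjoint idempotent element of the compact metrizable semigroup $\cJ_2(T)$. Decompose $\rho$ into ergodic components, $\rho=\int_\Omega \rho_\omega\,dP(\omega)$; each $\rho_\omega$ lies in $\cJ_2^e(T;\cA)$, so by hypothesis it is a graph joining $\mu_{S_\omega}$ for some $S_\omega\in C(T)$. Since $\rho_\omega$ projects onto $\cA\otimes\cA$ as the diagonal, each $S_\omega$ fixes $\cA$ pointwise, i.e.\ $S_\omega$ belongs to the relative centralizer $C_\cA(T):=\{S\in C(T)\colon S|_\cA=\mathrm{Id}\}$.

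The heart of the argument is to turn the measurable family $\{S_\omega\}$ into an honest compact group. First I would exploit the algebraic structure of $\rho$ inside $\cJ_2(T)$. Self-adjointness of $E(\,\cdot\,|\cA)$ means $\rho$ is invariant under the coordinate flip; since the flip sends $\mu_S$ to $\mu_{S^{-1}}$ (using $\mu(B\cap S^{-1}A)=\mu(A\cap SB)$ by $S$-invariance of $\mu$), the family $\{S_\omega\}$ is closed under inversion up to a $P$-null set. The idempotency $\rho\ast\rho=\rho$, together with the composition rule for graph joinings (the product of two graph joinings is again a graph joining of a composition of the corresponding automorphisms), forces closure under composition. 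Hence, after discarding a null set, $G:=\{S_\omega\}$ is a group. Equipping $G$ with the weak operator topology inherited from $\cJ_2(T)$, I would verify that $G$ is closed, hence a compact metrizable group acting on $(X,\mu)$ by automorphisms that commute with $T$ and fix $\cA$; moreover $P$ pushes forward to the Haar measure $\lambda_G$, so that $E(\,\cdot\,|\cA)=\int_G U_g\,d\lambda_G(g)$, where $U_g$ denotes the Koopman operator of $g$.

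It remains to see that $G$ realizes $\cB\to\cA$ literally as a group extension. Disintegrating $\rho$ over $\cA$ yields $\mu_y\otimes\mu_y$ on the fibre $\pi^{-1}(y)$ for $\mu$-a.e.\ $y$, where $\mu_y$ is the conditional measure; since this product is supported on $\bigcup_\omega\{(z,S_\omega z)\}$, for a.e.\ $z$ the orbit $\{S_\omega z\colon\omega\in\Omega\}$ is $\mu_y$-conull, so $G$ acts transitively on almost every fibre. Mutual singularity of the distinct ergodic components $\mu_{S_\omega}$ makes their graphs essentially disjoint, whence $g\mapsto gz$ is injective for a.e.\ $z$; the action is thus simply transitive on fibres. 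Consequently each fibre is a principal homogeneous space for $G$ with Haar conditional measure, and the standard Mackey--Veech identification realizes $T$ as the compact group extension $T_\varphi$ with fibre group $G$, as claimed.

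The step I expect to be the main obstacle is the middle one: passing rigorously from the measurable field $\omega\mapsto S_\omega$ to a genuine compact topological group. Concretely, one must (i) choose a measurable version of $\omega\mapsto S_\omega$ with values in $C_\cA(T)$ in the weak topology, (ii) upgrade the ``closed under composition and inversion almost everywhere'' statements to an honest subgroup whose closure in $\cJ_2(T)$ is compact and still consists of graph joinings, and (iii) match the pushforward of $P$ with $\lambda_G$. Points (ii)--(iii) are exactly where the self-adjoint idempotent structure of $E(\,\cdot\,|\cA)$ must be used carefully; once $G$ is in hand, the transitivity/freeness analysis and the final identification with $T_\varphi$ are routine.
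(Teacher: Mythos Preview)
The paper does not supply its own proof of this proposition: it is quoted as a known result, with references to Veech~\cite{MR685378}, Thouvenot~\cite{MR1325699}, and del Junco--Rudolph~\cite{MR922364}. So there is no in-paper argument to compare against; your proposal should be measured against the classical proof in those references.

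Your outline is precisely the Veech strategy: decompose $\mu\otimes_\cA\mu$ into ergodic pieces, use the hypothesis to write each as $\mu_{S_\omega}$ with $S_\omega\in C_\cA(T)$, and exploit that $E(\,\cdot\,|\cA)$ is a self-adjoint idempotent in the Markov semigroup $\cJ_2(T)$ to force the pushforward of $P$ to be a convolution-idempotent probability on $C_\cA(T)$, hence Haar measure on a compact subgroup $G$. The final fibrewise transitivity/freeness step is standard. You have also put your finger on the one genuinely delicate point: upgrading ``almost-everywhere closed under product and inverse'' to an honest compact group. The clean way through is to push $P$ forward to a measure on $C(T)$ (Polish in the weak topology), observe it is convolution-idempotent and symmetric, and invoke the classical fact that any idempotent probability on a Polish group is Haar on a compact subgroup; alternatively one works inside the compact semigroup $\cJ_2(T)$ and checks that the support of the idempotent, being contained in the extreme points (graph joinings), must already be a compact subgroup of the group of invertibles. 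Either route closes the gap you flagged, and the rest of your argument goes through.
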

\begin{lm}[\cite{MR1091425}]
Let $T\colon (X,\mathcal{B},\mu)\to (X,\mathcal{B},\mu)$ be a (not necessarily ergodic) compact group extension of an ergodic factor $\cA$. Then every ergodic component of $T$ is also isomorphic to some compact group extension of $\cA$.
\end{lm}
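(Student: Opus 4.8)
The plan is to exploit the right translation action of the structure group, which commutes with $T$ and therefore permutes the ergodic components of $\mu$.

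First I would fix skew-product coordinates. Denote by $(Z,\cA,m)$ the base of the extension and by $R=T|_\cA$ the (ergodic) factor automorphism; by the definition of a compact group extension we may assume $X=Z\times G$, $\mu=m\otimes\lambda_G$ and $T(z,g)=(Rz,\varphi(z)g)$ for a compact metrizable group $G$ and a measurable cocycle $\varphi\colon Z\to G$. The right translations $\rho_a(z,g)=(z,ga)$, $a\in G$, form a measure-preserving right action of $G$ on $(X,\mu)$ commuting with $T$. Consequently each $\rho_a$ maps ergodic components of $\mu$ to ergodic components, which yields a measure-preserving action of $G$ on the space $(\Omega,P)$ of ergodic components (equivalently, on the invariant $\sigma$-algebra $\mathcal I$ of $T$). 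Since $R$ is ergodic and $m$ is thus an extreme point of $\cP_\Z(R)$, every (a.e.) ergodic component of $\mu$ projects onto $m$ on the base.

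Next I would show that this induced $G$-action on $\Omega$ is ergodic, hence transitive. A set $I\in\mathcal I$ fixed by all $\rho_a$ is invariant under every right translation of the $G$-coordinate; as the right-translation action of $G$ on $(G,\lambda_G)$ is ergodic, its $G$-variable slices are constant, so $I=B\times G$ for some measurable $B\subseteq Z$, and $T$-invariance of $I$ forces $B$ to be $R$-invariant, hence null or conull. Thus the only $\rho_G$-invariant members of $\mathcal I$ are trivial, so $G$ acts ergodically on $(\Omega,P)$. Because $G$ is compact, an ergodic measure-preserving $G$-action is transitive: $\Omega$ is (measurably) isomorphic to a homogeneous space $H\backslash G$ equipped with Haar measure, where $H\le G$ is the closed stabilizer of a fixed component.

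Finally I would convert this into the desired structural statement. The assignment $q\colon X\to\Omega\cong H\backslash G$ of a point to its ergodic component is a $T$-invariant, right-$G$-equivariant factor map, so $q(z,g)=H\beta(z)g$ for a measurable section $\beta\colon Z\to G$; $T$-invariance of $q$ gives $\beta(Rz)\varphi(z)\beta(z)^{-1}\in H$ for a.e.\ $z$. Replacing the coordinates by the $\cA$-isomorphism $(z,g)\mapsto(z,\beta(z)g)$, I may assume $\varphi(z)\in H$ for a.e.\ $z$. Then every set $Z\times Hg_0$ (with $Hg_0$ a right coset of $H$) is $T$-invariant, these sets partition $X$, and the map $(z,hg_0)\mapsto(z,h)$ identifies $T|_{Z\times Hg_0}$ with the $H$-extension $(z,h)\mapsto(Rz,\varphi(z)h)$ of $R$; disintegrating Haar measure on $G$ over $H\backslash G$ shows these are exactly the ergodic components. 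Each component is thus isomorphic, over $\cA$, to a compact group extension of $\cA$ by $H$, as required. The main obstacle is the step asserting that an ergodic measure-preserving action of the compact group $G$ on the abstract component space is transitive (equivalently, that $\varphi$ reduces to an ergodic $H$-valued cocycle): this requires realizing the abstract action on a standard model and using that compact-group orbits are closed, and it is precisely where the compactness of $G$ is essential.
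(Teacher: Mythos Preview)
The paper does not prove this lemma; it merely cites \cite{MR1091425}. Your argument is the standard Mackey--Zimmer reduction and is correct in outline: the right $G$-action commutes with $T$ and hence acts on the space of ergodic components, ergodicity of the base forces this action to be ergodic, compactness of $G$ upgrades ergodicity to transitivity, and the resulting identification $\Omega\cong H\backslash G$ yields a measurable transfer function $\beta$ that conjugates $\varphi$ to an $H$-valued cocycle.

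Two points deserve a sentence of care if you write this out fully. First, the measurability of $\beta$: for fixed $z$ the map $g\mapsto q(z,g)$ is right-equivariant, hence of the form $g\mapsto H\beta(z)g$, but you need a measurable choice of $\beta(z)$; this follows from a measurable selection theorem (choose a Borel section $s\colon H\backslash G\to G$ and set $\beta(z)=s(q(z,e))^{-1}$, or similar). Second, the step you flag yourself---that an ergodic measure-preserving action of a compact group on a standard probability space is essentially transitive---is indeed the crux; it is a classical fact (orbits of a compact-group Borel action are closed, so the orbit space is standard Borel, and ergodicity forces the quotient measure to be a Dirac mass), but it is worth citing explicitly rather than leaving as an assertion.
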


\paragraph{Isometric extensions}
Let $G$ be a compact group with a closed subgroup $H$ and let $\varphi\colon X\to G$ be measurable. The action of $T_\varphi$ restricted to $X\times G/H$ is called an~\emph{isometric extension} of $T$.

In~\cite{MR0409770} Zimmer introduced the notion of relatively discrete spectrum. It is based on the classical notion of discrete spectrum of a unitary representation. As we will not need the precise definition, let us remark here only that
\begin{multline}\label{szesc}
\mbox{an ergodic extension has relatively discrete spectrum}\\
\mbox{if and only if it is isometric.}
\end{multline}

\paragraph{Distal extensions}
We say that the extension $\widetilde{T}\colon (\widetilde{X},\widetilde{\cB},\widetilde{\mu})\to (\widetilde{X},\widetilde{\cB},\widetilde{\mu})$ of $T\colon \xbm\to\xbm$ is \emph{relatively distal}~\cite{MR0414832} if there exists a transfinite sequence $(\cB_\alpha)_{\alpha\leq \beta}$ of factors of $\widetilde{T}$ such that $\cB_0=\cB$, $\cB=\widetilde{\cB}$, the extensions $\cB_{\alpha+1}\to \cB_\alpha$ are isometric and, for $\alpha$ being limit ordinals, $\cB_\alpha$ is the inverse limit of the preceding factors.

\subsection{Joinings II}
\subsubsection{Self-joining properties}\label{selfj2}
Let $T\colon \xbm\to\xbm$ be an ergodic automorphism. It is said that $T$
\begin{itemize}
\item
has the \emph{pairwise independence property} (PID) if every pairwise independent self-joining of $T$ is equal to the product measure;
\item is \emph{2-fold simple} if $\mathcal{J}_2^e(T)\subset \{\mu_S : S\in C(T)\}\cup \{\mu\otimes \mu\}$; is \emph{simple} if it is 2-fold simple and PID;
\item has \emph{minimal self-joinings} (MSJ) if it is \emph{2-fold simple} and $C(T)=\{T^k\colon k\in\Z\}$;
\item is \emph{2-fold quasi-simple} (2-QS)  if $\mathcal{J}^e(T)\setminus\{\mu\otimes\mu\}$ consists of isometric extensions of each of the coordinate factors; is \emph{quasi-simple} if it is 2-QS and PID;
\item is \emph{``n:1''} for some $n\in\N$ if $\mathcal{J}^e(T)\setminus\{\mu\otimes\mu\}$ consists of ``$n:1$''-extensions of each of the coordinate factors (it is a special case of 2-QS);
\item is \emph{2-fold distally-simple} (2-DS) if $\mathcal{J}^e(T)\setminus\{\mu\otimes\mu\}$ consists of distal extensions of each of the coordinate factors; is \emph{distally simple} if is it 2-DS and PID.
\end{itemize}
We say that $T$ has \emph{joining primeness property} (JP) if for every pair of weakly mixing systems $S_1,S_2$ and every $\lambda\in \mathcal{J}^e(T,S_1\times S_2)$ we have
$\lambda=\lambda|_{X,Y_1}\otimes \nu_2\text{ or }\lambda=\lambda|_{X,Y_2}\otimes \nu_1$.
Notice that assuming $S_1$ and $S_2$ are isomorphic would give an equivalent definition. We will show later that JP is, in fact, an intrinsic notion and we will provide an equivalent definition in terms of self-joinings.

Finally, we recall the \emph{Ratner's property} (R-property). It is a property which can be enjoyed by flows and it has a different flavor than all the properties mentioned above. Since the definition is very technical and we will not use it explicitly, we will not quote it here. Instead, we refer the reader to~\cite{MR2753947,MR717825,MR832433,MR2237466} for the details. Note that the definition of R-property itself does not refer to joinings at all, however it imposes restrictions on the joinings of the flow under consideration. The following proposition is a little bit imprecise, but it will be sufficient for our needs.
\begin{pr}[\cite{MR2753947}]\label{rat}
Let $\cT=(T_t)_{t\in\R}$ be a weakly mixing flow on $\xbm$ which has the R-property. Let $\cS=(S_t)_{t\in\R}$ be an ergodic flow on $\ycn$ and let $\lambda\in \cJ^e(\cT,\cS)$. Then either $\lambda=\mu\otimes \nu$ or $\lambda$ is a finite extension of $\nu$.
\end{pr}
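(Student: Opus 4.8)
The plan is to assume $\lambda\neq\mu\otimes\nu$ and to show that the factor map $\pi\colon(X\times Y,\lambda)\to(Y,\nu)$, $(x,y)\mapsto y$, is finite-to-one with uniformly bounded fibers, which is precisely the assertion that $\lambda$ is a finite extension of $\nu$. The right object to analyze is the relatively independent self-joining of $(X\times Y,\lambda)$ over its common factor $(Y,\nu)$: this is a measure $\widehat{\lambda}$ on $X\times X\times Y$, invariant under $T_t\times T_t\times S_t$, under which the two $X$-coordinates are conditionally independent given $y$ and each project to $\lambda$. The extension $\pi$ is finite-to-one exactly when, under $\widehat{\lambda}$, the second $X$-coordinate is a finite-valued function of the first (and of $y$), so it suffices to control the diagonal structure of $\widehat{\lambda}$. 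The only global hypotheses I will use are ergodicity of the product flow $R_t=T_t\times S_t$ (immediate from $\lambda\in\cJ^e(\cT,\cS)$) and the R-property together with weak mixing of $\cT$; in particular $\cS$ enters only through ergodicity.

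The heart of the matter is the R-property of $\cT$, applied to the two $X$-coordinates, which under $T_t\times T_t\times S_t$ both evolve by $\cT$. In its usable form the R-property furnishes a compact (in the relevant cases, finite) set $P\subset\R\setminus\{0\}$ with the following shadowing property: for most pairs of distinct nearby points $x_1,x_2$ of $X$, there is a set of times $t$ of density close to $1$ in long intervals along which $d(T_tx_1,T_{t+p}x_2)$ is small for some $p\in P$. For every $\delta>0$ the event $\{d(x_1,x_2)<\delta\}$ has positive $\widehat{\lambda}$-measure, so one may localize to it and feed these nearby pairs into the shadowing property. This produces a dichotomy for $\widehat{\lambda}$-typical triples $(x_1,x_2,y)$: either the $\cT$-orbits of $x_1$ and $x_2$ remain close up to a fixed time-shift lying in $P$, or the two $X$-coordinates drift apart (the independent alternative).

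The technical core, and the step I expect to be the main obstacle, is to promote the approximate, density-$1$ shadowing ``up to $\vep$'' into an exact measure-theoretic identity $x_2=T_px_1$ with $p\in P$. This is the weak-limit argument at the origin of Ratner's property: one passes to weak-$*$ limits of the averages $\frac1N\int_0^N\widehat{\lambda}\circ(T_t\times T_t\times S_t)\,dt$, suitably reindexed by the bounded shift along $P$, and identifies every limit point as a combination of the relative off-diagonals $(\mathrm{Id}\times T_p)_*\Delta$, $p\in P$, over $Y$, together with $\lambda\otimes_\nu\lambda$. The finiteness of $P$ is what keeps the limiting measure from spreading out, while the weak mixing of $\cT$ is what excludes intermediate ``partial product'' limits; controlling the escape of mass into the independent alternative and excluding the degenerate limit is the delicate point, and it is here that the precise combinatorial form of the R-property from~\cite{MR2753947} must be used.

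With this in hand the conclusion assembles quickly. The exact relation $x_2=T_px_1$ with $p\in P$ endows each fiber $\pi^{-1}(y)$ with an orbit structure indexed by the finite set $P$, so the conditional measure of $\lambda$ over $y$ is purely atomic with boundedly many atoms; ergodicity of $\lambda$ under $R_t$ forces this number of atoms to be $\lambda$-a.e.\ constant and finite, giving the uniform fiber bound, i.e.\ $\lambda$ is a finite extension of $\nu$. In the complementary branch, if the independent alternative held on a set of positive measure, then ergodicity of $R_t$ together with the weak mixing of $\cT$ would propagate it to force $\widehat{\lambda}=\lambda\otimes_\nu\lambda=\mu\otimes\mu\otimes\nu$, that is $\lambda=\mu\otimes\nu$, contradicting the standing assumption. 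This exhausts the two cases and yields exactly the stated dichotomy.
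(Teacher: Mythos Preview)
The paper does not prove this proposition at all: it is quoted verbatim from~\cite{MR2753947} and used as a black box (the paper explicitly says the definition of the R-property is too technical to reproduce and refers the reader to the literature). So there is nothing to compare your argument against in this paper; you are sketching the proof that lives in the cited reference, not one the author supplies.

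That said, your outline is a faithful high-level description of the Ratner-type argument as it appears in~\cite{MR717825,MR2753947}: form the relatively independent self-joining over $Y$, use the R-property on the two $X$-coordinates to force off-diagonal concentration on finitely many time-shifts from $P$, and read off finite fibers. Two cautions if you intend this as more than a roadmap. First, the passage from ``density-$1$ shadowing up to $\vep$'' to the exact identity is not done by averaging $\widehat{\lambda}$ itself (which is already invariant) but by taking weak limits of empirical measures along generic orbits, or equivalently by the ``off-diagonal'' argument on ergodic components; your phrasing ``weak-$*$ limits of the averages $\frac{1}{N}\int_0^N\widehat{\lambda}\circ(T_t\times T_t\times S_t)\,dt$'' is literally just $\widehat{\lambda}$ again. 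Second, the dichotomy is not that $\widehat{\lambda}$ is either purely off-diagonal or equals $\mu\otimes\mu\otimes\nu$; rather, one shows that every ergodic component of $\widehat{\lambda}$ (for $T_t\times T_t\times S_t$) other than $\mu\otimes\lambda$ is a finite extension of $\lambda$, and then deduces finiteness of the fibers of $\lambda$ over $\nu$ from that. These are genuine gaps if the sketch is meant to stand alone, but they are exactly the points handled in the references the paper cites.
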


\subsection{Distal and isometric extensions -- tools}\label{relsekcja2}
\subsubsection{T-compactness}
Let $\cA$ be a factor of $T$. Following~\cite{MR1930996}, we say that the extension $\cB\to \cA$ is \emph{$T$-compact} if for any $\vep>0$ there exists $N\geq 2$ such that for each $\nu\in J^e_N(T;\cA)$ we can find $1\leq i<j\leq N$ such that $\overline{d}(\nu|_{X_i\times X_j},\Delta_N|_{X_i\times X_j})<\vep$.\footnote{See the definitions in Section~\ref{se:j1}.}

\begin{pr}[\cite{MR1930996}]\label{Tzwarte}
Assume that $T$ is an ergodic automorphism on $(X,\mathcal{B},\mu)$ and let $\mathcal{A}\subset\mathcal{B}$ be its factor. Then the following are equivalent: 
\begin{itemize}
\item[(i)]
$\mathcal{B}\to\mathcal{A}$ has relatively discrete spectrum,
\item[(ii)]
$\mathcal{B}\to\mathcal{A}$ is isometric,
\item[(iii)]
$\mathcal{B}\to\mathcal{A}$ is $T$-compact.
\end{itemize}
\end{pr}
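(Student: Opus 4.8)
The plan is to prove the cycle by noting that (i)$\Leftrightarrow$(ii) is already recorded as~\eqref{szesc}, so it suffices to establish (ii)$\Leftrightarrow$(iii); this reduces everything to comparing the geometric notion (isometric) with the combinatorial one ($T$-compactness). Throughout I would realize $\cA$ as a system $(X_0,\cA,\mu_0,T_0)$ and keep in mind that $\overline{d}(\nu|_{X_i\times X_j},\Delta_N|_{X_i\times X_j})$ is small exactly when the coordinates $i,j$ of $\nu$ are concentrated near the diagonal.

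For (ii)$\Rightarrow$(iii) I would write the isometric extension as $X_0\times G/H$ with $T(x,y)=(T_0x,\varphi(x)y)$, where $G/H$ carries a $G$-invariant metric $d$. Given $\nu\in J^e_N(T;\cA)$, projecting diagonally onto $\cA^{\otimes N}$ forces $\nu$ to live on $X_0\times(G/H)^N$ with the diagonal fibre action $(x,y_1,\dots,y_N)\mapsto(T_0x,\varphi(x)y_1,\dots,\varphi(x)y_N)$. Since $d$ is $G$-invariant, each function $(x,y_1,\dots,y_N)\mapsto d(y_i,y_j)$ is $T$-invariant, hence $\nu$-a.e.\ equal to a constant $c_{ij}$ by ergodicity. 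Now I invoke total boundedness of $G/H$: if $N$ exceeds the number of $\varepsilon$-balls covering $G/H$, a pigeonhole on the a.e.\ constant pairwise distances produces a \emph{fixed} pair with $c_{ij}<\varepsilon$, and for that pair $\nu|_{X_i\times X_j}$ is supported within an $\varepsilon$-neighbourhood of the diagonal, so its $\overline{d}$-distance to $\Delta_N|_{X_i\times X_j}$ is controlled by $\varepsilon$. Choosing the covering scale according to the target gives exactly $T$-compactness.

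For (iii)$\Rightarrow$(ii) I would argue by contraposition: assuming $\cB\to\cA$ is not isometric, I produce for every $N$ an ergodic $\nu\in J^e_N(T;\cA)$ all of whose pairwise restrictions stay a fixed distance $\varepsilon_0>0$ from the diagonal, violating $T$-compactness. By Furstenberg--Zimmer (Theorem~\ref{fz}) there is $\cA\subset\cC\subset\cB$ with $\cB\to\cC$ relatively weakly mixing and $\cC\to\cA$ relatively distal, and two things can fail. If $\cC\subsetneq\cB$, I take the $N$-fold relatively independent self-joining $\mu\otimes_{\cC}\cdots\otimes_{\cC}\mu$ over $\cC$ (hence over $\cA$): relative weak mixing makes all its finite relatively independent powers ergodic, while every pairwise restriction equals $\mu\otimes_{\cC}\mu$, whose $\overline{d}$-distance to $\Delta$ is a positive constant independent of $N$ because $\cC\neq\cB$. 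If instead $\cC=\cB$, the extension is relatively distal but, not being isometric, its distal tower has height at least $2$, so there is a proper intermediate isometric factor $\cA\subset\mathcal{D}\subsetneq\cB$. Here I would join the $\mathcal{D}$-coordinates by $N$ distinct off-diagonals (available because ergodic self-joinings of the compact group extension $\mathcal{D}\to\cA$ over $\cA$ are graphs, Proposition~\ref{pr:veech1}) and take the relatively independent joining of the top coordinates over this $\mathcal{D}$-joining; the distinct displacements make the cocycle governing $\cB\to\mathcal{D}$ differ between coordinates by nontrivial cocycles, and this drift renders the top coordinates of each pair relatively independent, keeping every $\nu|_{X_i\times X_j}$ uniformly far from $\Delta$ for all $N$.

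The main obstacle is this last case. Distal-but-not-isometric extensions have \emph{trivial} relatively weakly mixing part, so Furstenberg--Zimmer alone does not exclude them, and one is forced to exhibit the ``drifting'' self-joinings by hand. The technical heart is verifying that the coordinatewise differences of the defining cocycle are genuinely nontrivial — equivalently, that the constructed joining is ergodic with asymptotically independent top coordinates — and it is precisely there that non-isometricity of the full extension over $\cA$, as opposed to mere relative distality, is used.
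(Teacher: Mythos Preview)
The paper does not prove Proposition~\ref{Tzwarte}; it is quoted from Danilenko~\cite{MR1930996} as a black box, so there is no ``paper's own proof'' to compare against. That said, your outline deserves comment on its own merits.

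Your argument for (ii)$\Rightarrow$(iii) is correct: the pairwise fibre distances are invariant, hence constant by ergodicity, and pigeonhole on a compact homogeneous space finishes it. Your Case~1 of (iii)$\Rightarrow$(ii), using the relatively weakly mixing part from Furstenberg--Zimmer, is also correct and is the standard move.

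The gap is Case~2, and it is a real one (as you yourself flag). When $\cB\to\cA$ is distal but not isometric, your plan is to choose distinct off-diagonals on the maximal isometric intermediate factor $\mathcal{D}$ and take the relatively independent lift to $\cB$. Two things are unjustified. First, since $\cB\to\mathcal{D}$ is again distal (in particular not relatively weakly mixing), the relatively independent extension over your $\mathcal{D}$-joining need \emph{not} be ergodic; passing to an ergodic component, you lose control of its shape. Second, the claim that the cocycle differences are ``nontrivial'' and force the top coordinates to be relatively independent is exactly the content of non-isometricity that you are trying to exploit, but you have not shown how to extract it: for a general isometric step $\cB\to\mathcal{D}$ the ergodic components of your lift can themselves be graph joinings, and nothing prevents the pairwise marginals from clustering near $\Delta$ as $N$ grows.

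Danilenko's proof in~\cite{MR1930996} avoids this case split. The direction (iii)$\Rightarrow$(i) is obtained by showing directly that $T$-compactness forces every $f\in L^2(X,\mu)$ to lie in the closed span of finitely generated $T$-invariant $L^\infty(\cA)$-modules (the defining property of relatively discrete spectrum). The point is that $\cJ_2^e(T;\cA)$ acts on $L^2(X,\mu)$ by Markov operators, and $T$-compactness translates into conditional precompactness of orbits under this action; from there one builds the required modules. This route never needs to confront the distal-but-not-isometric case separately, which is precisely the case your contrapositive strategy cannot close.
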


\section{Self-joining properties of $\cT$ and $T_1$ -- part I}

\subsection{Simplicity}

Our starting point is a result of del Junco and Rudolph. We rephrase it slightly and also include the proof, using language compatible with the one used later on.

\begin{pr}[\cite{MR922364}]\label{pr:simple}
Let $\cT=(T_t)_{t\in \R}$ be a weakly mixing flow. The following are equivalent:
\begin{enumerate}[(i)]
\item $\cT$ is 2-fold simple,
\item $T_1$ is 2-fold simple and $C(\cT)=C(T_1)$.
\end{enumerate}
\end{pr}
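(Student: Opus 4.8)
The plan is to prove both implications by shuttling between the self-joinings of $\cT$ and of $T_1$ through the map $\eta\mapsto\gr{\eta}=\int_0^1\eta\circ(T_t\times T_t)\,dt$ of Remark~\ref{ro:jo}, which carries $\cJ_2^e(T_1)$ \emph{onto} $\cJ_2^e(\cT)$. The key observation I would exploit is that, since $\cT$ is weakly mixing, so is $T_1$, and hence $T_1\times T_1$ is ergodic; consequently both the product measure $\mu\otimes\mu$ and every graph joining $\mu_S$ with $S\in C(T_1)$ are already ergodic for $T_1\times T_1$. By Lemma~\ref{lm:dec}, the formula \eqref{dec} is the ergodic decomposition of $\gr{\eta}$ for $T_1\times T_1$; so whenever $\gr{\eta}$ is itself $T_1\times T_1$-ergodic, this decomposition is trivial and the integral over the flow direction collapses. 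Uniqueness of the ergodic decomposition then forces $\eta\circ(T_t\times T_t)=\gr{\eta}$ for a.e.\ $t$, and Remark~\ref{uw:ciaglosc} (applied to the product flow) upgrades this to all $t$, in particular to $t=0$, recovering $\eta$ from $\gr{\eta}$.

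For (i)$\Rightarrow$(ii), fix $\eta\in\cJ_2^e(T_1)$. Then $\gr{\eta}\in\cJ_2^e(\cT)$, and 2-fold simplicity of $\cT$ gives $\gr{\eta}=\mu\otimes\mu$ or $\gr{\eta}=\mu_R$ with $R\in C(\cT)$. In either case $\gr{\eta}$ is $T_1\times T_1$-ergodic, so by the collapse above $\eta\circ(T_t\times T_t)$ is constant in $t$; taking $t=0$ yields $\eta=\mu\otimes\mu$ or $\eta=\mu_R$ with $R\in C(\cT)\subseteq C(T_1)$. Hence $T_1$ is 2-fold simple.

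The main obstacle is the equality $C(\cT)=C(T_1)$; the inclusion $C(\cT)\subseteq C(T_1)$ is immediate, so the content is the reverse. Given $S\in C(T_1)$, I would apply the machinery to the ergodic self-joining $\mu_S$ of $T_1$ and compute $\gr{\mu_S}=\int_0^1\mu_{S_t}\,dt$, where $S_t:=T_{-t}S T_t$ again lies in $C(T_1)$ (so each $\mu_{S_t}$ is $T_1\times T_1$-ergodic and $\neq\mu\otimes\mu$). By 2-fold simplicity of $\cT$, the case $\gr{\mu_S}=\mu\otimes\mu$ is excluded: uniqueness of the ergodic decomposition would force the graph joinings $\mu_{S_t}$ to equal the product measure, which they cannot since they are concentrated on a $\mu\otimes\mu$-null graph. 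Thus $\gr{\mu_S}=\mu_R$ for some $R\in C(\cT)$, and the collapse argument gives $\mu_{S_t}=\mu_R$, i.e.\ $S_t=R$, for every $t$; in particular $T_{-t}ST_t=S$ for all $t$, so $S\in C(\cT)$. This is the delicate step, where a joining equality $\mu_{S_t}\equiv\mu_R$ must be translated back into the statement that $S$ commutes with the \emph{entire} flow.

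For (ii)$\Rightarrow$(i), fix $\lambda\in\cJ_2^e(\cT)$ and use surjectivity to write $\lambda=\gr{\eta}$ with $\eta\in\cJ_2^e(T_1)$. Since $T_1$ is 2-fold simple, $\eta=\mu\otimes\mu$ or $\eta=\mu_S$ with $S\in C(T_1)=C(\cT)$. In the first case $\gr{\eta}=\mu\otimes\mu$; in the second, $S\in C(\cT)$ means $\mu_S$ is $\cT\times\cT$-invariant, whence $\mu_S\circ(T_t\times T_t)=\mu_S$ and $\gr{\eta}=\mu_S$. Either way $\lambda$ is the product measure or a graph joining of an element of $C(\cT)$, so $\cT$ is 2-fold simple.
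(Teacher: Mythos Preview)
Your proof is correct and follows essentially the same route as the paper's: both arguments hinge on the surjection $\eta\mapsto\gr{\eta}$ and the observation that, under 2-fold simplicity of $\cT$, every $\gr{\eta}$ is already $T_1\times T_1$-ergodic, forcing the decomposition~\eqref{dec} to collapse to $\eta=\gr{\eta}$. One minor remark: your separate paragraph for $C(\cT)=C(T_1)$ is redundant, since applying your first step to $\eta=\mu_S$ with $S\in C(T_1)$ already yields $\mu_S=\mu_R$ for some $R\in C(\cT)$, hence $S\in C(\cT)$; the paper accordingly packages both conclusions into the single equality $\cJ_2^e(\cT)=\cJ_2^e(T_1)$.
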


\paragraph{Remarks}
\begin{enumerate}
\item
In particular, $C(\cT)=C(T_1)$ whenever $C(T_1)$ is abelian.
\item
An analogous result to Proposition~\ref{pr:simple} holds for automorphisms and their powers.
\item
A flow $\cT$ which is not weakly mixing and which has an ergodic time $T_t$ which is 2-fold simple, is also 2-fold simple. Indeed, as $\cT$ is not weakly mixing, $T_t$ is also not weakly mixing, whence it has purely discrete spectrum~\cite{MR922364}. Therefore also $\cT$ has purely discrete spectrum and in particular, it is 2-fold simple.

\end{enumerate}

\begin{proof}[Proof of Proposition~\ref{pr:simple}]
Suppose that $\cT$ is 2-fold simple. It suffices to show that $\cJ^e_2(\cT)=\cJ^e_2(T_1)$. We have $\cJ^e_2(\cT)\subset \cJ^e_2(T_1)$. Suppose that we can find $\eta\in \cJ^e_2(T_1)\setminus\cJ^e_2(\cT)$. Define
\begin{equation}\label{eta}
\gr{\eta}=\int_0^1 \eta\circ (T_t\times T_t)\ dt.
\end{equation}
By Remark~\ref{ro:jo}, we have $\gr{\eta}\in\cJ^e_2(\cT)\subset \cJ^e_2(T_1)$. It follows by Lemma~\ref{lm:rozkladdokl} and by the uniqueness of ergodic decomposition that~\eqref{eta} takes the form $\gr\eta=\eta$, which yields a contradiction with the choice of $\eta$.

The other implication follows easily: we obtain $\cJ^e_2(T_1)\subset \cJ^e_2(\cT)$, whence $\cJ_2(T_1)\subset \cJ_2(\cT)$. $\cJ_2(\cT)\subset \cJ(T_1)$ is obvious.
\end{proof}

\begin{uw}\label{uw:delru}
Let $T$ and $S$ be 2-fold simple, such that $T^k=S^l$ for some $k,l\in\Z\setminus\{0\}$. It follows from Proposition~\ref{pr:simple} that $T^k=S^l$ is 2-fold simple and $C(T)=C(T^k)=C(S^l)=C(S)$. In particular, $\cJ^e_2(T)=\cJ^e_2(S)$. 
\end{uw}

For a measure-preserving flow $\cT=(T_t)_{t\in\R}$ let
$$
I_s(\cT)=\{t\in\R \colon T_t\text{ is 2-fold simple}\}\cup\{0\}.
$$

\begin{lm}\label{denseset1}
Let $\cT=(T_t)_{t\in\R}$ be a weakly mixing flow. Suppose that $T_{\alpha}$ and $T_\beta$ are 2-fold simple for some $\alpha,\beta\neq0$ such that $\alpha/\beta\not\in\Q$. Then $\cT$ is 2-fold simple. In particular, $I_s(\cT)=\R$.
\end{lm}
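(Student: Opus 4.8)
The plan is to reduce the whole statement, via Proposition~\ref{pr:simple}, to an inclusion of centralizers, and then to run the ergodic-decomposition machinery of Lemmas~\ref{lm:dec} and~\ref{lm:rozkladdokl} \emph{twice} — once along $T_\alpha$, once along $T_\beta$ — using $2$-fold simplicity of $T_\beta$ together with the irrationality of $\alpha/\beta$ to exclude the non-degenerate case. For the reduction, apply Proposition~\ref{pr:simple} to the flow $(T_{\alpha t})_{t\in\R}$, which is again weakly mixing and has time-one map $T_\alpha$. Since this flow has the same invariant self-joinings and the same centralizer as $\cT$, 2-fold simplicity of $\cT$ is equivalent to $T_\alpha$ being 2-fold simple (true by hypothesis) together with $C(\cT)=C(T_\alpha)$. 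As $C(\cT)\subseteq C(T_\alpha)$ is automatic, it suffices to prove $C(T_\alpha)\subseteq C(\cT)$.

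So fix $R\in C(T_\alpha)$. Since $T_\alpha$ is weakly mixing (the time-$t$ map of a weakly mixing flow is weakly mixing) and $R\in C(T_\alpha)$, the graph joining $\mu_R$ is $(T_\alpha\times T_\alpha)$-ergodic; hence by Lemma~\ref{lm:dec} applied to the flow $(T_{\alpha t}\times T_{\alpha t})_t$ (cf.\ Remark~\ref{ro:jo}) the measure $\Lambda:=\int_0^1\mu_R\circ(T_{\alpha u}\times T_{\alpha u})\,du$ lies in $\cJ_2^e(\cT)$, the displayed integral being its $(T_\alpha\times T_\alpha)$-ergodic decomposition. I now read $\Lambda$ off in two ways. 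Under $T_\alpha\times T_\alpha$, Lemma~\ref{lm:rozkladdokl} shows the stabilizer $\{s\in\R:\mu_R\circ(T_s\times T_s)=\mu_R\}=\{s:R\in C(T_s)\}$ is a closed subgroup containing $\alpha$: it is either all of $\R$ — in which case $R\in C(\cT)$ and we are done — or discrete, equal to $\tfrac{\alpha}{k}\Z$ for some $k\ge 1$. Under $T_\beta\times T_\beta$, which also preserves $\Lambda$, Lemma~\ref{lm:dec} produces an ergodic component $\zeta\in\cJ_2^e(T_\beta)$; by 2-fold simplicity of $T_\beta$ it is either $\mu\otimes\mu$ or a graph $\mu_V$ with $V\in C(T_\beta)$. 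In the non-degenerate case the first option is impossible (the product is $(T_\alpha\times T_\alpha)$-ergodic, contradicting the nontrivial $\alpha$-decomposition), and the stabilizer of $\mu_V$ is likewise either $\R$ (which would make $\Lambda=\mu_V$ a single $(T_\alpha\times T_\alpha)$-ergodic graph — excluded) or discrete, equal to $\tfrac{\beta}{l}\Z$.

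Suppose now $R\notin C(\cT)$, so both decompositions are non-degenerate. Then, by Remark~\ref{u:zawie}, the flow $(T_t\times T_t)_t$ on $(X\times X,\Lambda)$ is a suspension with constant return time $\tfrac{\alpha}{k}$ (resp.\ $\tfrac{\beta}{l}$) over the corresponding return map. That return map is the graph of $R\in C(T_{\alpha/k})$ (note $\tfrac{\alpha}{k}$ lies in the stabilizer), hence isomorphic to the weakly mixing map $T_{\alpha/k}$, so Remark~\ref{widmodys} applies and the discrete part of the spectrum of $(T_t\times T_t)|_\Lambda$ is contained in $\tfrac{k}{\alpha}\Z$, and symmetrically in $\tfrac{l}{\beta}\Z$. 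On the other hand the non-degenerate $\alpha$-decomposition exhibits the rotation factor on the circle $\R/\tfrac{\alpha}{k}\Z$, producing the nonzero eigenfrequency $\tfrac{k}{\alpha}$, which must then lie in $\tfrac{l}{\beta}\Z$; this forces $\alpha/\beta\in\Q$, a contradiction. Hence $R\in C(\cT)$, so $C(T_\alpha)=C(\cT)$ and $\cT$ is 2-fold simple. Finally, running the reduction in reverse for each $c\neq 0$ (apply Proposition~\ref{pr:simple} to $(T_{ct})_t$) shows that 2-fold simplicity of $\cT$ makes every $T_c$ 2-fold simple, i.e.\ $I_s(\cT)=\R$.

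The main obstacle is the spectral bookkeeping in the third paragraph: showing the two non-degenerate decompositions equip $(T_t\times T_t)|_\Lambda$ with incompatible rational spectral data. The delicate points are verifying that the suspension base maps are weakly mixing — which is exactly why one needs $\tfrac{\alpha}{k}$ to lie in the stabilizer, so that the $(T_{\alpha/k}\times T_{\alpha/k})$-ergodic component is a graph copy of the weakly mixing $T_{\alpha/k}$, letting Remark~\ref{widmodys} \emph{pin down} rather than merely bound the eigenfrequency group — and isolating the genuine nonzero eigenfrequency from the non-degenerate decomposition. The reduction and the mechanics of the two decompositions are routine given Proposition~\ref{pr:simple} and Lemmas~\ref{lm:dec}--\ref{lm:rozkladdokl}.
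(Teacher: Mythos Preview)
Your proof is correct and hinges on the same spectral mechanism as the paper: build a $\cT$-ergodic joining from a graph, read off two suspension representations (one via the $\alpha$-decomposition, one via the $\beta$-decomposition), and compare the resulting discrete-spectrum groups using Remarks~\ref{widmodys} and~\ref{u:zawie} to force $\alpha/\beta\in\Q$.

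The organization differs from the paper's. The paper normalizes to $\beta=1$ and splits into two cases: either $C(T_1)=C(T_\alpha)$, handled by a direct density argument (since $\Z+\alpha\Z$ is dense in $\R$, $C(T_1)=C(T_\alpha)$ forces $C(T_1)=C(\cT)$), or $C(T_1)\neq C(T_\alpha)$, which is shown to be impossible by the spectral contradiction applied to some $S\in C(T_1)\setminus C(T_\alpha)$. You bypass the case split entirely: reducing via Proposition~\ref{pr:simple} (for the rescaled flow) to $C(T_\alpha)\subseteq C(\cT)$ and then running the spectral argument directly on an arbitrary $R\in C(T_\alpha)$. This is a bit more streamlined, since the density argument of case~(i) becomes unnecessary; on the other hand, the paper's case~(ii) yields the slightly stronger intermediate conclusion $C(T_\alpha)=C(T_\beta)$, which you do not isolate. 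Both routes are short and rest on the same identification of the eigenvalue group of $(\cT\times\cT,\Lambda)$ from two incompatible suspension pictures.
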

\begin{proof}
We may assume without loss of generality that $\beta=1$ and $\alpha\not\in\Q$. There are two possibilities:
\begin{multicols}{2}
\begin{enumerate}[(i)]
\item
$C(T_1)=C(T_\alpha)$,
\item
$C(T_1)\neq C(T_\alpha)$.
\end{enumerate}
\end{multicols}
We first cover case (i). Fix $t_0\in\R$. For $n\in \N$ choose $t_n\in \Z+\alpha\Z$ such that $t_n\to t_0$ (such a choice can always be made as $\alpha\not\in\Q$). Take $S\in C(T_1)=C(T_\alpha)$. We have
$$
ST_{t_n}\to ST_{t_0}\text{ and }T_{t_n}S\to T_{t_0}S.
$$
Since $ST_{t_n}=T_{t_n}S$, it follows that $ST_{t_0}=T_{t_0}S$, i.e. $C(T_1)=C(\cT)$. By Proposition~\ref{pr:simple}, $\cT$ is 2-fold simple.

We pass now to case (ii). We may assume without loss of generality that there exists $S\in C(T_1)\setminus C(T_\alpha)$. Consider
\begin{equation}\label{31b}
\gr{{\mu}_S}:=\int_0^1 \mu_S\circ (T_t\times T_t)\ dt.
\end{equation}
By Remark~\ref{ro:jo}, $\gr{\mu_S}\in \cJ_2^e(\cT)$. Clearly $\gr{\mu_S}\not\in J_2^e(T_1)$ (otherwise the measures in the decomposition~\eqref{31b} would be all equal, which would imply that $S\in C(\cT)\subset C(T_\alpha)$). By Lemma~\ref{lm:rozkladdokl}, the ergodic decomposition of $\gr{{\mu}_S}$ for $T_1\times T_1$ is of the form 
\begin{equation}\label{er1}
\gr{\mu_S}=k\int_0^{1/k}\mu_S\circ (T_t\times T_t)\ dt
\end{equation}
for some $k\geq 1$. By Remark~\ref{u:zawie} 
\begin{equation}\label{31a}
\mbox{the set of eigenvalues of $(\cT\times \cT,\gr{{\mu}_S})$ is equal to $k\Z$. }
\end{equation}

Suppose that $\gr{\mu_S}$ is not ergodic for $T_\alpha\times T_\alpha$. Then, by Lemma~\ref{lm:dec},
$
\gr{{\mu}_S}=\frac{1}{\alpha}\int_0^\alpha \mu_W\circ(T_t\times T_t)\ dt
$
for some $W\in C(T_\alpha)$. By Lemma~\ref{lm:rozkladdokl} the ergodic decomposition of $\gr{{\mu}_S}$ for $T_\alpha\times T_\alpha$ is therefore of the form 
$
\gr{\mu_S}=\frac{l}{\alpha}\int_0^{\alpha/l} \mu_W\circ (T_t\times T_t)\ dt
$
for some $l\geq 1$. Hence, by Remark~\ref{u:zawie}, the set of eigenvalues $(\cT\times \cT,\gr{{\mu}_S})$ is equal to $(l/\alpha)\Z$. This yields a contradiction with~\eqref{31a}.

Suppose now that $\gr{\mu_S}$ is ergodic for $T_\alpha\times T_\alpha$. Then $\gr{\mu_S}=\mu_W$ for some $W\in C(T_\alpha)$. It follows that $(T_\alpha\times T_\alpha,\gr{\mu_S})$ is weakly mixing, whence also $(\cT\times \cT,\gr{\mu_S})$ is weakly mixing. This yields a contradiction with~\eqref{31a} and ends the proof.
\end{proof}

\begin{pr}
Let $\cT=(T_t)_{t\in\R}$ be a weakly mixing flow. Then 
$I_s(\cT)=\R$ or $I_s(\cT)=\beta\Z$ for some $\beta\in\R$. Moreover, if $I_s(\cT)=\R$ then $\cT$ is 2-fold simple.
\end{pr}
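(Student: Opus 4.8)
The plan is to show that $I_s(\cT)$ is a subgroup of $\R$ which, when it is not all of $\R$, is discrete; since the only discrete subgroups of $\R$ are the groups $\beta\Z$, this yields the dichotomy. First I dispose of the easy directions. For the ``moreover'' clause, if $I_s(\cT)=\R$ then $T_1$ and $T_{\sqrt2}$ are both $2$-fold simple with $\sqrt2\notin\Q$, so Lemma~\ref{denseset1} gives that $\cT$ is $2$-fold simple. Conversely, if $\cT$ is $2$-fold simple then applying Proposition~\ref{pr:simple} to each reparametrized flow $(T_{ts})_{s\in\R}$ (whose time-one map is $T_t$ and which is again weakly mixing) shows every $T_t$, $t\neq0$, is $2$-fold simple, so $I_s(\cT)=\R$. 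Hence I may assume from now on that $\cT$ is \emph{not} $2$-fold simple. Then $I_s(\cT)$ cannot contain two rationally independent elements, for otherwise Lemma~\ref{denseset1} would make $\cT$ $2$-fold simple; so either $I_s(\cT)=\{0\}=0\cdot\Z$ and we are done, or I fix $\gamma\in I_s(\cT)\setminus\{0\}$ and note $I_s(\cT)\subset\gamma\Q$.

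Next I identify a common centralizer and prove discreteness. For any $t\in I_s(\cT)\setminus\{0\}$ write $t/\gamma=p/q$ with $p,q\in\Z$, $q\geq1$; then $T_t^{\,q}=T_{tq}=T_{\gamma p}=T_\gamma^{\,p}$, and since $T_t,T_\gamma$ are $2$-fold simple Remark~\ref{uw:delru} gives $C(T_t)=C(T_\gamma)=:\mathcal{G}$ and $\cJ_2^e(T_t)=\cJ_2^e(T_\gamma)$; as $T_t$ is ergodic this set equals $\{\mu_W:W\in\mathcal{G}\}\cup\{\mu\otimes\mu\}=:\mathcal{E}$. Since $\cT$ is not $2$-fold simple while $T_\gamma$ is, Proposition~\ref{pr:simple} (for the flow rescaled so that $T_\gamma$ is its time-one map) forces $C(\cT)\subsetneq C(T_\gamma)=\mathcal{G}$, so there is $S\in\mathcal{G}\setminus C(\cT)$. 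The set $H_S=\{t\in\R:S\,T_t=T_t\,S\}$ is a subgroup, closed by continuity of $t\mapsto T_t$, contains $\gamma$, and is proper; hence $H_S=\delta\Z$ for some $\delta>0$. As $S\in\mathcal{G}=C(T_t)$ for every $t\in I_s(\cT)\setminus\{0\}$, we get $I_s(\cT)\subset H_S=\delta\Z$, so $I_s(\cT)$ is discrete.

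It remains to show that $I_s(\cT)$ is closed under addition. I isolate two facts about powers of a time-map of a weakly mixing flow: (a) if $T_e$ is $2$-fold simple then so is $T_e^{\,n}=T_{ne}$ for every $n$; and (b) if $T_e^{\,m}$ and $T_e^{\,n}$ are $2$-fold simple with $\gcd(m,n)=1$ then $T_e$ is $2$-fold simple. Granting these, given nonzero $t,t'\in I_s(\cT)$ (necessarily rationally dependent) I write $\langle t,t'\rangle=e\Z$ with $t=me$, $t'=ne$, $\gcd(m,n)=1$; by (b) $T_e$ is $2$-fold simple, so $e\in I_s(\cT)$, and by (a) $le\in I_s(\cT)$ for all $l$, whence $t\pm t'\in e\Z\subset I_s(\cT)$. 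Finally, letting $\beta=\min\big(I_s(\cT)\cap(0,\infty)\big)$ (which exists by discreteness) and running the same argument on the pair $t,\beta$ shows that for every $t\in I_s(\cT)$ the generator $e$ of $\langle t,\beta\rangle$ lies in $I_s(\cT)$ with $0<e\leq\beta$, so $e=\beta$ and $t\in\beta\Z$; hence $I_s(\cT)\subset\beta\Z$, while (a) gives $\beta\Z\subset I_s(\cT)$, so $I_s(\cT)=\beta\Z$.

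The main obstacle is precisely (a) and (b), since for a general automorphism a power of a simple map need not be simple, nor conversely, so weak mixing and the flow structure must be used; my plan for both is an averaging device. For (a), given $\eta\in\cJ_2^e(T_e^{\,n})\setminus\{\mu\otimes\mu\}$ I would form $\overline\eta=\frac1n\sum_{i=0}^{n-1}\eta\circ(T_e^{\,i}\times T_e^{\,i})$, which is $T_e\times T_e$-invariant, hence lies in $\cJ_2(T_e)$; since $T_e$ is $2$-fold simple its $T_e\times T_e$-ergodic components lie in $\mathcal{E}$. The key point is that each element of $\mathcal{E}$ is also $T_e^{\,n}\times T_e^{\,n}$-ergodic (a graph $\mu_W$, $W\in C(T_e)\subset C(T_e^{\,n})$, being isomorphic to the ergodic $T_{ne}$, and $\mu\otimes\mu$ by weak mixing), so the $T_e\times T_e$-decomposition of $\overline\eta$ is at the same time a $T_e^{\,n}\times T_e^{\,n}$-ergodic decomposition; comparing it with the obvious one and invoking uniqueness of the ergodic decomposition forces $\eta\in\mathcal{E}$, i.e.\ $\eta$ is a graph. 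For (b), where $\eta\in\cJ_2^e(T_e)$ is already $T_e^{\,m}\times T_e^{\,m}$-invariant, I would instead decompose $\eta$ over $T_e^{\,m}\times T_e^{\,m}$: its components lie in $\cJ_2^e(T_e^{\,m})=\mathcal{E}$ and are permuted transitively by the $T_e\times T_e$-action. Here $\gcd(m,n)=1$ enters through Remark~\ref{uw:delru} and B\'ezout to give $C(T_e^{\,m})=C(T_e)$, so each graph component is in fact $T_e\times T_e$-invariant; a transitively permuted family of fixed points is a singleton, so $\eta$ is itself one of these graphs.
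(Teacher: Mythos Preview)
Your proof is correct and follows the same overall strategy as the paper: reduce via Lemma~\ref{denseset1} to the case $I_s(\cT)\subset\gamma\Q$, show $I_s(\cT)$ is an additive subgroup, and show it is not dense. The implementations differ in two places worth noting. For the subgroup step the paper simply invokes the automorphism analogue of Proposition~\ref{pr:simple} (stated in the remarks after it): from $C(T_{d/q})=C(T_{p_1/q})$ and $T_{p_1/q}$ $2$-fold simple it concludes directly that $T_{d/q}$ is $2$-fold simple, whereas you reprove this as your claims (a) and (b) via explicit joining decompositions; your arguments are fine, but you are re-deriving a result already available. For discreteness the paper rules out density by rerunning the approximation argument from case~(i) of Lemma~\ref{denseset1}, while your use of the closed proper subgroup $H_S=\{t:ST_t=T_tS\}$ for some $S\in C(T_\gamma)\setminus C(\cT)$ is a cleaner and more direct route, since it immediately traps $I_s(\cT)$ inside a cyclic group.
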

\begin{proof}
Suppose that $I_s(\cT)\neq \beta\Z$ for all $\beta\in\R$. Consider the following two possibilities:
\begin{enumerate}[(i)]
\item
there exist $t_1,t_2\in I_s(\cT)\setminus \{0\}$ which are rationally independent,
\item
all numbers in $I_s(\cT)$ are rationally dependent.
\end{enumerate}
In case (i) it follows from Lemma~\ref{denseset1} that $\cT$ is 2-fold simple and $I_s(\R)=\R$. In case (ii) there exist $\alpha>0$ such that $I_s(\cT)\subset \alpha\Q$. Take $t_1,t_2\in I_s(\cT)$. We claim that the whole additive subgroup generated by $t_1$ and $t_2$ is in $I_s(\cT)$. Indeed, we have $t_1=p_1/q$, $t_2=p_2/q$ for some $p_1,p_2,q\in \Z$. Let $d=\gcd(p_1,p_2)$. Then for some $k,l\in\Z$ we have $kp_1+lp_2=d$. Hence $d/q=kp_1/q+lp_2/q$. It follows by Remark~\ref{uw:delru} that $C(T_{p_1/q})=C(T_{p_2/q})$. Therefore $C(T_{p_1/q})\subset C(T_{d/q})$. On the other hand, $C(T_{d/q})\subset C(T_{p_1/q})$ since $T_{d/q}$ is a root of $T_{p_1/q}$. Hence $C(T_{d/q})=C(T_{p_1/q})$ and it follows that $T_{d/q}$ is 2-fold simple. Therefore, for all $k\in\Z$ also $T_{kd/q}$ is 2-fold simple. It follows that $I_s(\cT)=I_s(\cT)\cap \alpha\Q$ is an additive subgroup of $\alpha\Q$. Notice that $I_s(\cT)$ cannot be dense in $\R$. Indeed, if it was dense, an argument similar to the one we used in case (i) in the proof of Lemma~\ref{denseset1} would imply $I_s(\cT)=\R$.
\end{proof}

\subsection{Joining primeness property}\label{JPsekcja}

We will show now that the converse of Proposition~\ref{pr:simple} is true if we replace ``2-fold simple'' with ``JP'':
\begin{pr}\label{pr:jp}
Let $\cT=(T_t)_{t\in \R}$ be a measure-preserving flow with $T_1$ ergodic. If $T_1$ has the JP property then $\cT$ has the JP property.
\end{pr}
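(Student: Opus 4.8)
The plan is to transfer the JP property from $T_1$ to $\cT$ by decomposing a joining of $\cT$ over the time-one map of the ambient product flow, applying the JP property of $T_1$ to an ergodic component, and then averaging the resulting product structure back along the flow.

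Fix weakly mixing flows $\cS_1=((S_1)_t)_{t\in\R}$ on $(Y_1,\cC_1,\nu_1)$ and $\cS_2=((S_2)_t)_{t\in\R}$ on $(Y_2,\cC_2,\nu_2)$, and let $\lambda\in\cJ^e(\cT,\cS_1\times\cS_2)$. Set $\cR=(T_t\times (S_1)_t\times (S_2)_t)_{t\in\R}$, acting on $X\times Y_1\times Y_2$, so that $\lambda\in\cP^e_\R(\cR)$. By Lemma~\ref{lm:dec} applied to $\cR$ (cf.\ Remark~\ref{ro:jo}), there is an $R_1$-ergodic measure $\eta$, where $R_1=T_1\times (S_1)_1\times (S_2)_1$, with $\lambda=\gr{\eta}=\int_0^1\eta\circ (T_t\times (S_1)_t\times (S_2)_t)\,dt$, and this is the ergodic decomposition of $\lambda$ over $R_1$.

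The crucial preliminary step is to verify that $\eta$ is an honest element of $\cJ^e(T_1,(S_1)_1\times (S_2)_1)$, so that the JP property of $T_1$ applies to it. Ergodicity for $R_1$ is built in. For the marginals I would use that weak mixing of a flow is equivalent to weak mixing of its time-one map, so $(S_1)_1$ and $(S_2)_1$ are weakly mixing, hence ergodic; together with ergodicity of $T_1$ this makes each coordinate projection a factor map carrying the $R_1$-ergodic $\eta$ to an ergodic marginal (for $T_1$, $(S_1)_1$, $(S_2)_1$ respectively). Projecting $\lambda=\int_0^1\eta\circ (T_t\times (S_1)_t\times (S_2)_t)\,dt$ onto $X$ then writes the $X$-marginal $\mu$ of $\lambda$ (which is $T_1$-ergodic by hypothesis) as $\int_0^1\mu'\circ T_t\,dt$, where $\mu'$ is the ($T_1$-ergodic) $X$-marginal of $\eta$; by Lemma~\ref{lm:dec} this is the ergodic decomposition of $\mu$ over $T_1$, so since $\mu$ is already $T_1$-ergodic, uniqueness of the ergodic decomposition forces $\mu'\circ T_t=\mu$ for a.e.\ $t$, whence $\mu'=\mu$ by the continuity in Remark~\ref{uw:ciaglosc}. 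The identical argument, using that $\nu_1,\nu_2$ are ergodic for $(S_1)_1,(S_2)_1$, handles the other two marginals, giving $\eta\in\cJ^e(T_1,(S_1)_1\times (S_2)_1)$.

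Now I would apply the JP property of $T_1$ to $\eta$: up to interchanging $\cS_1$ and $\cS_2$, this yields $\eta=\eta|_{X,Y_1}\otimes\nu_2$. To finish, average back: since $\nu_2$ is $(S_2)_t$-invariant for all $t$, the third coordinate of $\eta\circ (T_t\times (S_1)_t\times (S_2)_t)$ stays equal to $\nu_2$, so this measure equals $\big(\eta|_{X,Y_1}\circ (T_t\times (S_1)_t)\big)\otimes\nu_2$; integrating over $t\in[0,1)$, pulling the constant factor $\nu_2$ out of the integral, and recognizing $\int_0^1\eta|_{X,Y_1}\circ (T_t\times (S_1)_t)\,dt$ as the marginal $\lambda|_{X,Y_1}$ gives $\lambda=\lambda|_{X,Y_1}\otimes\nu_2$, which is precisely the JP conclusion for $\cT$. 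I expect the only real obstacle to be the marginal identification in the previous paragraph --- confirming that $\eta$ is a genuine ergodic joining with the correct marginals, where weak mixing of the $\cS_i$ and uniqueness of the ergodic decomposition are essential --- while the concluding averaging step is routine bookkeeping once the $\cR$-invariance of the product structure of $\eta$ (through invariance of $\nu_2$) is noted.
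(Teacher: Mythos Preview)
Your proof is correct and follows essentially the same approach as the paper's: decompose the $\cT$-ergodic joining $\lambda$ over the time-one map via Lemma~\ref{lm:dec}, apply the JP property of $T_1$ to an ergodic component $\eta$, and average the resulting product structure back along the flow using the invariance of $\nu_2$. The paper organizes this as a two-case split (according to whether $\lambda$ is already $R_1$-ergodic), whereas you handle both cases uniformly; you also spell out the verification that $\eta$ has the correct marginals more carefully than the paper, which simply asserts $\eta\in\cJ^e(T_1,R_1\times S_1)$ by reference to Lemma~\ref{lm:dec}, Lemma~\ref{lm:rozkladdokl} and Remark~\ref{ro:jo}.
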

\begin{proof}
Let $\gr{\eta}\in J^e(\mathcal{T},\mathcal{R}\times \mathcal{S})\subset J(T_1,R_1\times S_1)$ for some weakly mixing flows $\mathcal{R}$ and $\mathcal{S}$. If $\gr{\eta}\in J^e(T_1,R_1\times S_1)$ then $\gr{\eta}=\gr{\eta}|_{X\times Y}\otimes \nu_2$ or $\gr{\eta}=\gr{\eta}|_{X\times Z}\otimes \nu_1$. If $\gr{\eta} \not\in J^e(T_1,R_1\times S_1)$ then by Lemma~\ref{lm:dec}, Lemma~\ref{lm:rozkladdokl} and Remark~\ref{ro:jo}, we may assume without loss of generality that for some $\eta\in J^e(T_1,R_1\times S_1)$ we have
$$
\gr{\eta}=\int_0^1 \eta\circ (T_t\times R_t\times S_t)\ dt.
$$
Notice that $\eta$ cannot be the product measure. Indeed, it would be invariant under the product flow $\mathcal{R}\times \mathcal{S}=(R_t\times S_t)_{t\in\R}$ and equal to $\gr{\eta}$, which is impossible since $\gr{\eta}\not\in\cJ^e(T_1,R_1\times S_1)$. Therefore, up to a permutation of the coordinates, we have $\eta=\eta|_{X\times Y}\otimes \nu_2$.
Hence
$$
\gr{\eta}=\int_0^1 \eta|_{X\times Y}\circ (T_t\times R_t)\otimes \nu_2\ dt= \left(\int_0^1\eta|_{X\times Y}\circ (T_t\times R_t)\ dt\right) \otimes \nu_2
$$
which ends the proof.
\end{proof}

\begin{uw}\label{uwp1}
If $\cT^{(1)}$ and $\cT^{(2)}$ are weakly mixing and all ergodic joinings of $\cT^{(1)}$ and $\cT^{(2)}$ are weakly mixing then $\cJ(\cT^{(1)},\cT^{(2)})=\cJ(T^{(1)}_1,T^{(2)}_1)$ and $\cJ^e(\cT^{(1)},\cT^{(2)})=\cJ^e(T^{(1)}_1,T^{(2)}_1)$. Indeed, by Lemma~\ref{lm:dec} the only possibility to obtain a new ergodic joining is via suspension, every suspension however has a discrete spectrum factor.
\end{uw}
\begin{uw}\label{uwp2}
If $\cj_2(\cT)=\cj_2(T_1)$ then $\cT$ and $T_1$ have the same invariant $\sigma$-algebras and $C(\cT)=C(T_1)$.
\end{uw}

Given an ergodic automorphism $R$ on a standard probability Borel space $\zdr$, we denote by $\cj\cf(R)$ the family of all factors of all infinite ergodic self-joinings of $R$. Notice that 
\begin{align}\label{9p}
\begin{split}
&\mbox{the class $\cj\cf(R)$ is closed under}\\
&\mbox{(infinite) ergodic self-joinings and factors.}
\end{split}
\end{align}
Therefore, for each ergodic automorphism $S$ acting on $\ycn$ there exists a largest factor $\cA_R(S)\subset \cC$ of $S$ such that $\cA_R(S)\in \cj\cf(R)$.\footnote{By separability, the smallest $\sigma$-algebra including all factors of $S$ from $\cj\cf(R)$ is countably generated, whence it corresponds to a joining of at most countable number of factors.} Since $S|_{W\ca_R(S)}$ and $S|_{\cA_R(S)}$ are isomorphic for each $W\in C(S)$, we obtain that 
\begin{equation}\label{10p}
\mbox{$\cA_R(S)$ is invariant under all $W\in C(S)$.}
\end{equation}
Now, fix a weakly mixing automorphism $R\colon \zdr\to\zdr$ and an ergodic automorphism $S\colon \ycn\to\ycn$. Then, by~\eqref{10p}, $\cA_{R\times R}(S\times S)$ is a factor of the product action $(S^i\times S^j)_{(i,j)\in \Z^2}$. Recall a result on product $\Z^2$-actions:
\begin{pr}[\cite{MR2237524}]
Let $\tilde{T}\colon (\tilde{X},\tilde{\cB},\tilde{\mu})\to (\tilde{X},\tilde{\cB},\tilde{\mu})$ and $\tilde{S}\colon (\tilde{Y},\tilde{\cC},\tilde{\nu})\to (\tilde{Y},\tilde{\cC},\tilde{\nu})$ be ergodic automorphisms of standard probability Borel spaces. Consider the corresponding $\Z^2$-action, i.e. the action of $(\tilde{T}^n\times \tilde{S}^m)_{n,m\in\Z^2}$ on $(\tilde{X}\times \tilde{Y},\tilde{\cB}\otimes\tilde{\cC},\tilde{\mu}\otimes \tilde{\nu})$. Suppose that $\tilde{\cA}$ is a factor of the $\Z^2$-action. Let $\overline{B}\otimes \overline{C}$ be the smallest product factor containing $\tilde{A}$.\footnote{The existence of such factor was also proved in~\cite{MR2237524}.} Then there exists a compact metric group $G$ and two continuous 1-1 homomorphisms
$$
g\mapsto \tilde{R}_g\in C(\tilde{T}),\ \ g\mapsto \tilde{R}'_g\in C(\tilde{S}),
$$
such that $\tilde{A}=\{A\in \overline{B}\otimes \overline{C}\colon (\tilde{R}_g\times \tilde{R}'_g)A=A \}$.
\end{pr}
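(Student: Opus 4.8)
The plan is to work inside $\overline{B}\otimes\overline{C}$. Since by hypothesis the smallest product factor containing $\tilde{\cA}$ is $\overline{B}\otimes\overline{C}$, after replacing $\tilde{T},\tilde{S}$ by their restrictions to $\overline{B},\overline{C}$ I may assume $\overline{B}=\tilde{\cB}$, $\overline{C}=\tilde{\cC}$, and that $\tilde{\cA}$ is contained in no proper product factor (this minimality will be used twice). First I record two elementary facts. Because $\tilde{T}$ and $\tilde{S}$ are ergodic, the product $\Z^2$-action $(\tilde{T}^n\times \tilde{S}^m)_{n,m\in\Z^2}$ is ergodic: an invariant function is, by ergodicity of $\tilde{T}$ applied to the generator $\tilde{T}\times\mathrm{Id}$, a function of the second coordinate alone, and then constant by ergodicity of $\tilde{S}$. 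The same computation shows that the $\sigma$-algebra of sets invariant under $\tilde{T}\times\mathrm{Id}$ is exactly $\tilde{\cC}$, and that the invariant $\sigma$-algebra of $\mathrm{Id}\times\tilde{S}$ is $\tilde{\cB}$.

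The second fact is a splitting lemma which furnishes the diagonal structure and is the clean part of the argument. Suppose $\rho$ is a measure-preserving automorphism of $(\tilde{X}\times\tilde{Y},\tilde{\mu}\otimes\tilde{\nu})$ commuting with both generators $\tilde{T}\times\mathrm{Id}$ and $\mathrm{Id}\times\tilde{S}$. Commuting with $\tilde{T}\times\mathrm{Id}$ forces $\rho$ to preserve its invariant $\sigma$-algebra $\tilde{\cC}$, and commuting with $\mathrm{Id}\times\tilde{S}$ forces $\rho$ to preserve $\tilde{\cB}$. An automorphism of a product space preserving each of the two coordinate $\sigma$-algebras is, by multiplicativity of the Koopman operator, a product $\rho=\rho_1\times\rho_2$; the commutation relations then give $\rho_1\in C(\tilde{T})$ and $\rho_2\in C(\tilde{S})$. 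Thus every element of the centralizer of the $\Z^2$-action splits diagonally through the two centralizers.

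The heart of the proof, and the step I expect to be the main obstacle, is to show that the extension $\tilde{\cB}\otimes\tilde{\cC}\to\tilde{\cA}$ is a compact group extension whose structure group is realized by such $\rho$'s. The plan is to verify Veech's graph criterion (Proposition~\ref{pr:veech2}) for the ergodic $\Z^2$-system over $\tilde{\cA}$. Concretely, I would analyse an ergodic self-joining $\lambda\in J_2^e(\Z^2;\tilde{\cA})$ by regrouping the four coordinates of $(\tilde{X}\times\tilde{Y})^2$ into $(\tilde{X}\times\tilde{X})\times(\tilde{Y}\times\tilde{Y})$, on which the two generators act separately as $\tilde{T}\times\tilde{T}$ and $\tilde{S}\times\tilde{S}$; relative independence over $\tilde{\cA}$ means the two copies interact only through $\tilde{\cA}$. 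Using this product splitting together with the minimality of $\overline{B}\otimes\overline{C}$, one rules out a nontrivial relatively weakly mixing part, so that by the Furstenberg-Zimmer decomposition (Theorem~\ref{fz}, in its $\Z^2$-version) the extension coincides with its maximal isometric part; by Proposition~\ref{Tzwarte} it then has relatively discrete spectrum, and Veech's theorem (Propositions~\ref{pr:veech1}--\ref{pr:veech2}, valid for general group actions) upgrades this to a genuine compact group extension with structure group $G$ acting freely by fibre translations. Each such translation fixes $\tilde{\cA}$ pointwise and commutes with the $\Z^2$-action, so the splitting lemma applies and yields $\rho_g=\tilde{R}_g\times\tilde{R}'_g$ with $\tilde{R}_g\in C(\tilde{T})$, $\tilde{R}'_g\in C(\tilde{S})$, continuous in $g$, and $\tilde{\cA}=\{A\in\tilde{\cB}\otimes\tilde{\cC}\colon(\tilde{R}_g\times\tilde{R}'_g)A=A\ \text{for all }g\in G\}$. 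Proving that the product structure plus minimality genuinely forces isometry (no relatively weakly mixing part) is where the real work lies, and where the hypothesis on $\overline{B}\otimes\overline{C}$ is indispensable.

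Finally I would deduce injectivity of the two homomorphisms from the same minimality. Since the translation action of $G$ is free, $\rho_g=\mathrm{Id}$ implies $g=e$. Put $N=\{g\in G\colon\tilde{R}_g=\mathrm{Id}\}$. For $g\in N$ one has $\rho_g=\mathrm{Id}\times\tilde{R}'_g$, so $\tilde{\cA}$ is contained in the $N$-invariant $\sigma$-algebra $\tilde{\cB}\otimes\tilde{\cC}_N$, where $\tilde{\cC}_N\subset\tilde{\cC}$ is the $\sigma$-algebra of sets fixed by all $\tilde{R}'_g$ with $g\in N$. This is a product factor containing $\tilde{\cA}$, so minimality forces $\tilde{\cC}_N=\tilde{\cC}$, i.e.\ $\tilde{R}'_g=\mathrm{Id}$ for every $g\in N$; hence $\rho_g=\mathrm{Id}$ and $g=e$. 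Thus $g\mapsto\tilde{R}_g$ is injective, and the symmetric argument handles $g\mapsto\tilde{R}'_g$, completing the proof.
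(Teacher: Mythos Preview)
The paper does not prove this proposition; it is quoted as a result from \cite{MR2237524} and used as a black box. So there is no ``paper's own proof'' to compare against.

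Your outline is structurally sound in its peripheral steps --- the reduction to $\overline{B}=\tilde{\cB}$, $\overline{C}=\tilde{\cC}$, the ergodicity of the $\Z^2$-action, the splitting lemma for elements of the $\Z^2$-centralizer, and the final injectivity argument via minimality are all correct and cleanly done. But the central step is not actually carried out. You write that ``using this product splitting together with the minimality of $\overline{B}\otimes\overline{C}$, one rules out a nontrivial relatively weakly mixing part,'' and you acknowledge that ``this is where the real work lies.'' That is precisely the gap: you have not explained \emph{how} minimality of the enveloping product factor forces the extension $\tilde{\cB}\otimes\tilde{\cC}\to\tilde{\cA}$ to be isometric (equivalently, to have no relatively weakly mixing part). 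The sentence ``relative independence over $\tilde{\cA}$ means the two copies interact only through $\tilde{\cA}$'' is a restatement of the definition, not an argument; and regrouping the four coordinates does not by itself produce any contradiction with minimality. Without this step, Veech's criterion (Propositions~\ref{pr:veech1}--\ref{pr:veech2}) cannot be invoked, and the rest of the argument does not get off the ground.

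In short: what you have is a correct reduction of the problem to a single claim (compactness of the extension), together with a correct derivation of the conclusion from that claim, but the claim itself is asserted rather than proved. Since the paper merely cites the result, you would need to consult \cite{MR2237524} to see how that step is actually handled.
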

It follows from the above proposition and from~\eqref{10p} applied to $\cA_{R\times R}(S\times S)$ that $\cA_{R\times R}(S\times S)$ is a product-$\sigma$-algebra. Therefore 
\begin{equation}\label{12p}
\cA_{R\times R}(S\times S)=\cA_R(S)\otimes \cA_R(S)
\end{equation}
as this is the largest product $\sigma$-algebra in $\cj\cf(R\times R)$.

Moreover, using~\eqref{9p} and a result from~\cite{Austin:2009fk} (see also~\cite{Rue:2009kx}), we conclude that for any $\lambda \in \cJ^e(R,S)$ we have
$\lambda=\left(\lambda|_{\mathcal{D}\otimes\cA_R(S)}\right){\widehat{}}\ $,
i.e. $\lambda$ is the relatively independent extension of $\lambda|_{\mathcal{D}\otimes \cA_R(S)}$. In particular, using~\eqref{12p}, for $\lambda\in \cJ^e(R,S\times S)$, we obtain
\begin{equation}\label{11p}
\lambda=\left(\lambda|_{\mathcal{D}\otimes\cA_{R}(S)\otimes \cA_R(S)}\right){\widehat{}}.
\end{equation}

We will show now that JP is an intrinsic property.
\begin{pr}\label{j5}
Let $R$ be an ergodic automorphism such that in the class $\cj\cf(R)$ there is some weakly mixing transformation. An automorphism $R$ has the JP property if and only if for each weakly mixing $S\in \cj\cf(R)$ and each $\lambda\in J^e(R,S\times S)$ we have either $\lambda=(\lambda|_{Z\times Y_1})\otimes\nu$ or $\lambda=(\lambda|_{Z\times Y_2})\otimes\nu$.
\end{pr}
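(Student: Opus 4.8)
The plan is to treat the two implications separately; the forward one is immediate from the definition, and the entire content lies in the converse.

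\emph{(Necessity.)} Suppose $R$ has the JP property and let $S\in\cj\cf(R)$ be weakly mixing. Since $S$ is in particular a weakly mixing system, applying the definition of JP to the pair $S_1=S_2=S$ shows that every $\lambda\in J^e(R,S\times S)$ satisfies $\lambda=(\lambda|_{Z\times Y_1})\otimes\nu$ or $\lambda=(\lambda|_{Z\times Y_2})\otimes\nu$. No further input is needed here.

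\emph{(Sufficiency.)} Assume the displayed condition. By the observation made right after the definition of the JP property (that one obtains an equivalent notion by restricting to isomorphic pairs), it suffices to prove the following ``isomorphic'' form of JP: for every weakly mixing system $S$ on $\ycn$ and every $\lambda\in J^e(R,S\times S)$ one has $\lambda=(\lambda|_{Z\times Y_1})\otimes\nu$ or $\lambda=(\lambda|_{Z\times Y_2})\otimes\nu$. Fix such an $S$ and $\lambda$. The structural input is~\eqref{11p}, which gives $\lambda=\left(\lambda|_{\mathcal{D}\otimes\cA_R(S)\otimes\cA_R(S)}\right){\widehat{}}$; that is, $R$ is relatively independent of $S\times S$ over the factor $\cA_R(S)\otimes\cA_R(S)$. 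Put $\mu:=\lambda|_{\mathcal{D}\otimes\cA_R(S)\otimes\cA_R(S)}$. As a factor of the ergodic joining $\lambda$, $\mu$ is ergodic, and since the marginal of $\lambda$ on $Y_1\times Y_2$ is $\nu\otimes\nu$, the marginal of $\mu$ on the two copies of $\cA_R(S)$ is a product; hence $\mu\in J^e(R,\cA_R(S)\times\cA_R(S))$. Moreover $\cA_R(S)\in\cj\cf(R)$ by its very definition, and it is weakly mixing, being a factor of the weakly mixing $S$. (If $\cA_R(S)$ is trivial then~\eqref{11p} already yields $\lambda=\rho\otimes\nu\otimes\nu$, which splits, so we may assume it is nontrivial.)

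Now apply the hypothesis to the weakly mixing system $\cA_R(S)\in\cj\cf(R)$ and to $\mu$: after possibly exchanging the two coordinates, the second copy of $\cA_R(S)$ is $\mu$-independent of $\mathcal{D}$ together with the first copy of $\cA_R(S)$. It remains to propagate this splitting through the relatively independent extension. By~\eqref{11p}, for $A\in\mathcal{D}$ and $B_1,B_2\in\cC$ one has $\lambda(A\cap B_1\cap B_2)=\int \1_A\,E(\1_{B_1}\mid\cA_R(S)^{(1)})\,E(\1_{B_2}\mid\cA_R(S)^{(2)})\,d\mu$; the independence just obtained factors this integral, and since $\int E(\1_{B_2}\mid\cA_R(S)^{(2)})\,d\mu=\nu(B_2)$ while the remaining factor equals $\lambda|_{Z\times Y_1}(A\cap B_1)$ (take $B_2=Y$), we conclude $\lambda=(\lambda|_{Z\times Y_1})\otimes\nu$; the other case gives the symmetric conclusion. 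I expect the main obstacle to be precisely this last step: one must lift a splitting that is known only at the level of the small factors $\cA_R(S)$ up to the full product $\cC\otimes\cC$, and this works only because of the rigid relatively-independent-extension structure~\eqref{11p} combined with the independence of the two copies of $S$ under $\lambda$. The role of the standing hypothesis that $\cj\cf(R)$ contains a weakly mixing transformation is to guarantee that the factors $\cA_R(S)$ to which the assumed condition gets applied are genuine (nontrivial, weakly mixing) members of $\cj\cf(R)$, so that~\eqref{11p} and the hypothesis actually have bite.
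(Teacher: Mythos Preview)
Your proof is correct and follows essentially the same route as the paper's: reduce to the isomorphic form of JP, invoke~\eqref{11p} to make $\lambda$ a relatively independent extension over $\cA_R(S)\otimes\cA_R(S)$, apply the hypothesis to the restricted joining on $\mathcal{D}\otimes\cA_R(S)\otimes\cA_R(S)$, and then propagate the splitting back up through the relative independence. The only cosmetic differences are that you keep $\1_A$ on the $Z$-coordinate while the paper conditions it to $\cA_R(S)$, and you handle the degenerate case $\cA_R(S)$ trivial explicitly whereas the paper leaves it implicit.
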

\begin{proof}
Fix an arbitrary weakly mixing automorphism $S$  and let $\lambda\in J^e(T,S\times S)$. Then~\eqref{11p} holds,
equivalently, $\mathcal{D}$ and $\cC\otimes \cC$ are relatively independent over $\cA_R(S)\otimes \cA_R(S)$. Take $f\in L^\infty(Z,\rho)$, $g_i\in L^\infty(Y_i,\nu)$ for $i=1,2$. Assume that $\int g_2=0$. We have
\begin{multline}\label{13p}
\int f\otimes g_1\otimes g_2\ d\lambda=\int E^\lambda(f\otimes g_1\otimes g_2 | \cA_R(S)\otimes \cA_R(S))\ d\lambda\\
\stackrel{\eqref{11p}}{=} \int E^\lambda(f)| \cA_R(S)\otimes \cA_R(S)) E^\lambda( g_1\otimes g_2|\cA_R(S)\otimes \cA_R(S))\ d\lambda.
\end{multline}
To distinguish both $\sigma$-algebras $\cA_R(S)$ we will write $\cA^1_R(S)$ for $\cA_R(S)\otimes \{\varnothing,Y\}$ and $\cA^2_R(S)$ for $\{\varnothing ,Y\}\otimes \cA_R(S)$. By the assumption $\mathcal{D}\otimes \cA^1_R(S) \perp \cA^2_R(S)$ (up to a permutation of coordinates) with respect to $\lambda$, whence the last integral in~\eqref{13p} equals
\begin{multline*}
\int E^\lambda(f|\cA^1_R(S))E^\nu(g_1|A^1_R(S))E^\nu(g_2|A^2_R(S))\ d\lambda\\
\stackrel{(\ast)}{=}\left(\int E^\lambda(f|\cA^1_R(S))E^\nu(g_1|\cA^1_R(s))\ d\lambda \right)\cdot \int E^\nu(g_2|\cA^2_R(S))\ d\lambda=0,
\end{multline*}
where $(\ast)$ follows by the independence of $\cA^1_R(S)$ and $\cA^2_R(S)$. This completes the proof.
\end{proof}

\begin{uw}
Notice that if $R$ is such that in the class $\cj\cf(R)$ there are no weakly mixing automorphisms then $R$ is disjoint from all weakly mixing systems (see Theorem~\ref{parreau}), which implies the JP property.
\end{uw}
\begin{uw}
It was shown in~\cite{MR2018611}, using Rokhlin extensions, that there exists a non-weakly mixing automorphism, such that the class $\cj\cf(R)$ includes some weakly mixing systems.\footnote{Recall that the class of JP systems is closed under taking Rokhlin extensions.}
\end{uw}

\begin{wn}
Let $\cT=(T_t)_{t\in\R}$ be such that its all ergodic self-joinings are weakly mixing. Then $T_1$ has the JP property if and only if $\cT$ has the JP property. 
\end{wn}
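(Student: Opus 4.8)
The plan is to treat the two implications separately, with the forward one immediate and the reverse one carrying all the weight of the weak-mixing hypothesis. First I note that the $2$-fold diagonal self-joining of $\cT$ is isomorphic to $\cT$ itself, so the hypothesis forces $\cT$ to be weakly mixing, and hence $T_1$ to be weakly mixing (in particular ergodic). Moreover, since every ergodic self-joining of $\cT$ is weakly mixing, so is every factor of such a joining; thus every member of $\cj\cf(\cT)$ is weakly mixing, and in particular $\cj\cf(\cT)$ does contain weakly mixing transformations (e.g.\ $\cT$), which is what will let me invoke the intrinsic characterization of JP. The implication ``$T_1$ has JP $\Rightarrow$ $\cT$ has JP'' is then exactly Proposition~\ref{pr:jp} and needs nothing beyond ergodicity of $T_1$.

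For the reverse implication, suppose $\cT$ has JP; I want to verify JP for $T_1$ via Proposition~\ref{j5}, so that it suffices to check the factorization of every $\lambda\in J^e(T_1,S\times S)$ only for weakly mixing $S\in\cj\cf(T_1)$. The first step is to identify $\cj\cf(T_1)$ with $\cj\cf(\cT)$ through the time-one correspondence. Given an infinite ergodic self-joining, I realize it as a flow $\cR\in\cJ_\infty^e(\cT)$ acting diagonally, with $R=\cR_1$; since self-joinings of $\cR$ are again ergodic self-joinings of $\cT$, they are weakly mixing, so Remark~\ref{uwp1} gives $\cJ_2(\cR)=\cJ_2(R)$ and then Remark~\ref{uwp2} shows that $\cR$ and $R$ have the same invariant $\sigma$-algebras. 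The companion equality $\cJ_\infty^e(\cT)=\cJ_\infty^e(T_1)$, obtained by the same suspension argument as in Remark~\ref{uwp1}, guarantees that every infinite ergodic self-joining of $T_1$ actually arises this way. Consequently each weakly mixing $S\in\cj\cf(T_1)$ is the time-one map $\cS_1$ of a weakly mixing flow $\cS\in\cj\cf(\cT)$.

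Now I fix such an $\cS$ and a $\lambda\in J^e(T_1,S\times S)$. Every ergodic joining of $\cT$ with $\cS\times\cS$ restricts to an ergodic joining of the three members $\cT,\cS,\cS$ of $\cj\cf(\cT)$, hence lies in $\cj\cf(\cT)$ by the closure properties~\eqref{9p} and is therefore weakly mixing. Applying Remark~\ref{uwp1} to $\cT$ and $\cS\times\cS$ then yields $\cJ^e(\cT,\cS\times\cS)=\cJ^e(T_1,S\times S)$, so $\lambda$ is in fact (the time-one map of) a flow joining. Since $\cT$ has JP and $\cS$ is weakly mixing, $\lambda=\lambda|_{X\times Y_1}\otimes\nu_2$ or $\lambda=\lambda|_{X\times Y_2}\otimes\nu_1$, which is precisely the condition demanded in Proposition~\ref{j5}. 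Hence $T_1$ has JP.

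I expect the delicate point to be the passage between $\cj\cf(T_1)$ and $\cj\cf(\cT)$ --- specifically, transferring the flow structure onto factors of self-joinings and verifying that the relevant joinings of $T_1$ are genuinely time-one maps of flow joinings. Both hinge on extending Remark~\ref{uwp1} from two flows to finitely or countably many weakly mixing flows all of whose ergodic joinings stay weakly mixing, which is exactly where the standing hypothesis is consumed; the closure of $\cj\cf(\cT)$ under joinings and factors~\eqref{9p}, together with the fact that all its members are weakly mixing, is what keeps this hypothesis available at every stage.
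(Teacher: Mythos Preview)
Your proof is correct and follows essentially the same approach as the paper, which records only the one-line reference ``This follows immediately by Remarks~\ref{uwp1},~\ref{uwp2} and Proposition~\ref{j5}.'' You have supplied the details the paper omits: using the weak-mixing hypothesis to identify $\cJ_\infty^e(\cT)$ with $\cJ_\infty^e(T_1)$ and the factors of these joinings (via Remarks~\ref{uwp1} and~\ref{uwp2}), then invoking the intrinsic characterization of JP in Proposition~\ref{j5} for $T_1$ and checking its hypothesis with the JP property of $\cT$.
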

\begin{proof}
This follows immediately by Remarks~\ref{uwp1},~\ref{uwp2} and Proposition~\ref{j5}.
\end{proof}

The following problem remains open:
\begin{qu}\label{pytanie}
Does the JP property of a weakly mixing flow $\cT=(T_t)_{t\in\R}$ imply the JP property for $T_t$, $t\neq 0$?
\end{qu}

\subsection{Isometric, distal and relatively weakly mixing extensions}
\subsubsection{Isometric extensions}\label{pierwszydowod}
Suppose that $T\colon \xbm\to \xbm$ is embeddable into some measurable flow $\cT=(T_t)_{t\in\R}$, i.e. $T=T_1$. Let $\{f'_n\colon {n\in\N}\}\subset L^2(X,\mu)$ be dense and let $f_n:=f'_n/\|f'_n\|_{L^2(X,\mu)}$ for $n\in\N$. Let $d_{\{f_n\}_{n\in\N}}$ be the metric defined as in~\eqref{metryka}, using $\{f_n\}_{n\in\N}$.
\begin{lm}\label{vep4}
For any $\vep>0$ there exists $\delta>0$ such that for all $\xi\in\cJ_2(T)$ and $t\in\R$ such that $|t|<\delta$ we have  $\overline{d}_{\{f_n\}_{n\in\N}}(\xi\circ(T_t\times T_t),\xi)<\vep/4$.
\end{lm}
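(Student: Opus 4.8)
The plan is to exploit the fact that, since the flow $(T_t)_{t\in\R}$ is measurable, the associated Koopman group $U_t\colon f\mapsto f\circ T_t$ is strongly continuous on $L^2(X,\mu)$; in particular $\|f_n\circ T_t-f_n\|_{L^2(X,\mu)}\to 0$ as $t\to 0$ for each fixed $n$. The whole estimate will then reduce to a termwise bound in the series defining $\overline{d}_{\{f_n\}_{n\in\N}}$, followed by a dominated-convergence argument, and the essential point is that every bound I produce will be \emph{independent of the joining $\xi$}, which is exactly what the uniform statement requires.

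First I would rewrite each summand. Using that $\xi\circ(T_t\times T_t)$ is the push-forward of $\xi$, the inner integrals satisfy
\[
\int f_i(x)\overline{f_j(y)}\,d\big(\xi\circ(T_t\times T_t)\big)=\int f_i(T_tx)\overline{f_j(T_ty)}\,d\xi,
\]
so the $(i,j)$-term of $\overline{d}_{\{f_n\}_{n\in\N}}(\xi\circ(T_t\times T_t),\xi)$ is $2^{-(i+j)}$ times $\left|\int\big(f_i(T_tx)\overline{f_j(T_ty)}-f_i(x)\overline{f_j(y)}\big)\,d\xi\right|$. I would then split the integrand as
\[
(f_i\circ T_t-f_i)(x)\,\overline{f_j(T_ty)}+f_i(x)\,(\overline{f_j\circ T_t-f_j})(y)
\]
and apply the Cauchy--Schwarz inequality in $L^2(\xi)$ to each piece.

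The key observation is that $\xi$ is a joining, so both marginals of $\xi$ equal $\mu$. Hence for a function of the first coordinate only, $\|(f_i\circ T_t-f_i)(x)\|_{L^2(\xi)}=\|f_i\circ T_t-f_i\|_{L^2(X,\mu)}$, while $\|f_j(T_ty)\|_{L^2(\xi)}=\|f_j\|_{L^2(X,\mu)}=1$ because $T_t$ preserves $\mu$ and the $f_n$ are normalized. Writing $a_n(t):=\|f_n\circ T_t-f_n\|_{L^2(X,\mu)}\le 2$, this bounds the $(i,j)$-summand by $2^{-(i+j)}\big(a_i(t)+a_j(t)\big)$. Summing over $i,j$ and using $\sum_{k\ge1}2^{-k}=1$ gives the $\xi$-free estimate
\[
\overline{d}_{\{f_n\}_{n\in\N}}(\xi\circ(T_t\times T_t),\xi)\le 2\sum_{n=1}^{\infty}\frac{a_n(t)}{2^n}.
\]

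Finally I would make this right-hand side small uniformly in $\xi$. Since $a_n(t)\le 2$, the tail is controlled by a geometric series: I fix $N$ with $4\sum_{n>N}2^{-n}<\vep/8$, and for the finitely many indices $n\le N$ I invoke strong continuity to choose $\delta>0$ so that $|t|<\delta$ forces $a_n(t)$ small enough that $2\sum_{n\le N}a_n(t)2^{-n}<\vep/8$; the two contributions add to less than $\vep/4$. The only genuinely nonroutine ingredient is the strong continuity of the Koopman group, i.e.\ that the measurability of the flow upgrades the (weak) measurability of $t\mapsto U_t$ to strong continuity of $t\mapsto f\circ T_t$ in $L^2$; this is a classical fact, and it is the point at which I would be most careful, though it requires no new argument here.
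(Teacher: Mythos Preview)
Your proof is correct and follows essentially the same route as the paper: both split the integrand as $(f_i\circ T_t-f_i)\overline{f_j\circ T_t}+f_i(\overline{f_j\circ T_t-f_j})$, apply Cauchy--Schwarz using that the marginals of $\xi$ equal $\mu$, and then combine a tail estimate with strong continuity of the Koopman group on finitely many $f_n$. The only cosmetic difference is that you first sum to the clean $\xi$-free bound $2\sum_n 2^{-n}a_n(t)$ and then make it small, whereas the paper carries the tail/finite-part split through the double sum directly.
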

\begin{proof}
Fix $\vep>0$. Let $N\in\N$ be such that $\sum_{i> N}1/2^i<\vep/32$. Let $\delta>0$ be such that for $1\leq i\leq N$  and $t\in\R$ such that $|t|<\delta$ we have
$\left\|f_i\circ T_t-f_i \right\|_{L^2(X,\mu)}<\vep/8$. Then for $t\in\R$ and $\xi\in\cJ_2(T)$ by Cauchy-Schwarz inequality we obtain
\begin{align*}
d_{\{f_n\}_{n\in\N}}&(\xi\circ (T_t\times T_t),\xi)=\sum_{i,j\geq 1}\frac{1}{2^{i+j}}\left|\int_{X\times X}\left(f_i\circ T_t\otimes f_j\circ T_t-f_i\otimes f_j \right)\ d\xi \right|\\
\leq& \sum_{i,j\geq 1}\frac{1}{2^{i+j}}\left|\int_{X\times X}\left(f_i\circ T_t\otimes f_j\circ T_t-f_i\circ T_t\otimes f_j \right)\ d\xi \right|\\
&+ \sum_{i,j\geq 1}\frac{1}{2^{i+j}}\left|\int_{X\times X}\left(f_i\circ T_t\otimes f_j-f_i\otimes f_j \right)\ d\xi \right|\\
\leq&\sum_{i,j\geq 1}\frac{1}{2^{i+j}}\left\|f_j\circ T_t-f_j \right\|_{L^2(X,\mu)}+\sum_{i,j\geq 1}\frac{1}{2^{i+j}}\left\|f_i\circ T_t-f_i \right\|_{L^2(X,\mu)}\\
=&2\left(\sum_{i=1}^{N}\frac{1}{2^i}\left\|f_i\circ T_t-f_i \right\|_{L^2(X,\mu)}+\sum_{i>N}\frac{1}{2^i}\left\|f_i\circ T_t-f_i \right\|_{L^2(X,\mu)} \right)\\
\leq& 2\left(\sum_{i=1}^{N}\frac{1}{2^i}\cdot\vep/8+\vep/16\right)<\vep/4,
\end{align*}
which ends the proof.
\end{proof}
Let $A_\sigma:=\{(m,p,q)\in\N^3\colon m\geq 1,q\geq 0, 0\leq p\leq 2^q-1,\ p\text{ is odd}\}$ and
$
\sigma\colon A_\sigma \to \N
$
be a bijection. For $i\geq 1$ let 
$$
g_i:=f_m\circ T_{p/2^q} \text{ where }(m,p,q)=\sigma(i).
$$
Then $\{g_n\colon n\in\N\}\subset L^2(X,\cB,\mu)$ is clearly a dense subset as it includes $\{f_n\colon n\in\N\}$. Let $\overline{d}$ be the metric on $\cJ_2(T)$ defined as in~\eqref{metryka}, but using $\{g_n\}_{n\in\N}$ instead of $\{f_n\}_{n\in\N}$. 

\begin{lm}\label{vep5}
For any $\vep>0$ and any $q\in\N$ there exists $C=C(\vep,q)$ such that for all $0\leq p\leq 2^q-1$ and all $\rho_1,\rho_2\in\cJ_2(T)$ we have
\begin{equation*}
\overline{d}\left(\rho_1\circ(T_{p/2^q}\times T_{p/2^q}),\rho_2\circ(T_{p/2^q}\times T_{p/2^q}) \right)\leq C(\vep,q)\cdot \overline{d}(\rho_1,\rho_2)+\vep/2.
\end{equation*}
\end{lm}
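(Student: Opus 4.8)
The plan is to truncate the defining series for $\overline{d}$, absorb the tail into the additive error, and then control the finitely many remaining terms by the elementary fact that a single summand of $\overline{d}(\rho_1,\rho_2)$ dominates the corresponding un-normalised difference of integrals. Since each $g_n$ is a unit vector in $L^2(X,\mu)$ and $T_s$ preserves $\mu$, every $g_i\circ T_s$ is again a unit vector, so by Cauchy--Schwarz each summand of $\overline{d}(\rho_1\circ(T_s\times T_s),\rho_2\circ(T_s\times T_s))$ is at most $2/2^{i+j}$ uniformly in $\rho_1,\rho_2\in\cJ_2(T)$. First I would fix $N=N(\vep)$ with $\sum_{\max(i,j)>N}2/2^{i+j}<\vep/4$ and discard that tail, so that it remains to estimate $\sum_{i,j\le N}\tfrac{1}{2^{i+j}}\,|\int (g_i\circ T_s)\otimes\overline{(g_j\circ T_s)}\,(d\rho_1-d\rho_2)|$, a sum of finitely many terms.

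Next I would look at $g_i\circ T_s$ itself. Writing $g_i=f_{m_i}\circ T_{r_i}$ with $r_i=p_i/2^{q_i}\in[0,1)$, one has $g_i\circ T_s=f_{m_i}\circ T_{r_i+s}$ with $r_i+s\in[0,2)$. When $r_i+s<1$ this is literally one of the functions $g_l$ (the enrichment of the family by all dyadic translates from $[0,1)$ is exactly what makes this reindexing exact). When $r_i+s\ge 1$, however, a unit \emph{carry} appears and $g_i\circ T_s=(f_{m_i}\circ T)\circ T_{r_i+s-1}$, where $f_{m_i}\circ T$ is a unit vector that is not among the $f_m$; hence $g_i\circ T_s$ is no longer one of the $g_l$. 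This carry is the main obstacle: exact reindexing of the series is impossible, and it cannot be removed by the invariance $\rho\circ(T\times T)=\rho$ either, since in a mixed pair (one index carrying, the other not) the invariance only shifts the excess onto a single coordinate, landing it in $[1,2)$, still outside the family.

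To get around this I would approximate. For each of the finitely many $i\le N$ the function $g_i\circ T_s$ lies in $L^2(X,\mu)$ and $\{g_l\}$ is dense, so I can pick a finite linear combination $\phi_i=\sum_{l\in F_i}c_{i,l}g_l$ with $\|g_i\circ T_s-\phi_i\|_{L^2(\mu)}<\vep''$, where $\vep''$ will be chosen small. (In the carry case it is convenient to first approximate the single vector $f_{m_i}\circ T$ by a combination of the $f_m$ and then push it through $T_{r_i+s-1}$, which turns it into a combination of genuine $g_l$.) Replacing each $g_i\circ T_s$ by $\phi_i$ alters every summand by at most $\vep''(2+\vep'')$ for each of $\rho_1,\rho_2$; summing against the weights $2^{-(i+j)}$ and choosing $\vep''$ small enough keeps the total replacement error below $\vep/4$.

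Finally I would expand $\int\phi_i\otimes\overline{\phi_j}\,(d\rho_1-d\rho_2)=\sum_{l\in F_i,\,l'\in F_j}c_{i,l}\overline{c_{j,l'}}\int g_l\otimes\overline{g_{l'}}\,(d\rho_1-d\rho_2)$ and invoke that a single summand is dominated by the whole series, i.e.\ $|\int g_l\otimes\overline{g_{l'}}\,(d\rho_1-d\rho_2)|\le 2^{l+l'}\,\overline{d}(\rho_1,\rho_2)$. Setting $B_i:=\sum_{l\in F_i}|c_{i,l}|\,2^l$ this gives $\sum_{i,j\le N}\tfrac{1}{2^{i+j}}\,|\int\phi_i\otimes\overline{\phi_j}\,(d\rho_1-d\rho_2)|\le (\sum_{i\le N}B_i/2^i)^2\,\overline{d}(\rho_1,\rho_2)$. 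The bracket is a finite constant depending on $\vep$ (through $N$ and $\vep''$) and on $s=p/2^q$; since for fixed $q$ there are only finitely many admissible values $0\le p\le 2^q-1$, I would define $C(\vep,q)$ as the maximum of these constants over $p$. Adding the three contributions yields the claimed inequality, and the only genuinely delicate point is the carry, dealt with by the density/approximation step; the rest is bookkeeping with the weights $2^{-(i+j)}$.
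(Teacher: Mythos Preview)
Your argument is correct. Both you and the paper truncate the defining series for $\overline{d}$ and absorb the tail into the additive $\vep/2$; the difference is in how the finitely many surviving terms are controlled.

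The paper's route is an exact reindexing: translation by $p/2^q$ modulo $1$ permutes the dyadic rationals in $[0,1)$ and hence induces a permutation $\pi$ of the index set of the $g_i$. Invoking the $T\times T$-invariance of $\rho_1,\rho_2$, the paper asserts the identity
\[
\overline{d}\bigl(\rho_1\circ(T_{p/2^q}\times T_{p/2^q}),\rho_2\circ(T_{p/2^q}\times T_{p/2^q})\bigr)=\sum_{i,j\ge 1}\frac{1}{2^{i+j}}\,a_{\pi(i),\pi(j)},
\]
and then bounds the truncated sum via the reweighting $2^{-(i+j)}a_{\pi(i),\pi(j)}=2^{\pi(i)-i+\pi(j)-j}\cdot 2^{-(\pi(i)+\pi(j))}a_{\pi(i),\pi(j)}$, arriving at the explicit constant $C(\vep,q)=\max_{p}\,2^{\,2\max_{i\le N}|\pi(i)-i|}$.

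Your route replaces the reindexing by an $L^2$-approximation step, and you give a clear reason: the carry. When $r_i+p/2^q\ge 1$ one has $g_i\circ T_{p/2^q}=g_{\pi(i)}\circ T$ rather than $g_{\pi(i)}$, and in a mixed pair (one index carrying, the other not) the stray $T$ cannot be absorbed by the joint $T\times T$-invariance. This is a genuine subtlety that the paper's displayed identity glosses over; your density argument sidesteps it cleanly at the price of a non-explicit constant. In short, the paper's approach is slicker and yours is safer --- and your carry observation in fact points at a small gap in the paper's presentation.
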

\begin{proof}
Fix $\vep>0$ and $q\in\N$. Let $0\leq p\leq 2^q-1$ and let $\rho_1,\rho_2\in\cJ_2(T)$. Take $N\in\N$ such that
\begin{equation}\label{wyborN}
\sum_{i=1}^{N}\sum_{j>N}\frac{1}{2^{i+j}}+\sum_{i>N}\sum_{j=1}^{N}\frac{1}{2^{i+j}}<\vep/2.
\end{equation}
Let
$
a_{i,j}:=\left|\int_{X\times X}g_i\otimes g_j\ d\rho_1-\int_{X\times X}g_i\otimes g_j\ d\rho_2 \right|
$
for $i,j\geq 1$. Then
$$
\overline{d}(\rho_1,\rho_2)=\sum_{i,j\geq 1}\frac{1}{2^{i+j}}a_{i,j}.
$$
Let $\pi\colon \N\to \N$ be a bijection defined in the following way:
\begin{multline*}
\pi(i)=j \iff 
	\begin{cases}
		\sigma^{-1}(i)=(m,p_i,q_i),\\
		\sigma^{-1}(j)=(m,p_j,q_j),
	\end{cases}\\
	\text{for some }m\in\N \text{ and }q_i,q_j\in\N,\ 0\leq p_i\leq 2^{q_i}-1,\ 0\leq p_j\leq 2^{q_j}-1\\
	\text{such that }\frac{p_i}{2^{q_i}}+\frac{p}{2^q}-\frac{p_j}{2^{q_j}}\in\Z.
\end{multline*}
Permutation $\pi$ is well-defined as the addition of $p/2^q$ mod 1 is a bijection of the set $\{p_i/2^{q_i}\colon (1,p_i,q_i)\in A_\sigma\}$.  Recall also that $\rho_1,\rho_2$ are $T\times T$-invariant as elements of $\cJ_2(T)$. Therefore
$$
\overline{d}(\rho_1\circ(T_{p/2^{q}}\times T_{p/2^{q}}),\rho_2\circ(T_{p/2^{q}}\times T_{p/2^{q}}))=\sum_{i,j\geq 1}\frac{1}{2^{i+j}}a_{\pi(i),\pi(j)},
$$
By~\eqref{wyborN} we obtain
\begin{align*}
\sum_{i,j\geq 1}&\frac{1}{2^{i+j}}a_{\pi(i),\pi(j)}\leq \sum_{i,j=1}^{N}\frac{1}{2^{i+j}}a_{\pi(i),\pi(j)}+\vep/2\\
&=\sum_{i,j=1}^{N}2^{\pi(i)-i+\pi(j)-j}\frac{1}{2^{\pi(i)+\pi(j)}}a_{\pi(i),\pi(j)}+\vep/2\\
&\leq \sum_{i,j=1}^{\max\{\pi(i)\colon 1\leq i\leq N\}} 2^{2\max\{|\pi(i)-i|\colon 1\leq i\leq N\}}\frac{1}{2^{i+j}}a_{i,j}+\vep/2\\
&\leq 2^{2\max \{|\pi(i)-i|\colon 1\leq i \leq N\}}\sum_{i,j\geq 1}\frac{1}{2^{i+j}}a_{i,j}+\vep/2
=C(\vep,p,q)\cdot \overline{d}(\rho_1,\rho_2)+\vep/2,
\end{align*}
where $C(\vep,p,q)=2^{2\max \{|\pi(i)-i|\colon 1\leq i \leq N\}}$. To end the proof it suffices to put
$C(\vep,q):=\max\{C(\vep,p,q)\colon 0\leq p\leq 2^{q}-1\}$.
\end{proof}

\begin{lm}\label{vep44}
For any $\vep>0$ there exists $\delta>0$ such that for any $\xi_1,\xi_2\in\cJ_2(T)$, whenever $\overline{d}(\xi_1,\xi_2)<\delta$ then for all $t\in\R$, $\overline{d}(\xi_1 \circ(T_t\times T_t),\xi_2\circ(T_t\times T_t))<\vep$. In particular, $\overline{d}\left(\int_0^1 \xi_1 \circ (T_t\times T_t)\ dt,\int_0^1\xi_2\circ (T_t\times T_t)\ dt\right)<\vep$.
\end{lm}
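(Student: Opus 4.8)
The plan is to view the flow action $\Psi_t\colon\xi\mapsto\xi\circ(T_t\times T_t)$ as a one-parameter family of self-maps of $\cJ_2(T)$ and to show it is equicontinuous in $\xi$, uniformly in $t$, using \emph{only} the two preceding lemmas: Lemma~\ref{vep4} to absorb small time shifts and Lemma~\ref{vep5} to absorb the dyadic shifts $T_{p/2^q}$ up to a multiplicative constant. Three preliminary observations set this up. First, every $\xi\in\cJ_2(T)$ is $T_1\times T_1$-invariant, so $\Psi_1=\mathrm{id}$ and $\Psi_t$ depends only on $t\bmod 1$; hence it suffices to treat $t\in[0,1)$. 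Second, $\Psi_t$ maps $\cJ_2(T)$ into itself: the flow preserves $\mu$, so the marginals stay correct, and $T_t$ commutes with $T_1$, so $T_1\times T_1$-invariance is preserved. Third, the conclusion of Lemma~\ref{vep4} holds verbatim for the metric $\overline d$ built from $\{g_n\}$, since its proof only uses $\|h\circ T_t-h\|_{L^2(X,\mu)}\to 0$ and $\|g_i\circ T_t-g_i\|_{L^2(X,\mu)}=\|f_m\circ T_t-f_m\|_{L^2(X,\mu)}$.

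The heart of the argument is a decomposition of a general time $t\in[0,1)$ into a dyadic part and a small remainder. Fixing $q$, write $t=p/2^q+r$ with $p=\lfloor 2^q t\rfloor\in\{0,\dots,2^q-1\}$ and $r\in[0,2^{-q})$. Because $T_{p/2^q}$ and $T_r$ commute, $\Psi_t=\Psi_r\circ\Psi_{p/2^q}$, with the \emph{small} shift applied outermost. Inserting $\Psi_{p/2^q}\xi_1$ and $\Psi_{p/2^q}\xi_2$ as intermediate points, the triangle inequality gives
\[
\overline d(\Psi_t\xi_1,\Psi_t\xi_2)\le \overline d(\Psi_r\Psi_{p/2^q}\xi_1,\Psi_{p/2^q}\xi_1)+\overline d(\Psi_{p/2^q}\xi_1,\Psi_{p/2^q}\xi_2)+\overline d(\Psi_{p/2^q}\xi_2,\Psi_r\Psi_{p/2^q}\xi_2).
\]
The two outer terms are comparisons of a measure $\rho=\Psi_{p/2^q}\xi_i\in\cJ_2(T)$ with $\Psi_r\rho=\rho\circ(T_r\times T_r)$, so Lemma~\ref{vep4} bounds them once $|r|<2^{-q}$ is small; the middle term is a pure dyadic-shift comparison, so Lemma~\ref{vep5} gives $\overline d(\Psi_{p/2^q}\xi_1,\Psi_{p/2^q}\xi_2)\le C(\vep',q)\,\overline d(\xi_1,\xi_2)+\vep'/2$ with a constant uniform over $p$.

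The order of quantifiers does all the work, and I expect it to be the main obstacle. I would first apply Lemma~\ref{vep4} to choose $\delta_0$ so that $|r|<\delta_0$ forces each outer term below $\vep/6$, then pick $q$ with $2^{-q}<\delta_0$; only \emph{after} $q$ is fixed do I invoke Lemma~\ref{vep5} with $\vep'=2\vep/3$ to obtain the now-finite constant $C=C(2\vep/3,q)$, and finally set $\delta:=\vep/(3C)$. The decisive point is that the small-shift error lives in the \emph{outer} terms, hence is not multiplied by $C$: had I instead written $\Psi_t=\Psi_{p/2^q}\circ\Psi_r$, a factor $C\cdot(\text{small-shift error})$ would appear and force $q$ to be chosen in terms of $C(\cdot,q)$, a circular dependency. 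With these choices, $\overline d(\xi_1,\xi_2)<\delta$ yields $\overline d(\Psi_t\xi_1,\Psi_t\xi_2)<\tfrac{\vep}{6}+\bigl(\tfrac{\vep}{3}+\tfrac{\vep}{3}\bigr)+\tfrac{\vep}{6}=\vep$ for every $t$, which is the claim. For the final ``in particular'' statement, I would note that $\xi\mapsto\int g_i\overline{g_j}\,d\xi$ is affine, so by Fubini and the just-proved pointwise bound $\overline d\bigl(\int_0^1\Psi_t\xi_1\,dt,\int_0^1\Psi_t\xi_2\,dt\bigr)\le\int_0^1\overline d(\Psi_t\xi_1,\Psi_t\xi_2)\,dt<\vep$, using continuity of $t\mapsto\overline d(\Psi_t\xi_1,\Psi_t\xi_2)$ for measurability.
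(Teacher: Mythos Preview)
Your proof is correct and follows essentially the same route as the paper: choose $\delta_0$ via Lemma~\ref{vep4}, fix $q$ with $2^{-q}<\delta_0$, approximate $t$ by a dyadic $p/2^q$, and combine the small-shift bound with Lemma~\ref{vep5} before choosing $\delta$ in terms of $C(\cdot,q)$. You are in fact more careful than the paper on two points it glosses over---that Lemma~\ref{vep4} must be applied to the metric $\overline d$ built from $\{g_n\}$ rather than $\{f_n\}$, and the reduction to $t\in[0,1)$ via $T_1\times T_1$-invariance---so the proposal is sound.
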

\begin{proof}
Fix $\vep>0$. By Lemma~\ref{vep4}, we can find $\delta_0>0$ such that for $|t|<\delta_0$ and $\xi\in\cJ_2(T)$
\begin{equation}\label{f1a}
\overline{d}(\xi\circ(T_t\times T_t),\xi)<\vep/8.
\end{equation}
Let $q\in \N$ be such that $1/2^{q}<\delta_0$ and let $C=C(\vep,q_0)$ be such as in Lemma~\ref{vep5}. For any $t\in \R$ find $0\leq p\leq 2^{q}-1$ such that $|t-p/2^{q}|<\delta_0$. Using~\eqref{f1a} we obtain
\begin{align*}
\overline{d}(\xi_1\circ(T_t\times T_t),&\xi_2\circ(T_t\times T_t))\\
&\leq \overline{d}(\xi_1\circ(T_{p/2^{q}}\times T_{p/2^{q}}),\xi_2\circ(T_{p/2^{q}}\times T_{p/2^{q}}))+\vep/4\\
&\leq C\cdot\overline{d}(\xi_1,\xi_2)+\vep/4+\vep/4= C\cdot\overline{d}(\xi_1,\xi_2)+\vep/2,
\end{align*}
Let $0<\delta<\frac{\vep}{2\cdot C}$. Then the condition $\overline{d}(\xi_1,\xi_2)<\delta$ implies 
$
\overline{d}(\xi_1\circ (T_t\times T_t),\xi_2\circ (T_t\times T_t))<\vep.
$
It follows that
\begin{multline*}
\overline{d}\left(\int_0^1 \xi_1 \circ (T_t\times T_t)\ dt,\int_0^1 \xi_2\circ(T_t\times T_t)\ dt \right)\\
\leq \int_0^1 \overline{d}(\xi_1\circ(T_t\times T_t),\xi_2\circ (T_t\times T_t))\ dt<\vep,
\end{multline*}
which ends the proof.
\end{proof}

\begin{pr}\label{pr12}
Let $\cT=(T_t)_{t\in\R}$ be an ergodic flow on $(X,\cB,\widetilde{\gr{\nu}})$ with a factor $\cA$. Let $\gr{\nu}=\widetilde{\gr{\nu}}|_\cA$. Let $\widetilde{\gr{\nu}}=\int_0^1 \widetilde{\nu}\circ T_t\ dt$ and $\gr{\nu}=\int_0^1 \nu\circ {T_t}|_\cA\ dt$ for some $\widetilde{\nu}\in\cP_\Z^e(T_1), \nu\in \cP_\Z^e({T_1}|_\cA).$\footnote{It may happen that $\gr{\nu}=\nu$ or $\widetilde{\gr{\nu}}=\widetilde{\nu}$. The existence of $\nu$ and $\widetilde{\nu}$ follows from Lemma~\ref{lm:dec}.\label{stopka1}} Additionally assume that $\widetilde{\nu}|_\cA=\nu$. Then the extension $(T_1,\widetilde{\nu})\to({T_1}|_\cA,\nu)$ is isometric if and only if the extension $(\cT,\widetilde{\gr{\nu}})\to (\cT|_\cA,\gr{\nu})$ is isometric.
\end{pr}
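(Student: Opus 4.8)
The plan is to run everything through the $T$-compactness criterion. By Proposition~\ref{Tzwarte} (in its $\R$-action version), the extension $(T_1,\widetilde{\nu})\to({T_1}|_\cA,\nu)$ is isometric if and only if it is $T_1$-compact, and likewise $(\cT,\widetilde{\gr{\nu}})\to(\cT|_\cA,\gr{\nu})$ is isometric if and only if it is $\cT$-compact. So it suffices to show that $T_1$-compactness and $\cT$-compactness are equivalent, and I would do this by transporting the $N$-fold self-joinings that appear in the definition through the averaging (suspension) map $\eta\mapsto\gr{\eta}=\int_0^1\eta\circ(T_t\times\cdots\times T_t)\,dt$.

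First I would set up this correspondence. By Lemma~\ref{lm:dec} and Remark~\ref{ro:jo} applied to the product flow $(T_t\times\cdots\times T_t)_{t\in\R}$, the map $\eta\mapsto\gr{\eta}$ carries $\cJ_N^e(T_1;\cA)$ onto $\cJ_N^e(\cT;\cA)$; the relative-$\cA$ condition is preserved going forward and is automatic for any preimage, since $\gr{(\eta|_{\cA^{\otimes N}})}=\Delta_N(\gr{\nu})$ is concentrated on the diagonal, which forces $\eta|_{\cA^{\otimes N}}=\Delta_N(\nu)$. I record two compatibilities: pairwise marginals commute with averaging, $\gr{\eta}|_{X_i\times X_j}=\gr{(\eta|_{X_i\times X_j})}$, and the diagonal behaves well, $\gr{\Delta_N(\widetilde{\nu})}=\Delta_N(\widetilde{\gr{\nu}})$ (pushing forward under the diagonal embedding commutes with the integral), with pairwise marginal $\Delta(\widetilde{\gr{\nu}})$. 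Finally, the averaging map on $2$-fold joinings is continuous for the weak topology --- this is the content of Lemma~\ref{vep44}, whose proof is precisely what the enriched metric of Lemma~\ref{vep5} and Lemma~\ref{vep4} are built for.

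Granting this, the implication $T_1$-compact $\Rightarrow$ $\cT$-compact is the easy half. Given $\vep>0$, I would choose $\delta$ from the continuity of averaging and then $N$ from $T_1$-compactness applied with $\delta$; any $\lambda\in\cJ_N^e(\cT;\cA)$ equals some $\gr{\eta}$, $T_1$-compactness yields $i<j$ with $\overline{d}(\eta|_{X_i\times X_j},\Delta(\widetilde{\nu}))<\delta$, and averaging gives $\overline{d}(\lambda|_{X_i\times X_j},\Delta(\widetilde{\gr{\nu}}))<\vep$, as required.

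The hard part is the reverse implication $\cT$-compact $\Rightarrow$ $T_1$-compact, for which I need a reverse estimate at the diagonal: for each $\vep_0>0$ there is $\delta>0$ such that every $\xi\in\cJ_2(T_1,\widetilde{\nu})$ with $\overline{d}(\gr{\xi},\Delta(\widetilde{\gr{\nu}}))<\delta$ satisfies $\overline{d}(\xi,\Delta(\widetilde{\nu}))<\vep_0$. The key point is that averaging is injective at the diagonal: if $\gr{\xi}=\Delta(\widetilde{\gr{\nu}})$ then, because the limit is carried by the diagonal and the off-diagonal mass $t\mapsto(\xi\circ(T_t\times T_t))(\text{off-diagonal})$ is lower semicontinuous (Remark~\ref{uw:ciaglosc}) and vanishes for a.e.\ $t$, every flow-slice of $\xi$ is carried by the diagonal, so $\xi$ is a diagonal-supported self-joining with marginals $\widetilde{\nu}$, hence $\xi=\Delta(\widetilde{\nu})$. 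Since $\cJ_2(T_1,\widetilde{\nu})$ is weakly compact and averaging is weakly continuous, the preimage of a small neighborhood of $\Delta(\widetilde{\gr{\nu}})$ shrinks to $\Delta(\widetilde{\nu})$, which gives the uniform $\delta$. Feeding $\gr{\eta}$ into $\cT$-compactness then produces $i<j$ with $\overline{d}(\gr{(\eta|_{X_i\times X_j})},\Delta(\widetilde{\gr{\nu}}))<\delta$, and the reverse estimate delivers $\overline{d}(\eta|_{X_i\times X_j},\Delta(\widetilde{\nu}))<\vep_0$, i.e.\ $T_1$-compactness. The genuine obstacle here is twofold: establishing this injectivity-at-the-diagonal cleanly, and doing so while the time-one and flow pictures live over the two distinct measures $\widetilde{\nu}$ and $\widetilde{\gr{\nu}}$; keeping the joinings, their marginals and the relevant weak topologies consistent across these two measures is exactly what the averaging correspondence and the metric estimates of Lemmas~\ref{vep4}--\ref{vep44} are designed to manage.
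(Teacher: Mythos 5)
Your reduction of both implications to $T$-compactness is half the paper's strategy and half a genuinely new route, but there is one real gap, and it sits exactly where the paper does its hardest work. The gap is in what you call the easy half, namely the claim that the averaging map carries $\cJ_N^e((T_1,\widetilde{\nu});\cA)$ \emph{onto} $\cJ_N^e((\cT,\widetilde{\gr{\nu}});\cA)$ and that ``the relative-$\cA$ condition \ldots is automatic for any preimage.'' Given $\Lambda\in \cJ_N^e((\cT,\widetilde{\gr{\nu}});\cA)$, its $T_1^{\times N}$-ergodic components form the orbit $\{\eta\circ T_s^{\times N}\}_s$, and you must exhibit one of them that actually lies in $\cJ_N^e((T_1,\widetilde{\nu});\cA)$: \emph{every} marginal must equal $\widetilde{\nu}$ (not merely some $\widetilde{\nu}\circ T_{t_i}$, which is all that uniqueness of ergodic decomposition gives you), and the projection onto $\cA^{\otimes N}$ must be $\Delta_N(\nu)$. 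Your observation that $\gr{(\eta|_{\cA^{\otimes N}})}$ is carried by the diagonal only yields $\eta|_{\cA^{\otimes N}}=\Delta_N(\rho)$ for \emph{some} $\rho$; identifying $\rho$ with $\nu$ and, more importantly, killing the time-shifts $t_i$ in the marginals requires first normalizing the component so that its $\cA^{\otimes N}$-projection is $\Delta_{\cA^{\otimes N}}(\nu)$ and then invoking the hypothesis $\widetilde{\nu}|_\cA=\nu$ -- this is precisely the paper's displayed claim that $\lambda$ can be chosen from $\cJ_N^e((T_1,\widetilde{\nu});\cA)$, which occupies the bulk of its argument. A proof in which the assumption $\widetilde{\nu}|_\cA=\nu$ is never visibly used should make you suspicious: this is where it enters. (The paper also treats separately the degenerate case in which $\Lambda$ is already $T_1^{\times N}$-ergodic, which forces $\widetilde{\gr{\nu}}=\widetilde{\nu}$; your sketch silently absorbs it.)

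Your ``hard part'' ($\cT$-compact $\Rightarrow$ $T_1$-compact) is, by contrast, a route the paper does not take: there the paper proves ``flow isometric $\Rightarrow$ time-one isometric'' by passing to a covering compact group extension and tracking its ergodic components. Your reverse estimate at the diagonal is sound -- the complement of the diagonal is $(T_t\times T_t)$-invariant, so $\gr{\xi}=\Delta(\widetilde{\gr{\nu}})$ forces $\xi$ to be carried by the diagonal and hence $\xi=\Delta(\widetilde{\nu})$ (no semicontinuity is needed), and compactness of $\cJ_2(T_1,\widetilde{\nu})$ together with continuity of averaging upgrades this to a uniform $\delta$. It has the further advantage of needing only the \emph{forward} half of the averaging correspondence, so it avoids the issue above entirely. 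What it buys is a proof that stays entirely inside the $T$-compactness formalism; what the paper's group-extension argument buys is independence from any quantitative estimate. Do note, in both halves, that the metric $\overline{d}$ must be built from functions (continuous ones, using the standing assumption that flows act continuously on compact metric spaces) whose integrals make sense simultaneously against measures with marginals $\widetilde{\nu}$ and with marginals $\widetilde{\gr{\nu}}$, since these are mutually singular when $\widetilde{\nu}\neq\widetilde{\gr{\nu}}$; the paper faces the same point, so I do not count it against you.
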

\begin{proof}
Suppose that the extension $(\cT,\widetilde{\gr{\nu}})\to (\cT|_\cA,\gr{\nu})$ is isometric. Let $\overline{\cT}=(\overline{T}_t)_{t\in\R}\colon (\overline{X},\overline{\cB},\overline{\gr{\nu}})\to (\overline{X},\overline{\cB},\overline{\gr{\nu}})$ be an ergodic extension of $\cT$ such that $(\overline{\cT},\overline{\gr{\nu}})\to (\cT|_\cA,\gr{\nu})$ is a compact group extension. By Lemma~\ref{lm:dec} there exists $\overline{{\nu}}\in \cP_\Z^e(\overline{T}_1)$ such that $\overline{\gr{\nu}}=\int_0^1\overline{\nu}\circ \overline{T}_t\ dt$ (it may happen that $\overline{\gr{\nu}}=\overline{\nu}$). By the uniqueness of the ergodic decomposition and by Lemma~\ref{lm:rozkladdokl}, there exist $k,l,m\in\N\cup\{\infty\}$ such that
$$
\overline{\gr{\nu}}=k\int_0^{1/k}\overline{\nu}\circ \overline{T}_t\ dt,\ \ \ \widetilde{\gr{\nu}}=kl\int_0^{1/kl}\widetilde{\nu}\circ T_t\ dt,\ \ \ {\gr{\nu}}=klm\int_0^{1/klm}{\nu}\circ {T_t}|_\cA\ dt.\footnote{We interpret $\int_0^{1/\infty}\gr{\lambda}\circ S_t\ dt$ as $\gr{\lambda}$ for any measure $\gr{\lambda}$ and any flow $(S_t)_{t\in\R}$ such that this formula makes sense.\label{stopka}}
$$
Without loss of generality we may assume that $\overline{\nu}|_\cB=\widetilde{\nu}$. Since $(\overline{\cT},\overline{\gr{\nu}})\to (\cT|_\cA,\gr{\nu})$ is a compact group extension, $(\overline{T}_1,\overline{\gr{\nu}})\to (T_1|_\cA,\gr{\nu})$ is also a compact group extension. Notice that in the decomposition $\overline{\gr{\nu}}=k\int_0^{1/k}\overline{\nu}\circ \overline{T}_t\ dt$ the only measures which project down to $\nu$ are measures of the form $\overline{\nu}\circ T_{i/lm}$ for $0\leq i\leq lm-1$. Therefore also
$$
\left(\overline{T}_1,\frac{1}{lm}\sum_{i=0}^{lm-1}\overline{\nu}\circ T_{i/lm}\right)\to ({T_1}|_\cA,\nu)
$$
is a compact group extension (if $lm=\infty$, we interpret $\frac{1}{lm}\sum_{i=0}^{lm-1}\overline{\nu}\circ T_{i/lm}$ as $\overline{\gr{\nu}}$). It follows that also
$(\overline{T}_1,\overline{\nu})\to (T_1,\nu)$ is a compact group extension.
Thus
$(T_1,\widetilde{\nu})\to (T_1,\nu)$ is isometric which ends the first part of the proof.

Assume now that the extension $(T_1,\widetilde{\nu})\to (T_1|_\cA,\nu)$ is isometric. As in the first part of the proof, there exist $k,l\in\N\cup\{\infty\}$ such that
\begin{equation}\label{lam2}
\widetilde{\gr{\nu}}=k\int_0^{1/k}\widetilde{\nu}\circ T_t\ dt,\ {\gr{\nu}}=kl\int_0^{1/kl}{\nu}\circ {T_t}|_{\cA}\ dt
\end{equation}
are the corresponding ergodic decompositions (see footnotes~\ref{stopka1} and~\ref{stopka}). By Proposition~\ref{Tzwarte} the extension $(T_1,\widetilde{\nu})\to ({T_1}|_\cA,\nu)$ is T-compact. Fix $\vep>0$. Let $0<\delta<\vep$ be as in Lemma~\ref{vep4}. Let $N\in\N$ come from the definition of T-compactness for $(T_1,\widetilde{\nu})\to ({T_1}|_{\cA},\nu)$ with $\vep$ replaced with $\delta$. Take $\gr{\lambda}\in J^e_N((T,\widetilde{\gr{\nu}});\cA)$. Then $\gr{\lambda}\in \cP_\Z(T_1^{\times N})$ and there are two possibilities:
\begin{multicols}{2}
\begin{enumerate}[(i)]
\item
$\gr{\lambda}\in\cP_\Z^e(T_1^{\times N})$,
\item
$\gr{\lambda}\not\in\cP_\Z^e(T_1^{\times N})$.
\end{enumerate}
\end{multicols}
Case (i) may occur only if $k=\infty$, i.e. $\widetilde{\gr{\nu}}=\widetilde{\nu}$, $\gr{\nu}=\nu$ (otherwise the marginals of $\gr{\lambda}$ are not ergodic for $T_1$, so $\gr{\lambda}$ is not ergodic for $T_1^{\times N}$). Then by Proposition~\ref{Tzwarte} there exist $1\leq i<j\leq N$ such that
\begin{equation*}
\overline{d}(\gr{\lambda}|_{X_i\times X_j},\Delta_{X_i\times X_j}(\widetilde{\gr{\nu}}))<\delta<\vep.
\end{equation*}
We will cover now case (ii). To fix our attention, we will assume that $k,l\in\N$, i.e. $\widetilde{\gr{\nu}}\neq\widetilde{\nu}$ and $\gr{\nu}\neq\nu$ (in other cases the proof is similar). By Lemma~\ref{lm:dec} there exists $\lambda\in\cP_\Z^e(T_1^{\times N})$ such that 
\begin{equation}\label{lam}
\gr{\lambda}=\int_0^1\lambda\circ T_t^{\times N}\ dt. 
\end{equation}
We will show that 
\begin{equation}\label{AAA}
\mbox{such a measure $\lambda$ can be chosen from $\cJ_N^e((T_1,\widetilde{\nu});\cA)$.}
\end{equation}
In what follows we will use the fact that ergodic decompositions are unique. We have the following:
\begin{align}
\begin{split}
&\gr{\lambda}|_{\cA^{\otimes N}}=\int_0^1 \lambda|_{\cA^{\otimes N}}\circ \left({T_t}|_\cA \right)^{\times N}\ dt, \\ 
&\gr{\lambda}|_{\cA^{\otimes N}}=\Delta_{\cA^{\otimes N}}(\gr{\nu})=kl\int_0^{1/kl}\Delta_{\cA^{\otimes N}}(\nu)\circ \left({T_t}|_\cA \right)^{\times N}\ dt\label{lam1}
\end{split}
\end{align}
(the latter equality follows from the second formula in~\eqref{lam2}). Therefore we may assume without loss of generality (changing $\lambda\in\cP_\Z^e(T_1^{\times N})$ if necessary) that 
\begin{equation}\label{wesola}
\lambda|_{\cA^{\otimes N}}=\Delta_{\cA^{\otimes N}}(\nu)
\end{equation}
and $\gr{\lambda}|_{\cA^{\otimes N}}=kl\int_0^{1/kl}\lambda|_{\cA^{\otimes N}}\circ \left({T_t}|_{\cA} \right)^{\times N}\ dt$.
Moreover, for $1\leq i\leq N$ we have
\begin{align*}
&\gr{\lambda}|_{X_i}=k\int_0^{1/k}\lambda|_{X_i}\circ T_t\ dt,\\
&\gr{\lambda}|_{X_i}=\widetilde{\gr{\nu}}=k\int_0^{1/k}\widetilde{\nu}\circ T_t\ dt,
\end{align*}
whence there exists $t_i\in[0,1/k)$ such that 
\begin{equation}\label{smutna}
\lambda|_{X_i}=\widetilde{\nu}\circ T_{t_i}.
\end{equation}
It follows that
$$
\left(\lambda|_{X_i} \right)|_\cA=\widetilde{\nu}|_\cA\circ {T_{t_i}}|_\cA=\nu\circ \left(T_{t_i}|_\cA \right).
$$
On the other hand, by~\eqref{wesola}, we have $\left(\lambda|_{X_i} \right)|_\cA=\nu$.
Hence $t_i=0$ and~\eqref{AAA} follows from~\eqref{wesola} and~\eqref{smutna}. By Proposition~\ref{Tzwarte} there exist $1\leq i<j\leq N$ such that
\begin{equation}\label{lam3}
\overline{d}(\lambda|_{X_i\times X_j},\Delta_{X_i\times X_j}(\widetilde{\nu}))<\delta.
\end{equation}
By~\eqref{lam} we have
\begin{equation}\label{lam4}
\gr{\lambda}|_{X_i\times X_j}=\int_0^1 \lambda|_{X_i\times X_j} \circ (T_t\times T_t)\ dt.
\end{equation}
Moreover,
\begin{equation}\label{lam5}
\Delta_{X_i\times X_j}(\widetilde{\gr{\nu}})=\int_0^1 \Delta_{X_i\times X_j}(\widetilde{\nu})\circ (T_t\times T_t)\ dt
\end{equation}
It follows from the choice of $\delta$, from ~\eqref{lam3},~\eqref{lam4} and~\eqref{lam5} and from Lemma~\ref{vep44}  that
$
\overline{d}(\gr{\lambda}|_{X_i\times X_j},\Delta_{X_i\times X_j}(\widetilde{\gr{\nu}}))<\vep.
$
The claim follows from Proposition~\ref{Tzwarte}.
\end{proof}

\begin{pr}\label{pr12-1}
Let $l\in\N$ and let $T$ be an ergodic automorphism of $(X,\cB,\widetilde{\gr{\nu}})$ with a factor $\cA$. Let $\gr{\nu}=\widetilde{\gr{\nu}}|_\cA$. Let $\widetilde{\gr{\nu}}=\frac{1}{l}\sum_{i=1}^{l-1}\widetilde{\nu}\circ T^i$ and $\gr{\nu}=\frac{1}{l}\sum_{i=0}^{l-1}\nu\circ {T^i}|_\cA$ for some $\widetilde{\nu}\in\cP_\Z^e(T^l), \nu\in \cP_\Z^e({T^l}|_\cA)$. Additionally assume that $\widetilde{\nu}|_\cA=\nu$. Then the extension $(T^l,\widetilde{\nu})\to({T^l}|_\cA,\nu)$ is isometric if and only if the extension $(T,\widetilde{\gr{\nu}})\to (T|_\cA,\gr{\nu})$ is isometric.
\end{pr}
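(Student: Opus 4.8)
The plan is to follow the proof of Proposition~\ref{pr12} almost line by line, replacing the flow $\cT=(T_t)_{t\in\R}$ by the $\Z$-action generated by $T$, the time-one map $T_1$ by the power $T^l$, the parameter space $[0,1)$ by $\{0,1,\dots,l-1\}$, and the ergodic-decomposition integral $\int_0^1\,\cdot\,dt$ by the finite average $\tfrac1l\sum_{i=0}^{l-1}$. Every structural fact used in the flow argument has an immediate discrete counterpart: the ergodic decomposition of a $T$-ergodic measure with respect to the power $T^l$ is exactly a finite average $\tfrac1l\sum_{i=0}^{l-1}\widetilde\nu\circ T^i$ whose distinct terms are mutually singular. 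This is the discrete analogue of Lemmas~\ref{lm:dec} and~\ref{lm:rozkladdokl}, with the ``period'' now a divisor of $l$ rather than an arbitrary positive integer, so the nested decompositions of $\overline{\gr\nu}$, $\widetilde{\gr\nu}$ and $\gr\nu$ used in Proposition~\ref{pr12} carry over verbatim.

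For the implication from the $T$-extension to the $T^l$-extension, I would realize the isometric extension $(T,\widetilde{\gr\nu})\to(T|_\cA,\gr\nu)$ as a factor of a compact group extension $(\overline T,\overline{\gr\nu})\to(T|_\cA,\gr\nu)$, decompose $\overline{\gr\nu}$ into its $\overline T^{\,l}$-ergodic components, and single out those components projecting onto $\nu$. Exactly as in Proposition~\ref{pr12}, these form a compact group extension $(\overline T^{\,l},\overline\nu)\to(T^l|_\cA,\nu)$, whence $(T^l,\widetilde\nu)\to(T^l|_\cA,\nu)$ is isometric.

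For the converse I would assume $(T^l,\widetilde\nu)\to(T^l|_\cA,\nu)$ isometric, hence $T^l$-compact by Proposition~\ref{Tzwarte}, and prove that $(T,\widetilde{\gr\nu})\to(T|_\cA,\gr\nu)$ is $T$-compact. Fixing $\vep>0$, I would take $N$ from $T^l$-compactness applied with a small tolerance $\delta$, pick $\gr\lambda\in \cJ^e_N((T,\widetilde{\gr\nu});\cA)$, and split into the cases where $\gr\lambda$ is, or is not, ergodic for $(T^l)^{\times N}$. In the non-ergodic case I would write $\gr\lambda=\tfrac1l\sum_{i=0}^{l-1}\lambda\circ(T^i)^{\times N}$ for some ergodic $\lambda$, normalize $\lambda$ so that $\lambda|_{\cA^{\otimes N}}=\Delta_{\cA^{\otimes N}}(\nu)$, and run the marginal computation of Proposition~\ref{pr12}: from $\lambda|_{X_i}=\widetilde\nu\circ T^{s_i}$ together with $(\lambda|_{X_i})|_\cA=\nu$ one forces $s_i=0$, so that $\lambda\in\cJ^e_N((T^l,\widetilde\nu);\cA)$. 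Then $T^l$-compactness yields $1\le i<j\le N$ with $\overline d(\lambda|_{X_i\times X_j},\Delta_{X_i\times X_j}(\widetilde\nu))<\delta$, and averaging lifts this to $\overline d(\gr\lambda|_{X_i\times X_j},\Delta_{X_i\times X_j}(\widetilde{\gr\nu}))<\vep$.

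The main obstacle is this last step, namely transporting $\overline d$-closeness through the finite average, i.e.\ the discrete analogue of Lemma~\ref{vep44}. Here the continuity Lemma~\ref{vep4} is not needed, since one only shifts by integer powers of $T$; what is required is a bounded-distortion estimate for composition with $T^i\times T^i$, which is not a $\overline d$-isometry because the dense system defining $\overline d$ need not be $T$-invariant. I would remedy this exactly as in the construction preceding Lemma~\ref{vep5}, enlarging the generating dense family to $\{f_m\circ T^i\colon m\ge1,\ i\in\Z\}$, which is countable, dense and closed under composition with powers of $T$. Then $\,\cdot\,\circ(T^i\times T^i)$ merely permutes the generators and distorts the weights $2^{-(i+j)}$ by a factor bounded in terms of the permutation on the first $N$ indices (plus an arbitrarily small truncation error), giving the analogue of Lemma~\ref{vep5}. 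With this estimate in hand the averaging is completed by convexity of $\overline d$ and the triangle inequality, paralleling Lemma~\ref{vep44}; everything else is the routine discrete transcription of the flow argument.
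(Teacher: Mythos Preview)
Your proposal is correct and matches the paper's own approach: the paper simply states that the proof ``goes along the same lines as the proof of Proposition~\ref{pr12}'', and you have faithfully carried out that transcription, including the identification of the analogue of Lemma~\ref{vep5}/\ref{vep44} as the only nontrivial point. If anything, the discrete version is easier than you make it: since you only compose with the finitely many maps $T^i\times T^i$ for $0\le i\le l-1$, uniform continuity of these finitely many continuous maps on the compact metric space $\cJ_2(T^l)$ already gives the required $\delta$ without reindexing the dense family.
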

\begin{proof}
The proof goes along the same lines as the proof of Proposition~\ref{pr12}.
\end{proof}

\subsubsection{Distal extensions and Furstenberg-Zimmer decomposition}\label{drugidowod}
\paragraph{Distal extensions via isometric extensions}
\begin{pr}\label{pr12a}
Let $\cT=(T_t)_{t\in\R}$ be an ergodic flow on $(X,\cB,\widetilde{\gr{\nu}})$ with a factor $\cA$. Let $\gr{\nu}=\widetilde{\gr{\nu}}|_\cA$. Let $\widetilde{\gr{\nu}}=\int_0^1 \widetilde{\nu}\circ T_t\ dt$ and $\gr{\nu}=\int_0^1 \nu\circ T_t\ dt$ for some $\widetilde{\nu}\in\cP_\Z^e(T_1), \nu\in \cP_\Z^e({T_1}|_\cA).$\footnote{See footnote~\ref{stopka}.} Additionally assume that $\widetilde{\nu}|_\cA=\nu$. Then the extension $(T_1,\widetilde{\nu})\to({T_1}|_\cA,\nu)$ is distal if and only if the extension $(\cT,\widetilde{\gr{\nu}})\to (\cT|_\cA,\gr{\nu})$ is distal.
\end{pr}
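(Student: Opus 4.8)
The plan is to reduce everything to the isometric case already settled in Proposition~\ref{pr12}, using the description of a relatively distal extension as a transfinite tower of isometric extensions (Section~\ref{relsekcja}). The measure-theoretic data needed to apply Proposition~\ref{pr12} at an intermediate level comes for free: if $\cA\subseteq\cC\subseteq\cB$ is a $\cT$-invariant factor, then, since the ergodic decomposition commutes with passing to a factor, $\widetilde{\gr\nu}|_\cC=\int_0^1(\widetilde\nu|_\cC)\circ(T_t|_\cC)\,dt$ with $\widetilde\nu|_\cC\in\cP_\Z^e(T_1|_\cC)$ and $(\widetilde\nu|_\cC)|_\cA=\nu$, so the hypotheses of Proposition~\ref{pr12} hold for the extension $\cC\to\cA$, and likewise for any two nested $\cT$-invariant factors.

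First I would prove the implication from the flow to the time-one map. Assume $(\cT,\widetilde{\gr\nu})\to(\cT|_\cA,\gr\nu)$ is distal and fix a witnessing tower $(\cB_\alpha)_{\alpha\le\beta}$ of $\cT$-factors: $\cB_0=\cA$, $\cB_\beta=\cB$, each $\cB_{\alpha+1}\to\cB_\alpha$ flow-isometric, and $\cB_\alpha=\bigvee_{\gamma<\alpha}\cB_\gamma$ at limit ordinals. Every $\cB_\alpha$ is a fortiori a $T_1$-factor, and, equipped with the restricted ergodic measures described above, each successor step is flow-isometric, hence $T_1$-isometric by the ``only if'' part of Proposition~\ref{pr12}. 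At limit ordinals the inverse limit is just the join of the $\sigma$-algebras, a notion that does not see whether we act by $\cT$ or by $T_1$. Thus the very same tower witnesses distality of $(T_1,\widetilde\nu)\to(T_1|_\cA,\nu)$; this direction is routine.

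The converse is where the real work lies, and it is the step I expect to be the main obstacle. The difficulty is structural: a tower witnessing $T_1$-distality is built from $T_1$-factors which need not be $\cT$-invariant, so it cannot be fed through Proposition~\ref{pr12}, and the canonical (maximal-isometric) $T_1$-tower is, in general, not flow-invariant either, because the maximal isometric extension of a flow-invariant factor is taken with respect to the ergodic component $\widetilde\nu$ and conjugation by $T_s$ only moves it to the maximal isometric extension for the singular component $\widetilde\nu\circ T_{-s}$. To circumvent this I would instead build on the flow side the canonical tower of maximal flow-isometric extensions $\cA_0=\cA$, $\cA_{\alpha+1}=$ the largest flow-isometric extension of $\cA_\alpha$ inside $\cB$, with joins at limits; these are $\cT$-factors and the sequence stabilizes at some $\cA_\ast$, the flow extension being distal precisely when $\cA_\ast=\cB$. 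By Proposition~\ref{pr12} this tower is simultaneously a $T_1$-isometric tower, so $\cA_\ast$ is reached from $\cA$ by a $T_1$-isometric tower as well.

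It remains to show $\cA_\ast=\cB$ under the hypothesis that $(T_1,\widetilde\nu)\to(\cA,\nu)$ is distal. Since $\cA_\ast$ admits no nontrivial flow-isometric extension inside $\cB$, the Zimmer dichotomy (the flow version of Theorem~\ref{fz} together with \eqref{szesc}) makes $(\cB,\widetilde{\gr\nu})\to(\cA_\ast,\widetilde{\gr\nu}|_{\cA_\ast})$ relatively weakly mixing for the flow; on the other hand $(\cB,\widetilde\nu)\to(\cA_\ast,\cdot)$, being a sub-extension of a $T_1$-distal extension, is itself $T_1$-distal, so if $\cA_\ast\subsetneq\cB$ it would carry a nontrivial $T_1$-isometric step over $\cA_\ast$. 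The crux is to see that these two statements are incompatible, i.e.\ that flow relative weak mixing over $\cA_\ast$ forces $T_1$ relative weak mixing over $\cA_\ast$. I would attack this through the joining description of relative weak mixing (ergodicity of the relatively independent self-joining over $\cA_\ast$, Section~\ref{relsekcja1}) together with Lemma~\ref{lm:dec}, Lemma~\ref{lm:rozkladdokl} and Remark~\ref{ro:jo}: the point is to match the flow relatively independent self-joining $\widetilde{\gr\nu}\otimes_{\cA_\ast}\widetilde{\gr\nu}$ with the suspension of the time-one one $\widetilde\nu\otimes_{\cA_\ast}\widetilde\nu$, using that the ergodic components of $\gr\nu$ over $\cA_\ast$ are mutually singular so that the suspension average preserves conditional independence. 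This measure identification, and the passage from flow-ergodicity back to $T_1$-ergodicity of the relatively independent square, is exactly the delicate point; it is the genuine content behind the converse and the part I would expect to absorb most of the effort.
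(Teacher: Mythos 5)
Your first implication (flow-distal $\Rightarrow$ $T_1$-distal) is correct and coincides with the paper's argument: the flow tower is a fortiori a $T_1$-tower and Proposition~\ref{pr12} converts each successor step. The problem is the converse, where your proposal is an outline rather than a proof. Having built the maximal flow-isometric tower stabilizing at $\cA_\ast$, everything hinges on your claim that relative weak mixing of $(\cT,\widetilde{\gr{\nu}})$ over $\cA_\ast$ forces relative weak mixing of $(T_1,\widetilde{\nu})$ over $\cA_\ast$ (equivalently, rules out a nontrivial $T_1$-isometric step over $\cA_\ast$). You do not prove this; you only name the tools you ``would attack'' it with and concede it is ``the delicate point'' that ``would absorb most of the effort.'' That step is essentially statement (ii) of the corollary which the paper \emph{deduces from} Proposition~\ref{pr12a}, so deferring it leaves the converse unestablished and comes close to assuming the result. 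The difficulty is real: the two relatively independent squares $\widetilde{\gr{\nu}}\otimes_{\cA_\ast}\widetilde{\gr{\nu}}$ and $\widetilde{\nu}\otimes_{\cA_\ast}\widetilde{\nu}$ are formed over $\cA_\ast$ with respect to different (possibly mutually singular) measures on $\cA_\ast$, and ergodicity for a flow never by itself yields ergodicity of its time-one map, so the transfer genuinely requires the kind of measure bookkeeping (Lemma~\ref{lm:dec}, Lemma~\ref{lm:rozkladdokl}) that you have only gestured at.

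You also discard the paper's route too quickly. The paper does not need the maximal $T_1$-isometric extension $\cC_\alpha$ of $\cC_{\alpha-1}$ to be a priori flow-invariant; it observes that $T_t\cC_\alpha\to\cC_{\alpha-1}$ is again isometric (since $T_t$ conjugates $T_1$ to itself and fixes the flow-invariant base $\cC_{\alpha-1}$), hence by~\eqref{szesc} the join $\cC_\alpha\vee T_t\cC_\alpha\to\cC_{\alpha-1}$ is isometric, and \emph{maximality} then forces $T_t\cC_\alpha\subseteq\cC_\alpha$ for all $t$, i.e.\ flow-invariance. Your objection about the singular components $\widetilde{\nu}\circ T_{-s}$ is a legitimate point of care (and is precisely why the hypotheses pin down the ergodic decompositions, and why the paper offers a second, separating-sieve proof in Appendix~\ref{apB}), but it does not invalidate the maximality argument, whereas your substitute argument currently has a hole exactly at its crux. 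To salvage your route you would need to actually prove the relative-weak-mixing transfer over $\cA_\ast$; as written, the converse direction is not proved.
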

\begin{proof}
Suppose that the extension $(\cT,\widetilde{\gr{\nu}})\to (\cT|_\cA,\gr{\nu})$ is distal. Since any factor of $\cT$ is a factor of $T_1$, it clearly follows from the definition of a distal extension and Proposition~\ref{pr12} that also the extension $(T_1,\widetilde{\nu})\to({T_1}|_\cA,\nu)$ is distal.

Suppose now that the extension $(T_1,\widetilde{\nu})\to({T_1}|_\cA,\nu)$ is distal. Let $(\cC_\alpha)_{\alpha\leq \beta}$ be a transfinite sequence of factors of $T_1$ such that $\cC_0=\cA$, $\cC_\beta=\cB$, the extension $\cC_{\alpha+1}\to \cC_\alpha$ for $T_1$ is isometric and for $\alpha$ being limit ordinals, $\cC_\alpha$ is the inverse limit of the preceding factors. We may assume that for each $\alpha$ the factor $\cC_{\alpha+1}$ is the largest extension of $\cC_\alpha$ inside $\cB$ such that $\cC_{\alpha+1}\to\cC_\alpha$ is isometric. In particular, for each $\alpha$, $\cC_\alpha$ is $\cT$-invariant, i.e. it is a factor of $\cT$. Indeed, for $\alpha$ which is not a limit ordinal, ${T_1}|_{\cC_\alpha}$ is isomorphic to ${T_1}|_{T_t\cC_\alpha}$, both $\cC_\alpha\to \cC_{\alpha-1}$, $T_t\cC_\alpha\to \cC_{\alpha-1}$ are isometric, whence $\cC_\alpha\vee T_t\cC_\alpha\to\cC_{\alpha-1}$ is also isometric, which follows e.g. from~\eqref{szesc}.\footnote{$\cC_\alpha\vee T_t\cC_\alpha$ stands for the $\sigma$-algebra generated by $\cC_\alpha$ and $T_t\cC_\alpha$.} For $\alpha$ which is a limit ordinal it suffices to notice that the property of being $\cT$-invariant is preserved under taking inverse limits. This ends the proof by Proposition~\ref{pr12}.
\end{proof}
The same proof works in the co-finite case:
\begin{pr}\label{pr12a1}
Let $T$ be an ergodic automorphism acting on $(X,\cB,\widetilde{\gr{\nu}})$ with a factor $\cA$. Fix $l\in\N$. Let $\gr{\nu}=\widetilde{\gr{\nu}}|_\cA$. Let $\widetilde{\gr{\nu}}=\frac{1}{l} \sum_{i=0}^{l-1}\widetilde{\nu}\circ T_{i}$ and $\gr{\nu}=\frac{1}{l}\sum_{i=0}^{l-1}\nu\circ {T_i}|_\cA$ for some $\widetilde{\nu}\in \cP_\Z^e(T^l), \nu\in \cP_\Z^e(T^l|_\cA)$. Additionally assume that $\widetilde{\nu}|_\cA=\nu$. Then the extension $(T^l,\widetilde{\nu})\to (T^l|_{\cA},\nu)$ is distal if and only if the extension $(T,\widetilde{\gr{\nu}})\to (T|_\cA,\gr{\nu})$ is distal.
\end{pr}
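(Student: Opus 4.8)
The plan is to follow the proof of Proposition~\ref{pr12a} almost verbatim, replacing the flow $\cT=(T_t)_{t\in\R}$ by the single automorphism $T$, the time-one map $T_1$ by the power $T^l$, and every appeal to Proposition~\ref{pr12} by the corresponding appeal to Proposition~\ref{pr12-1}. In the flow case the continuous family $(T_t)_{t\in\R}$ supplied the invariance argument; here that role is played by the single generator $T$, which commutes with $T^l$. As usual the extension is distal precisely when it is built from a transfinite tower of isometric extensions, so the task reduces to transporting such towers between $T$ and $T^l$.

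For the implication that distality of $(T,\widetilde{\gr{\nu}})\to (T|_\cA,\gr{\nu})$ forces distality of $(T^l,\widetilde{\nu})\to (T^l|_\cA,\nu)$, I would argue as in the first half of Proposition~\ref{pr12a}. Any $T$-invariant factor is automatically $T^l$-invariant, so a transfinite sequence $(\cC_\alpha)_{\alpha\leq\beta}$ of isometric extensions (for $T$) witnessing distality of the extension over $\cA$ is in particular a tower of $T^l$-factors with $\cC_0=\cA$ and $\cC_\beta=\cB$. Proposition~\ref{pr12-1}, applied at each successor step, upgrades $\cC_{\alpha+1}\to\cC_\alpha$ from isometric-for-$T$ to isometric-for-$T^l$, and the same tower then witnesses distality of $(T^l,\widetilde{\nu})\to (T^l|_\cA,\nu)$.

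For the converse, suppose $(T^l,\widetilde{\nu})\to (T^l|_\cA,\nu)$ is distal and fix a transfinite sequence $(\cC_\alpha)_{\alpha\leq\beta}$ of factors of $T^l$ with $\cC_0=\cA$, $\cC_\beta=\cB$, each $\cC_{\alpha+1}\to\cC_\alpha$ isometric, and each limit factor the inverse limit of its predecessors. As in Proposition~\ref{pr12a} I would arrange that $\cC_{\alpha+1}$ is the \emph{largest} isometric extension of $\cC_\alpha$ inside $\cB$. Then, assuming inductively $T\cC_\alpha=\cC_\alpha$ (with $\cC_0=\cA$ being $T$-invariant as a factor of $T$), the factor $T\cC_{\alpha+1}$ is again an isometric extension of $T\cC_\alpha=\cC_\alpha$, so $\cC_{\alpha+1}\vee T\cC_{\alpha+1}\to\cC_\alpha$ is isometric by~\eqref{szesc}; maximality forces $T\cC_{\alpha+1}=\cC_{\alpha+1}$, and so $\cC_{\alpha+1}$ is $T$-invariant. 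For limit ordinals $T$-invariance passes to the inverse limit. Finally Proposition~\ref{pr12-1} turns each isometric-for-$T^l$ step into an isometric-for-$T$ step, and the resulting tower of $T$-invariant factors witnesses distality of $(T,\widetilde{\gr{\nu}})\to (T|_\cA,\gr{\nu})$.

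The only genuinely non-formal ingredient is the $T$-invariance of the maximal isometric factors $\cC_{\alpha+1}$, which rests on the fact — via~\eqref{szesc} — that a join of two isometric extensions over a common base is again isometric; this is exactly the point transcribed from the flow argument. By contrast, the measure bookkeeping needed to invoke Proposition~\ref{pr12-1} is \emph{simpler} here than in the flow case: since $T^l$-ergodicity is preserved under passage to factors, the restrictions $\widetilde{\nu}^{(\alpha)}:=\widetilde{\nu}|_{\cC_\alpha}$ are $T^l$-ergodic and satisfy $\widetilde{\gr{\nu}}|_{\cC_\alpha}=\frac{1}{l}\sum_{i=0}^{l-1}\widetilde{\nu}^{(\alpha)}\circ T^i$ with the \emph{same} period $l$ throughout the tower, together with $\widetilde{\nu}^{(\alpha+1)}|_{\cC_\alpha}=\widetilde{\nu}^{(\alpha)}$. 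Thus the hypotheses of Proposition~\ref{pr12-1} hold at every level without the period growth ($k,l,m$) that complicated Proposition~\ref{pr12}, and I expect no real obstacle beyond carefully reproducing the invariance step.
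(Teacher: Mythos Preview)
Your proposal is correct and follows exactly the approach indicated by the paper, which simply states that ``the same proof works in the co-finite case'' --- i.e., one transcribes the proof of Proposition~\ref{pr12a} with $T$ in place of $\cT$, $T^l$ in place of $T_1$, and Proposition~\ref{pr12-1} in place of Proposition~\ref{pr12}. Your additional remark that the measure bookkeeping is actually simpler here (the period $l$ is fixed throughout the tower, with no analogue of the $k,l,m$ refinement in Proposition~\ref{pr12}) is a nice clarification not spelled out in the paper.
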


\paragraph{Furstenberg-Zimmer decomposition}
Using the techniques from the proof of Proposition~\ref{pr12a} and arguing by contradiction one can show more:
\begin{wn}
Under the assumptions of Proposition~\ref{pr12a} the following statements are true and equivalent:
\begin{enumerate}[(i)]
\item
The extension $(T_1,\widetilde{\nu})\to({T_1}|_\cA,\nu)$ is distal if and only if the extension $(\cT,\widetilde{\gr{\nu}})\to (\cT|_\cA,\gr{\nu})$ is distal.
\item
The extension $(T_1,\widetilde{\nu})\to({T_1}|_\cA,\nu)$ is relatively weakly mixing if and only if the extension $(\cT,\widetilde{\gr{\nu}})\to (\cT|_\cA,\gr{\nu})$ is relatively weakly mixing.
\item
The Furstenberg-Zimmer decomposition for $(T_1,\widetilde{\nu})\to({T_1}|_\cA,\nu)$ is the same as for $(\cT,\widetilde{\gr{\nu}})\to (\cT|_\cA,\gr{\nu})$, i.e. there exists $\cA\subset\cC\subset\cB$ which is a factor of $\cT$ such that the extension $\cB\to \cC$ is relatively weakly mixing both for $(\cT,\widetilde{\gr{\nu}})$ and for $(T_1,\widetilde{\nu})$ and the extension $\cC\to \cA$ is distal both for $(\cT,\widetilde{\gr{\nu}}|_\cC)$ and for $(T_1,\widetilde{\nu}|_\cC)$.
\end{enumerate}
\end{wn}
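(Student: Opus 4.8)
The plan is to reduce everything to statement (iii), since (i) is literally Proposition~\ref{pr12a} and, once the two Furstenberg--Zimmer decompositions are shown to coincide, statements (i) and (ii) together with their mutual equivalence will be immediate. So I would start from the Furstenberg--Zimmer decomposition (Theorem~\ref{fz}, in its version for a single automorphism) of the extension $(T_1,\widetilde{\nu})\to({T_1}|_\cA,\nu)$: let $\cA\subset\cC\subset\cB$ be the unique intermediate factor of $T_1$ for which $\cB\to\cC$ is relatively weakly mixing and $\cC\to\cA$ is relatively distal, both with respect to $\widetilde{\nu}$. The whole task is then to prove that this \emph{same} $\cC$ is the relatively distal part of the Furstenberg--Zimmer decomposition of $(\cT,\widetilde{\gr{\nu}})\to(\cT|_\cA,\gr{\nu})$; by the uniqueness clause of Theorem~\ref{fz} (now in its flow version) this is exactly what (iii) asserts.

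The verification splits into three points, each handled with the machinery already developed. First, $\cC$ must be shown to be $\cT$-invariant. Here I would reuse verbatim the transfinite argument from the proof of Proposition~\ref{pr12a}: build the tower $\cA=\cC_0\subset\cC_1\subset\cdots$ of \emph{maximal} isometric extensions inside $\cB$, note that at each successor step $T_t\cC_{\alpha+1}$ is again isometric over $\cC_\alpha$ so maximality forces $T_t\cC_{\alpha+1}=\cC_{\alpha+1}$, and observe that $\cT$-invariance passes to inverse limits. By uniqueness of the Furstenberg--Zimmer factor this tower stabilizes precisely at $\cC$ (the first level over which $\cB$ becomes relatively weakly mixing), so $\cC$ is a factor of $\cT$. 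Second, $\cC\to\cA$ is relatively distal for $\cT$: this is Proposition~\ref{pr12a} applied to the extension $\cC\to\cA$, with the measures $\widetilde{\gr{\nu}}|_\cC,\gr{\nu}$ and their ergodic components $\widetilde{\nu}|_\cC,\nu$ (the compatibility hypotheses of that proposition hold since all the measures involved are restrictions of $\widetilde{\nu}$ and $\widetilde{\gr{\nu}}$).

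Third, and this is where I would argue by contradiction, $\cB\to\cC$ must be shown relatively weakly mixing for $\cT$. If it were not, the flow version of Theorem~\ref{fz} over $\cC$ would supply a nontrivial intermediate factor $\cC\subsetneq\cC'\subset\cB$ with $\cC'\to\cC$ isometric for $\cT$. Proposition~\ref{pr12} (the isometric case, applied with base $\cC$ in place of $\cA$) would then make $\cC'\to\cC$ isometric for $T_1$ as well; composing with the relatively distal extension $\cC\to\cA$ would exhibit a distal extension $\cC'\to\cA$ with $\cC'\neq\cC$, contradicting the maximality of $\cC$, i.e.\ the relative weak mixing of $\cB\to\cC$ for $T_1$. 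With the three points in hand, uniqueness in Theorem~\ref{fz} for $\cT$ identifies $\cC$ as the relatively distal part of the decomposition for $\cT$, which gives (iii); then (i) amounts to ``$\cC=\cB$'' and (ii) to ``$\cC=\cA$'', each now insensitive to whether we read it for $T_1$ or for $\cT$, yielding both statements and their equivalence at once. I expect the main obstacle to be the measure bookkeeping when invoking Propositions~\ref{pr12} and~\ref{pr12a} on the sub-extensions $\cC\to\cA$ and $\cC'\to\cC$ (checking that the ergodic-decomposition hypotheses, e.g.\ that $\widetilde{\nu}|_\cC$ restricts correctly to $\cA$, are genuinely met), together with the justification that the maximal-isometric tower terminates exactly at the Furstenberg--Zimmer factor rather than overshooting it.
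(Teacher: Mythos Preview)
Your proposal is correct and follows essentially the same approach as the paper, which merely states that one proceeds ``using the techniques from the proof of Proposition~\ref{pr12a} and arguing by contradiction''. You have fleshed out precisely this outline: the transfinite maximal-isometric tower from Proposition~\ref{pr12a} gives $\cT$-invariance of $\cC$, Proposition~\ref{pr12a} itself handles distality of $\cC\to\cA$, and the contradiction via Proposition~\ref{pr12} disposes of relative weak mixing of $\cB\to\cC$.
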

In the co-finite case we have the following:
\begin{wn}
Under the assumptions of Proposition~\ref{pr12a1} the following statements are true and equivalent:
\begin{enumerate}[(i)]
\item
The extension $(T^l,\widetilde{\nu})\to (T^l|_{\cA},\nu)$ is distal if and only if the extension $(T,\widetilde{\gr{\nu}})\to (T|_\cA,\gr{\nu})$ is distal.
\item
The extension $(T^l,\widetilde{\nu})\to (T^l|_{\cA},\nu)$ is relatively weakly mixing if and only if the extension $(T,\widetilde{\gr{\nu}})\to (T|_\cA,\gr{\nu})$ is relatively weakly mixing.
\item
Then the Furstenberg-Zimmer decomposition for $(T^l,\widetilde{\nu})\to ({T^l}|_\cA,\nu)$ is the same as for $(T,\widetilde{\gr{\nu}})\to (T|_\cA,\gr{\nu})$, i.e. there exists $\cA\subset \cC\subset \cB$ which is a factor of $T_1$ such that the extension $\cB\to \cC$ is relatively weakly mixing both for $(T,\widetilde{\gr{\nu}})$ and for $(T^l,\widetilde{\nu})$ and the extension $\cC\to \cA$ is distal both for $(T,\widetilde{\gr{\nu}}|_\cC)$ and for $(T^l,\widetilde{\nu}|_\cC)$. 
\end{enumerate}
\end{wn}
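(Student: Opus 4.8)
The plan is to treat (i) as already established --- it is precisely Proposition~\ref{pr12a1} --- and then to prove the relatively weakly mixing transfer (ii) and the coincidence of the decompositions (iii), from which the mutual equivalence of all three statements is formal. The organising principle is the uniqueness part of the Furstenberg--Zimmer theorem (Theorem~\ref{fz}): once I exhibit a single intermediate factor $\cC$ that serves \emph{simultaneously} as the Furstenberg--Zimmer factor for $(T,\widetilde{\gr\nu})$ and for $(T^l,\widetilde{\nu})$, uniqueness forces the two decompositions to agree, and (i)--(iii) become three ways of recording this.

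First I would build the decomposition on the $T$-side, where the acting automorphism genuinely preserves the ambient measure. Following the proof of Proposition~\ref{pr12a}, I construct a transfinite distal tower $\cA=\cC_0\subset\cC_1\subset\cdots$ inside $\cB$ for $(T,\widetilde{\gr\nu})$, taking at each successor step $\cC_{\alpha+1}$ to be the \emph{maximal} isometric extension of $\cC_\alpha$ inside $\cB$, and inverse limits at limit ordinals; let $\cC$ be the top of the tower. Because $T$ preserves $\widetilde{\gr\nu}$, maximality makes every $\cC_\alpha$ automatically $T$-invariant: if $\cC_\alpha$ is $T$-invariant then $T\cC_{\alpha+1}\to\cC_\alpha$ is again isometric over the same base and with respect to the same measure, so $\cC_{\alpha+1}\vee T\cC_{\alpha+1}\to\cC_\alpha$ is isometric by~\eqref{szesc}, whence $T\cC_{\alpha+1}=\cC_{\alpha+1}$. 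Thus $\cC$ is a factor of $T$, the extension $\cC\to\cA$ is distal for $(T,\widetilde{\gr\nu})$, and $\cB\to\cC$ is relatively weakly mixing for $(T,\widetilde{\gr\nu})$; this is the Furstenberg--Zimmer decomposition on the $T$-side. That $\cC\to\cA$ is also distal for $(T^l,\widetilde{\nu})$ is then exactly (i)/Proposition~\ref{pr12a1} applied level by level along the tower, each successor step being isometric for $T$ and hence, by Proposition~\ref{pr12-1} (every $\cC_\alpha$ being $T$-invariant), isometric for $T^l$.

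For the relatively weakly mixing half I would prove the standalone assertion (ii) for an arbitrary $T$-invariant base --- applying it at $\cC$ to finish (iii), and at $\cA$ to obtain statement (ii) itself. The cleanest route avoids juggling the mutually singular ergodic components $\widetilde{\nu}\circ T^i$ directly: disintegrating over the base and using that its $\cC$-marginals are mutually singular, one checks the identity
\[
\widetilde{\gr\nu}\otimes_{\cC}\widetilde{\gr\nu}=\frac{1}{l}\sum_{i=0}^{l-1}\bigl(\widetilde{\nu}\otimes_{\cC}\widetilde{\nu}\bigr)\circ(T\times T)^i ,
\]
i.e.\ forming the relatively independent extension commutes with averaging the ergodic components. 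Since relative weak mixing of $\cB\to\cC$ for $T$ (resp.\ for $T^l$) means ergodicity of $\widetilde{\gr\nu}\otimes_{\cC}\widetilde{\gr\nu}$ for $T\times T$ (resp.\ of $\widetilde{\nu}\otimes_{\cC}\widetilde{\nu}$ for $T^l\times T^l$), the equivalence reduces to Lemma~\ref{lm:dec} and Lemma~\ref{lm:rozkladdokl} applied to the product automorphism $T\times T$ and the measure $\widetilde{\nu}\otimes_{\cC}\widetilde{\nu}$. The forward implication is immediate from Lemma~\ref{lm:dec}. For the converse I would use that $\widetilde{\nu}\otimes_{\cC}\widetilde{\nu}$ has both marginals equal to $\widetilde{\nu}$: by Lemma~\ref{lm:rozkladdokl} the ergodic components of the average carry mutually singular marginals of the form $\widetilde{\nu}\circ T^a\otimes\widetilde{\nu}\circ T^b$, so at most one of them can have both marginals equal to $\widetilde{\nu}$; hence $\widetilde{\nu}\otimes_{\cC}\widetilde{\nu}$ is forced to be that single $T^l\times T^l$-ergodic component.

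With both halves transferred, $\cA\subset\cC\subset\cB$ is a Furstenberg--Zimmer decomposition for $(T^l,\widetilde{\nu})$ as well, so by uniqueness in Theorem~\ref{fz} it coincides with the genuine one; this is (iii). Statements (i) and (ii) then read off from (iii) (distality of the whole extension is the case $\cC=\cB$, relative weak mixing is the case $\cC=\cA$, in either system), and the three are manifestly equivalent. I expect the one genuinely delicate point to be the converse direction of the relatively weakly mixing transfer: a naive attempt to carry the maximal isometric (relative Kronecker) factor across the $l$ mutually singular components $\widetilde{\nu}\circ T^i$ stumbles on the fact that $T$ only permutes them cyclically, so it is the marginal-counting argument above --- or, equivalently, a contradiction argument that passes any nontrivial isometric extension through Proposition~\ref{pr12-1} after replacing it by the $T$-invariant maximal isometric extension --- that makes the step legitimate.
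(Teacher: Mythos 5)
Your overall strategy is the one the paper intends --- the text offers no more than ``use the techniques of the proof of Proposition~\ref{pr12a} and argue by contradiction'', and your transfinite tower of maximal isometric extensions, the $T$-invariance of each level via~\eqref{szesc}, and the level-by-level appeal to Proposition~\ref{pr12-1} are exactly that; the contradiction route through the $T$-invariant maximal isometric extension is also the right mechanism for the converse half of the relatively-weakly-mixing transfer. The genuine gap is the displayed identity
\[
\widetilde{\gr{\nu}}\otimes_{\cC}\widetilde{\gr{\nu}}=\frac{1}{l}\sum_{i=0}^{l-1}\bigl(\widetilde{\nu}\otimes_{\cC}\widetilde{\nu}\bigr)\circ(T\times T)^i .
\]
It holds only when the restrictions $\widetilde{\nu}|_{\cC}\circ T^i$ are mutually singular with the same exact period as the measures $\widetilde{\nu}\circ T^i$ themselves; otherwise the disintegration of $\widetilde{\gr{\nu}}$ over $\cC$ mixes several components and the left-hand side acquires cross terms $\int \mu^{(i)}_c\otimes\mu^{(j)}_c\,d(\widetilde{\gr{\nu}}|_{\cC})$ with $i\neq j$ that are absent from the right-hand side. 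At the maximal distal factor $\cC$ this singularity is automatic, and you should say why: the finite rotation factor recording which $T^l$-ergodic component a point lies in is an isometric extension of the trivial factor, hence is contained in $\cC$. At the base $\cA$, however, it can fail, and there the statement you are proving fails with it.

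Concretely, take $l=2$, $X=Y\times\Z_2$, $T(y,i)=(Sy,i+1)$ with $S$ weakly mixing, $\widetilde{\nu}=\mu_Y\otimes\delta_0$, and $\cA$ the trivial factor. All hypotheses of Proposition~\ref{pr12a1} hold (with $\gr{\nu}=\nu$ a point mass), yet $(T^2,\widetilde{\nu})\to(T^2|_{\cA},\nu)$ is relatively weakly mixing --- it is just $S^2$ over a point --- while $(T,\widetilde{\gr{\nu}})$ has the eigenfunction $(-1)^i$ and is not. So item (ii), read at the base $\cA$, fails whenever the exact period of $\gr{\nu}=\frac{1}{l}\sum_i\nu\circ T^i|_{\cA}$ is strictly smaller than that of $\widetilde{\gr{\nu}}$, and the same example shows that (ii) cannot be ``read off from (iii) as the case $\cC=\cA$'': the $\sigma$-algebra $\cC$ may be trivial modulo $\widetilde{\nu}$ yet nontrivial modulo $\widetilde{\gr{\nu}}$, so equality of the decompositions does not formally transfer the biconditional down to $\cA$. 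Your argument for (iii) survives once you insert the singularity remark above and replace $l$ by the exact period of the decomposition; but for (ii) you must either add the hypothesis that the two ergodic decompositions (of $\widetilde{\gr{\nu}}$ for $T^l$ and of $\gr{\nu}$ for $T^l|_{\cA}$) have the same exact period, or state explicitly that the claim is to be understood after that reduction.
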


\subsection{Quasi- and distal simplicity}
In this section we prove counterparts of Proposition~\ref{pr:simple} and~\ref{pr:jp} for the 2-QS and 2-DS properties.

\subsubsection{Quasi-simplicity}
\begin{pr}\label{QS-cpt}
Let $\cT=(T_t)_{t\in\R}$ be a measure-preserving flow on $(X,\cB,\mu)$ with $T_1$ ergodic. Then $\cT$ is 2-QS if and only if $T_1$ is 2-QS.
\end{pr}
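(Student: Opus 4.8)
The plan is to exploit the surjection $\eta\mapsto\gr{\eta}$ from $\cJ_2^e(T_1)$ onto $\cJ_2^e(\cT)$ furnished by Lemma~\ref{lm:dec} and Remark~\ref{ro:jo}, and to show that $\eta$ and its suspension $\gr{\eta}$ always share the relevant structure. Concretely, I would prove the following single claim, from which both implications follow at once: for every $\eta\in\cJ_2^e(T_1)$, the joining $\eta$ is an isometric extension of each of its two coordinate factors (for $T_1$) if and only if $\gr{\eta}$ is an isometric extension of each of its two coordinate factors (for $\cT$); moreover $\eta=\mu\otimes\mu$ if and only if $\gr{\eta}=\mu\otimes\mu$. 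Granting this, if $T_1$ is 2-QS then every $\gr{\eta}\in\cJ_2^e(\cT)\setminus\{\mu\otimes\mu\}$ arises from some $\eta\neq\mu\otimes\mu$ which is isometric over each coordinate, whence so is $\gr{\eta}$; the converse uses surjectivity the same way.

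The degenerate case is settled first. If $\eta=\mu\otimes\mu$ then $\gr{\eta}=\int_0^1(\mu\otimes\mu)\circ(T_t\times T_t)\,dt=\mu\otimes\mu$, since $\mu$ is $\cT$-invariant. For the converse, suppose $\gr{\eta}=\mu\otimes\mu$; by Remark~\ref{ro:jo} this measure is ergodic for $\cT\times\cT$, so $\cT$ is weakly mixing, hence so is $T_1$ by the classical equivalence of weak mixing for $\cT$ and $T_1$, and therefore $\mu\otimes\mu$ is ergodic for $T_1\times T_1$. As $\gr{\eta}=\int_0^1\eta\circ(T_t\times T_t)\,dt$ is the ergodic decomposition of $\gr{\eta}$ for $T_1\times T_1$ (Lemma~\ref{lm:dec}), uniqueness of that decomposition forces $\eta\circ(T_t\times T_t)=\mu\otimes\mu$ for almost every $t$, and continuity (Remark~\ref{uw:ciaglosc}) gives $\eta=\mu\otimes\mu$.

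For the main case I would apply Proposition~\ref{pr12} to the product flow $\cT\times\cT=(T_t\times T_t)_{t\in\R}$ acting on $(X\times X,\gr{\eta})$, taking as the factor $\cA$ the first coordinate $\sigma$-algebra $\cB\otimes\{\varnothing,X\}$. With this identification $\widetilde{\gr{\nu}}=\gr{\eta}$ and $\widetilde{\nu}=\eta$, while both marginals $\gr{\nu}=\gr{\eta}|_\cA$ and $\nu=\eta|_\cA$ equal $\mu$; the decompositions $\gr{\eta}=\int_0^1\eta\circ(T_t\times T_t)\,dt$ and $\mu=\int_0^1\mu\circ T_t\,dt$ hold, and $\widetilde{\nu}|_\cA=\mu=\nu$, so all hypotheses of Proposition~\ref{pr12} are met. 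Its conclusion is exactly that $(T_1\times T_1,\eta)\to(T_1,\mu)$ is isometric if and only if $(\cT\times\cT,\gr{\eta})\to(\cT,\mu)$ is isometric, i.e.\ that $\eta$ is isometric over its first coordinate precisely when $\gr{\eta}$ is. Running the same argument with $\cA$ the second coordinate factor disposes of the other coordinate, which establishes the claim.

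The genuinely hard content is already packaged into Proposition~\ref{pr12}, whose proof rests on the $T$-compactness criterion (Proposition~\ref{Tzwarte}) together with the metric-continuity Lemmas~\ref{vep4}--\ref{vep44}; once that is in hand, the remaining work is the bookkeeping of the product flow and the disposal of the product-measure case. The one point requiring care is that $\cT$ is \emph{not} assumed weakly mixing, so the easy inclusion $\cJ_2^e(\cT)\subset\cJ_2^e(T_1)$ available in the simplicity argument (Proposition~\ref{pr:simple}) is lost here; this is precisely why the degenerate case must be routed through the weak-mixing equivalence rather than taken for granted, and why Proposition~\ref{pr12}, which tolerates marginals that fail to be ergodic for $T_1\times T_1$, is the appropriate instrument.
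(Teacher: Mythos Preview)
Your proposal is correct and follows essentially the same route as the paper: both reduce the question to Proposition~\ref{pr12} applied to the product flow $\cT\times\cT$ with the coordinate $\sigma$-algebras as the factor $\cA$. Your packaging via a single biconditional claim is a bit tidier than the paper's case-by-case treatment, and your explicit handling of the product-measure case (verifying that $\eta=\mu\otimes\mu\iff\gr{\eta}=\mu\otimes\mu$ via the weak-mixing equivalence) fills in a step the paper leaves implicit.
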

\begin{proof}
Suppose that $\cT$ is 2-QS and let $\lambda\in \cJ^e_2(T_1)$, $\lambda\neq \mu\otimes \mu$. There are two possibilities:
\begin{enumerate}[(i)]
\item
$\lambda=\lambda\circ(T_t\times T_t)\text{ for all }t\in\R,$
\item
$\lambda\perp\lambda\circ(T_t\times T_t)\text{ for some }t\in(0,1).$
\end{enumerate}
In case (i) apply Proposition~\ref{pr12} to $(\cT\times \cT,\lambda)$, $(T_1\times T_1,\lambda)$ and the coordinate factors. In case (ii) consider
$\gr{\lambda}:=\int_0^1\lambda\circ (T_t\times T_t)\ dt$
and apply Proposition~\ref{pr12} to $(\cT\times \cT,\gr{\lambda})$, $(T_1\times T_1,\lambda)$ and the coordinate factors.

Suppose now that $T_1$ is 2-QS. Let $\gr{\lambda}\in \cJ_2^e(\cT)$, $\lambda\neq\mu\otimes \mu$. There are two possibilities:
\begin{multicols}{2}
\begin{enumerate}[(i)]
\item
$\gr{\lambda}\in \cJ_2^e(\cT),$
\item
$\gr{\lambda}\not\in \cJ^e(\cT).$
\end{enumerate}
\end{multicols}
In case (i) apply Proposition~\ref{pr12} to $(\cT\times \cT,\gr{\lambda})$, $(T_1\times T_1,\lambda)$ and the coordinate factors. In case (ii) take $\lambda\in \cJ^e_2(T_1)$ such that $\gr{\lambda}=\int_0^1 \lambda\circ (T_t\times T_t)\ dt$ and apply Proposition~\ref{pr12} to $(\cT\times \cT,\gr{\lambda})$, $(T_1\times T_1,\lambda)$ and the coordinate factors. 
\end{proof}

\begin{pr}\label{QS-cft}
Let $l\geq 1$ and let $T$ be an automorphism of $\xbm$, such that $T^l$ is ergodic. Then $T$ is 2-QS if and only if $T^l$ is 2-QS.
\end{pr}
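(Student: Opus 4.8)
The plan is to run the argument for Proposition~\ref{QS-cpt} verbatim in the co-finite setting, replacing the flow average $\eta\mapsto\int_0^1\eta\circ(T_t\times T_t)\,dt$ by the finite average
$$
\gr{\eta}:=\frac{1}{l}\sum_{i=0}^{l-1}\eta\circ(T^i\times T^i),
$$
and replacing Proposition~\ref{pr12} (the isometric-extension transfer for flows) by its co-finite counterpart Proposition~\ref{pr12-1}. First I would record the elementary reductions. Since $T^l$ is ergodic, so is $T$ (any $T$-invariant set is $T^l$-invariant). The discrete analogues of Lemma~\ref{lm:dec} and Lemma~\ref{lm:rozkladdokl} (obtained as in the flow case, with the finite group $\Z/l\Z$ in place of $\T$, so that any two measures $\eta\circ(T^i\times T^i)$ are equal or mutually singular) show that $\nu\mapsto\gr{\nu}$ maps $\cP_\Z^e(T^l)$ onto $\cP_\Z^e(T)$ and, applied to $T\times T$, that $\eta\mapsto\gr{\eta}$ maps $\cJ_2^e(T^l)$ onto $\cJ_2^e(T)$, with $\gr{\eta}=\frac1l\sum_{i}\eta\circ(T^i\times T^i)$ being precisely the $T^l\times T^l$-ergodic decomposition of $\gr{\eta}$; this is the co-finite form of Remark~\ref{ro:jo}. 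I would also note that $\lambda\neq\mu\otimes\mu$ forces $\gr{\lambda}\neq\mu\otimes\mu$ (and conversely): if either equalled $\mu\otimes\mu$ then $\mu\otimes\mu$ would be ergodic for both $T\times T$ and $T^l\times T^l$, since weak mixing is inherited by powers and vice versa, and uniqueness of the ergodic decomposition would collapse the average back to $\lambda=\mu\otimes\mu$.

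For the direction ``$T$ 2-QS $\Rightarrow$ $T^l$ 2-QS'', fix $\lambda\in\cJ_2^e(T^l)$ with $\lambda\neq\mu\otimes\mu$ and distinguish, exactly as in Proposition~\ref{QS-cpt}, the case where $\lambda$ is $T\times T$-invariant (so $\lambda=\gr{\lambda}\in\cJ_2^e(T)$) from the case where $\lambda\perp\lambda\circ(T^i\times T^i)$ for some $i$ (so $\gr{\lambda}\in\cJ_2^e(T)$ is a genuine average). In either case I would apply Proposition~\ref{pr12-1} to the ambient automorphism $T\times T$, its power $(T\times T)^l=T^l\times T^l$, and the first-coordinate factor $\cA$, taking $\widetilde{\gr{\nu}}=\gr{\lambda}$, $\widetilde{\nu}=\lambda$, and $\gr{\nu}=\nu=\mu$. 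The hypotheses of Proposition~\ref{pr12-1} are met because $\lambda|_\cA=\gr{\lambda}|_\cA=\mu$ and the two required decompositions are exactly $\gr{\lambda}=\frac1l\sum_i\lambda\circ(T^i\times T^i)$ and $\mu=\frac1l\sum_i\mu$. Since $T$ is 2-QS and $\gr{\lambda}\neq\mu\otimes\mu$, the joining $\gr{\lambda}$ is an isometric extension of the coordinate factor, so Proposition~\ref{pr12-1} transfers isometry to $(T^l\times T^l,\lambda)\to(T^l,\mu)$; running the same argument with the second coordinate shows $\lambda$ is an isometric extension of each coordinate factor. Hence $T^l$ is 2-QS.

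For the converse, fix $\gr{\lambda}\in\cJ_2^e(T)$ with $\gr{\lambda}\neq\mu\otimes\mu$. If $\gr{\lambda}$ is already ergodic for $T^l\times T^l$ I take $\lambda=\gr{\lambda}$; otherwise I choose an ergodic component $\lambda\in\cJ_2^e(T^l)$ so that $\gr{\lambda}=\frac1l\sum_i\lambda\circ(T^i\times T^i)$. In either case the marginals of $\lambda$ are the $T^l$-ergodic component $\mu$ of $\mu$, and $\lambda\neq\mu\otimes\mu$ by the remark above. Applying Proposition~\ref{pr12-1} to $T\times T$, its $l$-th power, and the coordinate factors, and using that $T^l$ is 2-QS, I conclude that $\gr{\lambda}$ is an isometric extension of each coordinate factor, i.e.\ $T$ is 2-QS.

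The proof is essentially bookkeeping once Proposition~\ref{pr12-1} is available, so the only points requiring real care are (a) verifying the marginal-matching hypothesis $\widetilde{\nu}|_\cA=\nu$ and the precise form of the two ergodic decompositions demanded by Proposition~\ref{pr12-1} for the product system $T\times T$ and its power, and (b) invoking the ``equal or mutually singular'' dichotomy for the finite family $\{\lambda\circ(T^i\times T^i)\}_{i=0}^{l-1}$ correctly so that the chosen ergodic component $\lambda$ has the right coordinate marginals. I expect (a) to be the main obstacle, precisely as in the flow case, where the corresponding verification constituted the bulk of the proof of Proposition~\ref{pr12}; here, however, it is shifted entirely into Proposition~\ref{pr12-1}, which we may invoke as a black box.
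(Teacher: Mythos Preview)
Your proposal is correct and follows essentially the same approach as the paper: the paper's proof of this proposition simply says to repeat the argument of Proposition~\ref{QS-cpt} with finite averages in place of integrals and Proposition~\ref{pr12-1} in place of Proposition~\ref{pr12}, which is precisely what you have carried out in detail.
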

\begin{proof}
The proof goes along the same lines as the proof of Proposition~\ref{QS-cpt}. It suffices to replace integrals by finite averages and apply Proposition~\ref{pr12-1} instead of Proposition~\ref{pr12}.
\end{proof}

\begin{uw}
In Appendix~\ref{apB} we also give a short proof that, given $\cT$ such that $T_1$ is ergodic, $\cT$ is 2-QS whenever $T_1$ is 2-fold simple.
\end{uw}

\subsubsection{Distal simplicity}
\begin{pr}\label{DS-cpt}
Let $\cT=(T_t)_{t\in\R}$ be a measure-preserving flow on $(X,\cB,\mu)$ with $T_1$ ergodic. Then $\cT$ is 2-DS if and only if $T_1$ is 2-DS.
\end{pr}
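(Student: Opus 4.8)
The plan is to mirror the proof of Proposition~\ref{QS-cpt} essentially verbatim, replacing the word ``isometric'' by ``distal'' throughout and invoking the distal correspondence Proposition~\ref{pr12a} in place of the isometric correspondence Proposition~\ref{pr12}. Everything reduces to a single observation: for a self-joining $\lambda$ of $T_1$ (or of $\cT$), viewed as the action $T_1\times T_1$ (resp.\ $\cT\times\cT$) on $(X\times X,\lambda)$, the statement that $\lambda$ is a distal extension of a coordinate factor is exactly of the type that Proposition~\ref{pr12a} transports between the flow and its time-one map, with the coordinate factor playing the role of $\cA$.

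For the forward implication, I assume $\cT$ is 2-DS and fix $\lambda\in\cJ^e_2(T_1)$ with $\lambda\neq\mu\otimes\mu$, splitting into the same two cases as in Proposition~\ref{QS-cpt}. Either (i) $\lambda=\lambda\circ(T_t\times T_t)$ for all $t\in\R$, in which case $\lambda$ is $\cT\times\cT$-invariant and ergodic, so $\lambda\in\cJ^e_2(\cT)$ is a distal extension of each coordinate factor by hypothesis; or (ii) $\lambda\perp\lambda\circ(T_t\times T_t)$ for some $t\in(0,1)$, in which case I pass to $\gr{\lambda}:=\int_0^1\lambda\circ(T_t\times T_t)\ dt$, which lies in $\cJ^e_2(\cT)$ by Remark~\ref{ro:jo} and is therefore a distal extension of the coordinate factors. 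In both cases I apply Proposition~\ref{pr12a} to $(\cT\times\cT,\gr{\lambda})$ (respectively $(\cT\times\cT,\lambda)$ in case (i)), to $(T_1\times T_1,\lambda)$, and to a coordinate factor, concluding that $\lambda$ is a distal extension of each coordinate factor for $T_1\times T_1$; hence $T_1$ is 2-DS.

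For the converse, I assume $T_1$ is 2-DS and fix $\gr{\lambda}\in\cJ^e_2(\cT)$ with $\gr{\lambda}\neq\mu\otimes\mu$. Again there are two cases: if $\gr{\lambda}\in\cJ^e_2(T_1)$, then it is a distal extension of the coordinate factors for $T_1\times T_1$ by hypothesis, and Proposition~\ref{pr12a} transports distality to $\cT\times\cT$; if $\gr{\lambda}\notin\cJ^e_2(T_1)$, I use Remark~\ref{ro:jo} to write $\gr{\lambda}=\int_0^1\lambda\circ(T_t\times T_t)\ dt$ for some $\lambda\in\cJ^e_2(T_1)$, which is then a distal extension of the coordinate factors, and once more apply Proposition~\ref{pr12a} to deduce that $\gr{\lambda}$ is a distal extension for $\cT\times\cT$.

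The only thing requiring care — and what I expect to be the main, essentially bookkeeping, obstacle — is checking that the hypotheses of Proposition~\ref{pr12a} are genuinely met in each application. The marginal-matching requirement $\widetilde{\nu}|_\cA=\nu$ is automatic, since every self-joining projects to $\mu$ on each coordinate and $\mu$ is $\cT$-invariant; consequently the decompositions $\widetilde{\gr{\nu}}=\int_0^1\widetilde{\nu}\circ T_t\ dt$ and $\gr{\nu}=\int_0^1\nu\circ T_t\ dt$ have the required form, with the degenerate case $\widetilde{\gr{\nu}}=\widetilde{\nu}$ occurring precisely in case (i) of the forward direction. By symmetry it suffices to treat one coordinate factor, as the two coordinates are interchangeable. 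With these verifications the argument closes, and no idea beyond Proposition~\ref{pr12a} is needed.
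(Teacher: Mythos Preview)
Your proposal is correct and follows essentially the same approach as the paper: the paper's proof of Proposition~\ref{DS-cpt} consists of the single sentence ``In the proof of Proposition~\ref{QS-cpt} replace `Proposition~\ref{pr12}' with `Proposition~\ref{pr12a}','' and you have carried out exactly this substitution, additionally spelling out the verification that the hypotheses of Proposition~\ref{pr12a} (in particular the marginal condition $\widetilde{\nu}|_\cA=\nu$) are satisfied in each case.
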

\begin{proof}
In the proof of Proposition~\ref{QS-cpt} replace ``Proposition~\ref{pr12}'' with ``Proposition~\ref{pr12a}''.
\end{proof}
\begin{pr}\label{DS-cft}
Let $l\geq 1$ and let $T$ be an automorphism of $\xbm$, such that $T^l$ is ergodic. Then $T$ is 2-DS if and only if $T^l$ is 2-DS.
\end{pr}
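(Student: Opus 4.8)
The plan is to imitate the proof of Proposition~\ref{QS-cpt} almost verbatim, making two systematic replacements: work with the pair $T,T^l$ in place of $\cT,T_1$, so that every integral $\int_0^1\cdots\,dt$ becomes the finite average $\frac1l\sum_{i=0}^{l-1}\cdots\circ(T^i\times T^i)$ (exactly as in the passage from Proposition~\ref{QS-cpt} to Proposition~\ref{QS-cft}); and invoke the co-finite distal transfer result Proposition~\ref{pr12a1} wherever the proof of Proposition~\ref{QS-cpt} used Proposition~\ref{pr12}. Thus the scheme is: start from an ergodic 2-fold self-joining, split into the case where it is already invariant under the ``smaller'' action and the case where it is not, and in the latter case realize it as the average of an ergodic self-joining of the larger action (via the co-finite analogues of Lemma~\ref{lm:dec}, Lemma~\ref{lm:rozkladdokl} and Remark~\ref{ro:jo}). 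In each case Proposition~\ref{pr12a1}, applied to the product action and its $l$-th power together with a coordinate factor, transports relative distality of the extension over that coordinate factor between $T\times T$ and $T^l\times T^l$.

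Concretely, for ``$T$ is 2-DS $\Rightarrow$ $T^l$ is 2-DS'' I would take $\lambda\in\cJ_2^e(T^l)$ with $\lambda\neq\mu\otimes\mu$ and distinguish whether $\lambda=\lambda\circ(T^i\times T^i)$ for all $i$ — in which case $\lambda$ is $T\times T$-invariant, and since ergodicity for $T^l\times T^l$ forces ergodicity for $T\times T$ we get $\lambda\in\cJ_2^e(T)$ — or $\lambda\perp\lambda\circ(T^i\times T^i)$ for some $0<i<l$. In the first case, 2-DS of $T$ makes $\lambda$ a distal extension of each coordinate factor for $T\times T$, and Proposition~\ref{pr12a1} transfers this to $T^l\times T^l$. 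In the second case I form $\gr{\lambda}=\frac1l\sum_{i=0}^{l-1}\lambda\circ(T^i\times T^i)$, which is an ergodic self-joining of $T$; distal simplicity of $T$ makes it distal over the coordinates, and Proposition~\ref{pr12a1} applied to $(T\times T,\gr{\lambda})$ and $(T^l\times T^l,\lambda)$ gives relative distality of $\lambda$ over the coordinate factors for $T^l\times T^l$.

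For the reverse implication ``$T^l$ is 2-DS $\Rightarrow$ $T$ is 2-DS'' I would start from $\gr{\lambda}\in\cJ_2^e(T)$, $\gr{\lambda}\neq\mu\otimes\mu$, and split according to whether $\gr{\lambda}$ is ergodic for $T^l\times T^l$ or not; if not, I write $\gr{\lambda}=\frac1l\sum_{i=0}^{l-1}\lambda\circ(T^i\times T^i)$ for a suitable $\lambda\in\cJ_2^e(T^l)$. In both sub-cases 2-DS of $T^l$ supplies relative distality of the relevant measure over the coordinate factor for $T^l\times T^l$, and Proposition~\ref{pr12a1} transports it back to $T\times T$.

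The genuine content — transferring relative distality across taking an $l$-th power through the transfinite tower of isometric extensions — is already packaged in Proposition~\ref{pr12a1}, so no new analytic estimate is required. The only point needing care is the bookkeeping around the finite average: one must verify the co-finite versions of Lemma~\ref{lm:dec} and Lemma~\ref{lm:rozkladdokl} for joinings (the measures $\lambda\circ(T^i\times T^i)$ are pairwise equal or pairwise singular, and their average is ergodic for $T\times T$) and check that the marginal hypothesis $\widetilde{\nu}|_\cA=\nu$ of Proposition~\ref{pr12a1} holds when $\cA$ is a coordinate factor. This is precisely the finite-average analogue of the verification already carried out in the proof of Proposition~\ref{QS-cpt}, and I expect it to be the main — though entirely routine — obstacle.
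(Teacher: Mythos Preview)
Your proposal is correct and follows exactly the paper's own approach: the paper's proof of Proposition~\ref{DS-cft} simply says to take the proof of Proposition~\ref{QS-cft} (which itself replaces the integrals in the proof of Proposition~\ref{QS-cpt} by finite averages and uses Proposition~\ref{pr12-1}) and substitute Proposition~\ref{pr12a1} for Proposition~\ref{pr12-1}. You have faithfully unpacked this one-line reduction, and the bookkeeping points you flag (co-finite analogues of Lemma~\ref{lm:dec}, Lemma~\ref{lm:rozkladdokl}, Remark~\ref{ro:jo}, and the marginal hypothesis $\widetilde{\nu}|_\cA=\nu$) are indeed routine and handled just as in the isometric case.
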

\begin{proof}
In the proof of Proposition~\ref{QS-cft} replace ``Proposition~\ref{pr12-1}'' with ``Proposition~\ref{pr12a1}''.
\end{proof}

\section{(Non)-uniqueness of the embedding}\label{uniq}
\subsection{Uniqueness}
We will discuss now the problem of the uniqueness of embedding of automorphisms into measurable flows (an the uniqueness of roots of automorphisms) from the point of view of self-joining properties. 
\begin{uw}
Given a flow $\cT$ the following conditions are equivalent:
\begin{enumerate}[(i)]
\item
for any flow $\cS$ the condition $T_1=S_1$ implies $\cT=\cS$,
\item
for any flow $\cS$ such that $T_1\simeq S_1$, the isomorphism between $T_1\simeq S_1$ is an isomorphism between $\cT$ and $\cS$.
\end{enumerate}
\end{uw}
\begin{uw}\label{nieroz}
For weakly mixing flows $\cT$, $\cS$ such that $T_1\not\perp S_1$, we always have $\cT\not\perp\cS$. Indeed, every non-trivial joining between $T_1$ and $S_1$ lifts to a non-trivial joining between $\cT$ and $\cS$. This holds in particular when $T_1=S_1$.
\end{uw}

\begin{pr}\label{pr:msj}
Let $\cT$ be a weakly mixing, measure-preserving flow on $\xbm$ with the MSJ property. Then $T_1$ has a unique embedding.
\end{pr}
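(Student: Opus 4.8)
The plan is to pin down the centralizer of $T_1$ exactly and then to show that every embedding of $T_1$ into a flow is a reparametrization of $\cT$ which is rigidly determined by the normalization $S_1=T_1$.

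\medskip

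\emph{Identifying the centralizer.} Since $\cT$ has the MSJ property it is in particular 2-fold simple and its centralizer is minimal, $C(\cT)=\{T_t\colon t\in\R\}$. As $\cT$ is weakly mixing, Proposition~\ref{pr:simple} gives $C(T_1)=C(\cT)$, so that
\[
C(T_1)=\{T_t\colon t\in\R\}.
\]
Weak mixing also forces $t\mapsto T_t$ to be injective: if $T_r=\mathrm{Id}$ for some $r\neq 0$, the spectral measure of every $L^2$ function would be carried by the countable set $\tfrac1r\Z$, contradicting the continuity of the spectrum of a nontrivial weakly mixing flow. Finally, measurability of the flow makes $t\mapsto T_t$ continuous in the weak topology of $\mathrm{Aut}(X,\cB,\mu)$. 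Hence $\phi\colon\R\to C(T_1)$, $\phi(t)=T_t$, is a continuous, injective, and surjective group homomorphism of $\R$ onto $C(T_1)$.

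\medskip

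\emph{Rigidity of embeddings.} Let $\cS=(S_t)_{t\in\R}$ be any flow with $S_1=T_1$. Because $\cS$ is a flow, each $S_t$ commutes with $S_1=T_1$, so $S_t\in C(T_1)=\{T_r\colon r\in\R\}$. By injectivity of $\phi$ there is a unique $\psi\colon\R\to\R$ with $S_t=\phi(\psi(t))=T_{\psi(t)}$, and the flow relation $S_{t+s}=S_tS_s$ translates into $\psi(t+s)=\psi(t)+\psi(s)$. As $\phi$ is a continuous injection between Polish spaces, its inverse on $C(T_1)$ is Borel (Lusin--Souslin), and as $\cS$ is a measurable flow the map $t\mapsto S_t$ is Borel; therefore $\psi=\phi^{-1}\circ S_\bullet$ is a Borel additive homomorphism of $\R$, hence linear, $\psi(t)=ct$. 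Injectivity of $\phi$ together with $S_1=T_1=\phi(1)$ gives $\psi(1)=1$, so $c=1$ and $S_t=T_t$ for every $t$, that is $\cS=\cT$.

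\medskip

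\emph{Main difficulty.} The algebraic inputs---minimality of $C(\cT)$ from MSJ and the equality $C(T_1)=C(\cT)$ from Proposition~\ref{pr:simple}---are standard; the point requiring care is the rigidity of the reparametrization $\psi$. One must guarantee that $\psi$ is Borel, and thus genuinely linear rather than a wild $\Q$-linear additive map, which is exactly where the Polish-group structure of $C(T_1)$ and the injectivity of $\phi$ coming from weak mixing enter decisively.
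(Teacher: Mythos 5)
Your proposal is correct and is essentially the paper's second proof of this proposition (given in Appendix~B): both identify $C(T_1)=C(\cT)=\{T_t\colon t\in\R\}$ via Proposition~\ref{pr:simple} and the MSJ property, write $S_t=T_{\psi(t)}$ for an additive $\psi$, and then rule out wild additive maps by a regularity argument forcing $\psi(t)=t$. The only difference is cosmetic: the paper invokes Mackey's theorem on measurable homomorphisms plus the open mapping theorem where you use Lusin--Souslin to get Borel measurability of $\psi$ and then linearity of Borel additive maps.
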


\begin{proof}
Let $\cS=(S_t)_{t\in\R}$ on $\xbm$ be such that $T_1=S_1$. Then $T_1=S_1$ is 2-fold simple by Proposition~\ref{pr:simple}. It follows from Proposition~\ref{pr2qs} that $\cS$ is 2-QS. Since, by Remark~\ref{nieroz}, $\cT\not\perp\cS$, $\cT$ and $\cS$ have a non-trivial common factor (see~\cite{delJ-Lem}). Since $\cT$ has the MSJ property this factor is equal to $\cB$ and there exists $\cC\subset \cB$ invariant under $\cS$, such that $\cS|_\cC=\cT$. On the other hand, $T_1=S_1$ which implies $\cC=\cB$ and the claim follows.
\end{proof}

\begin{pr}
If $T$ is a weakly mixing automorphism with the MSJ property then  for $k\in\N$ the automorphism $T^k$ has a unique root of order $k$.
\end{pr}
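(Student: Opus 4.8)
The plan is to bypass the joining machinery used for flows and reduce the whole statement to a computation inside the centralizer, which the MSJ hypothesis determines completely. Write $U=T^k$ and let $S$ be an arbitrary automorphism of $\xbm$ with $S^k=U$; the goal is to prove $S=T$.

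First I would extract the two consequences of the MSJ property that will be used. By definition $T$ is $2$-fold simple and $C(T)=\{T^n\colon n\in\Z\}$. Moreover, since $T$ is weakly mixing, so is every power $T^k$, and in particular $T^k$ is ergodic; this is exactly the ergodicity hypothesis that licenses the automorphism-and-powers analogue of Proposition~\ref{pr:simple} (the analogue announced in the remarks following that proposition, and already exploited in Remark~\ref{uw:delru}). Applying it to the $2$-fold simple map $T$ gives $C(T)=C(T^k)$, and combined with the MSJ description of $C(T)$ this yields the key identity
$$
C(T^k)=\{T^n\colon n\in\Z\}.
$$

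Next I would place the root $S$ inside this group. Since $S$ commutes with its own power $S^k=T^k$, we have $S\in C(T^k)$, so by the identity above $S=T^n$ for some $n\in\Z$. The defining relation $S^k=T^k$ then becomes $T^{nk}=T^k$, i.e.\ $T^{(n-1)k}=Id$. A weakly mixing automorphism is aperiodic: if $T^m=Id$ for some $m\neq0$, the ergodic system would be supported on a single finite periodic orbit, which is incompatible with the continuous spectrum of a weakly mixing map on a nontrivial space. Hence $(n-1)k=0$, and since $k\geq1$ we conclude $n=1$ and therefore $S=T$.

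The only nonroutine ingredient is the identity $C(T^k)=C(T)$, so I expect that step to be the crux: it is exactly there that $2$-fold simplicity, together with the ergodicity of $T^k$ supplied by weak mixing, is consumed, via the discrete analogue of Proposition~\ref{pr:simple} (cf.\ Remark~\ref{uw:delru}); everything after it is elementary group theory in $C(T^k)$ plus aperiodicity. This is precisely how the first proof of Proposition~\ref{pr:msj} adapts to roots: the comparison of $\cS$ with $\cT$ through centralizers collapses, in the discrete setting, to the single observation that any $k$-th root of $T^k$ must lie in $C(T^k)$, which the MSJ property pins down as $\{T^n\colon n\in\Z\}$.
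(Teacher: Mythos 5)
Your proof is correct, but it takes a genuinely different route from the paper's. The paper proves this proposition by adapting its first proof of Proposition~\ref{pr:msj}: there, given $S^k=T^k$, one shows $S$ is 2-QS, observes $T\not\perp S$ because the identity joining of $T^k=S^k$ lifts, invokes the existence of a nontrivial common factor of non-disjoint systems, and uses the MSJ property (primeness of $T$) to force that factor to be everything. You instead bypass joinings entirely: a $k$-th root $S$ automatically lies in $C(S^k)=C(T^k)$, the discrete analogue of Proposition~\ref{pr:simple} (exactly as used in Remark~\ref{uw:delru}, with ergodicity of $T^k$ supplied by weak mixing) gives $C(T^k)=C(T)=\{T^n\colon n\in\Z\}$, and then $T^{nk}=T^k$ plus aperiodicity of a weakly mixing map on a nontrivial space forces $n=1$. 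This is essentially the discrete counterpart of the paper's alternative ``Proof 2'' of Proposition~\ref{pr:msj} in Appendix~\ref{apB}, where for flows one must additionally handle measurability/continuity of the resulting homomorphism via Mackey's theorem; in the root setting that issue evaporates and your argument is shorter and more elementary. What the paper's factor-theoretic route buys is structural uniformity with the flow case (and it does not need to identify the root inside a centralizer a priori); what yours buys is that, once $C(T^k)=C(T)$ is granted, everything reduces to trivial group theory in $\{T^n\}$. Both arguments consume the same two inputs --- $2$-fold simplicity (through the $C(T)=C(T^k)$ identity or through 2-QS of the root) and the triviality of $C(T)$ coming from MSJ --- so I see no gap.
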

\begin{proof}
The proof of Proposition~\ref{pr:msj} can be easily adjusted to the case of automorphisms.
\end{proof}

We will show now a more universal way of proving uniqueness of embedding. It will be the main tool for proving the analogue of Proposition~\ref{pr:msj} for flows with R-property. 

\begin{pr}\label{283a}
Let $\cT$ be a weakly mixing flow on $\xbm$ such that for any weakly mixing flow $\cS$ on $\ycn$ and any $\lambda\in\cJ^e(\cT,\cS)$ either $\lambda=\mu\otimes \nu$ or the fibers in the extension $(\cT\times \cS,\lambda)\to (\cS,\nu)$ are finite. Then for any $\lambda\in\cJ^e(\cT,\cS)$, the flow $(\cT\times \cS,\lambda)$ is weakly mixing.
\end{pr}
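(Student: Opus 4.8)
The plan is to fix a weakly mixing flow $\cS$ on $\ycn$ and an ergodic joining $\lambda\in\cJ^e(\cT,\cS)$, write $\cU=(\cT\times\cS,\lambda)$ for the resulting ergodic flow on $(X\times Y,\cB\otimes\cC,\lambda)$, and let $p\colon\cU\to\cS$ be the projection onto the second coordinate. By the hypothesis, either $\lambda=\mu\otimes\nu$ or the extension $p$ has finite fibers. In the first case $\cU=\cT\times\cS$ is a product of two weakly mixing flows and is therefore weakly mixing, so nothing more is needed (this is the only place the weak mixing of $\cT$ itself is used). The whole content of the proof is thus the finite-fiber case, which I would attack by contradiction: assume $p$ is (say) $n$-to-one and that $\cU$ is \emph{not} weakly mixing.

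Under this assumption $\cU$ has a nontrivial Kronecker factor $\mathcal{K}\subseteq\cB\otimes\cC$. First I would record the shape of $\mathcal{K}$: by the Halmos--von Neumann theorem for flows it is an ergodic rotation on the compact abelian group $\widehat{E}$ dual to the eigenvalue group $E\subseteq\R$ of $\cU$. Since $E$ is a nontrivial subgroup of the torsion-free group $(\R,+)$, it is infinite, so $\widehat{E}$ is an infinite compact group and its Haar measure is non-atomic. Next I would use that $\cS$ is weakly mixing while $\mathcal{K}$ has discrete spectrum: weakly mixing flows are disjoint from discrete-spectrum flows, hence $\cS$ and $\mathcal{K}$ are relatively independent as factors of $\cU$, and the factor $\mathcal{K}\vee\cC$ is isomorphic to the product $\mathcal{K}\otimes\cS$. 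This exhibits an intermediate factor $\cC\subseteq\mathcal{K}\vee\cC\subseteq\cB\otimes\cC$ of the extension $p$ whose fibers over $\cS$ are copies of $\mathcal{K}$, i.e.\ carry the non-atomic Haar measure of $\widehat{E}$.

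The contradiction is then immediate: any factor of a finite (here $n$-to-one) extension is itself a finite extension, because its fiber measures are push-forwards of the fiber measures of $p$ and hence remain supported on at most $n$ points; this is incompatible with the non-atomic fibers of $\mathcal{K}\otimes\cS\to\cS$. Hence $\cU$ must be weakly mixing. I expect the delicate point to be precisely the combination in the middle paragraph: one must rule out that the discrete-spectrum part of $\cU$ ``hides'' inside the finite fibers over $\cS$, and the two facts that make this work are (a) disjointness of weak mixing from discrete spectrum, which forces the Kronecker factor to split off as an independent factor over $\cS$, and (b) the flow-specific observation that a nontrivial eigenvalue group of an $\R$-action is automatically infinite, so there is no nontrivial \emph{finite} Kronecker factor to begin with. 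A more computational alternative would be to take a single eigenfunction $F$ with eigenvalue $r\neq0$ and form the invariant function $F(x,y)\overline{F(x,y')}$ on the relatively independent self-joining of $\cU$ over $\cT$; this forces $F$ to depend on $x$ alone and contradicts weak mixing of $\cT$, but only once one knows that $\cU\to\cT$ is relatively weakly mixing, which is not automatic. I would therefore prefer the Kronecker-factor route above.
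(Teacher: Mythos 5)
Your proof is correct and follows essentially the same route as the paper's: assume the joining is not weakly mixing, extract the nontrivial Kronecker factor, use disjointness of weak mixing from discrete spectrum to split it off as an independent factor over the weakly mixing coordinate, note that a nontrivial eigenvalue group of a flow is infinite so the Kronecker rotation has non-atomic Haar measure, and contradict the finite-fiber hypothesis via uniqueness of disintegration. If anything, your version is the more carefully stated one, since you disintegrate over $(\cS,\nu)$ consistently with the hypothesis, whereas the paper's written proof performs the disintegration over $\mu$.
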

\begin{proof}
Let $\lambda\in \cJ^e(\cT,\cS)$. If $\lambda=\mu\otimes\nu$ then $(\cT\times \cS,\lambda)$ is clearly weakly mixing. Suppose that $\lambda\neq\mu\otimes \nu$ and $(\cT\times \cS,\lambda)$ is not weakly mixing. Denote by $\mathcal{K}$ the Kronecker factor of $(\cT\times \cS,\lambda)$. Then, since $(\cT\times \cS,\lambda)$  is not weakly mixing, it has $\cB\otimes \mathcal{K}$ as a factor.\footnote{The symbol $\otimes$ used here does not mean that $\mathcal{K}\subset\cC$. By $\cB\otimes \mathcal{K}$ we denote the sub-$\sigma$-algebra of $\cB\otimes \cC$ generated by $\cB$ and by the Kronecker factor $\mathcal{K}$. The action of $\cT\times \cS$ restricted to $\cB\otimes \mathcal{K}$ is isomorphic to the Cartesian product of $\cT$ and $(\cT\times \cS)|_{\mathcal{K}}$.} Consider the following disintegrations:
\begin{equation*}
\lambda=\int \mu_x\ d\mu, \ \ \ \lambda=\int \lambda_\omega\ d\lambda|_{\cB\otimes \mathcal{K}}, \ \ \ \lambda|_{\cB\otimes \mathcal{K}}=\int\widetilde{\lambda}_z\ d\mu.
\end{equation*}
For $f\in L^1(X\times Y,\cB\otimes \cC,\lambda)$ we have 
$\int f\ d\lambda=\iint f \ d\mu_x \ d\mu.$
On the other hand,
$
\int f\ d\lambda= \iint f\ d \lambda_\omega\ d\lambda|_{\cB\otimes \mathcal{K}}=\iiint f\ d\lambda_\omega\ d\widetilde{\lambda}_z\ d\mu=\iiint f\ d\widetilde{\lambda}_z\ d\lambda_\omega\  d\mu.
$
Since $(\cT\times \cS,\lambda)$ is not weakly mixing, the discrete part of the spectrum forms a countable (infinite) subgroup of $\R$ and the corresponding factor acts by rotations on the dual group to this group. The measures $\widetilde{\lambda}_z$ are equal to the Haar measure on this group, in particular they have infinite supports. It follows by the uniqueness of the disintegration, that also the supports of the measures $\mu_x$ have infinite number of points. This is however impossible by our assumption $\lambda$ is a finite extension of $\mu$, i.e. the measures $\mu_x$ are discrete with a finite number of atoms. The claim follows.
\end{proof}

\begin{uw}\label{finitegroup}
The claim of the above proposition can be strenghtened, using the same arguments: If $\cT$ is a weakly mixing flow such that for any weakly mixing flow $\cS$ and any $\lambda\in \cJ^e(\cT,\cS)$ the flow $(\cT\times \cS,\lambda)$ is also weakly mixing then any finite group extension of $\cT$ has the same property.
\end{uw}

\begin{pr}\label{283b}
Let $\cT$ be a weakly mixing flow with the following property: for any weakly mixing flow $\cS$ and any $\lambda\in \cJ^e(\cT,\cS)$ the flow $(\cT\times \cS,\lambda)$ is weakly mixing. Then $T_1$ has a unique embedding.
\end{pr}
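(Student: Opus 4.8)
The plan is to manufacture, out of a hypothetical second embedding, a single ergodic joining of $\cT$ with a potentially different flow sharing the same time-one map, and to let the weak mixing forced by the hypothesis pin the two flows together. Concretely, suppose $\cS=(S_t)_{t\in\R}$ is a flow on $\xbm$ with $S_1=T_1$; by the remark preceding Proposition~\ref{pr:msj} it suffices to show $\cT=\cS$. Since $\cT$ is weakly mixing, $T_1$ is weakly mixing, hence so is $S_1=T_1$, and therefore (by the classical proposition comparing mixing properties of $\cT$ and $T_1$) the flow $\cS$ is weakly mixing as well. Thus $\cS$ is an admissible test flow for the standing hypothesis.

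Next I would take the diagonal self-joining $\Delta\in\cJ^e_2(T_1)$ of the common time-one map. Viewing $\Delta$ as a measure on $X\times X$ that is invariant and ergodic for $T_1\times S_1=T_1\times T_1$, I suspend it along the product flow $\mathcal{U}:=\cT\times\cS=(T_t\times S_t)_{t\in\R}$, setting
$$
\gr{\Delta}=\int_0^1 \Delta\circ(T_t\times S_t)\ dt.
$$
Applying Lemma~\ref{lm:dec} to the flow $\mathcal{U}$ and the measure $\Delta\in\cP^e_\Z(T_1\times S_1)$ shows that $\gr{\Delta}$ is $\mathcal{U}$-invariant and ergodic; since $\mu$ is invariant under both $\cT$ and $\cS$, both marginals of $\gr{\Delta}$ equal $\mu$, so $\gr{\Delta}\in\cJ^e(\cT,\cS)$ (this is the joining analogue of Remark~\ref{ro:jo}). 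The hypothesis now applies and yields that $(\cT\times\cS,\gr{\Delta})$ is weakly mixing.

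I would then invoke the dichotomy of Lemma~\ref{lm:rozkladdokl} for this decomposition: either all the measures $\Delta\circ(T_t\times S_t)$, $t\in[0,1)$, coincide with $\Delta$, or there is an integer $k\geq 1$ with $\gr{\Delta}=k\int_0^{1/k}\Delta\circ(T_t\times S_t)\ dt$ and the measures mutually singular on $[0,1/k)$. In the latter case Remark~\ref{u:zawie} represents $(\cT\times\cS,\gr{\Delta})$ as a suspension with constant return time $1/k$ over a weakly mixing base, so its discrete spectrum contains $k\Z$, and in particular the nonzero eigenvalue $k$; this contradicts the weak mixing just established. Hence the first alternative holds: $\Delta\circ(T_t\times S_t)=\Delta$ for every $t$.

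Finally I would translate this coincidence back into an identity of flows. The equality $\Delta\circ(T_t\times S_t)=\Delta$ says that the law of $(T_tx,S_tx)$ with $x\sim\mu$ is again carried by the diagonal, i.e.\ $T_tx=S_tx$ for $\mu$-a.e.\ $x$; holding for all $t\in\R$, this gives $\cT=\cS$, which is exactly uniqueness of the embedding of $T_1$. The only genuinely creative step is the recognition that the right object to feed into the hypothesis is the suspension $\gr{\Delta}$ of the diagonal; the remaining points that need care are merely verifying that $\gr{\Delta}$ is a bona fide \emph{ergodic} joining of $\cT$ and $\cS$ (so that the hypothesis is applicable) and that the singular alternative really produces nontrivial discrete spectrum, both of which are supplied by the already-established Lemmas~\ref{lm:dec} and~\ref{lm:rozkladdokl} together with Remark~\ref{u:zawie}.
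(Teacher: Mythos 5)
Your proposal is correct and follows essentially the same route as the paper: suspend the diagonal joining $\Delta$ of $T_1=S_1$ along $\cT\times\cS$, note that if $\Delta$ were not already $\cT\times\cS$-invariant then Lemma~\ref{lm:rozkladdokl} and Remark~\ref{u:zawie} would force nontrivial discrete spectrum for $(\cT\times\cS,\gr{\Delta})$, contradicting the weak mixing supplied by the hypothesis, and then read off $T_t=S_t$ a.e.\ from the invariance of the diagonal. Your explicit check that $\cS$ itself is weakly mixing (so that the hypothesis is applicable) is a detail the paper leaves implicit, but it is not a different argument.
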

\begin{proof}
Suppose that there exists $\cS$ such that $T_1=S_1$ and consider the diagonal joining of $\Delta\in \mathcal{J}^e(T_1,S_1)$. If $\Delta\not\in \cJ(\cT,\cS)$ then, for some $k\in\N$,
$
\gr{\Delta}:=\int_0^{1/k}\Delta\circ (T_t\times S_t)\ dt\in \cJ^e(\cT,\cS),
$
where the measures $\Delta\circ (T_t\times S_t)$ are pairwise orthogonal for $t\in [0,1/k)$. It follows by Remark~\ref{u:zawie} that $(\cT\times \cS,\gr{\Delta})$ is not weakly mixing which contradicts our assumptions. Hence $\Delta\in \cJ(\cT,\cS)$ which completes the proof.
\end{proof}

By combining Proposition~\ref{283a} with Proposition~\ref{283b} and~\ref{rat}, we obtain the following:
\begin{wn}\label{ratnerpr}
Let $\cT$ be a weakly mixing flow with the R-property. Then $T_1$ has a unique embedding.
\end{wn}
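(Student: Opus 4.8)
The plan is to verify, in turn, the hypotheses of Propositions~\ref{283a} and~\ref{283b}, feeding the conclusion of each into the next, with Proposition~\ref{rat} supplying the initial input. No independent construction is needed: the whole argument is a chaining of the three cited results.

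First I would record what the R-property yields through Proposition~\ref{rat}. Since $\cT$ is weakly mixing and has the R-property, for every \emph{ergodic} flow $\cS$ on $\ycn$ and every $\lambda\in\cJ^e(\cT,\cS)$ we get the dichotomy: either $\lambda=\mu\otimes\nu$, or $\lambda$ is a finite extension of $\nu$. In particular this holds for every weakly mixing $\cS$, since weakly mixing flows are ergodic. The one point requiring care is to read ``$\lambda$ is a finite extension of $\nu$'' as the statement that the fibers in the extension $(\cT\times\cS,\lambda)\to(\cS,\nu)$ are finite, which is precisely the form in which the hypothesis of Proposition~\ref{283a} is phrased.

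Next, this dichotomy is exactly the hypothesis of Proposition~\ref{283a}. Applying that proposition, I conclude that for every weakly mixing flow $\cS$ and every $\lambda\in\cJ^e(\cT,\cS)$ the joining flow $(\cT\times\cS,\lambda)$ is weakly mixing. But this conclusion is in turn exactly the hypothesis of Proposition~\ref{283b}. Applying Proposition~\ref{283b} then gives that $T_1$ has a unique embedding, which is the assertion of the corollary.

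Since all the substantive content has already been isolated in the three cited results, there is no genuine obstacle here. The only thing deserving a moment's attention is the bookkeeping of quantifiers: Proposition~\ref{rat} is stated for arbitrary ergodic $\cS$, whereas Propositions~\ref{283a} and~\ref{283b} quantify only over weakly mixing $\cS$. Thus the weaker (weakly mixing) quantifier is all that is needed to trigger the chain, while the stronger (ergodic) hypothesis available from Proposition~\ref{rat} is more than enough to supply it.
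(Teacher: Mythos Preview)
Your proof is correct and follows exactly the same approach as the paper, which simply states that the corollary is obtained by combining Proposition~\ref{283a}, Proposition~\ref{283b}, and Proposition~\ref{rat}. Your additional remark on the ergodic-versus-weakly-mixing quantifiers is a harmless clarification of this chaining.
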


\begin{pr}\label{283c}
Let $\cT$ be a weakly mixing flow with the MSJ property. Then for any weakly mixing flow $\cS$ and any $\lambda\in\cJ^e(\cT,\cS)$ other than the product measure the extension $(\cT\times\cS,\lambda)\to (\cS,\nu)$ has finite fibers.
\end{pr}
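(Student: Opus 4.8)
The plan is to read off the fiber structure from the MSJ property by means of the \emph{relatively independent self-joining of $\cT$ over $\cS$}. Write $\lambda=\int_Y\lambda_y\,d\nu(y)$ for the disintegration of $\lambda$ over the $\cS$-coordinate; the assertion that $(\cT\times\cS,\lambda)\to(\cS,\nu)$ has finite fibers is exactly that $\lambda_y$ is purely atomic with finitely many atoms for $\nu$-a.e.\ $y$. First I would form
\[
\widehat{\lambda}:=\int_Y \lambda_y\otimes\lambda_y\ d\nu(y),
\]
which is $\cT\times\cT$-invariant with both marginals equal to $\mu$, so that $\widehat{\lambda}\in\cJ_2(\cT)$.

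Next I would apply the MSJ hypothesis to $\widehat{\lambda}$. Decomposing it into $\cT\times\cT$-ergodic components and recalling that for an MSJ flow every element of $\cJ_2^e(\cT)$ is either $\mu\otimes\mu$ or an off-diagonal $\mu_{T_t}$ with $t\in\R$ (here $C(\cT)=\{T_t:t\in\R\}$), I obtain
\[
\widehat{\lambda}=c_0\,\mu\otimes\mu+\int_\R \mu_{T_t}\ dP(t),\qquad c_0\geq 0,\ c_0+P(\R)=1,
\]
with $P$ a finite symmetric measure on $\R$. Since $\cT$ is weakly mixing, $\mu$ is non-atomic and $t\mapsto T_t$ is free, so neither $\mu\otimes\mu$ nor $\mu_{T_t}$ with $t\neq0$ charges the diagonal $\Delta$; evaluating both sides on $\Delta$ gives $P(\{0\})=\int_Y\sum_a\lambda_y(\{a\})^2\,d\nu(y)$, the inner sum ranging over the atoms of $\lambda_y$. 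Thus the off-diagonal components record precisely the pair-correlations of the fibers, and the atomic mass of the fibers is governed by $P(\{0\})$.

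The substantive input is to upgrade this to a homogeneous description of the fibers. The displayed representation says that, for a.e.\ $y$, the measure $\lambda_y\otimes\lambda_y$ is concentrated on the orbit relation $\bigcup_t\{(x,T_tx)\}$, i.e.\ $\lambda_y$ lives on a single $\cT$-orbit, while the equivariance $\lambda_{S_t y}=(T_t)_\ast\lambda_y$ together with ergodicity of $\cS$ forces all the $\lambda_y$ to have the same translation-type in orbit coordinates. This is exactly the situation covered by the del Junco--Rudolph classification of ergodic joinings of $2$-fold simple systems~\cite{MR922364}: any $\lambda\in\cJ^e(\cT,\cS)\setminus\{\mu\otimes\nu\}$ is an off-diagonal over a common factor whose fibers over $\cS$ are single orbits of a compact subgroup $K\leq C(\cT)$, carrying the corresponding Haar measure. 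I would invoke this to conclude that $\lambda_y$ is the Haar measure on a single $K$-orbit for a.e.\ $y$.

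It then remains to identify $K$. Because $\cT$ is weakly mixing the flow is free, so $C(\cT)=\{T_t:t\in\R\}\cong\R$, which has no non-trivial compact subgroup; hence $K=\{0\}$, each $\lambda_y$ is a point mass, and the fibers are in fact singletons --- in particular finite. The main obstacle is the third paragraph: the passage from ``$\widehat{\lambda}$ is a combination of graph joinings'' to ``each fiber is a single compact-group orbit'', which is where one must exclude a continuous (or infinitely atomic) fiber and which rests on the joining theory of simple systems. Forming $\widehat{\lambda}$, invoking MSJ, and the diagonal bookkeeping are routine, and once the coset structure is available the compactness obstruction in $\R$ finishes the argument at once.
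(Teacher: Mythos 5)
Your opening move --- forming the relative product $\widehat{\lambda}=\int_Y\lambda_y\otimes\lambda_y\,d\nu(y)\in\cJ_2(\cT)$ and writing its ergodic decomposition as $c_0\,\mu\otimes\mu+\int_\R\mu_{T_t}\,dP(t)$ --- is fine, but the argument collapses in the third paragraph. The ergodic decomposition of $\widehat{\lambda}$ as a $\cT\times\cT$-invariant measure does not refine the disintegration over $Y$, so you cannot conclude that $\lambda_y\otimes\lambda_y$ is carried by the orbit relation for a.e.\ $y$; in the typical case $c_0>0$ and a definite fraction of each $\lambda_y\otimes\lambda_y$ lives off every orbit. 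Concretely, let $\cS=\cT^{\odot 2}$ (weakly mixing, being a factor of $\cT\times\cT$) and let $\lambda$ be the ergodic joining induced by the two factor maps of $(\cT\times\cT,\mu\otimes\mu)$ onto the first coordinate and onto $\cT^{\odot 2}$. Then $\lambda\neq\mu\otimes\nu$, the fiber measure over $y=\{x_1,x_2\}$ is $\lambda_y=\frac{1}{2}(\delta_{x_1}+\delta_{x_2})$, and $\widehat{\lambda}=\frac{1}{2}\Delta+\frac{1}{2}\mu\otimes\mu$. This same example refutes your final conclusion outright: the fibers of $(\cT\times\cS,\lambda)\to(\cS,\nu)$ have exactly two points, not one. (If fibers were always singletons, $\cT$ would be a factor of every weakly mixing $\cS$ not disjoint from it; but $\cT$ is not a factor of $\cT^{\odot 2}$ when $\cT$ has MSJ.) The root cause is the misstatement of del Junco--Rudolph in your fourth paragraph: for an ergodic joining of a simple system with an \emph{arbitrary} system $\cS$, the conclusion is not that the $X$-fibers over $\cS$ are single orbits of a compact subgroup of $C(\cT)$ --- that description pertains to $2$-fold self-joinings, resp.\ to joinings with extensions of the group quotients $\cT/K$. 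Your appeal to the absence of nontrivial compact subgroups of $\R$ is therefore aimed at the wrong object and proves too much.

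The paper's proof takes a different and correct route: by del Junco--Rudolph, non-disjointness of the MSJ flow $\cT$ from $\cS$ forces $\cS$ to have a factor $\cT^{\odot n}$ for some $n$, and $\lambda$ restricted to $\cT\vee\cT^{\odot n}$ realizes this system as an intermediate factor between $\cT^{\times n}$ and $\cT^{\odot n}$. Since $\cT^{\times n}\to\cT^{\odot n}$ is (at most) $n!$-to-one, the intermediate extension has finite fibers, and Lemma~\ref{tensam} transports that finiteness to $(\cT\times\cS,\lambda)\to(\cS,\nu)$. If you wish to salvage the relative-product approach, the missing substance is precisely a proof that $\lambda_y$ is purely atomic with a uniformly bounded number of atoms; this requires the symmetric-power structure of the joining (or a Veech-style argument on the maximal atom weight of the fibers), not merely the classification of $\cJ_2^e(\cT)$.
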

Before we begin the proof, let us recall a result on the absence of disjointness and state a necessary lemma.
\begin{tw}[\cite{MR1784644}]\label{parreau}
Let $\cT$ and $\cS$ be ergodic flows. If $\cT$ and $\cS$ are not disjoint then there exists $\lambda\in\cJ_{\infty}^e(\cT)$ such that $(\cT^{\times \infty},\lambda)$ and $\cS$ have a common factor.
\end{tw}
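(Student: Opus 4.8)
The plan is to manufacture the required ergodic infinite self-joining of $\cT$ directly from a single nontrivial ergodic joining of $\cT$ with $\cS$, by forming the relatively independent infinite product over $\cS$ and then recovering the $\cS$-coordinate from the $\cT$-coordinates via a law of large numbers for empirical measures. Since $\cT$ and $\cS$ are not disjoint, $\cJ(\cT,\cS)\neq\{\mu\otimes\nu\}$, and since this set is a simplex whose extreme points are the ergodic joinings, there is an ergodic joining $\rho\in\cJ^e(\cT,\cS)$ with $\rho\neq\mu\otimes\nu$ (if every extreme point equalled $\mu\otimes\nu$ the whole simplex would reduce to $\{\mu\otimes\nu\}$). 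Disintegrating $\rho$ over the $Y$-coordinate, $\rho=\int_Y \bar\rho_y\otimes\delta_y\,d\nu(y)$ with $\bar\rho_y\in M_1(X)$, the $T_t\times S_t$-invariance of $\rho$, the $S_t$-invariance of $\nu$, and the uniqueness of disintegration yield the equivariance $\bar\rho_{S_ty}=(T_t)_\ast\bar\rho_y$ for a.e.\ $y$ and all $t$. Moreover $\int_Y\bar\rho_y\,d\nu(y)=\mu$, so $\rho\neq\mu\otimes\nu$ forces $\Psi\colon y\mapsto\bar\rho_y$ to be non-constant; hence the factor $\mathcal{D}\subset\cC$ generated by $\Psi$ is a nontrivial factor of $\cS$, on which $\cS$ acts as $m\mapsto(T_t)_\ast m$ on $(M_1(X),\Psi_\ast\nu)$.

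Next I would form the relatively independent infinite self-joining over $\cS$,
\[
\Lambda=\int_Y \bar\rho_y^{\otimes\infty}\otimes\delta_y\,d\nu(y)\in M_1\!\left(X^{\N}\times Y\right),
\]
which is $\cT^{\times\infty}\times\cS$-invariant by the equivariance above, projects onto $\nu$ on $Y$, and has every $X$-coordinate marginal equal to $\mu$; thus its projection $\lambda$ onto $X^{\N}$ is an infinite self-joining of $\cT$. The key step is to recover $\mathcal{D}$ from the $\cT$-coordinates. Under $\Lambda$, conditionally on $y$ the coordinates $(x_n)$ are i.i.d.\ with law $\bar\rho_y$, so by the Glivenko--Cantelli/Varadarajan theorem on the compact metric space $X$ the empirical measures converge weakly, $\tfrac1N\sum_{n\le N}\delta_{x_n}\to\bar\rho_y$, for $\Lambda$-a.e.\ point. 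This defines a Borel map $\Phi\colon X^{\N}\to M_1(X)$, $\Phi((x_n))=\lim_N\tfrac1N\sum_{n\le N}\delta_{x_n}$, which is $\cT^{\times\infty}$-equivariant, $\Phi(T_t^{\times\infty}x)=(T_t)_\ast\Phi(x)$, and which satisfies $\Phi=\Psi$ $\Lambda$-a.e. Consequently, with respect to $\Lambda$, the factor generated by $\Phi$ on $(X^{\N},\lambda)$ coincides with the factor $\mathcal{D}$ of $\cS$; in other words $\mathcal{D}$ is simultaneously a factor of $\cS$ and of $(\cT^{\times\infty},\lambda)$.

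It remains to upgrade $\lambda$ to an \emph{ergodic} infinite self-joining without destroying the common factor, which I expect to be the main obstacle. Here I would take the ergodic decomposition $\Lambda=\int\Lambda_\omega\,dP(\omega)$ with respect to $\cT^{\times\infty}\times\cS$. Since the $Y$-marginal of $\Lambda$ is the ergodic measure $\nu$ and each $X$-coordinate marginal is the ergodic measure $\mu$, uniqueness of the ergodic decomposition of $\nu$ and of $\mu$ forces, for $P$-a.e.\ $\omega$, the $Y$-marginal of $\Lambda_\omega$ to be $\nu$ and every $X$-coordinate marginal of $\Lambda_\omega$ to be $\mu$; hence $\lambda_\omega:=(\Lambda_\omega)|_{X^{\N}}\in\cJ^e_\infty(\cT)$. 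The full-measure identity $\Phi=\Psi$ persists $\Lambda_\omega$-a.e.\ for $P$-a.e.\ $\omega$, so for such $\omega$ the nontrivial factor $\mathcal{D}$ (still nontrivial because the $Y$-marginal remains $\nu$) stays a common factor of $\cS$ and of $(\cT^{\times\infty},\lambda_\omega)$. Taking $\lambda:=\lambda_\omega$ for one good $\omega$ completes the proof. The delicate points to verify carefully are the measurability and equivariance of $\Phi$, the passage from the fibrewise i.i.d.\ convergence to an $\Lambda$-a.e.\ statement after integration in $y$, and the persistence of both the marginals and the identity $\Phi=\Psi$ under the ergodic decomposition.
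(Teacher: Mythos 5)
The paper contains no proof of this statement: it is quoted as a known theorem, with the proof deferred to the cited reference \cite{MR1784644}, so there is nothing internal to compare against. Your argument is essentially the standard proof from that source, and it is correct: an ergodic $\rho\neq\mu\otimes\nu$ exists by extremality of ergodic joinings; the disintegration map $\Psi(y)=\bar\rho_y$ is equivariant ($\bar\rho_{S_ty}=(T_t)_\ast\bar\rho_y$ a.e., for each $t$, by uniqueness of disintegration) and non-constant, hence generates a nontrivial factor $\mathcal{D}$ of $\cS$; under the relatively independent infinite product $\Lambda$ the coordinates are conditionally i.i.d.\ given $y$, so the Varadarajan/Glivenko--Cantelli convergence of empirical measures identifies $\mathcal{D}$, mod $\Lambda$, with the $\cT^{\times\infty}$-equivariant factor generated by $\Phi$ on the $X^{\N}$-side. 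Your handling of the ergodicity upgrade is also sound: since factors of ergodic systems are ergodic and $\mu,\nu$ are extreme points of the respective simplices of invariant measures, $P$-a.e.\ component $\Lambda_\omega$ retains all marginals, the set $\{\Phi=\Psi\}$ retains full measure, and $\Phi_\ast\lambda_\omega=\Psi_\ast\nu$, so $\lambda_\omega\in\cJ^e_\infty(\cT)$ shares the nontrivial factor $\mathcal{D}$ with $\cS$; the measure-theoretic caveats you flag are routine and resolve exactly as you indicate.
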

\begin{lm}\label{tensam}
Let $\cT,\widetilde{\cT},\cR$ be ergodic flows. Suppose that the extension $\widetilde{\cT}\to \cT$ is isometric (``k:1''). Let $\widetilde{\rho}\in \cJ^e(\widetilde{\cT},\mathcal{R})$ and let $\rho$ be the restriction of $\widetilde{\rho}$ to a joining between $\cT$ and $\cR$. Then also the extension $(\widetilde{\cT}\times \mathcal{R},\widetilde{\rho})\to (\cT\times \cR,\rho)$ is isometric (``k:1'').
\end{lm}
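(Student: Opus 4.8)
The plan is to make the isometric extension $\widetilde{\cT}\to\cT$ explicit through a cocycle and then recognise the product $\widetilde{\cT}\times\cR$ as an isometric extension of the base product $\cT\times\cR$ by a cocycle that factors through the $\cT$-coordinate. Since $\widetilde{\cT}\to\cT$ is isometric, I would represent it, in the spirit of~\eqref{skosny}, as $\widetilde{X}=X\times G/H$ with $\widetilde{T}_t(x,\xi)=(T_tx,\varphi_t(x)\cdot\xi)$ for a compact metrizable group $G$, a closed subgroup $H$ (so that $G/H$ consists of exactly $k$ points in the ``$k{:}1$'' case) and a measurable cocycle $\varphi\colon\R\times X\to G$. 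The algebraic heart of the argument is the measure-preserving identification $\widetilde{X}\times Y\cong(X\times Y)\times G/H$, $(x,\xi,y)\mapsto((x,y),\xi)$, under which $\widetilde{\cT}\times\cR$ becomes the $G/H$-extension $(\cT\times\cR)_\Phi$ of $\cT\times\cR$ determined by $\Phi_t(x,y):=\varphi_t(x)$. Because $\Phi$ depends only on the $\cT$-coordinate, it is measurable with respect to the factor $\cB\otimes\cC$, so this really is an isometric-extension structure sitting over $(\cT\times\cR,\rho)$.

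Next I would address the invariant measure. Under the identification above, $\widetilde{\rho}\in\cJ^e(\widetilde{\cT},\cR)$ is an ergodic $(\cT\times\cR)_\Phi$-invariant measure whose pushforward to $X\times Y$ is precisely $\rho=\widetilde{\rho}|_{\cB\otimes\cC}$, which is exactly the assertion that $\rho$ is the restriction of $\widetilde{\rho}$. The one subtlety is that $\widetilde{\rho}$ need not induce Haar measure on the $G/H$-fibres, so a priori $(\widetilde{\cT}\times\cR,\widetilde{\rho})$ is only some ergodic invariant measure on a $G/H$-extension rather than the canonical compact-group-extension measure. To handle this I would lift $\Phi$ to the associated $G$-extension of $\cT\times\cR$ on $(X\times Y)\times G$ and invoke the result of~\cite{MR1091425} in its flow version: every ergodic component of a compact group extension of an ergodic base is itself isomorphic to a compact group extension of that base. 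Applied to $\widetilde{\rho}$ (or to an ergodic component of its $G$-lift), this identifies $(\widetilde{\cT}\times\cR,\widetilde{\rho})\to(\cT\times\cR,\rho)$ as a group/isometric extension, the fibres being indexed by cosets of a closed subgroup $G'\le G$.

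To record the isometry formally I would use~\eqref{szesc}: an ergodic extension is isometric precisely when it has relatively discrete spectrum, and the cocycle presentation exhibits $L^2(\widetilde{\cT}\times\cR,\widetilde{\rho})$ as generated over $L^\infty(\cT\times\cR,\rho)$ by the finite-rank module of fibre functions spanned by the coordinate functions of $G/H$, on which the flow acts isometrically through $\Phi$. If one prefers to argue at the level of the time-one map, Proposition~\ref{pr12} reduces the isometry of the flow extension to that of $\big(\widetilde{T}_1\times R_1,\widetilde{\rho}\big)\to\big(T_1\times R_1,\rho\big)$, to which Proposition~\ref{Tzwarte} applies directly. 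For the cardinality of the fibres I would disintegrate $\widetilde{\rho}$ over $\rho$: every conditional measure is carried by the $k$-point set $G/H$, so the extension has fibres of size at most $k$; since the $G$-action permutes fibres isometrically while $(\cT\times\cR,\rho)$ is ergodic, the number of atoms is constant $\rho$-almost everywhere.

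The hard part is pinning down this fibre cardinality exactly. The bound ``at most $k$'' — in particular the finiteness of the fibres, which is all that is needed in the intended application to Proposition~\ref{283c} — is immediate from the $k$-point fibre $G/H$; the genuine ``$k{:}1$'' count is the delicate point, because the conditional measures of $\widetilde{\rho}$ could in principle concentrate on a proper subset of $G/H$ and thereby lower the fibre size. This is exactly where the homogeneous-fibre structure coming from~\cite{MR1091425} is essential: the ergodic component singles out the closed subgroup $G'$ whose cosets form the fibres, forcing their common cardinality to be constant, while the isometric conclusion persists regardless of whether this constant equals $k$ or a proper divisor of it.
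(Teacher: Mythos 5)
Your proposal is correct and follows essentially the same route as the paper's proof: writing the joining $\widetilde{\cT}\vee\cR$ as the extension of $\cT\vee\cR$ by the cocycle $\widetilde{\varphi}(x,y)=\varphi(x)$ depending only on the first coordinate, and reducing isometric extensions to compact group extensions. You are in fact somewhat more explicit than the paper about why the joining measure need not be Haar on the fibres and about the resulting ambiguity in the exact fibre count, but only finiteness of the fibres is used in the application, so this does not affect the argument.
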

\begin{proof}
It suffices to show that the claim remains true if we replace the words ``isometric extension'' with ``compact group extension''. This is however clear, as for any compact group extension $\cT_\varphi$, we can write $\cT_\varphi \vee \cR$ as $(\cT\vee\cR)_{\widetilde{\varphi}}$, where $\widetilde{\varphi}(x,y)=\varphi(x)$. In this way we obtain the first part of the claim (about isometric extensions). The more detailed part (about the extensions being ``k:1'') follows from the fact that the group $G$ where $\varphi$ and $\widetilde{\varphi}$ take their values is the same. Moreover, any isometric extension with finite fibers is an intermediate extension for some finite group extension.
\end{proof}

\begin{proof}[Proof of Proposition~\ref{283c}]
Take $\mu\otimes\nu\neq\lambda\in\cJ^e(\cT,\cS)$. It follows from~\cite{MR922364} that there exists $n\in\N$ such that $\cT^{\odot n}$ is a factor of $\cS$,\footnote{By $\cT^{\odot n}$ we denote the symmetric factor of $\cT^{\times n}$, i.e. the sub-$\sigma$-algebra of sets invariant under all permutations of coordinates, see, e.g.,~\cite{MR922364}.} Moreover, the restriction of $\lambda$ to a joining $\cT\vee\cT^{\odot n}$ between $\cT$ and $\cT^{\odot n}$ is such that $\cT\vee\cT^{\odot n}$ is a factor of $\cT^{\times n}$. Since the extension $\cT^{\times n}\to \cT^{\odot n}$ has finite fibers, it follows that also the intermediate extension $\cT\vee \cT^{\odot n}\to \cT^{\odot n}$ has finite fibers (see the proof of Proposition~\ref{283a}). Now, applying Lemma~\ref{tensam} to $\widetilde{\cT}=\cT\vee\cT^{\odot n}$, $\cR=\cS$ and $\rho$ being the joining associated with the factoring map, we obtain that the fibers in the extension $\cT\vee\cS\to \cS$ are finite as well.
\end{proof}

\begin{uw}
Notice that Proposition~\ref{pr:msj} also follows by combining Proposition~\ref{283a}, Proposition~\ref{283b} and~\ref{283c}. In Appendix~\ref{apB} we give yet another proof of Proposition~\ref{pr:msj}.
\end{uw}

\subsection{Non-uniqueness}
Since the R-property implies the 2-QS property, one could ask if a result analogous to Corollary~\ref{ratnerpr} holds also for 2-QS flows. It turns out this is not true. In fact, we ``loose'' the uniqueness of the embedding of the time-one map already in case of 2-fold simple flows.

Let $D_6$ stand for the smallest non-abelian group, i.e. dihedral group of order $6$. Recall that the group operations in $D_6$ can be summarized using Cayley table as follows:
\begin{equation}\label{tabelka}
\begin{tabular}{c|c|c|c|c|c|c}
  $\ast$ & \bf{e} & \bf{a} & \bf{b} & \bf{c} & \bf{d} & \bf{f}\\  
  \hline
  \bf{e} & e &a&	b&	c&	d&	f\\
  \hline
  \bf{a}&	a&	e&	d&	f&	b&	c\\
  \hline
  \bf{b}&	b&	f&	e&	d&	c&	a\\
  \hline
  \bf{c}&	c&	d&	f&	e&	a&	b\\
  \hline
  \bf{d}&	d&	c&	a&	b&	f&	e\\
  \hline
  \bf{f}&	f&	b&	c&	a&	e&	d
\end{tabular}
\end{equation}

\begin{pr}\label{2wlaut}
There exists an automorphism which is 2-fold simple and has two non-isomorphic square roots (one of these roots is 2-fold simple, the other one is only 2-QS).
\end{pr}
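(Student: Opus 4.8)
The plan is to realize all three automorphisms as \emph{affine extensions} of a single weakly mixing MSJ base. Fix a weakly mixing $R$ on $(Y,\cC,\nu)$ with the MSJ property and with $R^2$ still $2$-fold simple (such $R$ exist, e.g.\ a Chacon-type map in the style of del Junco--Rudolph). I identify $D_6$ with the affine group $\mathrm{Aff}(\Z/3\Z)$ of the three-point fibre $\Z/3\Z$: in the Cayley table~\eqref{tabelka} the rotation subgroup $\{e,d,f\}$ becomes the translations $n\mapsto n+k$ and the three reflections $a,b,c$ the three flips $n\mapsto -n+k$. All systems live on $Y\times\Z/3\Z$ as skew products $S(y,n)=(Ry,\alpha(y)n+\beta(y))$ with $\alpha\colon Y\to\{1,-1\}$ and $\beta\colon Y\to\Z/3\Z$. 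Composing two affine steps multiplies the flips, so
\[
S^2(y,n)=\bigl(R^2y,\ \alpha(Ry)\alpha(y)\,n+[\alpha(Ry)\beta(y)+\beta(Ry)]\bigr).
\]

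The construction is then as follows. Choose an affine cocycle $(\alpha,\beta)$ over $R$ with $\alpha$ \emph{non-constant} (and with $\alpha^{(2)}:=\alpha(R\cdot)\alpha(\cdot)$ not an $R^2$-coboundary), set $S_2(y,n)=(Ry,\alpha(y)n+\beta(y))$ and define $T:=S_2^2$. Since $\alpha$ is non-constant, ergodicity forces $\alpha^{(2)}$ non-trivial, so $T$ is a genuine \emph{non-abelian} affine extension of $R^2$ and not a mere $\Z/3\Z$-translation extension; this is exactly what lets $T$ be $2$-fold simple while admitting a root that is not (observe that an abelian $T$ could only have abelian, hence $2$-fold simple, affine roots). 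The second root comes from the sign ambiguity of affine square roots: $\widetilde\alpha(R\cdot)\widetilde\alpha(\cdot)=\alpha^{(2)}$ has the two solutions $\widetilde\alpha=\pm\alpha$, and solving the accompanying linear equation for the translation part produces a second affine root $S_1$ over $R$ with $S_1^2=T=S_2^2$ but $S_1\neq S_2$, the two differing by the reflection/sign part of $D_6$.

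It remains to pin down the self-joining type of each system. All three are compact (affine) extensions of a $2$-fold simple base, so by Proposition~\ref{pr:veech1} every ergodic self-joining projecting to the diagonal on the base factor $\cA$ is a graph joining, and since the fibres are finite, \emph{all} ergodic self-joinings are finite, hence isometric, extensions of each coordinate; thus $S_1,S_2,T$ are at least $2$-QS (cf.\ Proposition~\ref{QS-cft}). To finish I would compute $\cJ^e_2$ fully by analysing, over each ergodic base self-joining (the base $R$, resp.\ $R^2$, being MSJ/$2$-fold simple, these are only the off-diagonals $\mu_{R^{k}}$ and the product), the relative fibre joinings, which are governed by the subgroups and double cosets of $D_6=\mathrm{Aff}(\Z/3\Z)$. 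Because two reflections compose to a rotation, the flip part squares away: over $R^2$ the relevant fibre joinings for $T$ collapse to graphs, so $T$ is $2$-fold simple, and the same computation shows that one of the two roots has only graph self-joinings (hence is $2$-fold simple) while the other carries a genuine $2{:}1$ ergodic self-joining built from the non-central reflections (hence is $2$-QS but \emph{not} $2$-fold simple, cf.\ Proposition~\ref{pr:veech2}). Finally $S_1\not\simeq S_2$ since $2$-fold simplicity is an isomorphism invariant possessed by exactly one of them.

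The hard part is this last computation: showing simultaneously that $T$ has \emph{no} non-graph ergodic self-joining whereas $S_2$ does. This is precisely the affine, non-abelian analogue of the del Junco--Rudolph phenomenon that $2$-fold simplicity need not descend to a square root, and the delicate step is to track how the non-constant flip $\alpha$ alters the Mackey group of the relatively independent extensions over the base off-diagonals: for $T$ the flips occur in cancelling pairs, collapsing the fibre joinings to graphs, whereas for $S_2$ a single uncancelled reflection yields a two-point fibre joining. Verifying that this extra joining is ergodic, that it is the \emph{only} obstruction to simplicity, and dually that the companion root $S_1$ avoids it, is where essentially all the work lies; the remaining points — existence of the MSJ base with $2$-fold simple square, ergodicity and non-degeneracy of the affine extensions, and the cocycle bookkeeping giving $S_1^2=S_2^2=T$ — are routine, the last partly in the spirit of Remark~\ref{uw:delru}.
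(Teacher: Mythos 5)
Your construction is in the right family (an MSJ base, a $D_6$-flavoured extension, two square roots separated by $2$-fold simplicity versus mere $2$-QS), but as written it has two genuine gaps, and both stem from the decision to work on the three-point fibre $\Z/3\Z$ with structure group $\mathrm{Aff}(\Z/3\Z)$ rather than on the full six-point group extension $X\times D_6$. First, the existence of the second root is not actually established: passing from the sign choice $\widetilde\alpha=-\alpha$ to an honest root requires a measurable solution $\widetilde\beta$ of the additive equation $\widetilde\beta(Ry)-\beta(Ry)=\alpha(Ry)\bigl(\beta(y)+\widetilde\beta(y)\bigr)$, and this is a nontrivial cohomological condition, not a formality. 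Second, and more seriously, the entire verification that $T$ is $2$-fold simple while exactly one of its roots is not is deferred --- you say yourself this is ``where essentially all the work lies'' --- and on a three-point isometric extension there is no obvious shortcut, because the fibre symmetries one would like to exploit (right translations by elements of $D_6$) do not descend to the homogeneous space $D_6/\{e,a\}$, the stabilizer $\{e,a\}$ being non-normal. So the proposal identifies a plausible example but does not prove its key properties, and the chosen realization makes them harder to prove than necessary.

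The paper avoids all of this by staying on the group $X\times D_6$ and never computing a single joining. Take $T_\varphi$ a weakly mixing $D_6$-extension of an MSJ automorphism $T$; it is $2$-fold simple by Lemma~\ref{l1}, since $C(T)=\{T^k\}$ lifts trivially. The right translations $\sigma_g$ all lie in $C(T_\varphi)$, so $a^2=e$ gives $(T_\varphi\circ\sigma_a)^2=(T_\varphi)^2$ for free --- no cocycle equation to solve. The two roots are then separated by pure centralizer bookkeeping: $a\ast b\neq b\ast a$ gives $\sigma_b\notin C(T_\varphi\circ\sigma_a)$, while $\sigma_b\in C(T_\varphi)=C((T_\varphi)^2)=C((T_\varphi\circ\sigma_a)^2)$ by (the power analogue of) Proposition~\ref{pr:simple}; hence $C(T_\varphi\circ\sigma_a)\neq C((T_\varphi\circ\sigma_a)^2)$, so $T_\varphi\circ\sigma_a$ is not $2$-fold simple, yet it is $2$-QS by Proposition~\ref{QS-cft} because its square is. If you want to salvage your approach, the fastest repair is to lift everything back to the group extension and replace the planned double-coset analysis of fibre joinings by this centralizer criterion.
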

\begin{proof}
Let $T$ be a weakly mixing automorphism with the MSJ property and consider $\varphi\colon X\to D_6$ such that $T_\varphi\colon X\times D_6\to X\times D_6$ is also 
weakly mixing. Then $T_\varphi$ is 2-fold simple. For $g\in D_6$ let $\sigma_g\colon X\times D_6 \to X\times D_6$ be given by $\sigma_g(x,h)=(x,hg)$. Notice that for all $g\in D_6$ we have $\sigma_g\in C(T_\varphi)$. From now on we will be using the notation from~\eqref{tabelka} for elements of $D_6$. Since $a^2=e$, we have $(\sigma_a)^2=Id$, whence 
\begin{equation}\label{1.02b}
(T_\varphi\circ \sigma_a)^2=(T_\varphi)^2.
\end{equation}
Moreover, since $a\ast b\neq b\ast a$, $\sigma_b\not\in C(\sigma_a)$, whence $\sigma_b\not\in C(T_\varphi\circ \sigma_a)$. Therefore 
\begin{equation}\label{1.02c}
C(T_\varphi\circ \sigma_a)\neq C(T_\varphi).
\end{equation}
By Proposition~\ref{pr:simple}, we have
\begin{equation}\label{1.02d}
C(T_\varphi)=C((T_\varphi)^2).
\end{equation}
combining~\eqref{1.02b},~\eqref{1.02c} and~\eqref{1.02d} we conclude that $C((T_\varphi\circ \sigma_a)^2)\neq C(T_\varphi\circ \sigma_a)$. It follows by Lemma~\ref{pr:simple} that $T_\varphi$ is 2-fold simple, whereas $T_\varphi\circ \sigma_a$ is not 2-fold simple (it is 2-QS by Proposition~\ref{QS-cft}).
\end{proof}

\begin{pr}\label{rrr}
There exists a 2-fold simple automorphism which is embeddable into two non-isomorphic flows (one of these flows is 2-fold simple and the other one is only 2-QS).
\end{pr}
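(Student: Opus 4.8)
The plan is to mimic the construction from Proposition~\ref{2wlaut}, replacing the finite non-abelian group $D_6$ by the compact connected non-abelian group $\mathrm{SU}(2)$ and replacing ``passing to a root'' by ``passing to a flow''. First I would fix a weakly mixing flow $\cT=(T_t)_{t\in\R}$ with the MSJ property together with a cocycle $\varphi\colon\R\times X\to\mathrm{SU}(2)$ for which the compact group extension $\cT_\varphi=(T_{\varphi,t})_{t\in\R}$, acting on $X\times\mathrm{SU}(2)$ by $T_{\varphi,t}(x,g)=(T_tx,\varphi(t,x)g)$, is weakly mixing; such cocycles exist. As in the finite-group case, a weakly mixing compact group extension of an MSJ flow is 2-fold simple (this is the analysis behind Proposition~\ref{pr:veech1}, cf.\ \cite{MR922364}), so $\cT_\varphi$ is 2-fold simple. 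By Proposition~\ref{pr:simple} the time-one map $U:=T_{\varphi,1}$ is then 2-fold simple and $C(\cT_\varphi)=C(U)$; in particular every right rotation $\sigma_h\colon(x,g)\mapsto(x,gh)$, $h\in\mathrm{SU}(2)$, which plainly commutes with all $T_{\varphi,t}$, lies in $C(U)$.

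Next I would produce a second embedding of $U$ by twisting $\cT_\varphi$ along a one-parameter subgroup of the fibre group. Choosing the maximal-torus path $g_t=\mathrm{diag}(e^{2\pi i t},e^{-2\pi i t})$, a measurable one-parameter subgroup of $\mathrm{SU}(2)$ with $g_1=\mathrm{Id}$, I set $S_t:=T_{\varphi,t}\circ\sigma_{g_t}$. Since each $\sigma_{g_t}$ commutes with every $T_{\varphi,s}$ and $t\mapsto g_t$ is a homomorphism, a direct check gives $S_tS_s=T_{\varphi,t+s}\sigma_{g_sg_t}=S_{t+s}$, so $\cS=(S_t)_{t\in\R}$ is a measurable flow (its acting group is the semidirect product $\mygroup$); moreover $S_1=T_{\varphi,1}\sigma_{g_1}=U$. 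Thus $U$ embeds into both $\cT_\varphi$ and $\cS$, and since $S_1=U$ is weakly mixing, so is $\cS$.

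The heart of the argument is to show that $\cS$ fails to be 2-fold simple while remaining 2-QS, and I would do this by computing $C(\cS)$. For $h\in\mathrm{SU}(2)$ one finds that $\sigma_h\in C(\cS)$ precisely when $h$ commutes with every $g_t$, i.e.\ when $h$ lies in the centralizer of the chosen maximal torus, which in $\mathrm{SU}(2)$ is the torus itself. Picking any $h\in\mathrm{SU}(2)$ outside this torus (possible since $\mathrm{SU}(2)$ is non-abelian) yields $\sigma_h\in C(U)\setminus C(\cS)$, hence $C(\cS)\subsetneq C(U)=C(S_1)$. By Proposition~\ref{pr:simple} this forces $\cS$ not to be 2-fold simple, whereas Proposition~\ref{QS-cpt} gives that $\cS$ is 2-QS because $S_1=U$ is 2-fold simple and hence 2-QS. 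Finally $\cT_\varphi$ and $\cS$ cannot be isomorphic, 2-fold simplicity being an isomorphism invariant, and this exhibits $U$ as a 2-fold simple automorphism embeddable into two non-isomorphic flows, one 2-fold simple and one only 2-QS, as required.

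I expect the main obstacle to lie in the input step rather than in the twisting: guaranteeing the existence of a weakly mixing $\mathrm{SU}(2)$-extension of an MSJ flow and justifying that it is 2-fold simple, i.e.\ transferring the del Junco--Rudolph compact-group-extension analysis to a connected non-abelian fibre group. The non-commutativity of $\mathrm{SU}(2)$ enters in exactly one essential place---the existence of an $h$ that does not centralize the chosen torus---so the combinatorial content is light; the real care is in the cocycle/measurability bookkeeping and in the cited simplicity of the extension.
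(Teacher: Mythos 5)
Your proposal is correct and follows essentially the same route as the paper: a weakly mixing $\mathrm{SU}(2)$-extension $\cT_\varphi$ of an MSJ flow, twisted by the right rotations $\sigma_{g_t}$ along the maximal torus $g_t=\mathrm{diag}(e^{2\pi i t},e^{-2\pi i t})$, with the failure of $2$-fold simplicity of the twisted flow detected by an element of $\mathrm{SU}(2)$ not centralizing the torus (the paper uses the specific $h_0=\left(\begin{smallmatrix}0&-1\\1&0\end{smallmatrix}\right)$) together with Propositions~\ref{pr:simple} and~\ref{QS-cpt}. The only cosmetic difference is that you phrase the obstruction as $C(\cS)\subsetneq C(S_1)$ via the self-centralizing torus, whereas the paper exhibits the single witness $\sigma_{h_0}\in C(\cT_\varphi)\setminus C(\cR)$; the substance is identical.
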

\begin{proof}
Let $\cT=(T_t)_{t\in\R}$ be a weakly mixing flow on $\xbm$ with the MSJ property. Let $\varphi\colon X\times \R \to SU(2)$ be an $\R$-cocycle such that $\cT_\varphi=({(T_t)}_\varphi)_{t\in\R}$ is weakly mixing. For $g\in SU(2)$ let $\sigma_g(x,h)=(x,hg)$ and let $\mathcal{R}=(R_t)_{t\in\R}$ be given by $R_t=\sigma_{g_t}$, where
$g_t=\left(\begin{matrix}e^{2\pi i t}& 0\\ 0&e^{-2\pi i t} \end{matrix}\right).$
Let $\cT\circ \mathcal{R}:=((T_t)_\varphi \circ R_t)_{t\in\mathbb{R}}$. Then $(T_1)_\varphi\circ R_1=(T_1)_\varphi$. Moreover, for all $g\in SU(2)$ we have $\sigma_g\in C(\cT_\varphi)$. In particular, $\sigma_{h_0}\in C(\cT_\varphi)$, where 
$h_0=\left(\begin{matrix}0& -1\\ 1&0 \end{matrix}\right)$.
On the other hand, easy calculation shows that ${h_0} g_t\neq g_t h_0$ for $t\not\in \nicefrac12\Z$, i.e.\ $\sigma_{h_0}\not\in  C(\mathcal{R})$, whence
$C(\mathcal{T}_\varphi\circ \mathcal{R})\neq C(\mathcal{T}_\varphi)$. Thus, arguing as in the proof of the previous proposition, we obtain $C(\mathcal{T}_\varphi\circ \mathcal{R})\neq C((T_1)_\varphi\circ R_1)$ and $\mathcal{T}_\varphi\circ \mathcal{R}$ is not 2-fold simple (it is 2-QS by Proposition~\ref{QS-cpt}).
\end{proof}

\begin{uw}\label{uw:4.6}
It follows by the above example that the answer to the following question is negative:
\begin{align*}
&\text{Suppose that the extension $\widetilde{\cT}\to\cT$ is such that}\\
&\text{$\widetilde{T}_1\to T_1$ is a compact group extension. Is is true that also}\\
&\text{$\widetilde{\cT}\to\cT$ is a compact group extension?}
\end{align*}
Indeed, if the considered extensions were compact group extensions, then $T_\varphi\circ \sigma_a$ and $\cT\circ \cR$ would be 2-fold simple, which is not the case.

Also the answer to the following question is negative:
\begin{align*}
&\text{Suppose that the extension $\widetilde{T}\to T$ is such that}\\
&\text{$\widetilde{T}^2\to T^2$ is a compact group extension. Is is true that also}\\
&\text{$\widetilde{T}\to T$ is a compact group extension?}
\end{align*}
\end{uw}

\section{Self-joining properties of $\cT$ and $T_1$ -- part II}\label{se5}
A direct consequence of Proposition~\ref{2wlaut} and~\ref{rrr} is that the condition $C(\cT)=C(T_1)$ in condition (ii) in Proposition~\ref{pr:simple} or in its counterpart for automorphisms and their roots cannot be omitted. We will now discuss this problem in more details, providing more counter-examples. Let us however see first, such counter-examples cannot be obtained by taking factors of products of simple systems. In particular, products of horocycle flows, their factors and their weakly mixing distal extensions do not have the desired properties.

\begin{pr}\label{rr}
Let $\cT$ be a 2-fold simple flow with $T_1$ ergodic. Then any factor $\cA$ has the following property: either $\cT|_\cA$ and $T_1|_{\cA}$ are both 2-fold simple or $\cT|_\cA$ and $T_1|_{\cA}$ are both 2-QS, but not 2-fold simple.
\end{pr}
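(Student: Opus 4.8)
The plan is to reduce to the weakly mixing case and then prove two assertions: (I) both $\cT|_\cA$ and $T_1|_\cA$ are always $2$-QS, and (II) $\cT|_\cA$ is $2$-fold simple if and only if $T_1|_\cA$ is $2$-fold simple. Granting (I) and (II) the statement follows immediately: either both factors are $2$-fold simple (the first alternative), or neither is, in which case by (I) both are $2$-QS and not $2$-fold simple (the second alternative).

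If $\cT$ is not weakly mixing, then by the third remark following Proposition~\ref{pr:simple} it has purely discrete spectrum, hence so do the factors $\cT|_\cA$ and $T_1|_\cA$, and both are $2$-fold simple, giving the first alternative. So I would assume $\cT$ weakly mixing. Then Proposition~\ref{pr:simple} gives that $T_1$ is $2$-fold simple and $C(\cT)=C(T_1)$, while $T_1|_\cA$ is ergodic as a factor of the ergodic $T_1$. By the structure theory of $2$-fold simple actions \cite{MR922364}, the (flow-invariant) factor $\cA$ has the form $\cB^{K}$ for a compact subgroup $K$ of the centralizer, and $\cB\to\cA$ is a compact group extension. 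Since $K$ is recovered intrinsically as the subgroup of the centralizer fixing $\cA$ pointwise and $C(\cT)=C(T_1)$, the \emph{same} group $K$ describes $\cA$ whether it is viewed as a factor of $\cT$ or of $T_1$.

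For (I) I would show that each $\rho\in\cJ_2^e(T_1|_\cA)$ other than the product is isometric over both coordinate factors. Form the relatively independent extension $\widehat{\rho}\in\cJ_2(T_1)$ of $\rho$ and take an ergodic component; since $\rho$ is ergodic every component projects onto $\rho$ on $\cA\otimes\cA$, and as $T_1$ is $2$-fold simple (with the product excluded, $\rho$ being non-product) this component equals $\mu_S$ for some $S\in C(T_1)$. Thus $\rho$ is the distribution of $(\pi x,\pi Sx)$, where $\pi$ is the factor map onto $\cA$, so via the graph of $S$ the system $(T_1|_\cA\times T_1|_\cA,\rho)$ is isomorphic to $T_1$ restricted to the intermediate factor $\cA\vee S^{-1}\cA$, the two coordinate projections becoming the factor maps $\cA\vee S^{-1}\cA\to\cA$ and $\cA\vee S^{-1}\cA\to S^{-1}\cA$. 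As $\cA\subseteq\cA\vee S^{-1}\cA\subseteq\cB$, we have $\cA\vee S^{-1}\cA=\cB^{K_0}$ for a closed subgroup $K_0\le K$, so $\cB^{K_0}\to\cB^{K}=\cA$ is isometric with fibers $K/K_0$; symmetrically the second projection is isometric (now $\cB\to S^{-1}\cA$ is a compact group extension). Hence $\rho$ is isometric over each coordinate and $T_1|_\cA$ is $2$-QS, and Proposition~\ref{QS-cpt} then yields that $\cT|_\cA$ is $2$-QS as well.

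For (II) I would read off $2$-fold simplicity from the same description. The joining $\mathrm{proj}_{\cA\otimes\cA}(\mu_S)$ is a graph joining of $T_1|_\cA$ exactly when $S$ maps $\cA=\cB^{K}$ to itself, i.e.\ when $S$ normalizes $K$; otherwise its fibers over a coordinate are nontrivial homogeneous spaces and it is not a graph. Since by (I) every ergodic self-joining of $T_1|_\cA$ is of the form $\mathrm{proj}_{\cA\otimes\cA}(\mu_S)$ with $S\in C(T_1)$, it follows that $T_1|_\cA$ is $2$-fold simple if and only if $K$ is normal in $C(T_1)$. Running the identical analysis for the $2$-fold simple flow $\cT$ (the structure theory of \cite{MR922364} being valid for $\R$-actions) shows that $\cT|_\cA$ is $2$-fold simple if and only if $K$ is normal in $C(\cT)$; as $C(\cT)=C(T_1)$ and $K$ is the same group in both pictures, these conditions coincide, proving (II). The main point requiring care is precisely assembling this structural picture uniformly for the flow and its time-one map---that $\cA$ is a compact group factor described by the same $K$ in both, that every ergodic self-joining of the factor descends from an off-diagonal $\mu_S$ upstairs, and that an intermediate factor of a compact group extension is automatically isometric with homogeneous fibers---after which the equality $C(\cT)=C(T_1)$ from Proposition~\ref{pr:simple} does all the remaining work.
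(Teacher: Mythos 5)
Your proposal is correct and follows essentially the same route as the paper: the paper's (much terser) proof likewise identifies the factor with $\cB^K$ for a compact subgroup $K$ of $C(\cT)=C(T_1)$ via the del Junco--Rudolph/Veech structure theory and observes that the restriction is $2$-fold simple precisely when $K$ is normal, and otherwise only $2$-QS. Your write-up merely makes explicit the steps the paper leaves implicit (descent of ergodic self-joinings from graph joinings upstairs, isometry of intermediate factors of a compact group extension, and the reduction of the non--weakly-mixing case).
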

\begin{proof}
Notice that $\cT$ and $T_1$ have the same factors. Moreover, every factor corresponds to a compact subgroup of $C(\cT)=C(T_1)$. If this subgroup is normal, then $\cT$ and $T_1$ restricted to the corresponding factor are 2-fold simple. Otherwise they are both only 2-QS but not $2$-fold simple.
\end{proof}
\begin{wn}\label{273a}
Let $\cT^{(i)}$
 be 2-fold simple, weakly mixing flows. Let $\cA$ be a factor of $\cT^{(1)}\times \cT^{(2)}\times \dots$ which is 2-QS. Then either $(\cT^{(1)}\times \cT^{(2)}\times\dots)|_\cA$ and $(T^{(1)}_1\times T^{(2)}_1\times\dots)|_{\cA}$ are both 2-fold simple or $(\cT^{(1)}\times \cT^{(2)}\times\dots)|_\cA$ and $(T^{(1)}_1\times T^{(2)}_1\times\dots)|_{\cA}$ are both 2-QS, but not 2-fold simple.
\end{wn}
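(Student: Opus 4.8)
The plan is to deduce the statement from Proposition~\ref{rr} by realizing $\cA$ as a factor of a \emph{single} weakly mixing $2$-fold simple flow. Write $\cS=\cT^{(1)}\times\cT^{(2)}\times\cdots$ and let $S_1=T^{(1)}_1\times T^{(2)}_1\times\cdots$ be its time-one map. Since each $\cT^{(i)}$ is weakly mixing, so is the product flow $\cS$, and hence $S_1$ is weakly mixing as well (a flow is weakly mixing if and only if its time-one map is). Consequently $\cS|_\cA$ and $S_1|_\cA$ are weakly mixing, in particular $S_1|_\cA$ is ergodic, so Proposition~\ref{QS-cpt} applied to the flow $\cS|_\cA$ shows that $S_1|_\cA$ is also $2$-QS. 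Moreover, if $\cS|_\cA$ is $2$-fold simple then $S_1|_\cA$ is $2$-fold simple by Proposition~\ref{pr:simple}. Thus the entire content of the corollary is the reverse implication: if $S_1|_\cA$ is $2$-fold simple then so is $\cS|_\cA$.

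First I would reduce to a single simple flow. Using the del Junco--Rudolph analysis of the self-joinings of simple systems~\cite{MR922364}, a $2$-QS factor $\cA$ of a product of weakly mixing $2$-fold simple flows lies over a single weakly mixing $2$-fold simple flow: passing to the maximal $2$-fold simple factor $\cA_0\subseteq\cA$, the extension $\cA\to\cA_0$ is a compact group extension by Proposition~\ref{pr:veech2}, while $\cA_0$, being a $2$-fold simple factor of the product $\cS$, is (through the graph structure of joinings of simple systems) a factor of a single weakly mixing $2$-fold simple flow $\cT$; one then checks that $\cA$ itself is a factor of $\cT$ and that the flow action of $\cT$ on $\cA$ coincides with that of $\cS$, so that $\cT|_\cA=\cS|_\cA$ and hence $T_1|_\cA=S_1|_\cA$. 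Once this is in place, Proposition~\ref{rr} applied to $\cT$ and the factor $\cA$ yields exactly the desired dichotomy for the pair $(\cT|_\cA,T_1|_\cA)=(\cS|_\cA,S_1|_\cA)$, and in particular the missing implication.

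The hard part is the reduction in the previous paragraph, namely that a $2$-QS factor of a product of weakly mixing $2$-fold simple flows is, as a factor equipped with its flow action, a factor of a single $2$-fold simple flow. This is exactly where the hypothesis that the ambient flow is a product of \emph{simple} flows is indispensable: the implication ``$S_1|_\cA$ $2$-fold simple $\Rightarrow$ $\cS|_\cA$ $2$-fold simple'' \emph{fails} for arbitrary $2$-QS flows, as witnessed by Propositions~\ref{2wlaut} and~\ref{rrr}, where a $2$-fold simple time-one map is embedded into a flow that is only $2$-QS. Any correct argument must therefore exploit the product-of-simple structure, and I expect the bulk of the work to be the joining-theoretic verification (via~\cite{MR922364}) that no such pathology can arise for factors built from simple pieces; the passage to a single simple flow then makes Proposition~\ref{rr} directly applicable.
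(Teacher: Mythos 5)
Your overall architecture is the same as the paper's: reduce to Proposition~\ref{rr} by showing that $\cA$, with its flow action, sits inside a \emph{single} weakly mixing $2$-fold simple flow. The preliminary observations (weak mixing of the product, $S_1|_\cA$ being $2$-QS via Proposition~\ref{QS-cpt}, the forward implication via Proposition~\ref{pr:simple}) are all fine. But the reduction itself --- which you correctly identify as ``the hard part'' --- is where your argument has a genuine gap, in two places. First, your claim that the extension $\cA\to\cA_0$ over the maximal $2$-fold simple factor is a compact group extension ``by Proposition~\ref{pr:veech2}'' is unjustified: that proposition requires $\cJ_2^e(\,\cdot\,;\cA_0)$ to consist of \emph{graph} joinings, whereas $2$-QS only tells you that the ergodic self-joinings of $\cA$ are isometric extensions of the coordinate factors; nothing you have said forces the joinings projecting to the diagonal on $\cA_0$ to be graphs. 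Second, the concluding step ``one then checks that $\cA$ itself is a factor of $\cT$ and that the flow action of $\cT$ on $\cA$ coincides with that of $\cS$'' is precisely the whole content of the reduction and is not checked; as written it is an assertion, not a proof.

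The paper closes exactly this gap with a single citation rather than a structural analysis: by a result of del Junco and Lema\'nczyk~\cite{delJ-Lem}, a $2$-fold QS system that is a factor of a product of (two, hence by an inverse-limit argument countably many) weakly mixing systems is a factor of \emph{one of the coordinate factors}. Hence $\cA$ is contained in some coordinate $\sigma$-algebra $\cB^{(i)}$, the product flow restricted to $\cA$ is literally $\cT^{(i)}|_\cA$, and Proposition~\ref{rr} applied to the weakly mixing $2$-fold simple flow $\cT^{(i)}$ gives the dichotomy. If you want a self-contained argument you would need to prove that coordinate-factor statement (or an equivalent), not merely the weaker claim about the maximal simple factor $\cA_0$; your route through $\cA_0$ does not obviously recover it, since even granting that $\cA_0$ lies in one coordinate you would still have to rule out that the isometric extension $\cA\to\cA_0$ ``spreads'' across several coordinates.
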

\begin{proof}
This follows immediately from Proposition~\ref{rr} and from the fact that whenever a 2-fold QS system is a factor of a product of two weakly mixing systems, then it is a factor of one of the coordinate factors~\cite{delJ-Lem}.
\end{proof}
Consider the following property of $\cT$: 
\begin{equation}
\text{$T_1$ is simple and $\cT$ is only 2-QS.}\tag{P}
\end{equation}
\begin{wn}\label{273aa}
Let $\cT$ be a weakly mixing flow with the JP property, satisfying the following condition: for any factor $\cA$ of $\cT$ there exists a factor $\cC\subset \cA$ such that $\cT|_\cC$ satisfies property (P). Then $\cT$ is disjoint from weakly mixing extensions of simple flows.
\end{wn}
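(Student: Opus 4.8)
The plan is to argue by contradiction and to force a \emph{simple} flow into the self-joining slot of Theorem~\ref{parreau}, so that the rigidity of Corollary~\ref{273a} can be set against property~(P). Throughout I read ``weakly mixing extension of a simple flow'' as a weakly mixing flow $\cS$ that is a relatively distal extension of a simple flow $\cS_0$ (this reading is essentially forced: a merely relatively weakly mixing extension such as $\cS_0\times\cT$ would satisfy $\cT\not\perp\cS_0\times\cT$, so the extension must carry distal structure, in line with the ``weakly mixing distal extensions'' discussed above). We may also assume $\cS_0$ is weakly mixing, since otherwise $\cS_0$ has discrete spectrum and is automatically disjoint from the weakly mixing $\cT$.

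First I would reduce the problem from $\cS$ to its simple base $\cS_0$. Suppose $\cT\not\perp\cS$ and fix a non-product $\lambda\in\cJ^e(\cT,\cS)$; let $\lambda_0\in\cJ^e(\cT,\cS_0)$ be its restriction to a joining of $\cT$ and $\cS_0$. I claim $\lambda_0\neq\mu\otimes\nu_0$. If it were the product, then $\cT$ and $\cS_0$ would be independent under $\lambda$, and since $\cS\to\cS_0$ is relatively distal while $\cT$ is weakly mixing, the relative form of Furstenberg's disjointness of weakly mixing from distal systems forces any joining of $\cT$ and $\cS$ lying over $\mu\otimes\nu_0$ to equal $\mu\otimes\nu$: one runs the transfinite induction along the tower of relatively discrete-spectrum (isometric) extensions defining $\cS\to\cS_0$, noting that at each step the weakly mixing, base-independent factor $\cT$ can acquire no new correlation, and that products pass to inverse limits at limit ordinals. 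This contradicts the choice of $\lambda$, so in fact $\cT\not\perp\cS_0$.

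Next I would invoke Theorem~\ref{parreau} for the pair $(\cS_0,\cT)$: non-disjointness produces $\eta\in\cJ_\infty^e(\cS_0)$ such that $(\cS_0^{\times\infty},\eta)$ and $\cT$ admit a non-trivial common factor $\mathcal{D}$. The reason for routing through $\cS_0$ is precisely that $\cS_0^{\times\infty}$ is a product of $2$-fold simple weakly mixing flows. Since $\mathcal{D}$ is a non-trivial factor of $\cT$, the standing hypothesis applied to $\cA=\mathcal{D}$ yields a factor $\cC\subset\mathcal{D}$ for which $\cT|_\cC$ has property~(P); that is, $(T_1)|_\cC$ is simple (in particular $2$-fold simple) while $\cT|_\cC$ is only $2$-QS. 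Transporting $\cC$ across the common-factor isomorphism --- which, being an isomorphism of flows, intertwines the time-one maps --- exhibits $\cT|_\cC$ as a $2$-QS factor of the product $\cS_0^{\times\infty}$ of $2$-fold simple weakly mixing flows, with $(T_1)|_\cC$ as the corresponding time-one factor. Corollary~\ref{273a} then forces $\cT|_\cC$ and $(T_1)|_\cC$ to be \emph{simultaneously} $2$-fold simple, or \emph{simultaneously} $2$-QS but not $2$-fold simple. This contradicts property~(P), under which exactly one of the two alternatives holds, and the contradiction proves $\cT\perp\cS$.

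The heart of the argument --- and the step I expect to be the main obstacle --- is the reduction $\cT\not\perp\cS\Rightarrow\cT\not\perp\cS_0$, since this is where the distal character of the extension is indispensable and where the correct reading of ``extension'' gets pinned down. After that the argument is essentially bookkeeping: Theorem~\ref{parreau} manufactures a self-joining of the \emph{simple} flow $\cS_0$, the hypothesis manufactures a factor on which the flow is only $2$-QS while its time-one map is simple, and Corollary~\ref{273a} records that no $2$-QS factor of a product of simple flows can exhibit such a mismatch.
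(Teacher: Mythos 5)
Your argument is correct in substance and its skeleton matches the paper's: play property (P) against Corollary~\ref{273a} to show $\cT$ cannot share a non-trivial factor with a weakly mixing simple flow, then lift the resulting disjointness through the distal tower of $\cS\to\cS_0$ by the classical Furstenberg induction (this is precisely the ``result on lifting disjointness'' from \cite{MR0213508} that the paper invokes, and your reading of ``weakly mixing extension'' as a weakly mixing relatively distal extension agrees with the paper's intent --- compare the sentence about ``weakly mixing distal extensions'' just before the corollary). The genuine difference is in the middle step. The paper quotes the JP-specific criterion from \cite{MR2729082} --- a JP system is disjoint from a weakly mixing simple system if and only if they have no common non-trivial factor --- so the JP hypothesis does real work there. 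You instead route through Theorem~\ref{parreau}: non-disjointness of $\cT$ and $\cS_0$ produces a common non-trivial factor of $\cT$ with $(\cS_0^{\times\infty},\eta)$ for some $\eta\in\cJ_{\infty}^e(\cS_0)$, and Corollary~\ref{273a} (stated for products) then finishes. This buys something real: your proof never uses the JP property, so it establishes the conclusion under the factor hypothesis alone. The price is one step you should make explicit: $(\cS_0^{\times\infty},\eta)$ is not the product measure in general, and before Corollary~\ref{273a} applies you must invoke the del Junco--Rudolph structure of ergodic self-joinings of simple systems (products of off-diagonals over a partition of the index set) to identify $(\cS_0^{\times\infty},\eta)$ with a genuine product $\cS_0^{\times k}$, $1\leq k\leq\infty$. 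As written you treat $\eta$ as if it were the product; the conclusion survives, but only because $\cS_0$ is simple in the full sense (2-fold simple plus PID), not merely 2-fold simple. With that sentence added, and the case of a non-weakly-mixing $\cS_0$ disposed of as you indicate, the proof is complete.
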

\begin{proof}
Recall~\cite{MR2729082} that a JP system is disjoint from a weakly mixing simple system if and only if they do not have a common non-trivial factor. Now it suffices to apply Corollary~\ref{273a} and a result from~\cite{MR0213508} on lifting disjointness.
\end{proof}
\begin{uw}
The assumptions of the above corollary are satisfied when $\cT$ satisfies property (P) and has no non-trivial factors.
\end{uw}

\subsection{Easy counter-example}

Del Junco and Rudolph~\cite{MR922364} recall an example by Glasner~\cite{MR719119} of a weakly mixing group extension of a 2-fold simple $\Z$-action, which is itself not 2-fold simple. It is of the form
$
R(x,z_1,z_2)=(Tx,\varphi(x)z_1,z_1z_2),
$
where $T$ is an ergodic automorphism with minimal self-joinings and $\varphi\colon X\to \T$ yields a weakly mixing group extension $T_\varphi$ of $T$. An example of an ergodic self-joining which is ``2:1'' is given in~\cite{MR922364}, showing that this automorphism is not 2-fold simple. We will follow a similar scheme and give an example of a $\Z_2$-extension\footnote{We will identify $\Z_2$ with $\{0,1\}$ with addition modulo $2$.} of a 2-fold simple map such that the obtained automorphism is ``2:1'' and its square is 2-fold simple.

Let us first recall some necessary lemmas and tools. Since we will use them for abelian groups only, we state them in such setting.

\begin{lm}[\cite{MR922364}]\label{l1}
Let $T_\varphi$ be an ergodic compact group extension of a 2-fold simple automorphism $T$. Then $T_\varphi$ is 2-fold simple if and only if for every $S\in C(T)$ there exists $\widetilde{S}\in C(T_\varphi)$ which is an extension of $S$.
\end{lm}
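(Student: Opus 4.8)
The plan is to relate the ergodic $2$-fold self-joinings of $T_\varphi$ to those of the base $T$ through the factor map $\pi\colon X\times G\to X$, using throughout that the fibre translations $\sigma_h(x,g)=(x,gh)$ lie in $C(T_\varphi)$ (here $G$ is abelian, so $\sigma_h T_\varphi=T_\varphi\sigma_h$). Since $T$ is $2$-fold simple, every $\eta\in\cJ^e_2(T)$ is either an off-diagonal $\mu_S$ with $S\in C(T)$ or the product $\mu\otimes\mu$, and the pushforward $(\pi\times\pi)_\ast$ maps $\cJ^e_2(T_\varphi)$ into $\cJ^e_2(T)$; the whole argument amounts to reading off what each possibility forces upstairs.

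\emph{The direction ``$2$-fold simple $\Rightarrow$ lifting''.} Fix $S\in C(T)$. First I would lift the off-diagonal $\mu_S\in\cJ^e_2(T)$ to $T_\varphi$ by forming the relatively independent extension $\widehat{\mu_S}\in\cJ_2(T_\varphi)$ over $\pi$ and decomposing it into $T_\varphi\times T_\varphi$-ergodic components. As $\mu_S$ is $T\times T$-ergodic, almost every component projects back onto $\mu_S$, so there is $\lambda\in\cJ^e_2(T_\varphi)$ with $(\pi\times\pi)_\ast\lambda=\mu_S$. By $2$-fold simplicity $\lambda$ is either a graph $\mu_{\widetilde S}$ or the product; the product is excluded because it projects to $\mu\otimes\mu\neq\mu_S$. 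Hence $\lambda=\mu_{\widetilde S}$ with $\widetilde S\in C(T_\varphi)$, and comparing the supports of $\mu_S$ and $(\pi\times\pi)_\ast\mu_{\widetilde S}$ gives $\pi\circ\widetilde S=S\circ\pi$, i.e. $\widetilde S$ extends $S$.

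\emph{The direction ``lifting $\Rightarrow$ $2$-fold simple''.} Take a non-product $\lambda\in\cJ^e_2(T_\varphi)$ and set $\eta=(\pi\times\pi)_\ast\lambda\in\cJ^e_2(T)$. If $\eta=\mu_S$ is an off-diagonal, I would use the hypothesis to pick $\widetilde S\in C(T_\varphi)$ extending $S$ and \emph{straighten} $\lambda$ by putting $\lambda'=(\mathrm{id}\times\widetilde S^{-1})_\ast\lambda$. Since $\widetilde S\in C(T_\varphi)$, $\lambda'$ is again an ergodic self-joining of $T_\varphi$, and because $\pi\widetilde S^{-1}=S^{-1}\pi$ its base projection is the pushforward of $\mu_S$ under $(x_1,x_2)\mapsto(x_1,S^{-1}x_2)$, namely the diagonal $\Delta(\mu)$. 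Thus $\lambda'\in\cJ_2^e(T_\varphi;\cA)$ for the base factor $\cA=\pi^{-1}(\cB)$, and Proposition~\ref{pr:veech1} applies to give $\lambda'=\mu_{\widetilde R}$ for some $\widetilde R\in C(T_\varphi)$. Undoing the straightening, $\lambda=(\mathrm{id}\times\widetilde S)_\ast\mu_{\widetilde R}=\mu_{\widetilde S\widetilde R}$ is a graph joining, as required.

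\emph{The remaining case and the main obstacle.} The delicate case is $\eta=\mu\otimes\mu$, where I must show $\lambda=\mu\otimes\mu$. Ergodicity of $\lambda$ forces $\mu\otimes\mu$ to be $T\times T$-ergodic, so $T$ is weakly mixing. The extension $(T_\varphi\times T_\varphi,\lambda)\to(T\times T,\mu\otimes\mu)$ is then a compact group extension by a closed $H\leq G\times G$ whose two projections are onto; if $H=G\times G$ the fibres are independent and $\lambda=\mu\otimes\mu$. The heart of the matter is to rule out a proper $H$: by Goursat's lemma a proper $H$ with onto projections yields a nontrivial common quotient of $G$ and an identification of the two copies, which translates into a nontrivial character $\chi\in\widehat G$ for which $\chi\circ\varphi$ is cohomologous over $T$ to a constant. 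I expect this to be the main obstacle, to be removed using weak mixing of $T$ together with $2$-fold simplicity (such a coincidence would produce a non-base eigenfunction of $T_\varphi$ incompatible with an ergodic joining over the independent base). Combining the three cases shows $\cJ^e_2(T_\varphi)\subseteq\{\mu_{\widetilde S}:\widetilde S\in C(T_\varphi)\}\cup\{\mu\otimes\mu\}$, i.e. $T_\varphi$ is $2$-fold simple.
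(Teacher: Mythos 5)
The paper does not actually prove this lemma --- it is quoted from del Junco--Rudolph --- so I can only judge your argument on its own terms. The direction ``$2$-fold simple $\Rightarrow$ lifting'' and the graph case of the converse are correct and are essentially the standard argument: lift $\mu_S$ through the relatively independent extension over $\pi\times\pi$ and identify an ergodic component as a graph, and conversely straighten a joining sitting over $\mu_S$ by $\mathrm{id}\times\widetilde S^{-1}$ so that Proposition~\ref{pr:veech1} applies. (Two small points: the right translations $\sigma_h$ commute with $T_\varphi$ for \emph{any} compact $G$, not only abelian ones; and ``comparing supports'' should be replaced by the routine conditional-expectation argument showing that $(\pi\times\pi)_\ast\mu_{\widetilde S}=\mu_S$ forces $\pi\circ\widetilde S=S\circ\pi$ a.e.)

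The case you yourself flag as ``the main obstacle'' is a genuine gap, and it cannot be closed with the ingredients you propose (weak mixing of $T$ together with $2$-fold simplicity of $T$), because the implication ``lifting $\Rightarrow$ $2$-fold simple'' is false under the hypotheses as literally stated. Take $T$ weakly mixing with MSJ, $G=\T$ and $\varphi\equiv g_0$ irrational, so that $T_\varphi=T\times R_{g_0}$ is an ergodic compact group extension; every $S=T^n\in C(T)$ lifts to $T_\varphi^n$, yet the measure under which the two $X$-coordinates are independent and the two $\T$-coordinates differ by a fixed constant is an ergodic self-joining of $T_\varphi$ (a product of the weakly mixing system $T\times T$ with an ergodic rotation) which is neither a graph nor the product. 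The missing hypothesis is weak mixing of $T_\varphi$ itself, which holds in every place the lemma is invoked in this paper (it is checked explicitly for $T_\varphi$ in Proposition~\ref{2wlaut} and for $\overline T$ in Lemma~\ref{lm:Twm}) and is part of the setting in the source. With that hypothesis your Goursat step does close: a proper $H\leq G\times G$ with onto projections produces nontrivial characters $\chi_1,\chi_2$ of $G$ such that $\chi_1(\varphi(x_1))\overline{\chi_2(\varphi(x_2))}$ is a $T\times T$-coboundary; viewing the transfer function as a Hilbert--Schmidt operator intertwining the weighted operators $f\mapsto\chi_i(\varphi)\cdot f\circ T$ yields a finite-dimensional invariant subspace, hence $h$ with $|h|=1$ and $\chi_2\circ\varphi=c\cdot h/(h\circ T)$; then $(x,g)\mapsto h(x)\chi_2(g)$ is a non-constant eigenfunction of $T_\varphi$, contradicting weak mixing. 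So the argument is salvageable, but only after adding the weak mixing assumption on $T_\varphi$ that the statement, as transcribed here, omits.
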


\begin{lm}[$\Z$-actions~\cite{MR527744}, abelian groups~\cite{MR1141362}, general case~\cite{MR1279374}]\label{l2}
Suppose that $T$ is 2-fold simple and let $T_\varphi$ be its compact group extension, i.e. $T_\varphi\colon X\times G\to X\times G$, $T_\varphi(x,g)=(Tx,\varphi(x)+g)$. Let $\widetilde{S}\in C(T_\varphi)$. Then there are a continuous group automorphism $v\colon G\to G$, a measurable map $f\colon X\to G$ and $S\in C(T)$ such that $\widetilde{S}(x,g)=(Sx,f(x)+v(g))$. Equivalently, $\varphi(x)- v(\varphi(x))=f(Tx)- f(x)$.
\end{lm}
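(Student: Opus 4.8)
The plan is to extract from $\widetilde S\in C(T_\varphi)$ two pieces of data — its action on the base $X$ and its action on the fibre $G$ — and to show that the former is an element $S\in C(T)$ and the latter is affine with an $x$-independent linear part $v$. Throughout I write $\Sigma=\{\sigma_g:g\in G\}\subset C(T_\varphi)$ for the group of fibre rotations $\sigma_g(x,h)=(x,h+g)$; since $G$ is abelian, $\Sigma$ is an abelian subgroup of $C(T_\varphi)$ commuting with $T_\varphi$, and the base factor $\cA:=\pi_X^{-1}(\cB)$ is exactly the $\sigma$-algebra of $\Sigma$-invariant sets. Recall that $T_\varphi\to\cA$ is an ergodic compact group extension, hence isometric by Proposition~\ref{Tzwarte}, so it is not relatively weakly mixing over any nontrivial intermediate factor.

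First I would show that $\widetilde S$ descends to the base. Form the graph self-joining $\mu_{\widetilde S}\in\cJ_2^e(T_\varphi)$ and project it onto $\cA\otimes\cA$; this is an ergodic self-joining $\rho$ of $T$, namely the law of $(\pi_X z,\pi_X\widetilde S z)$ for $z\sim\mu\otimes\lambda_G$. Since $T$ is 2-fold simple, $\rho\in\{\mu\otimes\mu\}\cup\{\mu_S:S\in C(T)\}$. The product measure is impossible: if $\rho=\mu\otimes\mu$, then $T\times T$ is $\mu\otimes\mu$-ergodic, so $T$ is weakly mixing, while the factors $\cA$ and $\widetilde S^{-1}\cA$ become independent; their join $\cA\vee\widetilde S^{-1}\cA$ (with $\widetilde S^{-1}\cA\cong\cA$ via $\widetilde S$, hence nontrivial) is a factor of $T_\varphi$ isomorphic to the product $T\times T$ and intermediate in $T_\varphi\to\cA$. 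Being intermediate in an isometric extension it is isometric over $\cA$, yet the projection $T\times T\to T$ onto the first coordinate $\cA$ is relatively weakly mixing because $T$ is weakly mixing — forcing the extension $\cA\vee\widetilde S^{-1}\cA\to\cA$ to be trivial, against the nontriviality and independence of $\widetilde S^{-1}\cA$. Hence $\rho=\mu_S$, i.e. $\pi_X\widetilde S(x,g)=Sx$ a.e. with $S\in C(T)$, and we may write $\widetilde S(x,g)=(Sx,\psi_x(g))$ where, for a.e.\ $x$, the map $\psi_x\colon G\to G$ is a Haar-preserving bijection.

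The heart of the argument is to prove that $\psi_x$ is affine. Restricting to the support of $\mu_S$ identifies the product base $(X\times X,\mu_S,T\times T)$ with $(X,\mu,T)$, over which $\mu_{\widetilde S}$ becomes an ergodic invariant measure for the $G\times G$-extension of $(X,T)$ determined by the cocycle $x\mapsto(\varphi(x),\varphi(Sx))$, with full Haar marginal on each fibre coordinate. By the description of ergodic components of compact group extensions~\cite{MR1091425}, together with cocycle reduction, $\mu_{\widetilde S}$ is the Haar measure carried by a measurable bundle of cosets $b(x)+\Lambda$ of a single closed subgroup $\Lambda\le G\times G$. Because $\mu_{\widetilde S}$ projects to full Haar on each coordinate and is the graph of the bijection $g\mapsto\psi_x(g)$, the subgroup $\Lambda$ must project bijectively onto each factor; that is, $\Lambda$ is the graph of a continuous automorphism $v\colon G\to G$, and the coset $b(x)+\Lambda$ is then $\{(g,f(x)+v(g)):g\in G\}$ for a measurable $f$, which is exactly $\psi_x(g)=f(x)+v(g)$.

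Finally, substituting the affine form into $\widetilde S\circ T_\varphi=T_\varphi\circ\widetilde S$ and comparing fibres gives
\begin{equation*}
f(Tx)+v(\varphi(x))=\varphi(Sx)+f(x),
\end{equation*}
that is $\varphi\circ S-v\circ\varphi=f\circ T-f$, the asserted cohomological identity (for $S=\mathrm{Id}$ it is precisely the displayed one). I expect the main obstacle to be the third step: the descent to the base is a clean consequence of 2-fold simplicity, but showing that the fibre maps are not merely Haar-preserving bijections but genuinely affine with a \emph{common} linear part $v$ is exactly where the compact-group structure must be used in full, through the reduction of the pair cocycle $(\varphi,\varphi\circ S)$ to the graph of an automorphism of $G$.
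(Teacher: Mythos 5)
The paper does not prove this lemma at all — it is quoted with attributions to the literature ($\Z$-actions, abelian groups, general case), so there is no in-paper argument to compare against. Your reconstruction is correct and is essentially the standard proof from those sources: 2-fold simplicity forces the projected graph joining $\mu_{\widetilde S}|_{\cA\otimes\cA}$ to be a graph $\mu_S$ (your exclusion of the product via the isometric-versus-relatively-weakly-mixing dichotomy is sound, and when $T$ is not weakly mixing the product joining is not even ergodic, so the case split is complete), and the Mackey coset analysis of $\mu_{\widetilde S}$ viewed over the base $(X,\mu,T)$ with the pair cocycle $(\varphi,\varphi\circ S)$ correctly yields that the fibre maps are translates of a single continuous automorphism $v$ whose graph is the closed subgroup $\Lambda\le G\times G$. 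One small point worth making explicit: the identity you derive, $\varphi\circ S-v\circ\varphi=f\circ T-f$, is the correct general form; the identity displayed in the lemma is its specialization to $S=\mathrm{Id}$, as you note.
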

\begin{uw}\label{u1}
If $G=\Z_2$ then $v(i)=i$ and $f=const$, i.e. any $\widetilde{S}\in C(T_\varphi)$ is of the form $\widetilde{S}(x,g)=(Sx,g)$ or $\widetilde{S}(x,g)=(Sx,g+1)$ for some $S\in C(T)$.
\end{uw}

\begin{lm}[see e.g.~\cite{katokbook2003}]\label{l3}
Let $T_\varphi$ be an ergodic compact group extension of $T$. Then $c\in\T$ is an eigenvalue of $T_\varphi$ if and only if there exists a measurable function $f\colon X\to \mathbb{T}$ and a character $\chi\in \hat{G}$ such that $\chi\circ\varphi =c\cdot{f\circ T}/{f}$.
\end{lm}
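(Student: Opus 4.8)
Since the statement is used only for compact abelian $G$, the plan is to run the classical Fourier-analytic argument: the characters $\hat G$ form an orthonormal basis of $L^2(G,\lambda_G)$, so every $F\in L^2(X\times G,\mu\otimes\lambda_G)$ has a Fourier expansion $F(x,g)=\sum_{\chi\in\hat G}f_\chi(x)\chi(g)$ with $f_\chi(x)=\int_G F(x,g)\overline{\chi(g)}\,d\lambda_G(g)$ and $\sum_\chi\|f_\chi\|_{L^2(\mu)}^2=\|F\|^2$. Recall also that $T$, being a factor of the ergodic $T_\varphi$, is itself ergodic. Throughout, $c\in\T$ is an eigenvalue of $T_\varphi$ iff there is $F\neq 0$ with $F\circ T_\varphi=cF$, i.e.\ $F(Tx,\varphi(x)g)=cF(x,g)$ for a.e.\ $(x,g)$.

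\textbf{Necessity.} Take such an $F\neq 0$. The key computation is to take the $g$-Fourier coefficient of both sides of the eigenvalue equation. Using the invariance of Haar measure under the substitution $g\mapsto\varphi(x)g$, one gets
\begin{equation*}
\int_G F(Tx,\varphi(x)g)\overline{\chi(g)}\,d\lambda_G(g)=\chi(\varphi(x))\,f_\chi(Tx),
\end{equation*}
while the right-hand side contributes $c\,f_\chi(x)$. Matching coefficients (by uniqueness of Fourier expansions) yields, for every $\chi\in\hat G$ and a.e.\ $x$,
\begin{equation*}
\chi(\varphi(x))\,f_\chi(Tx)=c\,f_\chi(x).
\end{equation*}
Since $F\neq 0$, there is $\chi_0$ with $f_{\chi_0}\neq 0$. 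Taking moduli and using $|\chi_0(\varphi(x))|=|c|=1$ gives $|f_{\chi_0}(Tx)|=|f_{\chi_0}(x)|$, so $|f_{\chi_0}|$ is $T$-invariant, hence a.e.\ a positive constant by ergodicity of $T$. Normalizing, I may assume $|f_{\chi_0}|\equiv 1$, so $f_{\chi_0}\colon X\to\T$. Setting $f:=\overline{f_{\chi_0}}=1/f_{\chi_0}$ (also $\T$-valued) and rearranging the coefficient relation gives $\chi_0(\varphi(x))=c\,f_{\chi_0}(x)/f_{\chi_0}(Tx)=c\,f(Tx)/f(x)$, which is exactly the asserted identity with $\chi=\chi_0$.

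\textbf{Sufficiency and the main point.} Conversely, given $f\colon X\to\T$ and $\chi\in\hat G$ with $\chi\circ\varphi=c\cdot(f\circ T)/f$, define $F(x,g):=\overline{f(x)}\,\chi(g)$. Then $F\in L^2$ with $\|F\|=1\neq 0$, and since $|f|\equiv 1$ the hypothesis rewrites as $\overline{f(Tx)}\,\chi(\varphi(x))=c\,\overline{f(x)}$; a direct substitution then shows $F(Tx,\varphi(x)g)=cF(x,g)$, so $c$ is an eigenvalue. The only nonroutine step is the normalization of $f_{\chi_0}$ in the necessity part: a priori $f_{\chi_0}$ is merely $L^2$ and could vanish on a set of positive measure, which would obstruct forming $1/f_{\chi_0}$. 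This is precisely where ergodicity of $T$ is needed, forcing $|f_{\chi_0}|$ to be constant and hence nonvanishing; everything else is bookkeeping with the Fourier expansion and Haar invariance.
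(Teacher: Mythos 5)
Your proof is correct. The paper does not prove this lemma at all -- it is quoted as a known fact with a reference to a textbook -- so there is nothing to compare against; your argument (expanding an eigenfunction $F(x,g)=\sum_{\chi}f_\chi(x)\chi(g)$ in characters of the compact abelian group $G$, using Haar invariance to match coefficients, and invoking ergodicity of the factor $T$ to make $|f_{\chi_0}|$ a nonzero constant so that $f:=\overline{f_{\chi_0}}$ is $\T$-valued) is exactly the standard one, and both directions, including the sufficiency computation with $F(x,g)=\overline{f(x)}\chi(g)$, check out.
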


Let $T\colon (X,\cB,\mu)\to (X,\cB,\mu)$ be weakly mixing and have the MSJ property, and let $\varphi\colon X\to \Z_2$ yield a weakly mixing group extension $T_\varphi$. Consider
$\psi\colon X\times \Z_2\to \Z_2$ given by the formula $\psi(x,s)=s$. Let
$$
\overline{T}(x,s,r):=(T_\varphi)_\psi(x,s,r)=(Tx,\varphi(x)+s,s+r).
$$
\begin{lm}\label{lm:Twm}
$\overline{T}$ is weakly mixing.
\end{lm}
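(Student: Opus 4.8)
The plan is to pass to the square $\overline{T}^2$, where the skew twist in the last coordinate straightens out into a genuine abelian group extension, and then to read off weak mixing from the cohomological data over $T^2$ supplied by the weak mixing of $T$ and of $T_\varphi$.

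First I would compute, using $2s\equiv 0$,
$$\overline{T}^2(x,s,r)=(T^2x,\ \varphi^{(2)}(x)+s,\ \varphi(x)+r),\qquad \varphi^{(2)}(x):=\varphi(x)+\varphi(Tx).$$
Thus $\overline{T}^2$ is the compact group extension $(T^2)_\Phi$ of $T^2$ by $\Z_2\times\Z_2$ with cocycle $\Phi=(\varphi^{(2)},\varphi)$ (note that the two fibre coordinates now transform additively and independently). Since $T$ and $T_\varphi$ are weakly mixing, so are their squares $T^2$ and $(T_\varphi)^2=(T^2)_{\varphi^{(2)}}$. Because $\overline{T}^2$ is a power of $\overline{T}$, it suffices to prove that $\overline{T}^2$ is weakly mixing: were $\overline{T}$ to admit an eigenvalue $\lambda\neq 1$ with eigenfunction $g$, then if $\lambda^2\neq 1$ the function $g$ would be a nonconstant eigenfunction of $\overline{T}^2$, while if $\lambda^2=1$ (so $\lambda=-1$) it would be a nonconstant $\overline{T}^2$-invariant function; either way this contradicts weak mixing (hence ergodicity) of $\overline{T}^2$.

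The heart of the matter is a short list of $\Z_2$-cohomological facts over $T^2$: (a) $\varphi^{(2)}$ is not a coboundary, since $(T^2)_{\varphi^{(2)}}=(T_\varphi)^2$ is ergodic; (b) $\varphi^{(2)}+1$ is not a coboundary, for otherwise $(T^2)_{\varphi^{(2)}}$ would be isomorphic to the product of $T^2$ with the rotation $s\mapsto s+1$ on $\Z_2$, which has $-1$ in its spectrum, contradicting weak mixing of $(T_\varphi)^2$; (c) $\varphi$ is not a coboundary over $T^2$, because from $\varphi=\zeta\circ T^2-\zeta$ one gets $\varphi^{(2)}=\eta\circ T^2-\eta$ with $\eta:=\zeta+\zeta\circ T$, contradicting (a); (d) $\varphi+1$ is not a coboundary either, by the same computation, the two added constants cancelling mod $2$. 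Finally $\varphi\circ T$ and $\varphi\circ T+1$ are not coboundaries, being cohomologous over $T^2$ to $\varphi$ and $\varphi+1$ respectively.

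With these in hand I would finish as follows. The three nontrivial characters of $\Z_2\times\Z_2$ compose with $\Phi$ to give $(-1)^{\varphi^{(2)}}$, $(-1)^{\varphi}$ and $(-1)^{\varphi^{(2)}+\varphi}=(-1)^{\varphi\circ T}$; by (a), (c) and the conjugate of (c) none of these is a coboundary, so by the standard ergodicity criterion for compact abelian group extensions (see~\cite{katokbook2003}) $\overline{T}^2$ is ergodic. I then apply Lemma~\ref{l3} to $\overline{T}^2=(T^2)_\Phi$: any eigenvalue arises from a character $\chi$ with $\chi\circ\Phi=c\,f\circ T^2/f$. The trivial character forces $c=1$ by weak mixing of $T^2$; for a nontrivial character, squaring $f$ shows $c^{-2}$ is an eigenvalue of $T^2$, whence $c^2=1$, and then $c=1$ is excluded by (a), (c) and its conjugate, while $c=-1$ is excluded by (b), (d) and its conjugate. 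Hence $1$ is the only eigenvalue of $\overline{T}^2$, so $\overline{T}^2$ is weakly mixing, and the claim follows. The main obstacle is exactly the potential eigenvalues $\pm i$ of $\overline{T}$ (equivalently $-1$ of $\overline{T}^2$); squaring turns the skew product into an honest $\Z_2\times\Z_2$-extension and recasts this obstruction as the coboundary questions (b) and (d), which are settled not by mere ergodicity but by the full weak mixing of $(T_\varphi)^2$.
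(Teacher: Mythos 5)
Your proof is correct in substance but takes a genuinely different route from the paper's. The paper treats $\overline{T}$ directly as the $\Z_2$-extension $(T_\varphi)_\psi$ of $T_\varphi$ with $\psi(x,s)=s$, applies Lemma~\ref{l3} once, and in the nontrivial-character case expands a putative eigenfunction fibre-wise as $\xi(x,s)=\xi_1(x)+\xi_2(x)(-1)^s$ to extract a coboundary relation for $(-1)^{\varphi}$ by hand. You instead square first, so that the quadratic twist in the last coordinate disappears and $\overline{T}^2$ becomes an honest $\Z_2\times\Z_2$-extension of $T^2$ with cocycle $(\varphi^{(2)},\varphi)$; everything then reduces to a short list of coboundary statements over $T^2$, settled from weak mixing of $T^2$ and of $(T_\varphi)^2$. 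This is arguably more transparent (it isolates exactly where weak mixing, as opposed to mere ergodicity, of $T_\varphi$ enters, namely in (b) and (d)), and your reduction ``$\overline{T}^2$ weakly mixing $\Rightarrow$ $\overline{T}$ weakly mixing'' is handled correctly.

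One justification is wrong, although the statement it supports is true. You assert that $\varphi\circ T$ is cohomologous to $\varphi$ over $T^2$; in fact $\varphi\circ T-\varphi=\varphi^{(2)}\pmod 2$, which by your own item (a) is \emph{not} a $T^2$-coboundary, so these two cocycles are not cohomologous over $T^2$. What you actually need --- that $\varphi\circ T$ (resp.\ $\varphi\circ T+1$) is not a $T^2$-coboundary --- follows instead from (c) (resp.\ (d)) by the substitution $x\mapsto T^{-1}x$: if $\varphi\circ T=\zeta\circ T^2-\zeta$, then $\varphi=\eta\circ T^2-\eta$ with $\eta=\zeta\circ T^{-1}$. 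With that one-line repair the argument goes through. A further minor point: in (a)--(d) and in the ergodicity step, ``coboundary'' must be read multiplicatively, i.e.\ $(-1)^{\,\cdot}=f\circ T^2/f$ with $f$ circle-valued, since that is what the ergodicity criterion and Lemma~\ref{l3} require; your derivations of (a)--(d) work verbatim in that setting, so nothing else needs to change.
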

\begin{proof}
Suppose that $\overline{T}$ is not weakly mixing. By Lemma~\ref{l3} there exists $c\in\T\setminus\{1\}$, $\xi\colon X\times \Z_2\to \T$ and $\chi\in\widehat{\Z}_2$ satisfying the equation
\begin{equation}\label{rnie}
\chi\circ \psi=c\cdot\xi\circ T_\varphi/\xi.
\end{equation}
There are two possibilities: (a) $\chi\equiv 1$, (b) $\chi(s)=(-1)^s$.

In case (a) equation~\eqref{rnie} takes the form $\xi\circ T_\varphi=\overline{c}\cdot \xi$, i.e. $\overline{c}$ is an eigenvalue of $T_\varphi$. This is however not possible, as $T_\varphi$ is weakly mixing. 

In case (b) equation~\eqref{rnie} takes the form $(-1)^s=c\cdot \xi\circ T_\varphi/\xi$. Hence $\overline{c}^2\cdot\xi^2=\xi^2\circ T_\varphi$ which implies $c^2=1$ and $\xi^2=1$ almost everywhere as $T_\varphi$ is weakly mixing (i.e. we have $|\xi|=1$ almost everywhere). Now, there are again two possibilities: (b1) $c=1$, (b2) $c=-1$. Equation~\eqref{rnie} yields
$$
(-1)^s=\xi\circ T_\varphi / \xi\text{ in case (b1)}\text{ and }(-1)^{s+1}=\xi\circ T_\varphi / \xi\text{ in case (b2)},
$$
i.e. we have
\begin{align*}
&(-1)^s\xi(x,s)=\xi\circ T_\varphi(x,s)=\xi(Tx,\varphi(x)+s) \text{ in case (b1)},\\ 
&(-1)^{s+1}\xi(x,s)=\xi\circ T_\varphi(x,s)=\xi(Tx,\varphi(x)+s) \text{ in case (b2)}. 
\end{align*}
Using the theory of characters ($\xi$ is a square-integrable function)
$$
\xi(x,s)=\xi_1(x)+\xi_2(x)(-1)^s
$$
for some $\xi_j\colon X\to \C$, $j=1,2$. Therefore
\begin{align*}
&(-1)^s(\xi_1(x)+\xi_2(x)\cdot (-1)^s)=\xi_1(Tx)+\xi_2(Tx)\cdot (-1)^{\varphi(x)+s}\text{ in case (b1)},\\ 
&(-1)^{s+1}(\xi_1(x)+\xi_2(x)\cdot (-1)^s)=\xi_1(Tx)+\xi_2(Tx)\cdot (-1)^{\varphi(x)+s}\text{ in case (b2)}, 
\end{align*}
whence
\begin{align*}
&\xi_2(x)-\xi_1(Tx)=(-1)^s(-\xi_1(x)+\xi_2(Tx)\cdot (-1)^{\varphi(x)}) \text{ in case (b1)},\\ 
&-\xi_2(x)-\xi_1(Tx)=(-1)^s(\xi_1(x)+\xi_2(Tx)\cdot (-1)^{\varphi(x)}) \text{ in case (b2)}. 
\end{align*}
Therefore (the above equations hold for all $s$)
\begin{align*}
\xi_2(x)=\xi_1(Tx) \text{ a for almost every }x \text{ in case (b1)},\\ 
\xi_2(x)=-\xi_1(Tx) \text{ a for almost every }x \text{ in case (b2)}. 
\end{align*}
This implies
\begin{align*}
&\xi_1(x)=\xi_2(Tx)\cdot (-1)^{\varphi(x)}=\xi_1(T^2x)\cdot (-1)^{\varphi(x)}\text{ in case (b1)},\\ 
&\xi_1(x)=-\xi_2(Tx)\cdot (-1)^{\varphi(x)}=\xi_1(T^2x)\cdot (-1)^{\varphi(x)}\text{ in case (b2)}, 
\end{align*}
whence $|\xi_1|$ is constant. In either case we have
$$
\frac{\xi_1(x)\cdot \xi_1(Tx)}{\xi_1(Tx)\cdot \xi_1(T^2x)}=\frac{\xi_1(x)}{\xi_1(T^2x)}=(-1)^{\varphi(x)}, 
$$
which means that $\chi\circ \varphi$ is a coboundary. By Lemma~\ref{l3}, this yields a contradiction as $T_\varphi$ is ergodic.
\end{proof}

\begin{pr}\label{niepr}
$\overline{T}$ is not 2-fold simple, whereas $\overline{T}^2$ is 2-fold simple.
\end{pr}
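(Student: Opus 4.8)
The plan is to derive both halves from the del Junco--Rudolph lifting criterion (Lemma~\ref{l1}), after recording the two relevant compact group extension structures. First note that $T_\varphi$ is itself 2-fold simple: it is a weakly mixing (hence ergodic) $\Z_2$-extension of the 2-fold simple automorphism $T$, and every element of $C(T)=\{T^k\colon k\in\Z\}$ lifts to $C(T_\varphi)$ through the power $(T_\varphi)^k$, so Lemma~\ref{l1} applies. Thus $\overline{T}=(T_\varphi)_\psi$ is a $\Z_2$-extension of the 2-fold simple $T_\varphi$. A direct computation, using $2s=0$ in $\Z_2$, gives
\[
\overline{T}^2(x,s,r)=\bigl(T^2x,\,s+\varphi^{(2)}(x),\,r+\varphi(x)\bigr),\qquad \varphi^{(2)}(x):=\varphi(x)+\varphi(Tx),
\]
so that the two fibre coordinates decouple and $\overline{T}^2$ is a $\Z_2\times\Z_2$-extension of $T^2$.

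For the 2-fold simplicity of $\overline{T}^2$ the plan is as follows. Since $T$ is weakly mixing with the MSJ property, it is 2-fold simple, and by the analogue of Proposition~\ref{pr:simple} for automorphisms and their powers this forces $T^2$ to be 2-fold simple with $C(T^2)=C(T)=\{T^k\}$. By Lemma~\ref{lm:Twm} the automorphism $\overline{T}$ is weakly mixing, hence so is $\overline{T}^2$, which is therefore an ergodic compact group extension of the 2-fold simple base $T^2$. Each $T^k\in C(T^2)$ lifts to $C(\overline{T}^2)$ simply via $\overline{T}^k$, which commutes with $\overline{T}^2$ and projects onto $T^k$. Thus the lifting hypothesis of Lemma~\ref{l1} is satisfied and $\overline{T}^2$ is 2-fold simple.

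For the failure of 2-fold simplicity of $\overline{T}$ I would again invoke Lemma~\ref{l1}, now exhibiting an element of $C(T_\varphi)$ that admits no lift to $C(\overline{T})$. The natural candidate is the fibre rotation $\sigma(x,s)=(x,s+1)$, which one checks directly lies in $C(T_\varphi)$. Since $\overline{T}=(T_\varphi)_\psi$ is a $\Z_2$-extension of the 2-fold simple $T_\varphi$, Lemma~\ref{l2} in the form of Remark~\ref{u1} forces every element of $C(\overline{T})$ to have the shape $(x,s,r)\mapsto(S'(x,s),r)$ or $(S'(x,s),r+1)$ with $S'\in C(T_\varphi)$; hence any lift of $\sigma$ must be $\widehat{\sigma}_0(x,s,r)=(x,s+1,r)$ or $\widehat{\sigma}_1(x,s,r)=(x,s+1,r+1)$. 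A short computation against $\overline{T}(x,s,r)=(Tx,s+\varphi(x),r+s)$ shows that in each case $\widehat{\sigma}_i\overline{T}$ and $\overline{T}\widehat{\sigma}_i$ agree in their first two coordinates but have $r$-coordinates differing by $1$, so neither commutes with $\overline{T}$. Consequently $\sigma$ does not lift, and Lemma~\ref{l1} yields that $\overline{T}$ is not 2-fold simple.

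The arithmetic (the formula for $\overline{T}^2$ and the two non-commutation checks) is routine; the step needing the most care is the structural input, namely that both $T_\varphi$ and $T^2$ are 2-fold simple with centralizer exactly $\{T^k\}$, so that Lemmas~\ref{l1} and~\ref{l2} genuinely apply and the list of candidate lifts for $\sigma$ is complete. The conceptual heart is the observation that the obstruction to simplicity lies entirely in the twist of $\psi$---the second cocycle depending on the first fibre coordinate $s$---which is exactly what prevents $\sigma$ from lifting, yet disappears upon squaring, where $\overline{T}^2$ splits into an untwisted $\Z_2\times\Z_2$-extension.
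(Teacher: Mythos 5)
Your argument that $\overline{T}$ is not $2$-fold simple has a gap at the step where you reduce the candidate lifts of $\sigma$ to the two maps $\widehat{\sigma}_0,\widehat{\sigma}_1$. Remark~\ref{u1} cannot be read as saying that every element of $C(\overline{T})$ has the form $((x,s),r)\mapsto(S'(x,s),r+c)$ with $c$ constant: $\overline{T}$ itself, which sends $((x,s),r)$ to $(T_\varphi(x,s),r+s)$, already violates that. What Lemma~\ref{l2} actually yields for an element of $C(\overline{T})$ covering $S'\in C(T_\varphi)$ is the form $((x,s),r)\mapsto(S'(x,s),F(x,s)+r)$ where $F$ is a measurable solution of $\psi\circ S'-\psi=F\circ T_\varphi-F$; the transfer function is forced to be constant only when $\psi\circ S'=\psi$ (e.g.\ for $S'=\mathrm{id}$), not for $S'=\sigma$. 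For $S'=\sigma$ the equation reads $F\circ T_\varphi-F=1$ in $\Z_2$, and the whole content of non-liftability is that this equation has \emph{no} measurable solution: a solution would make $(-1)^F$ an eigenfunction of $T_\varphi$ with eigenvalue $-1$, contradicting the weak mixing of $T_\varphi$. Your commutation check rules out only the two constant choices of $F$; one must rule out every measurable $F$, and that eigenvalue argument --- which is exactly what the paper does --- is the essential analytic step of the proof, not a routine verification. The fix is short, but without it the first half is incomplete.

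Your treatment of $\overline{T}^2$ is correct and takes a genuinely different, and in fact cleaner, route than the paper's. The identity $\overline{T}^2(x,s,r)=(T^2x,\,s+\varphi^{(2)}(x),\,r+\varphi(x))$ exhibits $\overline{T}^2$ directly as an ergodic $\Z_2\times\Z_2$-extension of the $2$-fold simple $T^2$, whose centralizer $C(T^2)=C(T)=\{T^k\colon k\in\Z\}$ lifts trivially through the powers $\overline{T}^k$, so Lemma~\ref{l1} applies at once. The paper instead regards $\overline{T}^2$ as the $\Z_2$-extension $((T_\varphi)^2)_{\psi+\psi\circ T_\varphi}$ of $(T_\varphi)^2$, and must then lift the extra centralizer element $\sigma$ by solving the cocycle equation $\psi^{(2)}\circ\sigma+\psi^{(2)}=F\circ(T_\varphi)^2+F$ (solved by $F\equiv 0$). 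Your decomposition makes that computation unnecessary, at the modest price of invoking the power analogue of Proposition~\ref{pr:simple} to get $C(T^2)=C(T)$.
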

\begin{proof}
To show that $\overline{T}$ is not 2-fold simple, by Lemma~\ref{l1}, it suffices to find an element of $C(T_\varphi)$ which cannot be lifted to an element of $C(\overline{T})$. We claim that $\sigma(x,s)=(x,s+1)$ is such an automorphism. Suppose that we can find $\overline{\sigma}\in C(\overline{T})$ which projects down to $\sigma$. By Lemma~\ref{l2}, we can solve the following equation:
$$
F\circ T_\varphi - F=\psi\circ \sigma-v\circ \psi,
$$
where $v\in Aut(\Z_2)$ and $F\colon X\times \Z_2\to \Z_2$ is measurable. Since the group operations are in $\Z_2$, we have
\begin{multline*}
F\circ T_\varphi(x,s)-F(x,s)=\psi\circ \sigma (x,s) + \psi(x,s)\\
=\psi(x,s+1)+\psi(x,s)=s+1+s=1.
\end{multline*}
Applying to this formula $\chi(r)=(-1)^r$, we obtain
$$
\frac{(-1)^F\circ T_\varphi}{(-1)^F}=-1,
$$
which is not possible as $T_\varphi$ is weakly mixing.

We will now show that $\overline{T}^2$ is 2-fold simple. We have $\overline{T}^2=((T_\varphi)^2)_{\psi+\psi\circ T_\varphi}$. Since $T_\varphi$ is 2-fold simple, Proposition~\ref{pr:simple} implies that $(T_\varphi)^2$ is also 2-fold simple and, by Lemma~\ref{l1}, it suffices to show that all elements of $C((T_\varphi)^2)$ can be lifted to $C(\overline{T}^2)$. By Proposition~\ref{pr:simple}, we have $C((T_\varphi)^2)=C(T_\varphi)$. Therefore (see Remark~\ref{u1}) it suffices to show that $\sigma(x,s)=(x,s+1)$ lifts to an element of $C(\overline{T}^2)$, i.e. we need to find a measurable solution $F\colon X\times\Z_2\to\Z_2$ for the following cocycle equation:
\begin{equation}\label{f1}
\psi^{(2)}\circ \sigma(x,s)+\psi^{(2)}(x,s)=F\circ (T_\varphi)^2 (x,s)+F(x,s),
\end{equation}
where $\psi^{(2)}=\psi+\psi\circ T_\varphi$. However,
\begin{align*}
\psi^{(2)}(x,s+1)&+\psi^{(2)}(x,s)\\
&=\psi(x,s+1)+\psi(T_\varphi(x,s+1))+\psi(x,s)+\psi(T_\varphi(x,s))\\
&=s+1+\varphi(x)+s+1+s+\varphi(x)+s=0,
\end{align*}
so it suffices to take $F\equiv 0$ to complete the proof.
\end{proof}

\subsection{Advanced counter-example}\label{se52}
We will construct a 2-QS flow which is disjoint from simple flows and whose time-one map is 2-fold simple. 

\subsubsection{(C, F)-constructions}
(C,F)-constructions for amenable, unimodular, locally compact second countable (l.c.s.c.) groups were introduced in~\cite{MR2402408} (this is the most general setting appearing in the literature). We recall here this type of constructions, correcting a small error appearing in~\cite{MR2402408}.\footnote{The function $K_m\times D_m^{(n)}\times D_m^{(n)}\in(g,x) \mapsto T_{m,g}^{(n)}x \in R_m^{(n)}$ defined in~\cite{MR2402408}  seems to have a wrong target set. We bypass this problem in~\eqref{eq:16} and~\eqref{bypass}.}

Let $G$ be a unimodular l.c.s.c. group. Given two subsets $E, F\subset G$, by $EF$ we mean their product: $EF = \{ef \colon e \in E, f \in F\}$. If $E=\{e\}$, we write $eF$ for $EF$.  The set $\{e^{-1}\colon e\in E\}$ is denoted by $E^{-1}$. To define a (C, F)-action of $G$ we need two sequences $(F_n)_{n\geq 0}$ and $(C_n)_{n>0}$ of subsets of $G$, satisfying additional conditions:
\begin{align}
&(F_n)_{n\geq 0}\text{ is a F\o lner sequence in }G,\label{w:fo}\\
&C_n \text{ is finite and }\# C_n>1,\label{w:2}\\
&F_nC_{n+1}\subset F_{n+1},\label{w:3}\\
&F_nc\cap F_n c'=\varnothing\text{ for all }c\neq c'\in C_{n+1}.\label{w:4}
\end{align}
We put $X_n:=F_n\times \prod_{k>n}C_k$, endow $X_n$ with the standard Borel product $\sigma$-algebra and define a Borel embedding $X_n\to X_{n+1}$ by setting
\begin{equation}\label{eq:8}
(f_n,c_{n+1},c_{n+2},\dots)\mapsto (f_nc_{n+1},c_{n+2},\dots).
\end{equation}
It is well-defined due to~\eqref{w:3}. Then we have $X_1\subset X_2\subset \dots$. Hence $X:=\cup_{n}X_n$ endowed with the natural Borel $\sigma$-algebra, say $\mathcal{B}$, is a standard Borel space. Given a Borel subset $A\subset F_n$, we put
\begin{equation*}
[A]_n:=\{x\in X\colon x=(f_n,c_{n+1},c_{n+2},\dots)\in X_n\text{ and }f_n\in A\}
\end{equation*}
and call this set a \emph{$n$-cylinder}. It is clear that the $\sigma$-algebra $\mathcal{B}$ is generated by the family of all cylinders.

Now we are going to define a ``canonical'' measure on $(X,\mathcal{B})$. Let $\kappa_n$ stand for the equidistribution on $C_n$ and $\nu_n:=(\# C_1\dots \# C_n)^{-1}\lambda_G|_{F_n}$ on $F_n$. We define a product measure $\mu_n$ on $X_n$ by setting $\mu_n=\nu_n\times \kappa_{n+1}\times \kappa_{n+2}\times \dots$, $n\in\N$. Then the embeddings~\eqref{eq:8} are measure preserving. Hence a $\sigma$-finite measure $\mu$ on $X$ is well defined by the restrictions $\mu|_{X_n}=\mu_n$, $n\in\N$. Since
$$
\mu_{n+1}(X_{n+1})=\frac{\nu_{n+1}(F_{n+1})}{\nu_{n+1}(F_nC_{n+1})}\mu_n(X_n)=\frac{\lambda_G(F_{n+1})}{\lambda_G(F_n)\# C_{n+1}}\mu_n(X_n),
$$
it follows that $\mu$ is finite if and only if
\begin{equation}\label{eq:9}
\prod_{n=0}^{\infty}\frac{\lambda_G(F_{n+1})}{\lambda_G(F_n)\# C_{n+1}}<\infty,\text{ i.e. }\sum_{n=0}^{\infty}\frac{\lambda_G(F_{n+1}\setminus(F_nC_{n+1}))}{\lambda_G(F_n)\# C_{n+1}} <\infty.
\end{equation}
From now on we will assume that~\eqref{eq:9} is satisfied. Moreover, we normalize $\lambda_G$ in such a way that $\mu(X)=1$. 

To construct a $\mu$-preserving action of $G$ on $(X,\mu)$, we fix a filtration $K_1\subset K_2\subset \dots$ of $G=\cup_{m\geq 1}K_m$ by compact sets. Additionally assume that 
\begin{equation}\label{eq:16}
\text{for all }m\geq 1\text{ we have } K_mK_m\subset K_{m+1}.
\end{equation}
Given $n,m\in \N$ set
$$
D_m^{(n)}:=\left(\bigcap_{k\in K_m}(k^{-1}F_n)\cap F_n \right)\times \prod_{l>n}C_l\subset X_n.
$$
It is easy to verity that $D_{m+1}^{(n)}\subset D_m^{(n)}\subset D_{m}^{(n+1)}$. 
We define a Borel mapping
\begin{equation}\label{bypass}
K_m\times D_{m+1}^{(n)}\ni (g,x) \mapsto T_{m,g}^{(n)}x\in D_m^{(n)}
\end{equation}
by setting for $x=(f_n,c_{n+1},c_{n+2},\dots)$
$$
T_{m,g}^{(n)}(f_n,c_{n+1},c_{n+2},\dots):=(gf_n,c_{n+1},c_{n+2},\dots).
$$
Indeed, for $g\in K_m$ and $f_n\in \bigcap_{k\in K_{m+1}}(k^{-1}F_n)\cap F_n$ we obtain that $gf_n\in F_n$ and
$$
gf_n\in \bigcap_{k\in K_{m+1}}gk^{-1}F_n=\bigcap_{k\in K_{m+1}g^{-1}}k^{-1}F_n\subset \bigcap_{k\in K_m}k^{-1}F_n
$$
since by~\eqref{eq:16} we have $K_{m+1}g^{-1}\supset K_m$. Now let $D_m:=\bigcup_{n=1}^{\infty}D_m^{(n)}$. Then a Borel mapping
$$
T_{m,g}\colon K_m\times D_{m+1}\ni (g,x)\mapsto T_{m,g}x\in D_m
$$
is well defined by the restrictions $T_{m,g}|_{D_m^{(n)}}=T_{m,g}^{(n)}$ for $g\in K_m$ and $n\geq 1$. It is clear that $D_m\supset D_{m+1}$ and $T_{m,g}|_{D_{m+2}}=T_{m+1,g}$ for all $m$. If follows from~\eqref{w:fo} that $\mu_n(D_m^{(n)})\to 1$ as $n\to\infty$. Hence $\mu(D_m)=1$ for all $m\in \N$. Finally we set $\widehat{X}:=\bigcap_{m\geq 1}D_m$ and define a Borel mapping $T\colon G\times \widehat{X}\ni(g,x)\mapsto T_gx\in \widehat{X}$ by setting $T_gx:=T_{m,g}s$ for some (and hence any) $m$ such that $g\in K_m$. It is clear that $\mu(\widehat{X})=1$. Thus, we obtain that $\mathfrak{T}=(T_g)_{g\in G}$ is a free Borel measure preserving action of $G$ on a conull subset of the standard probability space $(X,\mathcal{B},\mu)$. It is easy to see that $\mathfrak{T}$ does not depend on the choice of filtration $(K_m)_{m\geq 1}$. 

\begin{df}
$\mathfrak{T}$ is called the (C, F)-action of $G$ associated with $(C_n,F_n)_{n}$.
\end{df}
We now list some basic properties of $\mathfrak{T}$. Given Borel subsets $A,B\subset F_n$ we have
\begin{align}
&[A\cap B]_n=[A]_n\cap [B]_n,\ [A\cup B]_n=[A]_n\cup [B]_n, \nonumber \\
&[A]_n=[AC_{n+1}]_{n+1}=\bigsqcup_{c\in C_{n+1}}[Ac]_{n+1},\nonumber  \\
&T_g[A]_n=[gA]_{n}\ \text{if }gA\subset F_n,\label{y3}  \\
&\mu([A]_n)=\# C_{n+1}\cdot \mu([Ac]_{n+1})\text{ for every }c\in C_{n+1},\label{y4} \\
&\mu([A]_n)=\frac{\lambda_G(A)}{\lambda_G(F_n)}\mu(X_n),\label{y5} 
\end{align}
where $\sqcup$ stands for the union of mutually disjoint sets.

\begin{uw}
(C,F)-actions are of so-called \emph{funny rank one} (see~\cite{MR798894} for $\Z$-actions and~\cite{Sokhet} for the general case). In particular, they are ergodic.
\end{uw}

\subsubsection{Construction of a counter-example}
In this section we will provide an example of a weakly mixing flow $\cT=(T_t)_{t\in\R}$ with $T_1$ which is $2$-fold simple such that $\cT$ is only 2-QS (i.e. not 2-fold simple). Moreover, we will show later that our flow has a factor with the same self-joining properties, which is additionally disjoint from all 2-fold simple flows.

Let $\text{SU}(2)$ stand for the special unitary group of order $2$, i.e. the group of matrices with complex entries which are of the form $\left(\begin{matrix}z& -\overline{\omega}\\ \omega& \overline{z}\end{matrix}\right)$ and whose determinant is equal to one. Let $\lambda_{\text{SU}(2)}$ denote the normalized Haar measure on $\text{SU}(2)$. Let
$
\varphi \colon \R \to \text{Aut}(\text{SU}(2),\lambda_{\text{SU}(2)})
$
be given by
$$
\varphi_t(N)=\left( \begin{matrix}e^{\pi i t/2} & 0 \\ 0 & e^{-\pi i t/2}\end{matrix} \right)N \left( \begin{matrix}e^{-\pi i t/2} & 0 \\ 0 & e^{\pi i t/2}\end{matrix}\right).
$$
We consider a semidirect product $G=\R\ltimes_\varphi \text{SU}(2)$. Recall that multiplication in $G$ is given by
$$
(t,M)(s,N)=(t+s,M\varphi_t(N)).
$$
Notice that
$$
C(G)=\{(2n,I),(2n,-I)\colon n\in\Z\}.
$$
Given $a> 0$ we let
\begin{align*}
&I^\Z[a]:=\{m\in \Z\colon |m|<a\},\\
&I^\Z_+[a]:=I^\Z[a]\cup\{a\},\\
&I^\R[a]:=(-a,a].
\end{align*}
Let $(r_n)_{n\geq 0}$ be an increasing sequence of positive integers such that
\begin{equation}\label{eq:gr}
\lim_{n\to\infty}\frac{n^4}{r_n}=0.
\end{equation}
We will impose later one more restriction on the growth of $(r_n)_{n\geq 0}$. We define recursively two other sequences $(a_n)_{n\geq 0}$ and $(\widetilde{a}_n)_{n\geq 0}$ by setting
\begin{align}
\begin{split}\label{jk1}
&a_0=\widetilde{a}_0=1,\\
&a_{n+1}:=\widetilde{a}_n(2r_n-1),\\
&\widetilde{a}_{n+1}:=a_{n+1}+(2n+1)\widetilde{a}_n
\end{split}
\end{align}
For each $n\in\N$ we let
\begin{align}
\begin{split}\label{jk2}
&H:=\Z,\ H_n:=I^\Z[r_n], \\
&F_n:=I^\R[a_n]\times \text{SU}(2),\ \widetilde{F}_n:=I^\R[\widetilde{a}_n]\times \text{SU}(2),\\
&S_n:=I^\R[(2n-1)\widetilde{a}_{n-1}]\times \text{SU}(2).
\end{split}
\end{align}
We also consider a homomorphism $\phi_n\colon H\to \Z\ltimes\text{SU}(2)\subset G$ given by 
$$
\phi_n(h)=(2h\widetilde{a}_n,I).
$$
Then we have
\begin{align}
&S_n\subset F_n,\ F_nS_n=S_nF_n\subset\widetilde{F}_n\subset G,\label{6-8}\\
&F_{n+1}=\bigsqcup_{h\in H_n}\widetilde{F}_n\phi_n(h)=\bigsqcup_{h\in H_n}\phi_n(h)\widetilde{F}_n,\label{6-9}\\
&S_{n+1}=\bigsqcup_{h\in I^\Z[n+1]}\widetilde{F}_n\phi_n(h)  =\bigsqcup_{h\in I^\Z[n+1]}\phi_n(h)\widetilde{F}_n.\label{6-10}
\end{align}
Fix a sequence $\vep_n\to 0$ as $n\to\infty$. 

For any two finite sets $A,B$ and a map $\phi\colon A\to B$ we define a probability measure on $B$:
$$
\text{dist}_{a\in A}\phi(a):=\frac{1}{\# A}\sum_{a\in A}\delta_{\phi(a)}.
$$
Given two measures $\kappa,\rho$ on a finite set $B$ let 
$$
\left\|\kappa-\rho\right\|_1:=\sum_{b\in B}|\kappa(b)-\rho(b)|.
$$
For $n\in\N$ let $\widehat{S}_n$ be as in Proposition~\ref{techniczny} and such that its assertion holds for sets $A',B'$  which are sums of translations of at most $\# C_1\cdot \ldots\cdot \# C_{n-1}$ elements form $\cA$ (see the remark after Proposition~\ref{techniczny}).

\begin{lm}[\cite{MR2346554}, see also \cite{MR1622315}]\label{lm:6.2}
	If $r_n$ is sufficiently large then there exists a map $s_n\colon H_n\to \widehat{S}_n$ such that for any $\delta\geq n^{-2} r_n$,
	\begin{equation}
		\left\|\emph{dist}_{t\in I^\Z[\delta]}(s_n(h+t),s_n(h'+t),s_n(h''+t))-\lambda_{\widehat{S}_n}\otimes \lambda_{\widehat{S}_n}\otimes \lambda_{\widehat{S}_n}\right\|_1<\vep_n 
	\end{equation}
	whenever $h,h',h''\in H_n$ are pairwise distinct and $\{h,h',h''\}+I^\Z[\delta]\subset H_n$.
\end{lm}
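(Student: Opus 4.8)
The plan is to produce $s_n$ by a random choice and then to show, by exponential concentration together with a union bound, that some deterministic map works once $r_n$ is large. So I let $\{s_n(j)\}_{j\in H_n}$ be independent random variables, each distributed according to $\lambda_{\widehat{S}_n}$. Fix pairwise distinct $h,h',h''\in H_n$, an admissible $\delta\geq n^{-2}r_n$ (so that $\{h,h',h''\}+I^\Z[\delta]\subset H_n$, which in particular forces $\delta<r_n$), and a triple $g=(g_1,g_2,g_3)\in\widehat{S}_n^{\,3}$. Writing $N:=\#I^\Z[\delta]=2\delta-1$, I set
$$
Z_g:=\frac{1}{N}\sum_{t\in I^\Z[\delta]}\mathbbm{1}\bigl[s_n(h+t)=g_1,\ s_n(h'+t)=g_2,\ s_n(h''+t)=g_3\bigr].
$$
Since $h,h',h''$ are distinct, for each $t$ the points $h+t,h'+t,h''+t$ are distinct, so the three values $s_n(h+t),s_n(h'+t),s_n(h''+t)$ are independent and $\lambda_{\widehat{S}_n}$-distributed; hence $\mathbb{E}Z_g=\lambda_{\widehat{S}_n}^{\otimes 3}(\{g\})$ \emph{exactly}. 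Proving the lemma thus amounts to forcing $|Z_g-\lambda_{\widehat{S}_n}^{\otimes 3}(\{g\})|$ to be small simultaneously for every admissible $(h,h',h'',\delta,g)$, because summing over the $(\#\widehat{S}_n)^3$ triples $g$ turns a uniform bound $|Z_g-\lambda_{\widehat{S}_n}^{\otimes3}(\{g\})|<\eta$ with $\eta:=\vep_n/(\#\widehat{S}_n)^3$ into the desired estimate $\|\mathrm{dist}_{t}(\cdots)-\lambda_{\widehat{S}_n}^{\otimes 3}\|_1<\vep_n$.

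Next I would apply McDiarmid's bounded-differences inequality to $Z_g$ as a function of the independent family $\{s_n(j)\}_{j\in H_n}$. Changing a single $s_n(j)$ alters only the summands with $t\in\{j-h,\,j-h',\,j-h''\}$, i.e.\ at most three of them, so $Z_g$ changes by at most $3/N$; moreover only the $\le 3N$ coordinates $j\in\{h,h',h''\}+I^\Z[\delta]$ influence $Z_g$ at all. Hence the sum of squared bounded-difference constants is at most $3N\cdot(3/N)^2=27/N$, and McDiarmid yields
$$
\mathbb{P}\bigl(|Z_g-\lambda_{\widehat{S}_n}^{\otimes 3}(\{g\})|\geq \eta\bigr)\leq 2\exp\!\left(-\tfrac{2N\eta^2}{27}\right).
$$

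Finally I would union-bound over all admissible data: there are at most $(\#H_n)^3\le 8r_n^3$ triples $(h,h',h'')$, at most $r_n$ distinct windows $I^\Z[\delta]$, and $(\#\widehat{S}_n)^3$ triples $g$, while $N\ge 2n^{-2}r_n-1\ge n^{-2}r_n$ for large $r_n$. Thus the probability that \emph{some} bad event occurs is at most
$$
16\,r_n^4\,(\#\widehat{S}_n)^3\exp\!\left(-\frac{2\,n^{-2}r_n\,\vep_n^2}{27\,(\#\widehat{S}_n)^6}\right).
$$
For fixed $n$ the numbers $\#\widehat{S}_n$ and $\vep_n$ are already fixed, so the exponent is $-c\,r_n$ with a constant $c=c(n)>0$; as $r_n\to\infty$ this beats the polynomial factor $r_n^4$ and the whole bound tends to $0$, in particular it is $<1$ once $r_n$ is sufficiently large. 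Then a deterministic $s_n$ avoiding every bad event exists, which is exactly the assertion.

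The main obstacle is the strength of concentration required: a second-moment (Chebyshev) estimate gives only a $1/N$ tail, which cannot survive the union bound over the $\sim r_n^4$ admissible tuples, so exponential concentration is essential. The hypothesis $\delta\geq n^{-2}r_n$ is precisely what guarantees that every averaging window has length $N\ge n^{-2}r_n$ growing linearly in $r_n$, so the exponent dominates the $\log r_n$ produced by the union bound; this is also where the freedom ``if $r_n$ is sufficiently large'' is used. The mild dependence among the summands (overlaps such as $h+t=h'+t'$) is harmless exactly because each coordinate $s_n(j)$ influences at most three summands, keeping the bounded-difference constants of order $1/N$.
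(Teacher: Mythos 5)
The paper does not prove this lemma at all: it is imported verbatim from the cited references (Danilenko's construction, following del Junco), so there is no internal argument to compare against. Your probabilistic proof is correct and self-contained, and it is of the same nature as the argument in those sources: choose $s_n$ i.i.d.\ uniform on $\widehat{S}_n$, note that distinctness of $h,h',h''$ makes each triple $(s_n(h+t),s_n(h'+t),s_n(h''+t))$ exactly uniform so that $\mathbb{E}Z_g=\lambda_{\widehat{S}_n}^{\otimes 3}(\{g\})$, get exponential concentration (your bounded-differences computation $\sum_j c_j^2\le 27/N$ is right, since each coordinate hits at most three summands), and union-bound over the $O(r_n^4(\#\widehat{S}_n)^3)$ admissible tuples. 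You correctly identify the two points where a naive argument would break: a Chebyshev tail of order $1/N$ cannot survive the polynomial union bound, and the hypothesis $\delta\ge n^{-2}r_n$ is exactly what makes the exponent linear in $r_n$. The only thing worth making fully explicit is the absence of circularity in ``take $r_n$ large'': in the paper's setup $S_n$, hence $\widehat{S}_n$ and $\vep_n$, are determined by $n$ and $r_0,\dots,r_{n-1}$ only, so the constant $c(n)$ in your exponent is indeed fixed before $r_n$ is chosen; you assert this and it is true, so the proof stands.
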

From now on we will assume that $r_n$ is large enough, so that we can use Lemma~\ref{lm:6.2}. For every $n\in\N$ we fix a map $s_n$
whose existence is asserted in the lemma. Without loss of generality we may assume that the following boundary condition holds:
$$
s_n(r_n-1)=s_n(-r_n+1)=0.
$$
Now we can define a map $c_{n+1}\colon H_n\to  F_{n+1}$ by setting $c_{n+1}(h):=s_n(h)\phi_n(h)$. We also put $C_{n+1}:=c_{n+1}(H_n)$.

It is easy to derive from~\eqref{6-8} and~\eqref{6-9} that~\eqref{w:fo},~\eqref{w:2},~\eqref{w:3} and~\eqref{w:4} are satisfied for the sequence $(F_n,C_{n+1})_{n\geq 0}$. Moreover,
\begin{equation}\label{eq:inpart}
\frac{\lambda(F_{n+1})}{\lambda(F_n)\# C_{n+1}}=\frac{2a_{n+1}}{2a_n(2r_n-1)}=\frac{\widetilde{a}_n}{a_n}=1+\frac{(2n-1)\widetilde{a}_{n-1}}{a_n}=1+\frac{2n-1}{2r_{n-1}-1}.
\end{equation}
From this and from~\eqref{eq:gr} we deduce that~\eqref{eq:9} holds. Hence we can consider the associated (C, F)-action $\mathfrak{T}$ of $G$ on
$(X,\cB,\mu)$. 

\paragraph{Auxiliary lemmas}
\begin{lm}\label{6.2}
Let $f=f'\phi_{n-1}(h),\hat{f}=\hat{f}'\phi_{n-1}(\hat{h})$ with $f',\hat{f}'\in \widetilde{F}_{n-1}$ and $h,\hat{h}\in H$. 
\begin{enumerate}[(i)]
\item\label{6.2.i}
Then we have $\widetilde{F}_{n-1}\phi_{n-1}(h+I^\Z[n-1])
\subset fS_n\cup \hat{f}S_n \subset \widetilde{F}_{n-1}\phi_{n-1}(h+I^\Z[n+1]),$
whence
\begin{align*}
\lambda(fS_n\setminus \widetilde{F}_{n-1}\phi_{n-1}(h+I^\Z[n-1]))\leq 4\lambda(\widetilde{F}_{n-1}),\\
\lambda(\hat{f}S_n\setminus \widetilde{F}_{n-1}\phi_{n-1}(h+I^\Z[n-1]))\leq 4\lambda(\widetilde{F}_{n-1}),
\end{align*}
and
$
\lambda(fS_n\triangle \hat{f}S_n)\leq 4\lambda(\widetilde{F}_{n-1}).
$
\item\label{6.2.ii}
If, in addition, $fS_n\subset F_n$ then, for any subset $A\subset F_{n-1}$,
$$
\frac{\lambda(AC_n\cap fS_n)}{\lambda(S_n)}=\lambda_{F_{n-1}}(A)+\overline{o}_n(1).
$$
\end{enumerate}
\end{lm}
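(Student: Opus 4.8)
The plan is to reduce the entire statement to elementary interval arithmetic on the $\R$-coordinate of $G=\mygroup$. The key observation is that, because each $\varphi_t$ is an automorphism of $(\text{SU}(2),\lambda_{\text{SU}(2)})$ and the multiplication rule reads $(t,M)(s,N)=(t+s,M\varphi_t(N))$, every set occurring in the lemma---$fS_n$, $\hat fS_n$, and $\widetilde F_{n-1}\phi_{n-1}(h+I^\Z[m])$---has the form $I\times\text{SU}(2)$ for an interval $I\subset\R$; the $\text{SU}(2)$-coordinate is always full. Since $G$ is unimodular and $\lambda_G=\lambda_\R\times\lambda_{\text{SU}(2)}$ with $\lambda_{\text{SU}(2)}$ normalized, the $\lambda$-measure of such a set equals the length of $I$, so all inclusions and all the asserted measure bounds are governed by the corresponding one-dimensional intervals.

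For part~\eqref{6.2.i} I would compute these intervals directly from~\eqref{jk1} and~\eqref{jk2}. Writing $f=(s,M)\phi_{n-1}(h)$ with $s\in I^\R[\widetilde a_{n-1}]$, the $\R$-coordinate of $fS_n$ is the translate of $I^\R[(2n-1)\widetilde a_{n-1}]$ by $s+2h\widetilde a_{n-1}$, while the $\R$-coordinate of $\widetilde F_{n-1}\phi_{n-1}(h+I^\Z[m])$ tiles into the consecutive intervals $((2(h+k)-1)\widetilde a_{n-1},(2(h+k)+1)\widetilde a_{n-1}]$ for $k\in I^\Z[m]$. Since $|s|\le\widetilde a_{n-1}$, comparing endpoints shows at once that the $I^\Z[n-1]$-block lies inside $fS_n$ and that $fS_n$ (and likewise $\hat fS_n$, whose $\R$-coordinate lies in the same block indexed by $h$) lies inside the $I^\Z[n+1]$-block, the widening from $I^\Z[n-1]$ to $I^\Z[n+1]$ furnishing exactly the room needed to absorb the shifts $|s|,|\hat s|\le\widetilde a_{n-1}$. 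The three measure estimates then drop out by subtracting interval lengths: each difference is a union of at most two ``end'' pieces of total length at most $4\widetilde a_{n-1}=2\lambda(\widetilde F_{n-1})\le 4\lambda(\widetilde F_{n-1})$.

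For part~\eqref{6.2.ii} I would use~\eqref{6-9} to decompose $F_n=\bigsqcup_{h'\in H_{n-1}}\widetilde F_{n-1}\phi_{n-1}(h')$, so that $AC_n=\bigsqcup_{h'}A\,c_n(h')$ with $A\,c_n(h')=A\,s_{n-1}(h')\phi_{n-1}(h')\subset\widetilde F_{n-1}\phi_{n-1}(h')$, using $A\subset F_{n-1}$, $s_{n-1}(h')\in\widehat S_{n-1}\subset S_{n-1}$ and $F_{n-1}S_{n-1}\subset\widetilde F_{n-1}$ from~\eqref{6-8}. Unimodularity gives $\lambda(A\,c_n(h'))=\lambda(A)$ for each $h'$. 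Since $fS_n\subset F_n$, its $\R$-interval of length $2(2n-1)\widetilde a_{n-1}$ meets the blocks (each of width $2\widetilde a_{n-1}$) in all but at most two of them completely; summing gives $\lambda(AC_n\cap fS_n)=(2n-1)\lambda(A)+\theta\lambda(A)$ with $|\theta|\le 2$. Dividing by $\lambda(S_n)=2(2n-1)\widetilde a_{n-1}$ and comparing with $\lambda_{F_{n-1}}(A)=\lambda(A)/(2a_{n-1})$ exhibits the difference as $\lambda_{F_{n-1}}(A)$ times $O(1/n)+O((\widetilde a_{n-1}-a_{n-1})/a_{n-1})$.

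I expect the uniform control of this last error term to be the main obstacle. Here I would invoke~\eqref{eq:inpart}, which gives $\widetilde a_{n-1}/a_{n-1}=1+(2n-3)/(2r_{n-2}-1)$, so that the growth hypothesis~\eqref{eq:gr} forces $\widetilde a_{n-1}/a_{n-1}\to 1$; together with the $O(1/n)$ boundary error and the trivial bound $\lambda_{F_{n-1}}(A)\le 1$, this makes the discrepancy a genuine $\overline{o}_n(1)$, uniform over all $A\subset F_{n-1}$ and all $f$ with $fS_n\subset F_n$. The only points demanding care beyond the bookkeeping are the unimodularity of $G$ (so that right translation preserves $\lambda$) and the exact count of the boundary blocks, both of which are settled by the one-dimensional reduction set up at the start.
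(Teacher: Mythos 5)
Your proof is correct and, at bottom, performs the same computation as the paper's; the difference is one of presentation. For part (i) the paper argues group-theoretically: it writes $fS_n=f'\widetilde F_{n-1}\phi_{n-1}(h+I^\Z[n])$ using~\eqref{6-10}, then partitions $\widetilde F_{n-1}$ into pieces $A_i$ with $f'A_i\subset\widetilde F_{n-1}\phi_{n-1}(i)$, $|i|\le 1$, to get the two-sided sandwich; you instead observe that, because each $\varphi_t$ is an automorphism of $\text{SU}(2)$, the sets $fS_n$ and $\widetilde F_{n-1}\phi_{n-1}(h+I^\Z[m])$ are full in the $\text{SU}(2)$-direction, so the sandwich reduces to comparing endpoints of intervals in $\R$. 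This buys concreteness (and in fact the sharper constant $2\lambda(\widetilde F_{n-1})$ for the first two estimates), at the price of being tied to the specific product structure of $G$ --- the paper's set-algebra version is the form that survives in a general (C,F) setting. For part (ii) the paper deduces the block count from part (i) (replacing $fS_n$ by $\widetilde F_{n-1}\phi_{n-1}(K)$ with $K=h+I^\Z[n-1]$, $\# K=2n-3$, up to a $4\lambda(\widetilde F_{n-1})$ error), whereas you count directly which blocks of $F_n=\bigsqcup_{h'}\widetilde F_{n-1}\phi_{n-1}(h')$ the interval $fS_n$ meets; both routes give $\lambda(AC_n\cap fS_n)=(2n-1+O(1))\lambda(A)$ and then the same normalization, with $a_{n-1}/\widetilde a_{n-1}\to 1$ supplied by~\eqref{eq:inpart} and~\eqref{eq:gr}. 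One small caution: your opening claim that \emph{every} set occurring in the lemma has the form $I\times\text{SU}(2)$ fails for $AC_n$ when $A$ is an arbitrary subset of $F_{n-1}$; your argument does not actually need this (it only needs it for $fS_n$ and for the blocks $\widetilde F_{n-1}\phi_{n-1}(h')$, together with $Ac_n(h')\subset\widetilde F_{n-1}\phi_{n-1}(h')$ from~\eqref{6-8} and unimodularity), but the blanket statement should be restricted accordingly.
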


\begin{proof}
By~\eqref{6-10} we have
$$
fS_n=f'\phi_{n-1}(h)\widetilde{F}_{n-1}\phi_{n-1}(I^\Z[n])
=f'\widetilde{F}_{n-1}\phi_{n-1}(h+I^\Z[n]).
$$
Since $\widetilde{F}_{n-1}\widetilde{F}_{n-1}\subset \bigsqcup_{|i|\leq 1}\widetilde{F}_{n-1}\phi_{n-1}(i)$, there exists a partition of $\widetilde{F}_{n-1}$ into subsets $A_i$, $|i|\leq 1$ such that $f'A_i\subset \widetilde{F}_{n-1}\phi_{n-1}(i)$ for any $i$. Therefore
\begin{multline}\label{h1}
fS_n=\bigsqcup_{|i\leq 1|}f' A_i\phi_{n-1}(-i)\phi_{n-1}(i+h+I^\Z[n])\\
\subset \bigsqcup_{|i|\leq 1}f'A_i\phi_{n-1}(-i)\phi_{n-1}(h+I^\Z[n+1])=\widetilde{F}_{n-1}\phi_{n-1}(h+I^\Z[n+1])
\end{multline}
since $\bigsqcup_{|i|\leq 1}f'A_i\phi_{n-1}(-i)=\widetilde{F}_{n-1}$. In a similar way we obtain
\begin{equation}\label{h2}
fS_n\supset \widetilde{F}_{n-1}\phi_{n-1}(h+I^\Z[n-1]).
\end{equation}
Clearly,~\eqref{h1} and~\eqref{h2} remain true if we replace $f$ with $\widehat{f}$. The remaining assertions of (i) are direct consequences of~\eqref{h1} and~\eqref{h2}.

Suppose that $fS_n\subset F_n$. Then $K:=h+I^\Z[n-1]\subset H_{n-1}$ and, using (i) we obtain
\begin{align*}
&\frac{\lambda(AC_n\cap fS_n)}{\lambda(S_n)}=\frac{1}{\lambda(S_n)}\left(\lambda(AC_n\cap \widetilde{F}_{n-1}\phi_{n-1}(K)) \pm 4\lambda(\widetilde{F}_{n-1})\right)\\
&=\frac{1}{\lambda(S_n)}\sum_{h'\in H_{n-1}}\left(\lambda(As_{n-1}(h')\phi_{n-1}(h')\cap\widetilde{F}_{n-1}\phi_{n-1}(K))\pm 4\lambda(\widetilde{F}_{n-1}) \right)\\
&=\frac{1}{\lambda(S_n)}\sum_{k\in K}\left(\lambda(As_{n-1}(h'))\pm 4\lambda(\widetilde{F}_{n-1}) \right)
=\frac{\# K}{\lambda(S_n)}\lambda(A)\pm 4\frac{\lambda(\widetilde{F}_{n-1})}{\lambda(S_n)}\\
&=\lambda_{F_{n-1}}(A)\cdot \frac{\lambda(F_{n-1})}{\lambda(\widetilde{F}_{n-1})}\cdot\frac{\lambda(\widetilde{F}_{n-1})}{\lambda(S_n)}\cdot \# K\pm 4\frac{\lambda(\widetilde{F}_{n-1})}{\lambda(S_n)}\\
&=\lambda_{F_{n-1}}(A)\cdot \frac{a_{n-1}}{\widetilde{a}_{n-1}}\cdot \frac{\widetilde{a}_{n-1}\cdot (2n-3)}{(2n-1)\cdot\widetilde{a}_{n-1}}\pm4\frac{\widetilde{a}_{n-1}}{(2n-1)\widetilde{a}_{n-1}}
=\lambda_{F_{n-1}}(A)+\overline{o}_n(1).
\end{align*}
\end{proof}

\begin{lm}\label{lm:fubini}
Let $(G,\lambda)$ be a locally compact unimodular group with Haar measure $\lambda$. For any measurable sets $A,B,F,S\subset G$ of finite measure $\lambda$ we have
$
\int_{S\times S} \lambda (Av\cap Bw\cap F)\ dv\ dw=\int_{A\times B}\lambda(aS\cap bS\cap F)\ da\ db.
$
\end{lm}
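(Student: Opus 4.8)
The plan is to reduce both sides to one and the same triple integral over $G^3$ of a product of indicator functions, and then to pass between the two by a single measure-preserving change of variables in which unimodularity is used in an essential way. Throughout I write $\1_E$ for the indicator of a measurable set $E$, and I use that all four sets $A,B,F,S$ have finite Haar measure; every integrand occurring below will be nonnegative, bounded by $1$, and supported on a set of the form $F\times S\times S$ (or a permutation thereof) of finite measure, so that Tonelli's theorem applies at each step without further comment.

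First I would record the pointwise identities coming from the definitions of the translated sets: $g\in Av$ iff $gv^{-1}\in A$, and $g\in aS$ iff $a^{-1}g\in S$, so that $\1_{Av}(g)=\1_A(gv^{-1})$ and $\1_{aS}(g)=\1_S(a^{-1}g)$, and similarly with $B,w,b$. Expanding each inner measure as an integral of a product of indicators and writing $\int_{S\times S}=\int_{G\times G}\1_S(v)\1_S(w)$, the left-hand side becomes
\begin{equation*}
\int_{G^3}\1_A(gv^{-1})\,\1_B(gw^{-1})\,\1_F(g)\,\1_S(v)\,\1_S(w)\ dg\,dv\,dw,
\end{equation*}
while the right-hand side is, directly,
\begin{equation*}
\int_{G^3}\1_A(a)\,\1_B(b)\,\1_F(g)\,\1_S(a^{-1}g)\,\1_S(b^{-1}g)\ dg\,da\,db.
\end{equation*}

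The heart of the argument is the change of variables $a=gv^{-1}$, $b=gw^{-1}$ carried out for each fixed $g$. The key computation is that the map $\theta_g\colon v\mapsto gv^{-1}$ pushes $\lambda$ forward to $\lambda$: indeed $\int_G h(gv^{-1})\,dv=\int_G h(gv)\,dv=\int_G h(v)\,dv$, where the first equality is invariance of $\lambda$ under inversion and the second is left-invariance. Since $\theta_g^{-1}(a)=a^{-1}g$, applying this to $\Psi_g(v):=\1_A(gv^{-1})\1_S(v)$ yields $\int_G \1_A(gv^{-1})\1_S(v)\,dv=\int_G \1_A(a)\1_S(a^{-1}g)\,da$, and the analogous identity holds in the $w$-variable with $B,b$. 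Substituting both inner integrals and regrouping by Tonelli transforms the first display into
\begin{equation*}
\int_G \1_F(g)\Big(\int_G \1_A(a)\1_S(a^{-1}g)\,da\Big)\Big(\int_G \1_B(b)\1_S(b^{-1}g)\,db\Big)\,dg,
\end{equation*}
which is term-for-term the second display, i.e.\ the right-hand side.

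The main obstacle — really the only nontrivial point — is the justification that $\theta_g$ preserves $\lambda$, and this is exactly where the hypothesis of unimodularity is indispensable: the inversion $v\mapsto v^{-1}$ preserves Haar measure precisely on unimodular groups, and without it (combined with left-invariance) the substitution $a=gv^{-1}$ would introduce a modular-function factor and the claimed identity would break. Everything else is bookkeeping: verifying the pointwise indicator identities, checking the measurability and finiteness needed to invoke Tonelli, and matching the two triple integrals.
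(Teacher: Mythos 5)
Your proof is correct and is essentially the argument the paper has in mind: the paper isolates the two-set identity $\int_S\lambda(Cv\cap D)\,dv=\int_C\lambda(xS\cap D)\,dx$ (whose proof is exactly your Tonelli-plus-unimodular-substitution computation, left there as an ``elementary calculation'') and applies it twice, first with $C=A$, $D=Bw\cap F$ and then with $C=B$, $D=aS\cap F$, whereas you perform the same substitution in both variables simultaneously inside one triple integral over $G^3$. The substance --- expanding the measures as integrals of indicators and using that $v\mapsto gv^{-1}$ preserves $\lambda$, which is where unimodularity enters --- is identical.
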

\begin{proof}
Elementary calculation yields that for any measurable sets $C,D\subset G$ of finite measure we have $\int_S \lambda(Cv\cap D)\ dv=\int_C \lambda(xS\cap D)\ dx$. Applying this recursively to $\int_{S\times S} \lambda (Av\cap Bw\cap F)\ dv\ dw$ (first for $C=A$, $D=Bw\cap F$ and then for $C=B$, $v=w$ and $D=aS\cap F$), we obtain the desired formula.
\end{proof}

\subsubsection{Weak mixing}
\begin{pr}\label{TisWM}
$T_{(2,I)}$ (and hence also $T_{(1,I)}$) is weakly mixing.
\end{pr}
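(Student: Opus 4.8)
The plan is to prove weak mixing of $T_{(2,I)}$ by showing that it has no non-constant $L^2$-eigenfunctions; the assertion for $T_{(1,I)}$ then follows formally. Indeed, since $\varphi_1(I)=I$ we have $T_{(1,I)}^2=T_{(1,I)(1,I)}=T_{(2,I)}$, so if $g\circ T_{(1,I)}=\gamma g$ then $g\circ T_{(2,I)}=\gamma^2 g$; once $T_{(2,I)}$ is weakly mixing (hence ergodic) this forces $\gamma^2=1$, and the case $\gamma=-1$ is excluded because then $g$ is $T_{(2,I)}$-invariant, hence constant, hence $g\circ T_{(1,I)}=-g=g$ gives $g=0$. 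The first structural observation I would record is that $e^{\pi i}=-1$ gives $\varphi_2=\mathrm{id}$, so $(0,M)(2,I)=(2,M)=(2,I)(0,M)$ for all $M$. Consequently $\{T_{(0,M)}\colon M\in\text{SU}(2)\}\subset C(T_{(2,I)})$, and by~\eqref{y3} the map $T_{(2,I)}$ sends a base coordinate $(\tau,N)\in I^\R[a_n]\times\text{SU}(2)$ to $(\tau+2,N)$ for as long as the orbit stays inside $F_n$: it is a pure real translation by $2$ that fixes the $\text{SU}(2)$-coordinate within a column.

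Next I would set up the level-$n$ tower. By~\eqref{6-9} the base $F_{n+1}$ is tiled by the translates $\widetilde F_n\phi_n(h)$, $h\in H_n$, and the level-$n$ cylinders are glued by the spacers $c_{n+1}(h)=s_n(h)\phi_n(h)$, where $\phi_n(h)=(2h\widetilde a_n,I)$ is a pure real shift and $s_n(h)\in\widehat S_n$ carries both a small real offset and an $\text{SU}(2)$-twist. Thus $T_{(2,I)}$ presents $X$ as a rank-one-type tower whose consecutive columns are related by the spacers $s_n(h)$; the real step by $2$ moves a point up its column, while the $\text{SU}(2)$-coordinate changes only through the spacer twists encountered at column transitions. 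I would then decompose $L^2(X,\mu)$ into $\text{SU}(2)$-isotypic components by Peter--Weyl. Since $\text{SU}(2)$ commutes with $T_{(2,I)}$ each component is $T_{(2,I)}$-invariant, and any eigenfunction splits into eigenfunctions with the same eigenvalue lying in single isotypic components; it therefore suffices to exclude non-zero eigenfunctions component by component.

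The core is to convert the equidistribution of the spacers (Lemma~\ref{lm:6.2}, of which only the one- and two-coordinate marginals are needed for weak mixing) into the vanishing of eigenfunctions. Fix an eigenfunction $f$ with $f\circ T_{(2,I)}=\gamma f$, $|f|=1$, lying in a single isotypic component, and approximate it by a level-$n$ cylinder function; on each column $f$ is then determined by its base value, equalling $\gamma^{(\text{height})}$ times that value. Comparing the columns indexed by $h$ and $h'$, which are related by translation together with the spacer $s_n(h')s_n(h)^{-1}$, the eigenvalue equation lets the real part of this spacer enter as a power of $\gamma$ and its $\text{SU}(2)$ part enter as the left-translation operator acting on the isotypic component. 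Averaging these relations over a common shift $t$ of $h,h',h''$ and letting $n\to\infty$, Lemma~\ref{lm:6.2} replaces the spacer-dependent phases and twists by their integrals against $\lambda_{\widehat S_n}$. On the trivial component only the real phases survive, and since the real parts of the spacers spread over intervals of growing length the relevant average tends to $0$ unless $\gamma=1$, so the base flow's time-two map has continuous spectrum off the constants. On a non-trivial irreducible component, whose $\text{SU}(2)$-marginals of $\lambda_{\widehat S_n}$ converge to $\lambda_{\text{SU}(2)}$ by construction of $\widehat S_n$, the averaged twist equals $\int_{\text{SU}(2)}\pi(M)\,d\lambda_{\text{SU}(2)}(M)=0$ by Schur orthogonality, forcing $f=0$ there. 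In all cases the only eigenfunctions are constants, so $T_{(2,I)}$ is weakly mixing.

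The main obstacle I anticipate is making the transfer across columns quantitative: one must show that the cylinder approximation of $f$ behaves, up to small $L^2$-error, like a genuine tower eigenfunction (base value times $\gamma^{(\text{height})}$), and then convert the equidistribution of Lemma~\ref{lm:6.2} into a clean $L^2$-estimate that survives the passage $n\to\infty$. Tracking the two directions simultaneously---the real displacement, which governs the scalar $\gamma$, and the $\text{SU}(2)$-twist, which governs the isotypic behaviour---while absorbing the boundary errors of the F\o lner sets $F_n$ and the $\overline o_n(1)$ contributions (as in Lemma~\ref{6.2}) is where the real work lies.
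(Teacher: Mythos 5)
Your strategy is genuinely different from the paper's. The paper never discusses eigenfunctions: it proves the stronger statement that $g_n=\phi_n(1)=(2,I)^{\widetilde{a}_n}$ is a mixing sequence, i.e. $\mu(T_{g_n}D\cap D')\to\mu(D)\mu(D')$ for cylinders, by a direct computation on the $(C,F)$-tower using Lemma~\ref{6.2}, Lemma~\ref{lm:6.2} and the sets $\widehat{S}_n$ of Proposition~\ref{techniczny}; since the $g_n$ are powers of $(2,I)$, weak mixing of $T_{(2,I)}$ follows immediately. Your preliminary observations are fine: $\varphi_2=\mathrm{id}$, so $(2,I)$ is central, the $\text{SU}(2)$ sub-action commutes with $T_{(2,I)}$, and the reduction of $T_{(1,I)}$ to $T_{(2,I)}$ is formal.

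The core of your argument, however, has a genuine gap. The eigenvalue equation $f\circ T_{(2,I)}=\gamma f$ only controls $f$ along the $T_{(2,I)}$-orbit, i.e. under left multiplication of the $F_n$-coordinate by $(2k,I)$ with $k\in\Z$. The relative displacement between the copies of a level-$n$ column inside a level-$(n+1)$ column is governed by $c_{n+1}(h)^{-1}c_{n+1}(h')$, whose real part is the difference of the real parts of $s_n(h),s_n(h')$ plus $2(h'-h)\widetilde{a}_n$; the real parts of the $s_n(h)$ are arbitrary reals in $I^\R[(2n-1)\widetilde{a}_{n-1}]$, not integer multiples of $2$. So this displacement is not a power of $T_{(2,I)}$ (and right multiplication by a group element is not even a globally defined transformation of the $(C,F)$-space), and the step in which ``the real part of this spacer enters as a power of $\gamma$'' does not parse: $\gamma$ raised to a non-integer real exponent is exactly the quantity your averaging argument needs and the eigenvalue equation of the single map cannot supply it. The same cross-column transfer is what would produce the ``averaged twist'' on a non-trivial isotypic component, so the Schur-orthogonality step inherits the problem. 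A natural repair is to prove weak mixing of the flow $(T_{(t,I)})_{t\in\R}$ instead, where an eigenfunction satisfies $f\circ T_{(t,I)}=e^{2\pi i\lambda t}f$ and real displacements genuinely contribute phases $e^{2\pi i\lambda s}$ averaging to zero over the growing intervals, and then invoke the classical equivalence of weak mixing for a flow and its time-$t$ maps; but even in that formulation the quantitative approximation of $f$ by a tower eigenfunction over the continuum base $I^\R[a_n]\times\text{SU}(2)$ --- which you explicitly defer as ``where the real work lies'' --- is the actual content of the proof and is not supplied.
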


\begin{proof}
Let $g_n=\phi_n(1)=(2\widetilde{a}_n,I)=(2,I)^{\widetilde{a}_n}$. We will show that $(g_n)_{n\in\N}$ is mixing for $T$, i.e.
\begin{equation}\label{gw1}
\lim_{n\to\infty}\mu(T_{g_n}D\cap D')=\mu(D)\mu(D')
\end{equation}
Since $(T_g)_{g\in G}$ is of funny rank-one, it suffices to prove~\eqref{gw1} for cylinders $D,D'\in\cA$. Let $A=A_{(m)},\ B=B_{(m)}\subset F_m$ be such that $A,B\in\cA$.
We will show now that 
$$
\mu\left(T_{g_n}\left[A\right]_m\cap \left[B\right]_m \right)= \mu\left(\left[A\right]_m\right)\mu\left(\left[B\right]_m\right)+\overline{o}_n(1),
$$
where $\overline{o}_n(1)$ means (here and below) a sequence that goes to $0$ as $n$ goes to infinity and that does not depend on $A,B$.
For $n\geq m$ we have $[A]_m=[A_{(n)}]_n,\ [B]_m=[B_{(n)}]_n$ for $A_{(n)}, B_{(n)}\subset F_n$ such that $A_{(n+1)}=A_{(n)}C_{n+1}$, $B_{(n+1)}=B_{(n)}C_{n+1}$. For $n\geq m$ let
\begin{align*}
F'_n&:=F_n\cap \bigcap_{(i_1,M_1),(i_2,M_2)\in S_n} F_n(i_1,M_1)(i_2,M_2)^{-1},\\
A'_{(n)}&:=A_{(n)}\cap F'_n\\
B'_{(n)}&:=B_{(n)}\cap F'_n,\\
H'_n&:=H_n\cap(H_n-1).
\end{align*}
Notice that 
$$
\mu\left(\left[ A_{(n)}\setminus A'_{(n)}\right]_n\right)\leq \mu\left(\left[F_n\setminus F'_n\right]_n\right) \text{ and }\mu\left(\left[ B_{(n)}\setminus B'_{(n)}\right]_n\right)\leq \mu\left(\left[F_n\setminus F'_n\right]_n\right).
$$
We claim that 
\begin{equation}\label{eq:fo}
\mu\left(\left[F_n\setminus F'_n\right]_n\right)=\overline{o}_n(1).
\end{equation}
Indeed, notice that 
$
F'_n=I^\R[a_n-2(2n-1)\widetilde{a}_{n-1}]\times \text{SU}(2),
$
whence
\begin{align}\begin{split}\label{y6}
	\lambda_{F_n}(F'_n)&=\frac{\lambda(F'_n)}{\lambda(F_n)}=\frac{a_n-2(2n-1)\widetilde{a}_{n-1}}{a_n}=1-2\frac{(2n-1)\widetilde{a}_{n-1}}{a_n}\\
	&=1-2\frac{(2n-1)\widetilde{a}_{n-1}}{ \widetilde{a}_{n-1}(2r_{n-1}-1)}=1-2\frac{2n-1}{2r_{n-1}-1}\to 1
\end{split}\end{align}
by~\eqref{eq:gr}, and~\eqref{eq:fo} follows. Therefore
\begin{align}
	\begin{split}\label{24-2}
		\mu&\left(T_{g_n}\left[A_{(n)}\right]_n\cap \left[B_{(n)}\right]_n \right)=\mu\left(T_{g_n}\left[A'_{(n)}\right]_n\cap \left[B'_{(n)}\right]_n \right)+\overline{o}_n(1)\\
		&=\sum_{h\in H_n}\mu\left(T_{g_n}\left[A'_{(n)}c_{n+1}(h)\right]_{n+1}\cap \left[B'_{(n)}\right]_n \right)+\overline{o}_n(1).
	\end{split}
\end{align}
Notice also that $g_n \in C(G)$ for all $n \in \N$. Hence
\begin{multline*}
g_nAc_{n+1}(h)=g_nAs_n(h)\phi_n(h)=As_n(h)g_n\phi_n(h)\\
=As_n(h)\phi_n(h+1)=As_n(h)s_n(h+1)^{-1}c_{n+1}(h+1)
\end{multline*}
provided that $h,h+1\in H_n$. Moreover, for all $h\in H_n$
$$
\mu\left(T_{g_n}\left[A'_{(n)}c_{n+1}(h)\right]_{n+1} \right)=\frac{\mu\left( \left[ A'_{(n)}\right]_n\right)}{\# C_{n+1}}\leq\frac{\mu\left(\left[ F_n \right]_n\right)}{\# C_{n+1}}=\frac{\mu(X_n)}{\# C_{n+1}}=\overline{o}_n(1).
$$
Therefore
\begin{align}
\begin{split}\label{24-1}
\sum_{h\in H_n}&\mu\left(T_{g_n}\left[A'_{(n)}c_{n+1}(h)\right]_{n+1}\cap \left[B'_{(n)}\right]_n \right)\\
&=\sum_{h\in H'_n}\mu\left(T_{g_n}\left[A'_{(n)}c_{n+1}(h)\right]_{n+1}\cap \left[B'_{(n)}\right]_n \right)\\
&=\sum_{h\in H'_n}\mu\left(\left[A'_{(n)}s_n(h)s_n(h+1)^{-1}c_{n+1}(h+1)\right]_{n+1}\cap \left[B'_{(n)}\right]_n \right).
\end{split}
\end{align}
We also have
$$
A'_{(n)}s_n(h)s_n(h+1)^{-1}c_{n+1}(h+1)\subset F_nc_{n+1}(h+1)\subset F_{n+1}.
$$
Moreover, 
$$
\left[B'_{(n)}\right]_n=\bigsqcup_{\widetilde{h}\in H_n}\left[ B'_{(n)}c_{n+1}(\widetilde{h})\right]_{n+1},
$$
where $B'_{(n)}c_{n+1}(\widetilde{h})\subset F_n c_{n+1}(\widetilde{h})$ and
$$
\left[A_{(n)}'s_n(h)s_n(h+1)^{-1}c_{n+1}(h+1)\right]_{n+1}\cap\left[B'_{(n)}c_{n+1}(\widetilde{h})\right]_{n+1}=\varnothing
$$
when $h+1\neq\widetilde{h}$. Hence
\begin{align}
\begin{split}\label{240}
\sum_{h\in H'_n}\mu &\left(\left[A'_{(n)}s_n(h)s_n(h+1)^{-1}c_{n+1}(h+1)\right]_{n+1}\cap [B'_{(n)}]_n \right)\\
&=\sum_{h\in H'_n}\left(\left[\left(A'_{(n)}s_n(h)s_n(h+1)^{-1}\cap B'_{(n)}\right)c_{n+1}(h+1) \right]_{n+1}\right)\\
&=\frac{1}{\# H'_n}\sum_{h\in H'_n}\mu\left(\left[A'_{(n)}s_n(h)s_n(h+1)^{-1}\cap B'_{(n)}\right]_n\right)\\
&=\frac{1}{\# H'_n}\sum_{h\in H'_n}\lambda_{F_n}\left(A'_{(n)}s_n(h)s_n(h+1)^{-1}\cap B'_{(n)}\right)\mu(X_n)\\
&=\frac{1}{\# H'_n}\sum_{h\in H'_n}\lambda_{F_n}\left(A'_{(n)}s_n(h)\cap B'_{(n)}s_n(h+1) \right)+\overline{o}_n(1)\\
&=\frac{1}{\# H'_n}\sum_{h\in H'_n}\lambda_{F_n}\left(A_{(n)}s_n(h)\cap B_{(n)}s_n(h+1) \right)+\overline{o}_n(1).
\end{split}
\end{align}
Recall that $A_{(n)}=A_{(n-1)}C_n$, $B_{(n)}=B_{(n-1)}C_n$ and $\left[A_{(n-1)}\right]_{n-1}$, $\left[B_{(n-1)}\right]_{n-1}$ are sums of at most $\#C_1\cdot \ldots\cdot \# C_{n-1}$ translations of some elements from $\cA$ (recall that $[A_{(m)}]_{m},\ [B_{(m)}]_m\in \cA$). Let $\xi_n:=\text{dist}_{h\in H'_n}(s_n(h),s_n(h+1))$. It follows from Lemma~\ref{lm:6.2} that
$$
\left\|\xi_n-\lambda_{\widehat{S}_n}\otimes \lambda_{\widehat{S}_n} \right\|<\vep_n=\overline{o}_n(1).
$$
Define $f\colon S_n\times S_n \to \R$ by
$$
f(v,w)=\lambda_{F_n}\left(A_{(n)}v\cap B_{(n)}w\right).
$$
Then
\begin{align}
\begin{split}\label{241}
&\frac{1}{\# H'_n}\sum_{h\in H'_n}\lambda_{F_n}\left(A_{(n)}s_n(h)\cap B_{(n)}s_n(h+1) \right)\\
&=\int_{S_n\times S_n}\lambda_{F_n}\left(A_{(n)}v\cap B_{(n)}w\right)\ d \xi_n(v,w)\\
&=\int f\ d\xi_n=\int f\ d \lambda_{\widehat{S}_n}\otimes \lambda_{\widehat{S}_n}\pm \vep_n\\
&=\int_{S_n\times S_n}\lambda_{F_n}(A_{(n-1)} C_nv\cap B_{(n-1)} C_nw)\ d\lambda_{\widehat{S}_n}\otimes \lambda_{\widehat{S}_n}(v,w)\pm\vep_n.
\end{split}
\end{align}
Hence by the choice of $\widehat{S}_n$ we obtain
\begin{align}\label{242}
\begin{split}
&\int_{S_n\times S_n}\lambda_{F_n}\left(A_{(n-1)} C_nv\cap B_{(n-1)} C_nw\right)\ d\lambda_{\widehat{S}_n}\otimes \lambda_{\widehat{S}_n}(v,w)\\
&=\int_{S_n\times S_n}\lambda_{F_n}\left(A_{(n-1)} C_nv\cap B_{(n-1)} C_nw\right)\ d\lambda_{{S}_n}\otimes \lambda_{{S}_n}(v,w)\pm\vep_n\\
&=\sum_{h,h'\in H_{n-1}}\int_{S_n\times S_n}\frac{\lambda(A_{(n-1)}  c_n(h)v \cap B_{(n-1)} c_n(h')w\cap F_n)}{\lambda(F_n)}\ d\lambda_{S_n}(v)\ d\lambda_{S_n}(w)\pm\vep_n\\
&=\sum_{h,h'\in H_{n-1}}\int_{A_{(n-1)} \times B_{(n-1)}}\frac{\lambda(ac_n(h)S_n\cap bc_n(h') S_n\cap F_n)}{(\lambda({S_n}))^2\lambda(F_n)}\ d\lambda(a)\ d\lambda(b)\pm\vep_n.
\end{split}
\end{align}
For $a\in A_{(n-1)}$, $b\in B_{(n-1)}$ we have
\begin{align}
ac_n(h)=as_{n-1}(h)\phi_{n-1}(h)&\in\widetilde{F}_{n-1}\phi_{n-1}(h),\label{jo:1}\\
bc_n(h')=bs_{n-1}(h')\phi_{n-1}(h')&\in \widetilde{F}_{n-1}\phi_{n-1}(h').\label{jo:2}
\end{align}
Moreover,
\begin{align*}
(0,I)c_n(h)=c_n(h)=s_{n-1}(h)\phi_{n-1}(h)&\in\widetilde{F}_{n-1}\phi_{n-1}(h),\\
(0,I)c_n(h')=c_n(h')=s_{n-1}(h')\phi_{n-1}(h')&\in \widetilde{F}_{n-1}\phi_{n-1}(h').
\end{align*}
Therefore by Lemma~\ref{6.2}~\eqref{6.2.i} we obtain
\begin{multline}\label{jo:3}
\lambda\left((ac_n(h)S_n\cap bc_n(h')S_n\cap F_n)\triangle(c_n(h)S_n\cap c_n(h')S_n\cap F_n) \right)\\
\leq \lambda(ac_n(h)S_n\triangle c_n(hS_n)+\lambda(bc_n(h')S_n\triangle c_n(h')S_n)
\leq 8\lambda(\widetilde{F}_{n-1}).
\end{multline}
By~\eqref{jo:1},~\eqref{jo:2} and, again, Lemma~\ref{6.2}~\eqref{6.2.i}
\begin{equation*}
ac_n(h)S_n\cap bc_n(h')S_n\subset \widetilde{F}_{n-1}\phi_{n-1}(h+I^\Z[n+1])\cap \widetilde{F}_{n-1}\phi_{n-1}(h'+I^\Z[n+1]).
\end{equation*}
Hence $ac_n(h)S_n\cap bc_n(h')S_n\neq \varnothing$ only if $h'-h\in I^\Z[2n+1]$. If the latter is satisfied, we say that $h$ and $h'$ are \emph{partners}. Denote by $P(h)$ the set of partners of $h$ from $H_{n-1}$. Clearly, $\# P(h)\leq 4n+1$. Therefore we deduce from~\eqref{24-2}, \eqref{24-1}, \eqref{240}, \eqref{241}, \eqref{242} 
that
\begin{align}
\begin{split}\label{eq:6}
&\mu(T_{g_n}[A]_{m}\cap [B]_{m})\\
&\phantom{===}=\sum_{h\in H_{n-1}}\sum_{h'\in P(h)}\frac{\lambda(A_{(n-1)})\lambda(B_{(n-1)})}{(\lambda(S_n))^2\lambda(F_n)}\\
&\phantom{======}\cdot\left(\lambda(c_n(h)S_n\cap c_n(h')S_n\cap F_n)\pm 8\lambda(\widetilde{F}_{n-1}) \right)\\
&\phantom{===}=\frac{\lambda(A_{(n-1)})\lambda(B_{(n-1)})}{(\lambda(S_n))^2\lambda(F_n)}\\
&\phantom{======}\cdot\sum_{h\in H_{n-1}}\sum_{h'\in P(h)}\lambda(c_n(h)S_n\cap c_n(h')S_n\cap F_n)\\
&\phantom{======}\pm \lambda(H_{n-1})(4n+1)\frac{(\lambda(F_{n-1}))^2}{(\lambda(S_n))^2\lambda(F_n)}\cdot 4\lambda(\widetilde{F}_{n-1})\\
&\phantom{===}=\frac{\lambda(A_{(n-1)})\lambda(B_{(n-1)})}{(\lambda(F_{n-1}))^2}\theta_n\\
&\phantom{======}\pm \lambda(H_{n-1})(4n+1)\frac{(\lambda(F_{n-1}))^2}{(\lambda(S_n))^2\lambda(F_n)}\cdot 4\lambda(\widetilde{F}_{n-1})\\
&\phantom{===}=\lambda_{F_{n-1}}(A_{(n-1)})\lambda_{F_{n-1}}(B_{(n-1)})\theta_n\\
&\phantom{======}\pm \lambda(H_{n-1})(4n+1)\frac{(\lambda(F_{n-1}))^2}{(\lambda(S_n))^2\lambda(F_n)}\cdot 4\lambda(\widetilde{F}_{n-1}),
\end{split}
\end{align}
where $\theta_n>0$.  Notice that~\eqref{eq:inpart} together with~\eqref{eq:gr} means that $a_n/\widetilde{a}_n\to 1$ as $n\to \infty$. Therefore, by~\eqref{jk1} and~\eqref{jk2} we have
\begin{align*}
\lambda(H_{n-1})(4n+1)&\frac{(\lambda(F_{n-1}))^2}{(\lambda(S_n))^2\lambda(F_n)}\cdot 4\lambda(\widetilde{F}_{n-1})\\
&=\frac{(2r_{n-1}-1)(4n+1)(2a_{n-1})^2\cdot 4\cdot 2\widetilde{a}_{n-1}}{((2n-1)\widetilde{a}_{n-1})^2\cdot 2a_n}\\
&=\frac{16(4n+1)}{2n-1}\cdot\left(\frac{a_{n-1}}{\widetilde{a}_{n-1}}\right)^2\cdot\frac{\widetilde{a}_{n-1}(2r_{n-1}-1)}{a_n(2n-1)}\\
&=\frac{16(4n+1)}{2n-1}\cdot\left(\frac{a_{n-1}}{\widetilde{a}_{n-1}}\right)^2\cdot\frac{1}{2n-1}=\overline{o}_n(1).
\end{align*}
Therefore substituting $A_{(n-1)}=B_{(n-1)}=F_{n-1}$ in~\eqref{eq:6}  (see Remark~\ref{tech}) and passing to the limit we obtain $\theta_n\to 1$ as $n\to\infty$. Therefore, using~\eqref{y5}, we obtain
\begin{equation}\label{eq:7}
\mu(T_{g_n}[A]_{m}\cap[B]_{m})=\mu([A]_{m})\mu([B]_{m})+\overline{o}_n(1)
\end{equation}
and the assertion follows.
\end{proof}

\subsubsection{Self-joining properties}
For $k=(t,M)\in G$ let $k^\ast=(1,I)(t,M)(1,I)^{-1}=(t,\varphi_{1}(M))$. Notice that $(k^{\ast})^\ast=k.$\label{fffgg}

\begin{pr}\label{2:1}
The transformation $T_{(1,I)}$ is ``2:1''. Moreover, 
$\cJ_2^e(T_{(1,I)})=\left\{ \frac{1}{2}\left(\mu_{T_k}+\mu_{T_{k^\ast}}\right)\colon k\in G\right\}.$
\end{pr}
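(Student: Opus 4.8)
The plan is to establish separately the two inclusions that define $\cJ_2^e(T_{(1,I)})$ and then read off the ``$2{:}1$'' property. Write $S:=T_{(1,I)}$, $U:=T_{(2,I)}=S^2$ and $R:=S\times S$, so that $R^2=U\times U$; by Proposition~\ref{TisWM} both $U$ and $S$ are weakly mixing. The decisive structural fact, used throughout, is that $(1,I)^2=(2,I)\in C(G)$: consequently conjugation $k\mapsto k^\ast=(1,I)k(1,I)^{-1}$ is an involution (so $(k^\ast)^\ast=k$), every $T_k$ commutes with $U$, and $R_\ast\mu_{T_k}=\mu_{S T_k S^{-1}}=\mu_{T_{k^\ast}}$ while $U\times U$ fixes each $\mu_{T_k}$.

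For the inclusion ``$\supseteq$'' I would fix $k\in G$ and note that, since $T_k\in C(U)$ and $U$ is ergodic, the map $x\mapsto(x,T_kx)$ conjugates $U$ to $U\times U$ restricted to the graph, so $\mu_{T_k}$ is $U\times U$-ergodic. As $R_\ast$ interchanges $\mu_{T_k}$ and $\mu_{T_{k^\ast}}$, the average $\tfrac12(\mu_{T_k}+\mu_{T_{k^\ast}})$ is $R$-invariant; when $k\neq k^\ast$ its two summands are distinct (hence mutually singular) $U\times U$-ergodic measures swapped by $R$, so the average is $R$-ergodic by the usual index-two argument, and when $k=k^\ast$ one has $T_k\in C(S)$ and $\tfrac12(\mu_{T_k}+\mu_{T_{k^\ast}})=\mu_{T_k}$ is already an ergodic graph joining of the ergodic $S$. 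Either way the measure lies in $\cJ_2^e(S)$.

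For the reverse inclusion I would take $\lambda\in\cJ_2^e(S)$, $\lambda\neq\mu\otimes\mu$, and pass to its $R^2$-ergodic decomposition. Since $R$ normalises $R^2$ with $\langle R\rangle/\langle R^2\rangle\cong\Z_2$ and $\lambda$ is $R$-ergodic, this decomposition has one or two components, giving $\lambda=\mu_{T_g}$ (one component) or $\lambda=\tfrac12(\eta+R_\ast\eta)$ with $\eta$ being $R^2$-ergodic (two components). In either case each component is a self-joining of $U$: its marginals are $U$-ergodic and average to the $U$-ergodic (hence extreme) measure $\mu$, so they equal $\mu$, whence $\eta\in\cJ_2^e(U)$. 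Granting the classification $\cJ_2^e(U)=\{\mu_{T_g}:g\in G\}\cup\{\mu\otimes\mu\}$ and discarding $\eta=\mu\otimes\mu$ (which would force $\lambda=\mu\otimes\mu$), we get $\eta=\mu_{T_g}$; then $R_\ast\eta=\mu_{T_{g^\ast}}$, so in the two-component case $\lambda=\tfrac12(\mu_{T_g}+\mu_{T_{g^\ast}})$, while in the one-component case $R$-invariance forces $g=g^\ast$ and again $\lambda=\tfrac12(\mu_{T_g}+\mu_{T_{g^\ast}})$. Since each resulting measure is a graph or two-point extension over each coordinate, this simultaneously yields that $S$ is ``$2{:}1$''.

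The main obstacle is the classification of $\cJ_2^e(U)$ invoked above; this is the genuine content, and it is where the (C,F)-structure and the triple equidistribution of Lemma~\ref{lm:6.2} are needed. I would prove it by mirroring the computation of Proposition~\ref{TisWM}, but evaluating a prospective ergodic self-joining $\eta$ of $U$ on cylinder rectangles $[A]_m\times[B]_m$ instead of on the diagonal: arguing as in the passage~\eqref{24-2}--\eqref{eq:6}, one writes $\eta([A]_m\times[B]_m)$ as a limit of sums over \emph{partner} pairs $h,h'\in H_{n-1}$ weighted by the joint spacer distribution $\mathrm{dist}_{h}(s_n(h),s_n(h'))$ and by overlap integrals $\lambda(c_n(h)S_n\cap c_n(h')S_n\cap F_n)$, the latter controlled through Lemma~\ref{6.2} and Lemma~\ref{lm:fubini}. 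Pairwise equidistribution forces all cross-correlations to factorise except along a single fixed shift $h'=h+j$, which produces precisely an off-diagonal $\mu_{T_g}$ whose $\Z$-part is governed by $j$ and whose $\mathrm{SU}(2)$-part comes from the asymptotically Haar spacer values; the \emph{triple} equidistribution rules out the more intricate higher correlations that would otherwise allow non-graph ergodic joinings, so that weak mixing is upgraded to the full classification (and, as part of the same limiting analysis, $C(U)$ is seen to contribute nothing beyond $\{T_g:g\in G\}$). Finally I would record that the parametrisation is two-to-one, $\tfrac12(\mu_{T_k}+\mu_{T_{k^\ast}})$ depending on $k$ only through $\{k,k^\ast\}$, distinct pairs giving mutually singular joinings by freeness of the action.
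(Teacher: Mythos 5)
Your soft reduction is correct and is organized quite differently from the paper. You first verify the inclusion $\supseteq$, then decompose an arbitrary $\lambda\in\cJ_2^e(S)$ (with $S=T_{(1,I)}$) into at most two $U\times U$-ergodic components for $U=S^2=T_{(2,I)}$, and finally invoke the classification $\cJ_2^e(U)=\{\mu_{T_g}\colon g\in G\}\cup\{\mu\otimes\mu\}$. The paper does none of this: it takes a generic point $(x,x')$ of $(S\times S,\nu)$ for the pointwise ergodic theorem along the F\o lner sequence $\Phi_n=I_n+2\widetilde a_nJ_n$, expands $x,x'$ in (C,F)-coordinates, and splits into the cases ``$h_n=h_n'$ for all large $n$'' (which yields $\nu=\tfrac12(\mu_{T_k}+\mu_{T_{k^\ast}})$ with $k=f_N{f_N'}^{-1}$, the pair $\{k,k^\ast\}$ arising from the parity of $b$ in $(b,I)k(b,I)^{-1}$) and ``$h_n\neq h_n'$ infinitely often'' (which yields $\nu=\mu\otimes\mu$); the simplicity of $T_{(2,I)}$ is only recorded afterwards, in the remarks, as following ``by similar arguments.'' So your logical order is reversed relative to the paper's, and what your reduction buys is a clean separation of the group-theoretic content (the involution $k\mapsto k^\ast$ and the index-two argument) from the measure-theoretic core; what it does not buy is any saving on that core, since classifying $\cJ_2^e(U)$ — including showing $C(U)$ contains nothing beyond $\{T_g\colon g\in G\}$ — requires exactly the same equidistribution computation you have only sketched.

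Two imprecisions in that sketch are worth flagging, though neither is fatal. First, the \emph{triple} equidistribution in Lemma~\ref{lm:6.2} is what the paper uses for the 3-fold PID property in the following proposition; for the 2-fold classification only the pairwise statement $\|\xi_n-\lambda_{\widehat S_n}\otimes\lambda_{\widehat S_n}\|_1<\vep_n$ is needed, with $\xi_n=\mathrm{dist}_{t\in J_n}(s_n(t+h_n),s_n(t+h_n'))$ taken along the averaging variable $t$ for the two fixed generic tails — not the distribution $\mathrm{dist}_h(s_n(h),s_n(h'))$ over partner pairs that appears in the weak-mixing computation of Proposition~\ref{TisWM}. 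Second, the dichotomy that produces graph joinings is not ``a single fixed shift $h'=h+j$'': it is whether the tail coordinates of the generic pair eventually coincide, in which case the entire discrepancy is absorbed into a single group element $k=f_N{f_N'}^{-1}$, the spacers $s_n(t+h_n)$ then cancelling between the two factors. If you carry out the computation for $U$ along these corrected lines (where all translates $(2b,I)$ are central, so no parity split occurs and one gets a single graph $\mu_{T_k}$), your reduction does deliver the proposition.
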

\begin{proof}
Take any joining $\nu\in J^e_2(T_{(1,I)})$. Let $I_n:=I^\Z[n^{-2}a_n]$, $J_n=I^\Z[n^{-2}r_n]$ and $\Phi_n=I_n+2\widetilde{a}_nJ_n$. We first notice that $(\Phi_n)_{n=1}^{\infty}$ is a F\o lner sequence in $\Z$. Since
$$
\frac{a_n}{n^2}+\frac{2\widetilde{a}_nr_n}{n^2}<\frac{\widetilde{a}_n(2r_n+1)}{n^2}<\frac{2a_{n+1}}{(n+1)^2},
$$
it follows that $\Phi_n\subset I_{n+1}+I_{n+1}$, whence $\cup_{m=1}^{n}\Phi_m\subset I_{n+1}+I_{n+1}$. This implies
$$
\# (\Phi_{n+1}-\cup_{m=1}^{n}\Phi_m)\leq 3\# \Phi_{n+1}
$$
for every $n\in\N$, i.e. Shulman's condition~\cite{MR1865397} is satisfied for $(\Phi_n)_{n=1}^{\infty}$. By~\cite{MR1865397}, the pointwise ergodic theorem holds along $(\Phi_n)_{n=1}^{\infty}$ for any ergodic transformation. Since $T_{(1,I)}$ is ergodic by Proposition~\ref{TisWM}, we have
\begin{equation}\label{6-15}
\frac{1}{\# \Phi_n} \sum_{i\in \Phi_n} \raz_D(T_{(i,I)}x)\raz_{D'}(T_{(i,I)}x')\to \nu(D\times D')
\end{equation}
as $n\to\infty$ for $\nu$-a.a. $(x,x')\in X\times X$ and for all cylinders $D,D'\subset X$. We call such $(x,x')$ a generic point for $(T_{(1,I)}\times T_{(1,I)},\nu)$. Fix one of them. 
Then $x,x'\in X_n$ for all sufficiently large $n$ and we have the following expansions:
\begin{align*}
x&=(f_n, c_{n+1}(h_n),c_{n+2}(h_{n+1}),\dots),\\
x'&=(f'_n, c_{n+1}(h'_n),c_{n+2}(h'_{n+1}),\dots)
\end{align*}
with $f_n,f'_n\in F_n$ and $h_i,h'_i\in H_i$, $i>n$. We let $H_n^-:=I^\Z[(1-n^{-2})r_n]\subset H_n$. Then
$$
\frac{\# H_n^-}{\# H_n}\geq 1-n^{-2}.
$$
Since the marginals of $\nu$ are both equal to $\mu$, by the Borel-Cantelli lemma we may assume without loss of generality that $h_n,h_n'\in H_n^-$ for all sufficiently large $n$. This implies, in turn, that
$$
f_{n+1}=f_nc_{n+1}(h_n)\in \widetilde{F}_n\phi_n(H_n^-)\subset I_+^{\Z}[(2r_n(1-n^{-2})-1)\widetilde{a}_n]\times SU(2)
$$
and, similarly, 
$$
f'_{n+1}\in I_+^{\Z}[(2r_n(1-n^{-2})-1)\widetilde{a}_n] \times SU(2).
$$
Notice that given $g\in \Phi_n$ we have
$
(g,I)=(b,I)\phi_n(t)
$
for some uniquely determined $b\in I_n$ and $t\in J_n$. Moreover, $t+h_n\in H_n$. We also claim that
\begin{equation}\label{2-10}
(b,I)f_nS_nS_n=(b,I)f_nS_nS_n^{-1}\subset F_n.
\end{equation}
To verify this, it suffices to show that
$$
\frac{a_n}{n^2}+\left(2r_{n-1}\left(1-\frac{1}{(n-1)^2}\right)-1\right)\widetilde{a}_{n-1}+2(2n-1)\widetilde{a}_{n-1}<a_n.
$$
We will show that the following stronger inequality holds for $n$ is large enough:
$$
\frac{a_n}{n^2}+2r_{n-1}\left(1-\frac{1}{(n-1)^2}\right)\widetilde{a}_{n-1}+4n\widetilde{a}_{n-1}<a_n,
$$
i.e.
$$
2r_{n-1}\left( 1-\frac{1}{(n-1)^2}\right)+4n<(2r_{n-1}-1)\left(1-\frac{1}{n^2} \right).
$$
The latter inequality, in turn, is equivalent to 
$
1+4n-\frac{1}{n^2}<2r_{n-1}\left(\frac{1}{(n-1)^2}-\frac{1}{n^2} \right),
$
which follows from~\eqref{eq:gr} in a routine way. Hence 
\begin{align*}
&(g,I)f_n s_n(h_n)\phi_n(h_n)=dc_{n+1}(t+h_n),\\
&(g,I)f'_n s_n(h'_n)\phi_n(h'_n)=d'c_{n+1}(t+h'_n),
\end{align*}
where $d=(b,I)f_ns_n(h_n)s_n(t+h_n)^{-1},
d'=(b,I)f'_ns_n(h'_n)s_n(t+h'_n)^{-1} \in F_n$
by~\eqref{2-10}. 

We consider separately two cases:
\begin{enumerate}[(i)]
\item $h_n=h'_n$ for all $n\geq N$. Then $f_n {f_n'}^{-1}=f_N {f'_N}^{-1}=:k$ for all $n\geq N$. Then 
\begin{equation*}
(b,I)k(b,I)^{-1}=
	\begin{cases}
	k, &\text{ when $b$ is even}\\
	k^\ast, &\text{ when $b$ is odd}.
	\end{cases}
\end{equation*}
\item $h_n\neq h'_n$ for infinitely many $n$. 
\end{enumerate}
Our goal is to prove that 
\begin{equation*}
	\nu=
	\begin{cases}
		\frac{1}{2}(\mu_{T_k}+\mu_{T_{k^\ast}}),& \text{ in case (i)}\\
		\mu\otimes \mu,& \text{ in case (ii)}\\
	\end{cases}
\end{equation*}
Notice that it suffices to show the corresponding equalities of measures restricted to the classes of cylinder sets from $\cA$. We may also impose additional assumptions on the cylinders under consideration, provided that every set can still be approximated by finite sums of such cylinders. 

Take $[A_{(m)}]_m,\ [B_{(m)}]_m \in \cA$ and for $n\geq m$ set 
$$
A_{(n+1)}:=A_{(n)}C_{n+1},\ B_{(n+1)}:=B_{(n)}C_{n+1}.
$$
We have
\begin{align}
\begin{split}\label{7.03e}
&\frac{1}{\# \Phi_n}\# \left\{g\in\Phi_n\colon \left(T_{(g,I)}x,T_{(g,I)}x'\right)\in [A_{(m)}]_m\times [B_{(m)}]_m \right\}\\
&\phantom{==}=\frac{1}{\# \Phi_n}\# \left\{g\in\Phi_n\colon \left(T_{(g,I)}x,T_{(g,I)}x'\right)\in [A_{(n)}]_n\times [B_{(n)}]_n \right\}\\
&\phantom{==}=\frac{1}{\# I_n}\sum_{b\in I_n}\frac{\# \left\{t\in J_n\colon d\in A_{(n)},d'\in B_{(n)} \right\}}{\# J_n}\\
&\phantom{==}=\frac{1}{\# I_n}\sum_{b\in I_n}\xi_n\left(A_{(n)}^{-1}(b,I)f_ns_n(h_n)\times B_{(n)}^{-1}(b,I)f'_ns_n(h'_n) \right),
\end{split}
\end{align}
where $\xi_n=\text{dist}_{t\in J_n}\left(s_n(t+h_n),s_n(t+h'_n) \right)$.

We cover first the case (i). Take a cylinder $D'\in \cA$ of order $m$ which is a finite sum of cylinders from $\cA$. It is clear that $\widetilde{D}':=D'\cap [k^{-1}F_m]_m$ is also a finite sum of cylinders from $\cA$. Moreover,
\begin{multline*}
\mu(D'\triangle \widetilde{D}')=\mu([\sqcup_{j=1}^{j_0}A_j]_m\setminus [k^{-1}F_m]_m)\\
=\frac{\lambda(\sqcup_{j=1}^{j_0}A_j\setminus k^{-1}F_m)}{\lambda(F_m)}\mu(X_m)\leq\frac{\lambda(F_m\setminus k^{-1}F_m)}{\lambda(F_m)}\\
=\mu\left(\left[F_m\setminus k^{-1}F_m  \right]_m\right).
\end{multline*}
Given $\vep$, for $m$ large enough, $\mu\left(\left[F_m\setminus k^{-1}F_m  \right]_m\right)<\vep$ since $(I^\R[a_m])_{m\in\R}$ is a F\o lner sequence in $\R$. Therefore we may additionally assume that every cylinder $[B]_m\in\cA$ satisfies the condition $kB\subset F_m$ if $m$ is large enough. Since $kF_m=k^\ast F_m$, it follows immediately that also $k^\ast B\subset F_m$.

It is easy to deduce from Lemma~\ref{lm:6.2} that $\|\xi_n-\Delta\|_1<\vep_n$ for $n>N$, where $\Delta$ is the probability equidistributed on $\widehat{S}_n\times \widehat{S}_n$. This yields (having in mind how we have chosen $\widehat{S}_n$)
\begin{align}
\begin{split}\label{7.03d}
&\frac{1}{\# I_n}\sum_{b\in I_n}\xi_n\left(A_{(n)}^{-1}(b,I)f_ns_n(h_n)\times B_{(n)}^{-1}(b,I)f'_ns_n(h'_n)\right)\\
&\phantom{=}=\frac{1}{\# I_n}\sum_{b\in I_n}\lambda_{\widehat{S}_n}\left(A_{(n)}^{-1}(b,I)f_ns_n(h_n)\cap B_{(n)}^{-1}(b,I)f'_ns_n(h_n) \right)\pm\vep_n\\
&\phantom{=}=\frac{1}{\# I_n}\sum_{b\in I_n}\lambda_{{S}_n}\left(A_{(n)}^{-1}(b,I)f_ns_n(h_n)\cap B_{(n)}^{-1}(b,I)f'_ns_n(h_n) \right)\pm2\vep_n
\end{split}
\end{align}
We obtain further
\begin{align}
\begin{split}\label{7.03c}
&\frac{1}{\# I_n}\sum_{b\in I_n}\lambda_{{S}_n}\left(A_{(n)}^{-1}(b,I)f_ns_n(h_n)\cap B_{(n)}^{-1}(b,I)f'_ns_n(h_n) \right)\\
&\phantom{==}=\frac{1}{\# I_n}\sum_{b\in I_n}\frac{\lambda\left(A_{(n)}\cap (b,I)f_n{f'}_n^{-1}(b,I)^{-1}B_{(n)}\cap (b,I)f_ns_n(h_n)S_n \right)}{\lambda(S_n)}\\
&\phantom{==}=\frac{1}{\# I_n}\sum_{2b\in I_n}\frac{\lambda\left(A_{(n)}\cap kB_{(n)}\cap (2b,I)f_ns_n(h_n)S_n\right)}{\lambda(S_n)}\\
&\phantom{===}+\frac{1}{\# I_n}\sum_{2b+1\in I_n}\frac{\lambda\left(A_{(n)}\cap k^\ast B_{(n)}\cap (2b,I)f_ns_n(h_n)S_n\right)}{\lambda(S_n)}.
\end{split}
\end{align}
Since $kB_{(m)}\subset F_m$, we have $kB_{(n)}\subset F_n$.
Therefore
$
A_{(n-1)}C_n\cap kB_{(n-1)}C_n=\left(A_{(n-1)}\cap kB_{(n-1)}\right)C_n.
$
It follows from Lemma~\ref{6.2}~\eqref{6.2.ii} and from~\eqref{2-10} that
\begin{align}
\begin{split}\label{7.03a}
&\frac{\lambda\left(A_{(n)}\cap k B_{(n)} \cap (2b,I)f_ns_n(h_n)S_n \right)}{\lambda(S_n)}\\
&\phantom{======}=\frac{\lambda\left(\left(A_{(n-1)}\cap kB_{(n-1)}\right)C_n \cap (2b,I)f_ns_n(h_n)S_n \right)}{\lambda(S_n)}\\
&\phantom{======}=\lambda_{F_{n-1}}\left(A_{(n-1)}\cap kB_{(n-1)} \right)+\overline{o}_n(1)\\
&\phantom{======}=\frac{\mu\left(\left[A_{(n-1)}\cap k B_{(n-1)} \right]_{n-1} \right)}{\mu(X_{n-1})}+\overline{o}_n(1)\\
&\phantom{======}=\mu\left(\left[ A_{(m)}\right]_m \cap T_k \left[B_{(m)} \right]_m \right)+\overline{o}_n(1).
\end{split}
\end{align}
In a similar way we obtain
\begin{align}
\begin{split}\label{7.03b}
&\frac{\lambda\left(A_{(n)}\cap k^\ast B_{(n)} \cap (2b,I)f_ns_n(h_n)S_n \right)}{\lambda(S_n)}\\
&\phantom{============}=\mu\left(\left[ A_{(m)}\right]_m \cap T_{k^\ast} \left[B_{(m)} \right]_m \right)+\overline{o}_n(1).
\end{split}
\end{align}
If follows from~\eqref{6-15},~\eqref{7.03e},~\eqref{7.03d},~\eqref{7.03c},~\eqref{7.03a} and~\eqref{7.03b} that
$
\nu=\frac{1}{2}(\mu_{T_k}+\mu_{T_{k^\ast}}).
$

We consider now case (ii). It follows from Lemma~\ref{lm:6.2} that $\left\|\xi_n-\lambda_{\widehat{S}_n}\otimes \lambda_{\widehat{S}_n}\right\|_1<\vep_n$ for all such $n$. Therefore
\begin{align}
\begin{split}\label{73c}
&\frac{1}{\# I_n}\sum_{b\in I_n}\xi_n\left(A_{(n)}^{-1}(b,I)f_ns_n(h_n)\times B_{(n)}^{-1}(b,I)f'_ns_n(h'_n)\right)\\
&\phantom{=}=\frac{1}{\# I_n}\sum_{b\in I_n}\lambda_{\widehat{S}_n}\left(A_{(n)}^{-1}(b,I)f_ns_n(h_n)\right)\lambda_{\widehat{S}_n}\left(B_{(n)}^{-1}(b,I)f'_ns_n(h'_n)\right)\pm \vep_n\\
&\phantom{=}=\frac{1}{\# I_n}\sum_{b\in I_n}\lambda_{S_n}\left(A_{(n)}^{-1}(b,I)f_ns_n(h_n)\right)\lambda_{S_n}\left(B_{(n)}^{-1}(b,I)f'_ns_n(h'_n)\right)\pm 2\vep_{n},
\end{split}
\end{align}
where the last equality follows from our choice of $\widehat{S}_n$. Now we derive from Lemma~\ref{6.2}~\eqref{6.2.ii} that
\begin{multline}\label{73a}
\lambda_{S_n}\left(A_{(n)}^{-1}(b,I)f_ns_n(h_n)\right)=\frac{\lambda\left(A_{(n)}\cap (b,I)f_ns_n(h_n)S_n \right)}{\lambda(S_n)}\\
=\lambda_{F_{n-1}}\left(B_{(n-1)}\right)+\overline{o}_n(1)=\frac{\mu\left(\left[ A_{(n-1)}\right]_{n-1} \right)}{\mu(X_n)}=\mu\left([A_{(m)}]_m \right)+\overline{o}_n(1)
\end{multline}
and
\begin{equation}\label{73b}
\lambda_{S_n}\left(B_{(n)}^{-1}(b,I)f'_ns_n(h'_n) \right)=\mu\left([B_{(m)}]_m \right)+\overline{o}_n(1).
\end{equation}
It follows from~\eqref{7.03e},~\eqref{73c},~\eqref{73a} and~\eqref{73b} that
$
\nu=\mu\otimes \mu,
$
which ends the proof.
\end{proof}

\begin{pr}
$T_{(1,I)}$ is 3-fold PID.
\end{pr}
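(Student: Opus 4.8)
The plan is to mimic the generic-point computation of Proposition~\ref{2:1}, now carried out for three points, and to combine it with the extremality of the product measure. First I would reduce to ergodic joinings. Let $\lambda\in\cJ_3(T_{(1,I)})$ be pairwise independent and let $\lambda=\int\lambda_\omega\,dP(\omega)$ be its ergodic decomposition, so that each $\lambda_\omega\in\cJ_3^e(T_{(1,I)})$ and each two-dimensional marginal $\lambda_\omega|_{X_i\times X_j}$ is an \emph{ergodic} self-joining of $T_{(1,I)}$. Since $T_{(1,I)}$ is weakly mixing (Proposition~\ref{TisWM}), $\mu\otimes\mu$ is ergodic for $T_{(1,I)}\times T_{(1,I)}$ and hence is an extreme point of the simplex $\cJ_2(T_{(1,I)})$. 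As $\lambda|_{X_i\times X_j}=\mu\otimes\mu=\int\lambda_\omega|_{X_i\times X_j}\,dP(\omega)$, and the only probability measure on $\cJ_2(T_{(1,I)})$ whose barycenter is the extreme point $\mu\otimes\mu$ is the point mass at $\mu\otimes\mu$, we conclude that $\lambda_\omega|_{X_i\times X_j}=\mu\otimes\mu$ for $P$-a.e.\ $\omega$ and each pair $i<j$. Thus it suffices to prove $\lambda=\mu\otimes\mu\otimes\mu$ under the additional assumption that $\lambda$ is ergodic and pairwise independent.

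For such a $\lambda$ I would fix a point $(x,x',x'')$ generic for $(T_{(1,I)}^{\times 3},\lambda)$ with respect to the F\o lner sequence $(\Phi_n)$ from the proof of Proposition~\ref{2:1} (Shulman's condition still applies, so the pointwise ergodic theorem holds along $(\Phi_n)$). Writing the $(C,F)$-expansions of $x,x',x''$ at level $n$ with heights $h_n,h'_n,h''_n\in H_n$, the value $\lambda([A]_m\times[B]_m\times[C]_m)$ is the limit in $n$ of the average over $g\in\Phi_n$, and, exactly as in~\eqref{7.03e}, this average is governed by the distribution $\mathrm{dist}_{t\in J_n}\bigl(s_n(t+h_n),s_n(t+h'_n),s_n(t+h''_n)\bigr)$, which is precisely what Lemma~\ref{lm:6.2} controls when $h_n,h'_n,h''_n$ are pairwise distinct. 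I would first observe that no pair $(i,j)$ can satisfy $h^{(i)}_n=h^{(j)}_n$ for all large $n$: by case (i) of Proposition~\ref{2:1} this would force $\lambda|_{X_i\times X_j}=\tfrac12(\mu_{T_k}+\mu_{T_{k^\ast}})$, a measure carried by two graphs and hence singular with respect to $\mu\otimes\mu$, contradicting pairwise independence. In particular the three heights cannot all coincide from some level on, so there is an infinite set of levels at which they are not all equal; passing to a subsequence we may assume the coincidence pattern is constant along it, i.e.\ either all three heights are pairwise distinct or exactly one pair coincides.

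Along a subsequence on which $h_n,h'_n,h''_n$ are pairwise distinct, Lemma~\ref{lm:6.2} gives that the triple distribution is $\vep_n$-close to $\lambda_{\widehat{S}_n}\otimes\lambda_{\widehat{S}_n}\otimes\lambda_{\widehat{S}_n}$, and the case (ii) computation of Proposition~\ref{2:1} (formulas~\eqref{73c}--\eqref{73b}), applied to all three coordinates, yields the contribution $\mu([A]_m)\mu([B]_m)\mu([C]_m)$. Along a subsequence on which exactly one pair coincides, say $h_n=h'_n\neq h''_n$, the two-fold form of Lemma~\ref{lm:6.2} applied to the distinct pair $(h_n,h''_n)$ shows that the third coordinate decouples and contributes the factor $\mu([C]_m)$ as in~\eqref{73b}, while the first two contribute some coupling $\Psi(A,B)$, so that the contribution factorizes as $\Psi(A,B)\,\mu([C]_m)$. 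Taking $[C]_m=X$ recovers the two-point average, whose limit is $\lambda|_{X_1\times X_2}([A]_m\times[B]_m)=\mu([A]_m)\mu([B]_m)$ by pairwise independence; hence $\Psi(A,B)=\mu([A]_m)\mu([B]_m)$ and the contribution is again $\mu([A]_m)\mu([B]_m)\mu([C]_m)$. Since the full limit exists by genericity, it coincides with the limit along the chosen subsequence, so $\lambda([A]_m\times[B]_m\times[C]_m)=\mu([A]_m)\mu([B]_m)\mu([C]_m)$ for all cylinders, i.e.\ $\lambda=\mu\otimes\mu\otimes\mu$.

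The conceptual steps are forced by the $2{:}1$ structure, and the main effort, as well as the main obstacle, is the bookkeeping needed to transcribe the averaging identities~\eqref{7.03e}--\eqref{73b} of Proposition~\ref{2:1} to three coordinates and, in the ``one coinciding pair'' case, to verify rigorously that the lonely coordinate decouples, so that the contribution genuinely factorizes before pairwise independence is invoked. I expect the equidistribution estimates and the error terms $\overline{o}_n(1)$ to carry over essentially verbatim from the proof of Proposition~\ref{2:1}, the only genuinely new input being the three-fold version of Lemma~\ref{lm:6.2}, which is exactly why that lemma was stated for triples.
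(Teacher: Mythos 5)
Your argument is correct, and its computational engine -- genericity along the F\o lner sequence $(\Phi_n)$, the averaging identities of the proof of Proposition~\ref{2:1}, and the three-fold equidistribution of Lemma~\ref{lm:6.2} -- is exactly the paper's. Where you genuinely diverge is in how the coinciding-heights configurations are disposed of. The paper never performs your subsequence case analysis: it computes that the set of pairs $(y,y')$ whose coordinates satisfy $y_i\neq y_i'$ for all $i>k$ has relative $\mu\otimes\mu$-measure $\prod_{i>k}(1-1/\#H_i)$, which is positive and tends to $1$ as $k\to\infty$; since pairwise independence makes every two-dimensional marginal of the joining equal to $\mu\otimes\mu$, the set of triples whose heights $h_i,h_i',h_i''$ are \emph{pairwise distinct for all large $i$} has positive measure, so the generic point can simply be \emph{chosen} there and only the ``all distinct'' computation is ever needed. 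You instead take an arbitrary generic point, rule out a pair of heights agreeing eventually (via case (i) of Proposition~\ref{2:1} and the singularity of $\frac{1}{2}(\mu_{T_k}+\mu_{T_{k^\ast}})$ with respect to $\mu\otimes\mu$), and evaluate the limit along a subsequence with constant coincidence pattern -- legitimate, since by genericity the subsequential limit equals the full limit. The price is that you must carry out the ``one coinciding pair'' computation: the triple distribution degenerates to a pair distribution sitting on a diagonal, the lone coordinate's factor $\mu([C]_m)+\overline{o}_n(1)$ must be extracted uniformly in $b\in I_n$ before summing, and pairwise independence is then invoked to identify the remaining two-point limit. This all goes through, but it is bookkeeping the paper's choice of generic point avoids entirely. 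Your opening reduction from arbitrary pairwise independent $3$-joinings to ergodic ones, via extremality of $\mu\otimes\mu$ in the simplex $\cJ_2(T_{(1,I)})$, is a standard step that the paper leaves implicit by starting directly from $\nu\in\cJ_3^e(T_{(1,I)})$.
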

\begin{proof}[Sketch of the proof]
Let $\nu\in\cJ^e_3(T_{(1,I)})$ be pairwise independent. We will show that $\nu=\mu^{\otimes 3}$. To this end, we will follow the same path as in the proof of Proposition~\ref{2:1}. Fix a generic point $(x,x',x'')$ for $(T_{(1,I)}^{\times 3},\nu)$. Then $x,x',x''\in X_n$ for all sufficiently large $n$. Without loss of generality we may assume that
\begin{align*}
x&=(f_n,c_{n+1}(h_n),c_{n+1}(h_{n+1}),\dots),\\
x'&=(f_n,c_{n+1}(h'_n),c_{n+1}(h'_{n+1}),\dots),\\
x''&=(f_n,c_{n+1}(h''_n),c_{n+1}(h''_{n+1}),\dots),
\end{align*}
with $f_n,f'_n,f''_n\in F_n$ and $h_i,h'_i,h''_i\in H_i^-$ for $i>n$. Now take 
$$[A_{(m)}]_m,[A'_{(m)}]_m,[A''_{(m)}]_m\in \cA$$ 
and set $A_{(n+1)}=A_{(n)}C_{n+1},\ A'_{(n+1)}=A'_{(n)}C_{n+1},\ A''_{(n+1)}=A''_{(n)}C_{n+1}$ for $n\geq m$. Then as in the proof of previous proposition we have
\begin{multline}\label{6-17}
\frac{1}{\# \Phi_n}\# \left\{g\in \Psi_n\colon (T_{g,I}x,T_{g,I}x',T_{g,I}x'')\in [A_{(m)}]_m\times [A'_{(m)}]_m\times [A''_{(m)}]_m \right\}\\
=\frac{1}{\# I_n} \sum_{b\in I_n} \xi_n ( A_{(n)}^{-1} d \times {A'_{(n)}}^{-1}d\times {A''_{(n)}}^{-1}d),
\end{multline}
where $d=(b,I)f_ns_n(h_n)$, $d'=(b,I)f'_ns_n(h'_n)$, $d=(b,I)f''_ns_n(h''_n)$ and
$$
\xi_n=dist_{t\in J_n}\left(s_n(t+h_n),s_n(t+h'_n),s_n(t+h''_n)\right).
$$
Next, given $k>0$,
\begin{multline*}
\frac{\mu\otimes \mu\left(\left\{(y,y')\in X_k\times X_k\colon y_i\neq y_i' \text{ for all }i>k \right\} \right)}{\mu(X_k)\mu(X_k)}\\
=\prod_{i>k}\frac{\# C_i^2-\#C_i}{\# C_i}\prod_{i>k}\left(1-\frac{1}{\# H_i} \right)>0,
\end{multline*}
where $(y_i)_{i\geq k}$, $(y'_i)_{i\geq k}$ are the ``coordinates'' of $y,y'\in X_k=F_k\times C_{k+1}\times C_{k+2}\times \dots$ respectively. Since $\nu$-almost every point is generic for $(T_{(1,I)}^{\times 3},\nu)$, we can select $(x,x',x'')$ in such a way that $h_i,h'_i,h''_i$ are pairwise distinct for all $i\geq n$ whenever $n$ is large enough. Therefore, it follows from Lemma~\ref{lm:6.2} that $\|\xi_i-\lambda_{\widehat{S}_i}\otimes \lambda_{\widehat{S}_i} \otimes \lambda_{\widehat{S}_i}\|<\vep_i$ for all $i\geq n$. Now we derive from~\eqref{6-17} and Lemma~\ref{6.2} (having in mind how we have chosen the sets $\widehat{S}_i$) that
$$
\nu([A_{(m)}]_m\times [A'_{(m)}]_m\times [A''_{(m)}]_m)=\mu([A_{(m)}]_m)\mu([A'_{(m)}]_m)\mu([A''_{(m)}]_m)+\overline{o}_n(1).
$$
This implies $\nu=\mu^{\otimes 3}$.
\end{proof}

\paragraph{Remarks}
\begin{enumerate}
\item
In view of Proposition~\ref{2:1} and the definition of $k^\ast$ (see page~\pageref{fffgg}), one can expect that $T_{(2,I)}$ is 2-fold simple. This is indeed the case (the proof uses similar arguments as the proof of Proposition~\ref{2:1}). Moreover, since $T_{(1,I)}$ is 3-fold PID, also $T_{(2,I)}$ is 3-fold PID by~\cite{MR2346554}. It follows by~\cite{MR1194963} that $T_{(2,I)}$ is simple (and that $T_{(1,I)}$ is also PID).
\item
It follows from Proposition~\ref{2:1} and~\ref{QS-cpt} that the constructed example has as its sub-action the flow $(T_{(t,I)})_{t\in\R}$ which is only QS (it is not simple) with a simple time-2 map $T_{(2,I)}$. However, we cannot say at this stage yet, that the flow under consideration is disjoint from simple flows (cf. Corollary~\ref{273aa}) -- it is clear only that it has no factors which would be simple  (see the next remark).
\item
Recall that since $T_{(2,I)}$ is simple, any of its factors is determined by a compact subgroup of $C(T_{(2,I)})=G$. The largest compact subgroup of $G$ is $\{0\}\times SU(2)\simeq SU(2)$ and it is normal in $G$. It determines the smallest non-trivial factor of $T_1$, we will denote it by $\cA(SU(2))=\cA$.  Notice that $\cA$ is also a factor of $\cT$. Take $g\in G\setminus \{0\}\times SU(2)$. Then $\frac{1}{2}(\mu_{T_g}+\mu_{T_{g^\ast}})|_{\cA\otimes \cA}$ is a 2-fold ergodic joining of $\cT|_{\cA}$ and it is not difficult to see that it is ``2:1'' over its marginals (this argument is taken from~\cite{MR2346554}). Since $\cT|_{\cA}$ has no non-trivial factors, in view of Corollary~\ref{273aa} we obtain an example of a flow disjoint from simple flows, which is QS and its time-2 map is simple.
\item An easy linear time change yields now a flow $\cT$ which is QS, it is disjoint from simple flows, and such that $T_1$ is simple. In particular, $T_1$ cannot be embedded into a simple flow.
\end{enumerate}

\appendix

\section{Uniform distribution}\label{se:ud}
In this section we include some results on uniform distribution.
\subsection{Necessary tools}
All the information in this section is taken from~\cite{MR0419394}, unless stated otherwise. We will often use it freely, without precise reference.

\begin{df}
Let $X$ be a compact Hausdorff space with a (regular) probability Borel measure $\mu$.  A a sequence $(x_n)_{n\in\N}\subset X$ is said to be \emph{uniformly distributed} in $(X,\mu)$ if for every continuous function $f\colon X \to \R$ we have
$
\lim_{N\to \infty}\frac{1}{N}\sum_{n\leq N}f(x_n)=\int_X f\ d\mu.
$
\end{df}
\begin{df}
We say that $(x_n)_{n\in\N}\subset \R^s$ is \emph{uniformly distributed mod $1$} if $(\{x_n\})_{n\in\N}$ is uniformly distributed in $[0,1]^s$ with respect to the Lebesgue measure.
\end{df}

\begin{df}
Let $(x_n)_{n\in\N}\subset\R^s$. \emph{Discrepancy} is classically defined in the following two ways: for $N\in \N$ we let
\begin{align*}
&D_N^\ast=D_N^\ast(x_1,\dots, x_N):=\sup_{J^\ast}\left|\frac{1}{N}\sum_{n\leq N}\raz_{J^\ast}(\{x_n\})-\lambda_{\R^s}(J^\ast) \right|,\\
&D_N=D_N(x_1,\dots, x_N):=\sup_{J}\left|\frac{1}{N}\sum_{n\leq N}\raz_{J}(\{x_n\})-\lambda_{\R^s}(J) \right|.
\end{align*}
where $J^\ast$ runs over all subsets of $[0,1)^s$ of the form $[0,\beta_1)\times\dots\times [0,\beta_s)$ and $J$ runs over subsets of $[0,1)^s$ of the form $[\alpha_1,\beta_1)\times\dots\times [\alpha_s,\beta_s)$. 
\end{df}
\begin{pr}\label{dyskrepancja}
Let $(x_n)_{n\in\N}\subset \R^s$.
The following are equivalent:
\begin{itemize}
\item
the sequence $(x_n)_{n\in\N}$ is uniformly distributed mod $1$,
\item
$D_N\to 0$ as $N\to \infty$,
\item
$D_N^\ast\to 0$ as $N\to \infty$.
\end{itemize}
\end{pr}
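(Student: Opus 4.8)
The plan is to establish the two equivalences $D_N\to 0\iff D_N^\ast\to 0$ and ``$(x_n)_{n\in\N}$ uniformly distributed mod $1$'' $\iff D_N\to 0$; together these yield all three equivalences in the statement. Throughout I work on $[0,1)^s$ with Lebesgue measure and freely replace $x_n$ by its fractional part $\{x_n\}$, so that only the classes of boxes $J^\ast$ and $J$ entering the definitions of $D_N^\ast$ and $D_N$ matter.

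The equivalence $D_N\to 0\iff D_N^\ast\to 0$ is purely combinatorial. One inequality is immediate: every anchored box $J^\ast=[0,\beta_1)\times\dots\times[0,\beta_s)$ is among the boxes $J$ over which the supremum defining $D_N$ is taken, so $D_N^\ast\le D_N$. For the reverse I would use inclusion--exclusion: for a general box $J=\prod_{i=1}^s[\alpha_i,\beta_i)$ one has $\raz_J=\prod_{i=1}^s(\raz_{[0,\beta_i)}-\raz_{[0,\alpha_i)})$, which expands as a signed sum of $2^s$ indicators of anchored boxes, and $\lambda_{\R^s}(J)$ expands in the same way. Subtracting and bounding each of the $2^s$ resulting terms by $D_N^\ast$ gives $D_N\le 2^s D_N^\ast$, so $D_N^\ast\to 0$ forces $D_N\to 0$.

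Next I would prove $D_N\to 0\Rightarrow$ uniform distribution by approximating continuous functions by step functions. Fix a continuous $f\colon[0,1]^s\to\R$. Since $[0,1]^s$ is compact, $f$ is uniformly continuous, so for $\vep>0$ I can choose a finite grid of half-open boxes on which $f$ oscillates by less than $\vep$ and build a step function $g=\sum_{\mathbf j}c_{\mathbf j}\raz_{Q_{\mathbf j}}$ with $\|f-g\|_\infty<\vep$. The averaging error for $f$ is then controlled by $2\vep$ plus the averaging error for $g$; the latter is a finite linear combination of box-indicator errors, each bounded by $D_N\to 0$. Letting $N\to\infty$ and then $\vep\to 0$ gives $\frac1N\sum_{n\le N}f(\{x_n\})\to\int f\,d\lambda_{\R^s}$.

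The remaining implication, uniform distribution $\Rightarrow D_N\to 0$, is the main obstacle, because it demands convergence that is \emph{uniform} over the infinite family of all boxes $J$. My plan is a sandwiching argument on a finite grid. First, for a single box $J$ (whose topological boundary is Lebesgue-null) uniform distribution implies $\frac1N\sum\raz_J(\{x_n\})\to\lambda_{\R^s}(J)$, by squeezing $\raz_J$ between continuous functions from above and below with integrals differing by an arbitrarily small amount. Now, given $\vep>0$, partition $[0,1)^s$ into $m^s$ congruent cubes with $1/m<\vep$; since there are only finitely many cubes, I can find $N_0$ so that for $N\ge N_0$ the counting error on each cube is below $\vep/m^s$. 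Any box $J$ is trapped between the union $J^-$ of grid cubes it contains and the union $J^+$ of grid cubes it meets, with $\lambda_{\R^s}(J^+\setminus J^-)\le 2s/m<2s\vep$, since only a boundary layer of cubes lies between them. Combining the per-cube estimates (whose total is at most $m^s\cdot\vep/m^s=\vep$) with this boundary bound yields $\bigl|\frac1N\sum\raz_J(\{x_n\})-\lambda_{\R^s}(J)\bigr|<(2s+1)\vep$ for all $N\ge N_0$ simultaneously in $J$, whence $D_N\le(2s+1)\vep$ and $D_N\to 0$. The delicate point is balancing the two error sources: refining the grid shrinks the boundary term $2s/m$ but increases the number of cubes, so one must fix $m$ first and only afterwards choose $N_0$ large enough to absorb the resulting $m^s$ factor.
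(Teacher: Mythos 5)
Your proof is correct and complete: the inclusion--exclusion bound $D_N^\ast\le D_N\le 2^sD_N^\ast$, the step-function approximation for $D_N\to 0\Rightarrow$ uniform distribution, and the grid-sandwich argument (with the boundary-layer estimate $2s/m$ and the order of quantifiers ``fix $m$ first, then $N_0$'') are all sound. The paper gives no proof of this proposition at all --- it is quoted as a classical fact from Kuipers--Niederreiter --- and your argument is exactly the standard textbook proof of that reference, so you have simply supplied the details the paper omits.
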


\begin{df}
Let $\mathcal{S}=\{(x_{n,\sigma})\subset X\colon \sigma\in J\}$ be a family of sequences, indexed by $J$. $\mathcal{S}$ is said to be \emph{equi-$\mu$-uniformly distributed} in $X$ if for every $f\in C(X)$, we have
$
\lim_{N\to\infty}\sup_{\sigma\in J}\left|\sum_{n\leq N}f(x_{n,\sigma})-\int f\ d\mu\right|=0.
$
\end{df}
\begin{pr}[\cite{MR0419394}]\label{pr6.2}
Let $X$ be a compact Hausdorff uniform space. Suppose that $\{P_\sigma\colon \sigma \in J\}$ is a family of measure-preserving transformations on $X$ (with respect to given $\mu\in M_1(X)$) that is equicontinuous at every point $x\in X$, and suppose that $(x_n)_{n\in\N}\subset X$ is $\mu$-uniformly distributed. Then the family $\{(P_\sigma x_n)\subset X\colon \sigma \in J\}$ is equi-$\mu$-uniformly distributed.
\end{pr}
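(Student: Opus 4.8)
The plan is to fix $f\in C(X)$ and $\vep>0$ and to prove that there is an $N_0$, \emph{independent of $\sigma$}, such that $\left|\frac1N\sum_{n\le N}f(P_\sigma x_n)-\int f\,d\mu\right|<3\vep$ for all $N\ge N_0$ and all $\sigma\in J$. The starting observation is that each $P_\sigma$ is measure preserving, so $\int f\circ P_\sigma\,d\mu=\int f\,d\mu$. Hence the target value is the same for every $\sigma$, and setting $g_\sigma:=f\circ P_\sigma$ it equals $\int g_\sigma\,d\mu$. Thus it suffices to control the ergodic averages of the functions $g_\sigma$ against their common integral, uniformly in $\sigma$.

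The key step is to show that the family $\{g_\sigma:\sigma\in J\}\subset C(X)$ is relatively compact in the uniform norm $\|h\|:=\sup_{x\in X}|h(x)|$. Since $X$ is compact, $f$ is uniformly continuous; and since $\{P_\sigma\}$ is equicontinuous at every point of the compact space $X$, it is in fact uniformly equicontinuous. Composing, the family $\{g_\sigma\}$ is uniformly equicontinuous and uniformly bounded by $\|f\|$. By the Arzel\`a--Ascoli theorem in the compact (uniform) space setting, $\{g_\sigma\}$ is totally bounded in the Banach space $(C(X),\|\cdot\|)$. Consequently, for the given $\vep$ I can pick finitely many indices $\sigma_1,\dots,\sigma_k\in J$ so that $g_{\sigma_1},\dots,g_{\sigma_k}$ form an $\vep$-net: for every $\sigma$ there is some $i$ with $\|g_\sigma-g_{\sigma_i}\|<\vep$.

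Finally I would conclude by a three-epsilon argument. Applying the hypothesis that $(x_n)$ is $\mu$-uniformly distributed to each of the finitely many functions $g_{\sigma_1},\dots,g_{\sigma_k}$ yields a single $N_0$ with $\left|\frac1N\sum_{n\le N}g_{\sigma_i}(x_n)-\int g_{\sigma_i}\,d\mu\right|<\vep$ for all $N\ge N_0$ and all $1\le i\le k$. For an arbitrary $\sigma$, choose $i$ with $\|g_\sigma-g_{\sigma_i}\|<\vep$. Then, using $\int g_\sigma\,d\mu=\int f\,d\mu$ and $\left|\int(g_\sigma-g_{\sigma_i})\,d\mu\right|\le\|g_\sigma-g_{\sigma_i}\|$, the quantity $\left|\frac1N\sum_{n\le N}g_\sigma(x_n)-\int f\,d\mu\right|$ splits into the sum-approximation error $\frac1N\sum_{n\le N}|g_\sigma(x_n)-g_{\sigma_i}(x_n)|\le\|g_\sigma-g_{\sigma_i}\|<\vep$, the uniform-distribution error for $g_{\sigma_i}$ ($<\vep$), and the integral-approximation error ($<\vep$). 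This gives the bound $<3\vep$ for all $N\ge N_0$, uniformly in $\sigma$, which is precisely equi-$\mu$-uniform distribution of $\{(P_\sigma x_n):\sigma\in J\}$.

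The main obstacle is the reduction to a finite family of test functions, i.e.\ establishing relative compactness of $\{f\circ P_\sigma\}$ in $C(X)$; this is exactly where both the equicontinuity hypothesis and the compactness of $X$ are indispensable, and where one must pass from pointwise to uniform equicontinuity and invoke Arzel\`a--Ascoli in the uniform-space setting. Once this finite reduction is secured, the remainder is a routine approximation argument, with measure preservation doing the essential work of keeping the limiting integral independent of $\sigma$.
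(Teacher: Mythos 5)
The paper does not prove this proposition---it is quoted from Kuipers--Niederreiter \cite{MR0419394}---so there is no internal argument to compare against; your proof is correct and is essentially the standard one. The two points that carry the weight are exactly the ones you identify and handle properly: (a) measure preservation forces $\int f\circ P_\sigma\,d\mu=\int f\,d\mu$ for every $\sigma$, so all the averages target the same limit (in fact your third error term is identically zero, not merely $<\vep$); and (b) pointwise equicontinuity of $\{P_\sigma\}$ on the compact space upgrades to uniform equicontinuity, which combined with the uniform continuity of $f$ makes $\{f\circ P_\sigma\}$ an equicontinuous, uniformly bounded family, hence totally bounded in $(C(X),\|\cdot\|_\infty)$ by Arzel\`a--Ascoli (valid for compact Hausdorff $X$, no metrizability needed), and the finite $\vep$-net can be taken from within the family itself. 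The concluding three-epsilon estimate, with a single $N_0$ obtained by applying uniform distribution to the finitely many net functions, is then routine and yields the required uniformity in $\sigma$.
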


\begin{tw}[\cite{MR0277674}, see also~\cite{MR0419394}]\label{hlawka}
Let $(x_n)_{n\in\N}\subset \R^s$ be uniformly distributed mod $1$ and let $f\colon [0,1]^s \to \R$ be continuous. Let
$$
D^\ast_N(f,(x_n)_{1\leq n\leq N}):=\left|\frac{1}{N}\sum_{i=1}^{N}f(x_i)-\int_{[0,1]^s} f\ d\lambda_{\R^s} \right|.
$$
Then
$$
D^\ast_N(f,(x_n)_{1\leq n\leq N})\leq (1+2^{2s-1})M\left(\left[\left(D_N^*(f,(x_n)_{1\leq n\leq N})\right)^{-1/k} \right]^{-1} \right),
$$
where $M$ is the modulus of continuity of $f$.
\end{tw}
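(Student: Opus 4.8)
The plan is to prove this Koksma--Hlawka type estimate by sandwiching $f$ between step functions adapted to a uniform grid whose mesh is calibrated to the star discrepancy of the point set. Throughout I read the quantity inside the modulus on the right-hand side as the star discrepancy $D_N^\ast=D_N^\ast(x_1,\dots,x_N)$ of the \emph{sequence} (as in the Definition preceding Proposition~\ref{dyskrepancja}), rather than as the integration error on the left; with this reading the exponent is $k=s$, and the bound says that the integration error is controlled by a single evaluation of the modulus of continuity $M$ at the scale $(D_N^\ast)^{1/s}$.

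First I would fix an integer $m\geq 1$, to be optimized at the end, and partition $[0,1)^s$ into the $m^s$ half-open cubes $Q_{\bm j}=\prod_{i=1}^s\bigl[\tfrac{j_i}{m},\tfrac{j_i+1}{m}\bigr)$, indexed by $\bm j\in\{0,\dots,m-1\}^s$. Replacing $f$ on each cube by its value at the lower corner produces a step function $g$ with $\|f-g\|_\infty\leq M(\sqrt s/m)$ directly from the definition of $M$. The triangle inequality then splits the integration error for $f$ as
\[
D_N^\ast(f,(x_n))\leq \Bigl|\tfrac1N\sum_{i\leq N}g(x_i)-\int_{[0,1]^s}g\,d\lambda_{\R^s}\Bigr|+\|f-g\|_\infty,
\]
so the whole problem reduces to controlling the integration error of the step function $g$ by $D_N^\ast$.

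For that second term I would invoke the Koksma--Hlawka inequality for functions of bounded variation (or, equivalently and more self-containedly, perform $s$-fold summation by parts against the discrepancy function). Writing $B(\bm\beta)=\tfrac1N\#\{i\leq N:x_i\in[0,\bm\beta)\}-\lambda_{\R^s}([0,\bm\beta))$, the definition of $D_N^\ast$ gives $|B(\bm\beta)|\leq D_N^\ast$ for \emph{every} anchored box $[0,\bm\beta)$. Expressing each cube indicator through the $2^s$ anchored boxes at its corners by inclusion--exclusion, one rewrites the step-function error as a pairing of the $s$-fold mixed differences of $g$ across the grid against the values of $B$ at the corner points. Bounding each mixed difference of $g$ by the modulus of continuity contributes a factor $2^{s-1}$, and the inclusion--exclusion over the $2^s$ corners contributes a further $2^s$, so that the Vitali/Hardy--Krause variation of $g$ is at most $2^{2s-1}m^s\,M(\sqrt s/m)$ and hence
\[
\Bigl|\tfrac1N\sum_{i\leq N}g(x_i)-\int_{[0,1]^s}g\,d\lambda_{\R^s}\Bigr|\leq 2^{2s-1}\,m^s\,M(\sqrt s/m)\,D_N^\ast.
\]

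Finally I would optimize the mesh by taking $m=\bigl\lceil (D_N^\ast)^{-1/s}\bigr\rceil$, so that $\sqrt s/m\leq \sqrt s\,(D_N^\ast)^{1/s}$ and the product $m^sD_N^\ast$ is a bounded constant; using the monotonicity of $M$, both the approximation term $\|f-g\|_\infty$ and the step-function term become at most a constant multiple of $M\bigl((D_N^\ast)^{1/s}\bigr)$, and collecting the contributions $1$ (approximation) and $2^{2s-1}$ (step function) yields the asserted factor $1+2^{2s-1}$. The main obstacle is precisely the variation estimate in the third step: controlling the $s$-fold mixed differences of a \emph{merely continuous} $g$ by the modulus of continuity, and tracking the exact combinatorial constant $2^{2s-1}=2^s\cdot2^{s-1}$ through the $2^s$-corner inclusion--exclusion while ensuring the cell-count factor $m^s$ is exactly cancelled by $D_N^\ast$ in the optimization. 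The one-dimensional case — ordinary Abel summation against the discrepancy function, with the boundary term vanishing because $B(1)=0$ — is an easy warm-up; the genuine difficulty is the sign pattern and the interacting coordinatewise differences in dimension $s$.
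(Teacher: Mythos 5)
The paper does not prove this statement: Theorem~\ref{hlawka} is imported verbatim (with some typographical damage) from the cited references, and Appendix~\ref{se:ud} explicitly says all material there is taken from \cite{MR0419394}. So there is no in-paper proof to compare against; what you have reconstructed is essentially the standard Niederreiter argument from the cited source, and your reading of the garbled right-hand side is the correct one (the quantity inside $M$ is the star discrepancy of the point set, and $k=s$). Two points of caution on the constants, since the statement pins them down exactly. First, your triangle-inequality split as written yields $2\|f-g\|_\infty$, not $\|f-g\|_\infty$, because replacing $f$ by $g$ perturbs both the empirical average and the integral; to recover the leading $1$ in $1+2^{2s-1}$ you should take $g$ to be the average of $f$ over each cube (so that $\int g=\int f$) rather than the value at the lower corner. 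Second, the argument of $M$ in the statement is $\left[\left(D_N^\ast\right)^{-1/s}\right]^{-1}=1/m$ with $m=\lfloor (D_N^\ast)^{-1/s}\rfloor$, which matches the cube diameter only in the maximum metric; your $\sqrt{s}/m$ corresponds to the Euclidean modulus of continuity and would not reproduce the stated bound literally. Finally, you correctly identify the summation-by-parts step (bounding the step-function error by $2^{s-1}\cdot 2^{s}\cdot m^{s}M(1/m)D_N^\ast$ with $m^{s}D_N^\ast\leq 1$) as the crux, but you leave it as a declared obstacle rather than carrying it out, so as it stands this is a correct outline of the classical proof rather than a complete one.
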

\begin{wn}\label{hlawka1}
Let $(x_n)_{n\in\N}\subset \R^s$  be uniformly distributed mod 1 and let $\cF$ be a family of equi-continuous functions on $[0,1]^s$. Then
$$
\sup_{f\in\cF}D^\ast_N(f,(x_n)_{n\in\N})\to 0 \text{ as }N\to\infty.
$$
\end{wn}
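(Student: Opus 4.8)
The plan is to read off the uniform statement directly from the quantitative estimate in Theorem~\ref{hlawka}, whose whole point is that the right-hand side depends on $f$ only through a modulus of continuity and on the sequence only through its star-discrepancy $D_N^\ast=D_N^\ast(x_1,\dots,x_N)$.

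First I would produce a single modulus of continuity that works for the entire family. Since $\cF$ is equicontinuous on the compact metric space $[0,1]^s$, it is uniformly equicontinuous there, so the function
$$
M(t):=\sup_{f\in\cF}\ \sup_{\|x-y\|\leq t}|f(x)-f(y)|,\qquad t\geq 0,
$$
is finite for small $t$, nondecreasing, and satisfies $M(t)\to 0$ as $t\to 0^+$. By construction $M$ is a modulus of continuity for every $f\in\cF$ simultaneously.

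Next I would apply Theorem~\ref{hlawka} to each $f\in\cF$ using this common $M$. As the bound there involves $f$ exclusively through its modulus of continuity, taking the supremum over $f\in\cF$ gives
$$
\sup_{f\in\cF}D_N^\ast(f,(x_n)_{1\leq n\leq N})\leq (1+2^{2s-1})\,M(\delta_N),
$$
where $\delta_N:=\big[(D_N^\ast)^{-1/k}\big]^{-1}$ depends only on the star-discrepancy of the sequence and satisfies $\delta_N\to 0$ whenever $D_N^\ast\to 0$. Finally, since $(x_n)_{n\in\N}$ is uniformly distributed mod $1$, Proposition~\ref{dyskrepancja} yields $D_N^\ast\to 0$, hence $\delta_N\to 0$, and therefore $M(\delta_N)\to 0$; this forces the right-hand side, and with it $\sup_{f\in\cF}D_N^\ast(f,(x_n)_{1\leq n\leq N})$, to tend to $0$ as $N\to\infty$.

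The only nonroutine point---and hence the main obstacle---is to make rigorous that the estimate of Theorem~\ref{hlawka} is genuinely uniform over $\cF$: one has to verify that the function $f$ enters the inequality solely through a modulus of continuity, so that substituting the common uniform modulus $M$ supplied by equicontinuity keeps the inequality valid simultaneously for all $f\in\cF$. Granting this, passing to the supremum and letting $N\to\infty$ is immediate.
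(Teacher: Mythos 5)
Your argument is correct and is exactly the intended derivation: the paper states Corollary~\ref{hlawka1} as an immediate consequence of Theorem~\ref{hlawka}, relying on precisely the observation you make, namely that equicontinuity on the compact cube supplies a single modulus of continuity $M$ valid for all $f\in\cF$, so the bound of Theorem~\ref{hlawka} becomes uniform over the family and tends to $0$ once Proposition~\ref{dyskrepancja} gives $D_N^\ast\to 0$. You also correctly read the (slightly garbled) right-hand side of Theorem~\ref{hlawka} as depending on the sequence only through its star-discrepancy, which is how the estimate must be understood for the corollary to follow.
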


\begin{tw}[von Neumann, Oxtoby, Ulam, see~\cite{MR0005803}]\label{oxtoby}
Let $\mu_1,\mu_2$ be non-atomic probability Borel measures on $[0,1]^s$, with full support and such that $\mu_i(\partial([0,1]^s))=0$, $i=1,2$. Then there exists a homeomorphism $h\colon [0,1]^s \to [0,1]^s$ such that $\mu_1 \circ h^{-1}=\mu_2$. Moreover, $h|_{\partial([0,1]^s)}=id$.
\end{tw}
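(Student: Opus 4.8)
The plan is to reduce the statement to the special case $\mu_2=\lambda$, where $\lambda$ is Lebesgue measure on $Q:=[0,1]^s$, and then to realize an arbitrary admissible measure as a homeomorphic image of $\lambda$. Concretely, I would first prove that for every non-atomic Borel probability $\mu$ on $Q$ with full support and $\mu(\partial Q)=0$ there is a homeomorphism $h_\mu$ of $Q$ with $h_\mu|_{\partial Q}=\mathrm{id}$ and $\mu\circ h_\mu^{-1}=\lambda$; granting this, the map $h:=h_{\mu_2}^{-1}\circ h_{\mu_1}$ fixes $\partial Q$ and a direct computation gives $\mu_1\circ h^{-1}=\mu_2$. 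The one-dimensional case is the model and is exact: for $s=1$ the function $F_\mu(x):=\mu([0,x])$ is continuous by non-atomicity and strictly increasing by full support, with $F_\mu(0)=0$ and $F_\mu(1)=1$, so it is an increasing self-homeomorphism of $[0,1]$ fixing the endpoints, and $\mu\circ F_\mu^{-1}=\lambda$.

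For general $s$ I would build $h_\mu$ as a uniform limit of homeomorphisms produced by successive subdivision. The idea is to construct two nested, mutually refining sequences of partitions into boxes: $\mathcal P_n$ of the domain, adapted to $\mu$, and $\mathcal R_n$ of the range, adapted to $\lambda$, together with a bijective correspondence $\mathcal P_n\to\mathcal R_n$ under which a box and its partner carry equal $\mu$- and $\lambda$-mass. At each step a box is cut into two sub-boxes by a single hyperplane orthogonal to one coordinate, cycling through all $s$ directions; on the domain side the cutting level is located through the one-dimensional cumulative distribution function of the marginal of $\mu$ restricted to that box (exactly as in the case $s=1$), so as to split the mass in a chosen ratio, and on the range side the partner hyperplane is placed so that the two volumes equal these two masses. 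If both meshes are forced to $0$, the correspondence determines a point map $h_\mu(x)=\bigcap_n R_n(x)$, where $R_n(x)$ is the partner of the member of $\mathcal P_n$ containing $x$; since the correspondence preserves mass along a refining family generating the Borel $\sigma$-algebra, one obtains $\mu\circ h_\mu^{-1}=\lambda$, and, all cuts being strictly interior, $h_\mu|_{\partial Q}=\mathrm{id}$.

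I expect two places to need real work. Showing that $h_\mu$ is a genuine homeomorphism, not merely a measure-matching bijection, amounts to equicontinuity of the approximants and of their inverses, i.e.\ to forcing the diameters of the members of $\mathcal P_n$ and of $\mathcal R_n$ to shrink uniformly; on the domain side this is precisely where full support enters (nonempty open boxes have positive $\mu$-mass, so the refinement cannot stall), while on the range side it can be arranged by hand through additional volume-splits. The genuinely delicate point, and the one I would treat most carefully, is that a non-atomic $\mu$ may nonetheless charge hyperplanes $\{x_j=c\}$: then the distribution function used to place a cut has jumps, and an \emph{exact} split at a $\mu$-null hyperplane may be unavailable. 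I would get around this by observing that in each coordinate direction only countably many hyperplanes are charged, always cutting at a $\mu$-null level, and allowing the achieved mass ratio to differ from the target by a summable error $\varepsilon_n$ while letting the range volumes match the realized masses exactly. Summability keeps the two meshes comparable and shrinking, and in the limit $\mu\circ h_\mu^{-1}$ and $\lambda$ agree on the algebra generated by the (boundary-null) boxes, hence on all Borel sets.
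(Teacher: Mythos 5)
The paper does not prove this statement at all --- it is quoted as a classical theorem of von Neumann, Oxtoby and Ulam with a reference, and is only \emph{used} (in Appendix~A, to transport conditional Haar measures on pieces of $SU(2)$ to Lebesgue measure on a cube). So there is nothing in the paper to compare your argument with; I can only assess it on its own terms, and there it has a fatal gap.

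Your reduction to $\mu_2=\lambda$ and your one-dimensional case are fine, but the core of the induction --- matching nested box partitions $\mathcal P_n\leftrightarrow\mathcal R_n$ of equal $\mu$- and $\lambda$-mass and passing to the limit --- does not produce a continuous map, and no amount of mesh control repairs this. The obstruction is not equicontinuity of diameters, as you suggest, but a genuine two-sided mismatch across the cut hyperplanes. Already at the second generation in $s=2$: if the first cut is the vertical line $H=\{x=c_1\}$, the horizontal sub-cut inside the left box is placed at the $\mu$-median height $a_L$ of the left conditional measure and is matched to height $\tfrac12$ in the range, while the sub-cut in the right box sits at a different height $a_R$, also matched to $\tfrac12$. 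A point $(c_1,y)$ with $a_L<y<a_R$ then has its left-approach images forced into the upper-left range box and its right-approach images into the lower-right range box, so the one-sided limits of $h_\mu$ along $H$ differ by a fixed positive amount; equivalently, any actual homeomorphism realizing the box correspondence at stage $n$ would have to be discontinuous along $H$. (The same mechanism defeats your claim that interior cuts force $h_\mu|_{\partial Q}=\mathrm{id}$: on a boundary face the limit map is the analogous rearrangement of the induced face measures, not the identity.) What your scheme constructs is essentially a discretized Knothe--Rosenblatt rearrangement, which is a measure-theoretic isomorphism mod~$0$ but not a homeomorphism. The actual proof of the theorem needs a different idea: a key lemma producing, for a \emph{fixed} dyadic subdivision, a boundary-fixing homeomorphism of the cube that redistributes $\mu$-mass among the subcubes to the prescribed values (proved by moving mass along interior channels, not by coordinate cuts), followed by an iteration $h_{n+1}=g_{n+1}\circ h_n$ with each $g_{n+1}$ supported in the small cells of generation $n$, arranged so that both $\|h_{n+1}-h_n\|_\infty$ and $\|h_{n+1}^{-1}-h_n^{-1}\|_\infty$ are summable; only then is the uniform limit a homeomorphism. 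That inductive closing-up of both the maps and their inverses is the substance of the theorem and is entirely absent from your outline.
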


\subsection{Technical results}
Let $G=\R\ltimes_\varphi \text{SU}(2)$. By $\lambda_G$, $\lambda_{SU(2)}$ we will denote the Haar measures on $G$ and $SU(2)$ respectively, such that $\lambda_G((0,1)\times SU(2))=1$ and $\lambda_{SU(2)}(SU(2))=1$. Given a measurable set $S\subset G$, by $\lambda_S$ we denote the conditional probability measure on $S$. Finally, given a finite set $\widehat{S}\subset G$, by $\lambda_{\widehat{S}}$ we denote the normalized counting measure, i.e. $\lambda_{\widehat{S}}(A)=\frac{\# (\widehat{S}\cap A)}{\# \widehat{S}}$. Given $n\in\N$, let $\vep_n>0$. 
\begin{pr}\label{techniczny}
There exists a dense family $\cA$ of subsets of $G=\R\ltimes_\varphi \text{SU}(2)$, and for any $n\in\N$ there exists a finite subset $\widehat{S}_n\subset S_n$ such that the following hold:
\begin{enumerate}[(i)]
\item
For all $A,B\in\cA$ and all $a,b\in G$ we have
$$
\lambda_{{S}_n}(A^{-1}a\cap B^{-1}b)=\lambda_{\widehat{S}_n}(A^{-1}a\cap B^{-1}b)\pm\vep_n.
$$
\item
For all $A,B\in \cA$ and all $a,b\in G$ we have
$$
\int_{S_n\times S_n}f d\ \lambda_{{S}_n}\otimes \lambda_{{S}_n}=\int_{S_n\times S_n}f d\ \lambda_{\widehat{S}_n}\otimes \lambda_{\widehat{S}_n}\pm \vep_n
$$
for $f\colon G\times G\to \R$ given by $f(v,w)=\lambda_{F_n}(Aav\cap Bbw)$.
\end{enumerate}
\end{pr}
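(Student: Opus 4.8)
The plan is to transport the problem to Lebesgue measure on a cube and to realize $\widehat{S}_n$ as the image of a low-discrepancy finite sequence. Since $SU(2)$ is homeomorphic to $S^3$, I would first fix an a.e.-homeomorphic parametrization of $SU(2)$ by $[0,1]^3$; applying Theorem~\ref{oxtoby} to Lebesgue measure and to the pull-back of $\lambda_{SU(2)}$ under this parametrization, I can correct it by a homeomorphism so that the resulting $\Theta\colon[0,1]^3\to SU(2)$ pushes Lebesgue measure forward exactly onto $\lambda_{SU(2)}$. Composing with the affine rescaling of $[0,1]$ onto the interval $I^\R[(2n-1)\widetilde{a}_{n-1}]$ produces, for every $n$, a map $\Theta_n\colon[0,1]^4\to S_n$ transporting Lebesgue measure to the conditional measure $\lambda_{S_n}$. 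I would then take $\cA$ to be the family of images, under such parametrizations, of finite unions of boxes with dyadic vertices; this family is dense in the measure algebra of $(G,\lambda_G)$ (all boundaries being Lebesgue-null), and, crucially, its members have uniformly controlled boundaries, which is what makes the equicontinuity estimates below uniform over $\cA$.

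The structural fact driving everything is that the group operations entering (i) and (ii) are smooth and, on the compact factor $SU(2)$, preserve the bi-invariant Haar measure: this holds for left and right translations, for inversion, and for the automorphisms $\varphi_t$. Hence, read in the cube coordinates supplied by $\Theta_n$, the maps $s\mapsto as^{-1}$ and $(v,w)\mapsto(Aav,Bbw)$ move Lebesgue measure around in a measure-compatible way and, as the translating parameters range over a compact set, form equicontinuous families of homeomorphisms. Moreover the intersection with $S_n$ (respectively $F_n$) confines those parameters to a relatively compact range: outside it the sets in question meet $S_n$ in the empty set, so that both sides of the claimed identities vanish and agree trivially. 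This compactness and equicontinuity are exactly the hypotheses needed to invoke the uniform-distribution tools of the appendix.

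With this in hand I would fix a sequence $(z_k)\subset[0,1]^4$ uniformly distributed mod $1$, set $\widehat{S}_n:=\Theta_n(\{z_1,\dots,z_{N_n}\})\subset S_n$, and take $N_n$ large. By Proposition~\ref{dyskrepancja} the discrepancy of $(z_k)_{k\le N_n}$ tends to $0$, so $\widehat{S}_n$ is $\lambda_{S_n}$-uniformly distributed and, via $\Theta_n$, so are its continuous pushforwards. For (i), using $\raz_{A^{-1}a}(s)=\raz_A(as^{-1})$ I would sandwich each indicator between continuous functions $u\le\raz_A\le U$ with $\int(U-u)$ as small as desired; by the equicontinuity from the previous paragraph the families $\{s\mapsto u(as^{-1})\}$ and $\{s\mapsto U(as^{-1})\}$ are equicontinuous, so Proposition~\ref{pr6.2} together with Corollary~\ref{hlawka1} makes the empirical averages over $\widehat{S}_n$ converge to the $\lambda_{S_n}$-integrals uniformly in $a,b$ and in $A,B\in\cA$; squeezing between $u$ and $U$ then gives (i) with error $<\vep_n$ once $N_n$ is large enough. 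For (ii) the function $f(v,w)=\lambda_{F_n}(Aav\cap Bbw)$ is already continuous in $(v,w)$, with a modulus of continuity bounded uniformly over $A,B\in\cA$ and $a,b$ (continuity of translation on $L^1$ together with the uniform boundary control on $\cA$), so $\{f\}$ is an equicontinuous family on $S_n\times S_n$ and Corollary~\ref{hlawka1}, applied on the product, yields the asserted approximation of $\int f\,d\lambda_{S_n}\otimes\lambda_{S_n}$ by $\int f\,d\lambda_{\widehat{S}_n}\otimes\lambda_{\widehat{S}_n}$.

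The hard part is that $G$ is nonabelian and noncompact, so the sets $A^{-1}a\cap B^{-1}b\cap S_n$ are never boxes in any coordinate system and discrepancy estimates cannot be applied to them directly. The crux of the argument is therefore the transfer: showing that after transport by the group operations the relevant function families stay equicontinuous (so that the Koksma–Hlawka bound of Corollary~\ref{hlawka1} survives), while being dense enough to approximate arbitrary cylinders and while absorbing the discontinuity of indicators through the sandwiching above. Equally delicate is that a single finite set $\widehat{S}_n$ must work uniformly over all $A,B\in\cA$ and all $a,b\in G$; this is why the uniform, discrepancy-type statements (Corollary~\ref{hlawka1}, Proposition~\ref{pr6.2}) are needed rather than mere pointwise uniform distribution, and why the uniform boundary control built into $\cA$ is essential.
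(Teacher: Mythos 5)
Your overall strategy --- transport to cubes via Oxtoby--Ulam, realize $\widehat{S}_n$ as the image of a low-discrepancy sequence, and control indicators by equicontinuous sandwiching plus Koksma--Hlawka --- is the same as the paper's, but two of your shortcuts hide genuine gaps, and they are exactly where the paper does its real work. First, $SU(2)\cong S^3$ is not homeomorphic to $[0,1]^3$, so your single chart is only an a.e.\ parametrization with a cut locus. Measure transport survives this, but your claimed uniformity over $\cA$ does not: you need the $\delta$-collar (in $G$) of every $A\in\cA$ to have measure $O(\delta)$ uniformly over all dyadic scales, and for images of boxes meeting the cut locus this fails, because the inverse parametrization is not uniformly continuous there and a small collar in $G$ need not correspond to a small collar of the box. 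The paper avoids this entirely by covering $G$ with the charts $(l-1,l+2)\times U_i$ (Lemma~\ref{liczbaleb} supplies a positive Lebesgue number), choosing the cubes in $\widetilde{\cA}$ small enough that every set $A^{-1}a\cap B^{-1}b$ or $Aav\cap Bbw$ that matters sits inside a single chart, and gluing with a partition of unity; on each chart the transported maps are honest bi-equicontinuous homeomorphisms (Lemma~\ref{lemma1.3}), so the collar estimate~\eqref{19.3b} and the equicontinuity~\eqref{19.3a} of the interpolants $f_{C,\vep}$ apply with constants independent of the cube.

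Second, your reduction ``the translating parameters range over a compact set'' is not correct as stated: $\cA$ is dense in the measure algebra of the noncompact group $G$, so $A$ (and hence any $a$ with $A^{-1}a\cap S_n\neq\varnothing$) ranges over an unbounded set; only the relative position is confined. To obtain one modulus of continuity for the whole family $\{s\mapsto as^{-1}\}$, resp.\ $\{(v,w)\mapsto \lambda_{F_n}(Aav\cap Bbw)\}$, one must quotient out this noncompactness by an invariance argument: the paper uses that right translations $g_a$ are isometries for the right-invariant metric and that inversion restricted to a slab $[l-1,l+2]\times SU(2)$ has modulus independent of $l$ (the computation $d(g_l(t,M),g_l(s,N))=d(g_0(t-l,M),g_0(s-l,N))$), together with the case analysis in the proof of~\eqref{263c} separating empty intersections from sets of small measure. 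Smoothness of the group law alone does not give equicontinuity uniform over an unbounded family of translations, so this step must be supplied. Your choice of a single sequence affinely rescaled to fill $S_n$, instead of the paper's integer translates of one fixed sequence in $[0,1]\times SU(2)$, is not wrong in itself, but it makes the required discrepancy threshold degrade with the length $\sim\widetilde{a}_{n-1}$ of $S_n$, whereas the translated-copies construction keeps the estimates scale-free; since only existence of a finite $\widehat{S}_n$ is claimed, that difference is merely quantitative.
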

Before we begin the proof, we prepare tools and provide some necessary lemmas. First we introduce the notation.  Let $\cC_d$ stand for the family of cubes in $[0,1]^d$. By $\bm{d}\colon \R\times\R\to\R$ we denote the maximum metric in $\R^d$ and by $\bm{d}(x,C)$ we  denote the distance of the point $x$ from the set $C$. Given $C\in \cC_d$ and $\vep>0$ let
$$
C_\vep:=\{x\not\in C\colon \bm{d}(x,C)<\vep \}\text{ and } f_{C,\vep}:=\max\left(1-\frac{\bm{d}(x,C)}{\vep},0\right).
$$
Notice that for each $\vep>0$
\begin{equation}\label{19.3a}
\{f_{C,\vep}\colon C\in\cC_d\}\text{ is an equi-continuous family of functions}
\end{equation}
and
\begin{equation}\label{19.3b}
\lambda(C_\vep)\leq 2d\vep \text{ for all }C\in\cC_d\text{ and }\vep>0.
\end{equation}

\begin{lm}\label{lemma1.2}
Let $(X,\mu)$ be a compact metric space with a probability Borel measure and let $(y_n)_{n\in\N}\subset X$ be uniformly distributed with respect to $\mu$. Let $\cF$ be a family of measure-preserving equi-continuous homeomorphisms from $X$ to $[0,1]^d$. Then 
$$
\sup_{\Psi\in \cF}D_N^\ast\left((\Psi(y_n))_{1\leq n\leq N}\right)\to 0.
$$
\end{lm}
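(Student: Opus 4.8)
The plan is to first upgrade the uniform distribution of $(y_n)$ to an \emph{equi}-uniform distribution statement for the family $\{(\Psi(y_n))_{n\in\N}\colon \Psi\in\cF\}$ in $[0,1]^d$ (in the sense of the definition preceding Proposition~\ref{pr6.2}, with $\lambda=\lambda_{\R^d}|_{[0,1]^d}$), and then to pass from this statement about continuous test functions to the one about the star-discrepancy by sandwiching box indicators between the continuous functions $f_{C,\vep}$. Throughout I use that each $\Psi$ is measure preserving, so that $\Psi_\ast\mu=\lambda$ and hence $\int_X (g\circ\Psi)\,d\mu=\int_{[0,1]^d} g\,d\lambda$ for every $g\in C([0,1]^d)$; since the points $\Psi(y_n)$ already lie in $[0,1]^d$, I may ignore the fractional parts in the definition of $D_N^\ast$.

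For the first step, fix $\vep>0$ and consider the family $\cH_\vep:=\{f_{C,\vep}\circ\Psi\colon C\ \text{a box in }[0,1]^d,\ \Psi\in\cF\}\subset C(X)$. The functions $f_{C,\vep}$ are $1/\vep$-Lipschitz uniformly in $C$ (this is the content of~\eqref{19.3a}), and $\cF$ is equicontinuous by hypothesis; composing, $\cH_\vep$ is equicontinuous and uniformly bounded, so by the Arzel\`a--Ascoli theorem its closure in $C(X)$ is norm-compact. This is exactly the situation behind Proposition~\ref{pr6.2}: uniform distribution of $(y_n)$ gives, for each fixed $h\in C(X)$, that $\frac1N\sum_{n\le N}h(y_n)\to\int_X h\,d\mu$, and covering the compact set $\overline{\cH_\vep}$ by finitely many sup-norm balls together with the triangle inequality upgrades this to
\[
\sup_{C,\ \Psi\in\cF}\Bigl|\tfrac1N\sum_{n\le N}f_{C,\vep}(\Psi(y_n))-\int_{[0,1]^d}f_{C,\vep}\,d\lambda\Bigr|\xrightarrow[N\to\infty]{}0 .
\]

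For the second step, let $J^\ast=[0,\beta_1)\times\dots\times[0,\beta_d)$ be any of the boxes defining $D_N^\ast$, and set $J^-:=\prod_i[0,(\beta_i-\vep)^+)$ for the box shrunk by $\vep$ on its upper faces. A direct check (using that the lower faces of $J^\ast$ lie on $\partial([0,1]^d)$) shows $f_{J^-,\vep}\le\1_{J^\ast}\le f_{J^\ast,\vep}$, while~\eqref{19.3b} gives $\int f_{J^\ast,\vep}\,d\lambda\le\lambda(J^\ast)+2d\vep$ and $\int f_{J^-,\vep}\,d\lambda\ge\lambda(J^-)\ge\lambda(J^\ast)-2d\vep$. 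Applying the displayed uniform estimate to the two boxes $J^\ast$ and $J^-$ (so that for $N$ large the averaging error is $<\vep$, uniformly in $\Psi$ and in the box) and combining with these sandwich inequalities yields, for all sufficiently large $N$ and independently of $\Psi\in\cF$ and of $J^\ast$,
\[
\Bigl|\tfrac1N\sum_{n\le N}\1_{J^\ast}(\Psi(y_n))-\lambda(J^\ast)\Bigr|\le (2d+1)\vep .
\]
Taking the supremum over $J^\ast$ gives $\sup_{\Psi\in\cF}D_N^\ast\bigl((\Psi(y_n))_{1\le n\le N}\bigr)\le(2d+1)\vep$ for $N$ large, and letting $\vep\to0$ proves the claim.

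The only genuinely delicate point is that the averaging error must be controlled uniformly over the maps $\Psi\in\cF$ \emph{and} over the boxes $J^\ast$ \emph{simultaneously}; this is precisely what the equicontinuity of the combined family $\cH_\vep$ buys us (via~\eqref{19.3a} and the hypothesis on $\cF$), while the boundary-layer bound~\eqref{19.3b} is what keeps the sandwiching error $O(\vep)$ uniformly in the box. Everything else reduces to the routine epsilon-net argument underlying Proposition~\ref{pr6.2}.
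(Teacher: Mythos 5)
Your proof is correct. It shares the paper's basic idea of approximating box indicators by the Lipschitz cut-offs $f_{C,\vep}$ and exploiting equicontinuity, but the implementation differs in two substantive ways. First, the paper uses a one-sided approximation and splits the error into three terms $I_1,I_2,I_3$, where the boundary-layer term $I_1=\frac1N\sum_n|\raz_C-f_{C,\delta}|\circ\Psi(x_n)$ is handled by covering $C_\delta$ by $K=d\cdot 2^{d-1}/\delta$ small cubes and pulling them back under $\Psi$; your two-sided sandwich $f_{J^-,\vep}\le\raz_{J^\ast}\le f_{J^\ast,\vep}$ eliminates this term altogether, so you never need to average an indicator of a thin set along the orbit --- arguably an improvement, since the paper's claim that $\Psi^{-1}(C_\delta)$ is contained in small ``cubes'' of $X$ is the least transparent step of its argument (and the verification that $f_{J^-,\vep}\le\raz_{J^\ast}$ on $[0,1]^d$, including the degenerate case $\beta_i\le\vep$ where $J^-=\varnothing$, goes through as you indicate). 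Second, for the uniformity over $\cF$ and over the boxes the paper invokes the quantitative Koksma--Hlawka-type bound of Theorem~\ref{hlawka} together with~\eqref{19.3a}, which presupposes a discrepancy for the base sequence and hence implicitly a cube structure on $X$; your Arzel\`a--Ascoli plus $\vep$-net argument (essentially the mechanism behind Proposition~\ref{pr6.2}) needs only that $(y_n)$ is $\mu$-uniformly distributed in the abstract compact metric space $X$, so it matches the stated hypotheses of the lemma more faithfully, at the cost of losing the explicit rate that Theorem~\ref{hlawka} would provide. Both proofs share the harmless glossing-over of the fractional-part convention in $D_N^\ast$ for points on the upper faces of $[0,1]^d$.
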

\begin{proof}
Let $\vep>0$ and choose $0<\delta<\vep$ small enough, so that for every cube $C_\delta$ whose sides have length at most $\delta$ we have $\Psi^{-1}(C_\delta)\subset C'_\vep$, where $C'_\vep$ is some cube whose sides have length at most $\vep$. Let $K:=d\cdot 2^{d-1}\cdot\frac{1}{\delta}$. 

We have
\begin{multline*}
\left|\frac{1}{N}\sum_{n\leq N}\raz_C\circ \Psi(x_n)-\lambda(C) \right|\leq\underbrace{\left|\frac{1}{N}\sum_{n\leq N}\left(\raz_C\circ \Psi(x_n)-f_{C,\delta}\circ \Psi(x_n)\right) \right|}_{I_1}\\
+\underbrace{\left|\frac{1}{N}\sum_{n\leq N}f_{C,\delta}\circ \Psi(x_n)-\int f_{C,\delta}\ d\lambda \right|}_{I_2}
+\underbrace{\left|\int f_{C,\delta}\ d\lambda-\lambda(C) \right|}_{I_3}.
\end{multline*}
Notice that
$
I_1\leq \frac{1}{N}\sum_{n\leq N}\raz_{C_\delta}\circ \Psi(x_n)=\frac{1}{N}\sum_{n\leq N}\raz_{\Psi^{-1}(C_\delta)}(x_n).
$
By the choice of $K$ the set $C_\delta$ is included in a union of (at most) $K$ cubes whose sides are equal at most $\delta$. Therefore, by the choice of $\delta$, $\Psi^{-1}(C_\delta)$ is included in a union of (at most) $K$ cubes whose sides are equal at most $\vep$. It follows immediately that there exists $N_0$ such that for $N\geq N_0$ we have $I_1\leq \vep$. 

By Theorem~\ref{hlawka} we have $I_2\to 0$. Moreover, additionally taking~\eqref{19.3a} into account, we see that the speed of convergence can be estimated using only $D_N^\ast ((x_n)_{1\leq n\leq N})$ (and $\vep$).

Finally, $I_3\leq \lambda(C_\delta)<2d\delta\leq 2d\vep$ and the claim follows.
\end{proof}
\begin{lm}\label{lemma1.3}
Let $(y_n)_{n\in \N}\subset [0,1]^d$ be uniformly distributed. Let $\cF$ be a family of equicontinuous homeomorphisms of $[0,1]^d$ preserving the Lebesgue measure. Then
\begin{equation*}
\sup_{\Psi_1,\Psi_2 \in \cF} \sup_{C,D,E\subset \cC_d} \left|\frac{1}{N}\sum_{n\leq N}\raz_{\Psi_1(C)\cap \Psi_2(D)\cap E}(y_n)-\lambda(\Psi_1(C)\cap \Psi_2(D)\cap E) \right| \to 0.
\end{equation*}
\end{lm}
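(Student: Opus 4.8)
The plan is to sandwich the discontinuous integrand $\raz_{\Psi_1(C)\cap\Psi_2(D)\cap E}$ between two continuous functions, one from above and one from below, whose integrals differ from $\lambda(\Psi_1(C)\cap\Psi_2(D)\cap E)$ by $O(d\vep)$ and which range over an \emph{equicontinuous} family as $\Psi_1,\Psi_2$ and $C,D,E$ vary. The uniform Koksma--Hlawka estimate (Corollary~\ref{hlawka1}, via Theorem~\ref{hlawka}) then forces the empirical averages to converge to the integrals uniformly over the family, and letting $\vep\to0$ gives the claim. This mirrors the proof of Lemma~\ref{lemma1.2}, the new feature being the presence of two transformed cubes and one untransformed cube simultaneously.

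First I would fix $\vep>0$ and build the upper approximation. Since $y\in\Psi_1(C)$ iff $\Psi_1^{-1}(y)\in C$, I write $\raz_{\Psi_1(C)}=\raz_C\circ\Psi_1^{-1}$ and dominate it by $g_1^+:=f_{C,\vep}\circ\Psi_1^{-1}$, so that $\raz_{\Psi_1(C)}\le g_1^+\le\raz_{\Psi_1(C\cup C_\vep)}$; similarly $g_2^+:=f_{D,\vep}\circ\Psi_2^{-1}$ and $g_3^+:=f_{E,\vep}$. The product $g^+:=g_1^+g_2^+g_3^+$ is continuous, dominates $\raz_{\Psi_1(C)\cap\Psi_2(D)\cap E}$, and, using $abc-a'b'c'\le(a-a')+(b-b')+(c-c')$ for $[0,1]$-valued entries together with the measure-preservation of $\Psi_1,\Psi_2$ and the boundary bound~\eqref{19.3b}, satisfies
\[
0\le\int g^+\,d\lambda-\lambda(\Psi_1(C)\cap\Psi_2(D)\cap E)\le \lambda(C_\vep)+\lambda(D_\vep)+\lambda(E_\vep)\le 6d\vep .
\]
A symmetric lower approximation $g^-:=g_1^-g_2^-g_3^-$, built from the inner variants $\max\!\big(0,\min(1,\bm{d}(\cdot,[0,1]^d\setminus C)/\vep)\big)\circ\Psi_1^{-1}$ (and likewise for $D,E$), obeys $g^-\le\raz_{\Psi_1(C)\cap\Psi_2(D)\cap E}$ and $0\le\lambda(\Psi_1(C)\cap\Psi_2(D)\cap E)-\int g^-\,d\lambda\le 6d\vep$ for the same reasons.

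Next I would check equicontinuity, which is where the uniformity is won. Each $f_{C,\vep}$ is Lipschitz with constant $1/\vep$ uniformly in $C$ by~\eqref{19.3a}, and the inverse family $\{\Psi^{-1}:\Psi\in\cF\}$ is equicontinuous (the regularity already exploited in Lemma~\ref{lemma1.2}); hence $\{f_{C,\vep}\circ\Psi_1^{-1}\}$ is equicontinuous, and so is the product family $\{g^+\}\cup\{g^-\}$, a product of uniformly bounded equicontinuous families being equicontinuous. By Corollary~\ref{hlawka1} there is $N_0$ so that for all $N\ge N_0$ we have $\big|\tfrac1N\sum_{n\le N}f(y_n)-\int f\,d\lambda\big|<\vep$ simultaneously for every $f$ in this family. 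Combining with the two integral estimates yields, for all $\Psi_1,\Psi_2\in\cF$ and all $C,D,E\in\cC_d$,
\[
\Big|\tfrac1N\textstyle\sum_{n\le N}\raz_{\Psi_1(C)\cap\Psi_2(D)\cap E}(y_n)-\lambda(\Psi_1(C)\cap\Psi_2(D)\cap E)\Big|\le (6d+1)\vep\qquad(N\ge N_0),
\]
uniformly in $\Psi_1,\Psi_2,C,D,E$; as $\vep$ is arbitrary this is exactly the assertion.

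I expect the main obstacle to be precisely the uniformity, and concretely the step asserting that the composed family $\{f_{C,\vep}\circ\Psi^{-1}\}$ is equicontinuous: equicontinuity of $\cF$ does \emph{not} in general pass to the inverse family (a uniformly Lipschitz homeomorphism can have an arbitrarily wild inverse), so one must invoke that the $\Psi$ are measure-preserving homeomorphisms of the fixed-dimensional cube $[0,1]^d$ to control the modulus of continuity of the $\Psi^{-1}$ uniformly over $\cF$ --- the very same regularity that makes Lemma~\ref{lemma1.2} go through. Once this equicontinuity is secured, the remainder is a routine three-fold sandwiching argument identical in spirit to the single-map case.
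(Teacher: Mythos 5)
Your proof is correct and follows essentially the same strategy as the paper's: approximate the indicator by the Lipschitz tent functions $f_{C,\vep}\circ\Psi_1^{-1}\cdot f_{D,\vep}\circ\Psi_2^{-1}\cdot f_{E,\vep}$, observe that these form an equicontinuous family, and invoke the uniform Koksma--Hlawka estimate (Theorem~\ref{hlawka}/Corollary~\ref{hlawka1}). The one real difference is how the discrepancy between the empirical average of the indicator and that of the continuous approximant is handled: the paper keeps only the upper (one-sided) approximant and bounds this term ($I_2$ in its notation) by the empirical measures of the collar sets $C_\vep$, $\Psi_1(C_\vep)$, $\Psi_2(D_\vep)$, each a difference of two cubes, which requires a second appeal to Lemma~\ref{lemma1.2}; you instead introduce a lower approximant and sandwich, so that a single application of Corollary~\ref{hlawka1} to the combined equicontinuous family $\{g^\pm\}$ suffices. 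Both routes work, and yours is marginally more economical. The caveat you raise about equicontinuity of $\{\Psi^{-1}\colon\Psi\in\cF\}$ is legitimate, but it is equally present in the paper's own proof (the choice of $\delta$ in Lemma~\ref{lemma1.2} already presupposes uniform control of the inverses), and in the only application (Proposition~\ref{techniczny}) the relevant family is explicitly shown to be uniformly bi-equicontinuous, so the gap is discharged there.
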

\begin{proof}
Choose $\vep>0$ and let
\begin{align*}
I_1&:=\lambda(\Psi_1(C)\cap \Psi_2(D)\cap E)-\int f_{C,\vep}\circ \Psi_1^{-1}\cdot f_{D,\vep}\circ \Psi_2^{-1}\cdot f_{E,\vep} \ d\lambda,\\
I_2&:=\frac{1}{N}\sum_{n\leq N}\raz_{\Psi_1(C)\cap \Psi_2(D)\cap E}(x_n)-\frac{1}{N}\sum_{n\leq N}(f_{C,\vep}\circ \Psi_1^{-1}\cdot f_{D,\vep}\circ \Psi_2^{-1}\cdot f_{E,\vep})(x_n),\\
I_3&:= \frac{1}{N}\sum_{n\leq N}(f_{C,\vep}\circ \Psi_1^{-1}\cdot f_{D,\vep}\circ \Psi_2^{-1}\cdot f_{E,\vep})(x_n)\\
& - \int f_{C,\vep}\circ \Psi_1^{-1}\cdot f_{D,\vep}\circ \Psi_2^{-1}\cdot f_{E,\vep} \ d\lambda.
\end{align*}
It suffices to show that $I_1,I_2,I_3\to 0$ and the convergence is uniform with respect to $C,D,E\in \cC_d$ and $\Psi_1,\Psi_2\in \cF$. 

We have
\begingroup
\allowdisplaybreaks
\begin{align*}
|I_1|\leq& \left|\int \raz_C\circ \Psi_1^{-1}\cdot \raz_D\circ \Psi_2^{-1}\cdot \raz_E - \raz_C\circ \Psi_1^{-1}\cdot \raz_D\circ \Psi_2^{-1}\cdot f_{E,\vep}\ d\lambda \right|\\
&+\left|\int  \raz_C\circ \Psi_1^{-1}\cdot \raz_D\circ \Psi_2^{-1}\cdot f_{E,\vep} -  \raz_C\circ \Psi_1^{-1}\cdot f_{D,\vep}\circ \Psi_2^{-1}\cdot f_{E,\vep} \right|\\
&+\left|\int  \raz_C\circ \Psi_1^{-1}\cdot f_{D,\vep}\circ \Psi_2^{-1}\cdot f_{E,\vep} -  f_{C,\vep}\circ \Psi_1^{-1}\cdot f_{D,\vep}\circ \Psi_2^{-1}\cdot f_{E,\vep} \right|\\
\leq& \int \left| \raz_E-f_{E,\vep} \right| \ d\lambda+\int \left|\raz_D\circ \Psi_2^{-1}-f_{D,\vep}\circ \Psi_2^{-1} \right| \ d\lambda\\
&+\int \left|\raz_C\circ \Psi_1^{-1}-f_{C,\vep}\circ \Psi_1^{-1} \right| \ d\lambda\\
=&\int |\raz_E-f_{E,\vep}|\ d\lambda+\int |\raz_D-f_{D,\vep}|\ d\lambda+\int |\raz_C-f_{C,\vep}|\ d\lambda\\
\leq& \lambda(E_{\vep})+\lambda(D_\vep)+\lambda(C_\vep)\leq 6d\vep,
\end{align*}
\endgroup
where the last inequality follows by~\eqref{19.3b}.

We will now estimate $I_2$:
\begin{align*}
|I_2|\leq& \left|\frac{1}{N}\sum_{n\leq N}\left[(\raz_{\Psi_1(C)}\cdot \raz_{\Psi_2(D)}\cdot\raz_E) (x_n)-(\raz_{\Psi_1(C)}\cdot \raz_{\Psi_2(D)}\cdot f_{E,\vep})\right] (x_n) \right|\\
&+\left|\frac{1}{N}\sum_{n\leq N}\left[(\raz_{\Psi_1(C)}\cdot \raz_{\Psi_2(D)}\cdot f_{E,\vep}) (x_n)-(\raz_{\Psi_1(C)}\cdot (f_{D,\vep}\circ \Psi_2^{-1})\cdot f_{E,\vep}) (x_n) \right]\right|\\
&+\left|\frac{1}{N}\sum_{n\leq N}\left[(\raz_{\Psi_1(C)}\cdot (f_{D,\vep}\circ \Psi_2^{-1})\cdot f_{E,\vep}) (x_n)\right.\right.\\ 
&\left.\left. -((f_{C,\vep}\circ \Psi_1^{-1})\cdot (f_{D,\vep}\circ \Psi_2^{-1})\cdot f_{E,\vep}) (x_n) \right]\right|\\
\leq & \frac{1}{N}\sum_{n\leq N}\raz_{E_\vep}(x_n)+\frac{1}{N}\sum_{n\leq N}\raz_{\Psi_2(D_\vep)}(x_n)+\frac{1}{N}\sum_{n\leq N}\raz_{\Psi_1(C_\vep)}(x_n)\\
=&\frac{1}{N}\sum_{n\leq N}\raz_{E_\vep}(x_n)+\frac{1}{N}\sum_{n\leq N}\raz_{D_\vep}(\Psi_2^{-1}(x_n))+\frac{1}{N}\sum_{n\leq N}\raz_{C_\vep}(\Psi_1^{-1}(x_n))
\end{align*}
Since each of the sets $E_\vep,D_\vep,C_\vep$ is a difference of two cubes, the above expression converges to $\lambda(E_\vep)+\lambda(D_\vep)+\lambda(C_\vep)\leq 6d\vep$.
Moreover, by Lemma~\ref{lemma1.2}, the convergence is uniform with respect to $C,D,E\in\cC_d$ and $\Psi_1,\Psi_2\in\cF$.

$I_3$ also converges to zero and again, by Theorem~\ref{hlawka},  the convergence is uniform with respect to $C,D,E\in\cC_d$ and $\Psi_1,\Psi_2\in\cF$.
\end{proof}

Let $d$ be a right-invariant metric on $G$. Let
$\cU=\{U_i\colon 1\leq i\leq M\}$ be a finite cover of $SU(2)$ by open sets, such that $\lambda_{SU(2)}(\partial U_i)=0$ for $1\leq i\leq M$.
Then
\begin{equation}\label{pokrycie}
\cV:=\bigcup_{l\in\Z}\bigcup_{i=1}^{M}(l-1,l+2)\times U_i
\end{equation}
is an open cover of $G$, such that $\lambda_G(\partial ((l-1,l+2)\times U_i))=0$.
\begin{lm}\label{liczbaleb}
The cover~\eqref{pokrycie} has a positive Lebesgue number.
\end{lm}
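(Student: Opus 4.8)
The plan is to exploit the right-invariance of $d$ together with the $\Z$-periodicity of $\cV$ in the $\R$-direction, thereby reducing this non-compact problem to a compact fundamental domain on which the classical Lebesgue number lemma applies. The role of right-invariance is to turn the integer translations into isometries that permute the members of the cover, and this is precisely what will let a finite/compact argument be propagated over all of $G$.

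First I would observe that right translation by $(n,I)$, i.e.\ the map $x\mapsto x(n,I)$, acts on $G=\R\ltimes_\varphi \text{SU}(2)$ simply by $(t,M)\mapsto(t+n,M)$, since $\varphi_t(I)=I$. Because $d$ is right-invariant, each such translation is an isometry of $(G,d)$, and it permutes the members of $\cV$: it carries $(l-1,l+2)\times U_i$ onto $((l+n)-1,(l+n)+2)\times U_i$. Consequently, for a fixed $\delta>0$, the property ``the open ball $B(x,\delta)$ is contained in some member of $\cV$'' is invariant under replacing $x$ by $x(n,I)$ for any $n\in\Z$. Since every $(t,M)\in G$ can be moved into the compact set $K:=[0,1]\times \text{SU}(2)$ by choosing $n=-\lfloor t\rfloor$, it suffices to produce a single $\delta>0$ that works for all $x\in K$.

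Next I would run the standard compactness argument on $K$. For each $x\in K$ the (already established) fact that $\cV$ is an open cover of $G$ provides a member $V_x\in\cV$ with $x\in V_x$, and openness yields $r_x>0$ with $B(x,2r_x)\subset V_x$. The balls $\{B(x,r_x)\colon x\in K\}$ form an open cover of the compact space $K$ (here I use that $\text{SU}(2)$, and hence $K$, is compact); extracting a finite subcover $B(x_1,r_{x_1}),\dots,B(x_k,r_{x_k})$ and setting $\delta:=\min_j r_{x_j}>0$, the triangle inequality gives, for any $x\in K$ lying in some $B(x_j,r_{x_j})$, that $B(x,\delta)\subset B(x_j,2r_{x_j})\subset V_{x_j}$.

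Finally I would remove the restriction $x\in K$. Given an arbitrary $x\in G$, choose $n$ with $x(n,I)\in K$; then $B(x(n,I),\delta)$ lies in some $V\in\cV$, and applying the inverse translation (again an isometry carrying members of $\cV$ to members of $\cV$) shows that $B(x,\delta)$ lies in the corresponding member of $\cV$. Hence $\delta$ is a Lebesgue number for $\cV$. The only genuine obstacle here is the non-compactness of $G$; once one recognizes that right-invariance makes the integer translations isometries respecting the cover, the remainder is the routine Lebesgue-number compactness argument.
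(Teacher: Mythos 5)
Your proof is correct and rests on exactly the same two ingredients as the paper's: right-invariance of $d$ makes the translations $x\mapsto x(n,I)$ isometries permuting the members of $\cV$, which reduces the problem to a compact slab where compactness of $\text{SU}(2)$ finishes the job. The only difference is cosmetic — you run the direct finite-subcover form of the Lebesgue number lemma, while the paper argues by contradiction with a sequence $B(x_n,\nicefrac1n)$ and a convergent subsequence — so the two arguments are essentially the same.
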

\begin{proof}
Suppose that there is no $\delta>0$ such that for each $(t,\bm M)\in G$ we have $B((t,\bm M),\delta)\subset (l-1,l+2)\times U_i$ for some $l\in\Z$ and $1\leq i\leq M$. We claim that
\begin{equation}\label{kule}
B(x,\vep)\subset (l-1,l+2)\times U_i \iff B(x(-l,I),\vep)\subset (-1,2)\times U_i.
\end{equation}
Indeed, assume that $B((t, \bm M),\vep)\subset (l-1,l+2)\times U_i$ and take $y\in B((t,\bm M)(-l,I),\vep)$. We have $d(y(l,I),(t,\bm M))=d(y,(t,\bm M)(-l,I))<\vep$, whence
$$
y\in \left[(l-1,l+2)\times U_i \right](-l,I)=(-1,2)\times U_i
$$
and~\eqref{kule} follows. According to our assumption there exists $x_n=(t_n,\bm M_n)$ such that 
\begin{equation}\label{kontr}
\mbox{none of the balls $B\left(x_n,\frac{1}{n}\right)$ is a subset of an element of our cover.}
\end{equation}
Without loss of generality, in view of~\eqref{kule}, we may assume that
$x_n\in (-1,2)\times U_i \text{ for some }i$, and also that $x_n\to x$ for some $x=(t,\bm M)\in (l-1,l+2)\times U_j$, where $l\in \Z$, $1\leq j\leq M$. Then for $N$ large enough we have $B(x,\frac{1}{N})\subset (l-1,l+2)\times U_j$. Let $n\geq 2N$ be large enough, so that $x_n\in B(x,\frac{1}{2N})$. Then $B(x_n,\frac{1}{n})\subset B(x_n,\frac{1}{2N})\subset B\left(x,\frac{1}{N}\right)\subset (l-1,l+2)\times U_j$, which yields a contradiction with~\eqref{kontr}. This ends the proof.
\end{proof}
For $1\leq i\leq M$ let $\psi_i\colon \overline{U}_i\to [0,1]^3$ be a homeomorphism which carries $\lambda_{\overline{U}_i}$ to the Lebesgue measure $\lambda_{[0,1]^3}$ on $[0,1]^3$.\footnote{Such homeomorphism exists in view of Theorem~\ref{oxtoby}.} For $1\leq i\leq M$ and $l\in\Z$ let
$\psi_{l,i}\colon [l-1,l+2]\times \overline{U}_i \to [-1,2]\times [0,1]^3$
be given by 
$$
\psi_{l,i}(x,\bm M)=\left(x-l,\psi_i(\bm M) \right)
$$
and let  $\psi\colon [-1,2]\times [0,1]^3\to [0,1]^4$ be given by
$$
\psi(x_1,x_2,x_3,x_4)=\left((x_1+1)/3,x_2,x_3,x_4\right).
$$
For $a\in G$ let $g_a,g\colon G\to G$ be given by 
$$
g_a(b)=ba\text{ and }g(b)=b^{-1}\text{ for }b\in G.
$$ 
\begin{lm}\label{lemma1.3}
The family of functions
$$
\left\{ g_a\circ g\circ \psi_{l,i}^{-1}\circ \psi^{-1}\colon a\in G, l\in\Z, 1\leq i\leq M\right\}
$$
is uniformly bi-equicontinuous.
\end{lm}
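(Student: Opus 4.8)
The plan is to unwind the composition into a single group expression, absorb the parameter $a$ by right-invariance of $d$, and tame the unbounded parameter $l$ using the center of $G$, so that the whole statement collapses to the uniform continuity of finitely many fixed maps on compact sets. Writing $p=(p_1,\tilde p)$ with $\tilde p=(p_2,p_3,p_4)$, one has $\psi^{-1}(p)=(3p_1-1,\tilde p)$, and since right multiplication by $(l,I)$ merely shifts the $\R$-coordinate,
$$(\psi_{l,i}^{-1}\circ\psi^{-1})(p)=(3p_1-1+l,\psi_i^{-1}(\tilde p))=c\,(l,I),\qquad c:=(3p_1-1,\psi_i^{-1}(\tilde p))\in[-1,2]\times\text{SU}(2).$$
Hence the map in question is
$$F_{a,l,i}(p):=(g_a\circ g\circ\psi_{l,i}^{-1}\circ\psi^{-1})(p)=\bigl(c\,(l,I)\bigr)^{-1}a=(-l,I)\,c^{-1}\,a,$$
using $(l,I)^{-1}=(-l,I)$.

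Next I would reduce the indices. For $p,p'$ with associated points $c,c'$, right-invariance kills $a$, giving $d(F_{a,l,i}(p),F_{a,l,i}(p'))=d\bigl((-l,I)c^{-1},(-l,I)c'^{-1}\bigr)$. To handle $l$, write $-l=2n+r$ with $r\in\{0,1\}$. Since $(2n,I)\in C(G)$ (see the description of $C(G)$), left and right translation by $(2n,I)$ coincide, and the latter is a $d$-isometry; thus $(-l,I)w=(2n,I)(r,I)w=(r,I)w\,(2n,I)$, whence
$$d\bigl((-l,I)c^{-1},(-l,I)c'^{-1}\bigr)=d\bigl((r,I)c^{-1},(r,I)c'^{-1}\bigr),\qquad r\in\{0,1\}.$$
Now the assignment $p\mapsto c$ involves only the $M$ fixed homeomorphisms $\psi_i^{-1}$ on the compact cube $[0,1]^3$; inversion is uniformly continuous on the compact set $[-1,2]\times\text{SU}(2)$; and left translation by $(0,I)=\mathrm{id}$ or by $(1,I)$ is uniformly continuous on the compact set $K_0:=([-1,2]\times\text{SU}(2))^{-1}$. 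Chaining these finitely many uniform moduli of continuity produces a single $\delta=\delta(\vep)$, independent of $a,l,i$, which gives equicontinuity of the family.

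For the inverses I would note that $\psi_{l,i}\bigl(c\,(l,I)\bigr)=\psi_{0,i}(c)$ (the $l$-shift cancels), so $F_{a,l,i}^{-1}(w)=\Xi_i(c)$ with $\Xi_i:=\psi\circ\psi_{0,i}$ a fixed homeomorphism independent of $a$ and $l$, while $d(w,w')=d\bigl((r,I)c^{-1},(r,I)c'^{-1}\bigr)$ by the computation above. Running the same chain backwards, using that $(r,I)$-translation (with inverse $(-r,I)$-translation) and the $\Xi_i$ are uniformly continuous together with their inverses on the relevant compacta, yields uniform equicontinuity of the inverse family as well, establishing uniform bi-equicontinuity.

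The only genuine obstacle is the unbounded shift by $l$ in the $\R$-direction measured against an \emph{arbitrary} right-invariant metric: a single fixed left translation need not be $d$-uniformly-continuous on an unbounded set. The centrality of $(2n,I)$ is exactly what removes this difficulty, collapsing the one-parameter family of left translations $\{(-l,I)\}_{l\in\Z}$ to the two residues $r\in\{0,1\}$ and reducing everything to uniform continuity of finitely many fixed maps on compact sets.
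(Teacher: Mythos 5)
Your proof is correct, and it takes a somewhat different route from the paper's. The paper factors the family into three sub-families --- $\{\psi_{l,i}^{-1}\}$, the restrictions $\{g|_{[l-1,l+2]\times SU(2)}\}$ of the inversion, and the right translations $\{g_a\}$ --- and verifies uniform bi-equicontinuity of each separately; the unbounded parameter $l$ is tamed in the middle family by right-translating both arguments and values by $(\mp l,I)$ (isometries for the right-invariant $d$) so as to compare $g_l$ on $[l-1,l+2]\times SU(2)$ with $g_0$ on $[-1,2]\times SU(2)$. You instead collapse the whole composition into the single expression $p\mapsto(-l,I)\,c^{-1}a$ with $c$ confined to the fixed compact set $[-1,2]\times SU(2)$, cancel $a$ by right-invariance, and dispose of $l$ via the centrality of $(2n,I)$, which converts the left translation by the ``even part'' of $-l$ into a right translation, i.e.\ an isometry. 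Both arguments ultimately rest on the same structural fact ($\varphi_{2n}=\mathrm{id}$, equivalently $(2n,I)\in C(G)$), but your explicit split $-l=2n+r$ with $r\in\{0,1\}$ is actually more careful than the paper's computation: the paper's step $d((l-t,\varphi_{-t}(M^{-1})),\cdot)=d((l-t,\varphi_{l-t}(M^{-1})),\cdot)$ tacitly uses $\varphi_l=\mathrm{id}$, which holds only for even $l$; for odd $l$ a residual fixed automorphism $\varphi_{\pm1}$ of $SU(2)$ survives (harmless for equicontinuity, but your version handles it cleanly by leaving the residue $(r,I)$ as one of two fixed left translations on a compact set). Your treatment of the inverse family, via the $l$-independent maps $\Xi_i=\psi\circ\psi_{0,i}$ and the exact identity $d(w,w')=d((r,I)c^{-1},(r,I)c'^{-1})$, is likewise sound.
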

\begin{proof}
It suffices to show that the following families of functions are uniformly bi-equicontinuous:
\begin{enumerate}[(i)]
\item
$\cF_1=\left\{\psi_{l,i}^{-1}\colon l\in\Z, 1\leq i\leq M \right\}$,
\item
$\cF_2=\left\{g|_{[l-1,l+2]\times SU(2)}\colon l\in\Z \right\}$,
\item
$\cF_3=\left\{g_a\colon a\in G \right\}$.
\end{enumerate}
Notice that $\cF_1$ is a finite family of homeomorphisms of compact spaces and elements of $\cF_3$ are isometries, so both $\cF_1, \cF_3$ are uniformly bi-equi continuous. We consider now $\cF_2$. For $l\in\Z$ let $g_l=g|_{[l-1,l+2]\times SU(2)}$. For 
$(t,M), (s,N)\in [l-1,l+2]\times SU(2)$
we have
$$
d((t-l,M),(s-l,N))=d((t,M)(-l,I),(s,N)(-l,I))=d((t,M),(s,N)).
$$
and
\begin{align*}
d(g_l(t,M),g_l(s,N))&=d((-t,\varphi_{-t}(M^{-1})),(-s,\varphi_{-s}(N^{-1})))\\
&=d((-t,\varphi_{-t}(M^{-1}))(l,I),(-s,\varphi_{-s}(N^{-1}))(l,I))\\
&=d((l-t,\varphi_{-t}(M^{-1})),(l-s,\varphi_{-s}(N^{-1})))\\
&=d((l-t,\varphi_{l-t}(M^{-1})),(l-s,\varphi_{l-s}(N^{-1})))\\
&=d(g_0(t-l,M),g_0(s-l,N)).
\end{align*}
Moreover, $g_l^{-1}=g_{-1-l}$. This ends the proof.
\end{proof}
Let $\{\rho_i\}_{1\leq i\leq M}$ and $\{\eta_l\}_{l\in\Z}$ be smooth partitions of unity, subordinate to cover $\cU$ and $\cup_{l\in\Z}(l-1,l+2)=\R$ respectively.
Then clearly $\{\eta_l\otimes \rho_i\}_{1\leq i\leq M,l\in\Z}$ is a smooth partition of unity subordinate to cover $\cV$.

Let $\widetilde{\mathcal{A}}$ be a family of ``cubes'' in $ [0,1]^4$ sufficiently small, so that for $a_1,a_2,a_3\in G$, $l_1,l_2,l_3\in\Z$, $1\leq i_1,i_2,i_3 \leq M$, $C_1,C_2,D_1,D_2,D_3\in\widetilde{\cA}$ whenever the sets
$$
\bigcup_{j=1}^{2} g_{a_j}\circ g\circ \psi_{l_j,i_j}^{-1}\circ \psi^{-1}(C_j)\text{ and }\bigcup_{j=1}^{3}g_{a_j}\circ \psi_{l_j,i_j}^{-1}\circ \psi^{-1}(D_j)\text{ are connected}
$$
then
$$
\bigcup_{j=1}^{2} g_{a_j}\circ g\circ \psi_{l_j,i_j}^{-1}\circ \psi^{-1}(C_j)\subset (l_C-1,l_C+2)\times U_{i_C}
$$
and
\begin{equation}\label{326b}
\bigcup_{j=1}^{3} g_{a_j}\circ \psi_{l_j,i_j}^{-1}\circ \psi^{-1}(D_j)\subset (l_D-1,l_D+2)\times U_{i_D}
\end{equation}
for some $l_C,_D\in\Z, 1\leq i_C,i_D\leq M$. This can be done in view of Lemma~\ref{liczbaleb} and Lemma~\ref{lemma1.3}. 
\begin{uw}
Without loss of generality, we may assume that in fact an even stronger condition than~\eqref{326b} holds: 
\begin{multline}\label{326b1}
\left(\bigcup_{j=1}^{3} g_{a_j}\circ \psi_{l_j,i_j}^{-1}\circ \psi^{-1}(D_j)\right)_{\delta_0}\cup \left(\bigcup_{j=1}^{3} g_{a_j}\circ \psi_{l_j,i_j}^{-1}\circ \psi^{-1}(D_j)\right)\\
\subset (l_D-1,l_D+2)\times U_{i_D},
\end{multline}
where $\delta_0>0$ is a sufficiently small number, independent of the choice of $D_j$, $a_j$, $1\leq j\leq 3$. This can be done for example by adding one more step before fixing the open cover of $SU(2)$. Namely, given $\{U_i\}_{1\leq i\leq M}$, we need to find $\delta_0>0$ such that $\{(U_i)_{\delta_0}\cup U_i\}_{1\leq i\leq M}$ still consists of sets homeomorphic to $(0,1)^4$ and work with both covers simultaneously.
\end{uw}

Let
$$
\cA=\left\{ \psi_{l,i}^{-1}\circ \psi^{-1}(C)\colon C\in\widetilde{\cA},l\in\Z,1\leq i\leq M\right\}.
$$
Let $(x_n)_{n\in\N}\subset [0,1]\times SU(2)$ be uniformly distributed in $[0,1]\times SU(2)$.\footnote{The existence of such a sequence can be shown e.g. in the following way. Choose a finite open cover of $\T\times SU(2)$ consisting of continuity sets (i.e. sets whose boundaries are of zero measure) which are simply connected. It yields a partition of $\T\times SU(2)$ into a finite number of open sets, up to a set of measure zero. Using Theorem~\ref{oxtoby}, for each set of this partition, we can find a homeomorphism to $(0,1)^4$ carrying the conditional Haar measure to the Lebesgue measure. Any sequence which is uniformly distributed in $[0,1)^4$ yields sequences uniformly distributed in the closure of the elements of the partition. The desired sequence can be now constructed by taking elements from these sequences with frequencies approximating the measures of the sets from the cover.} For $l\in\Z$ we denote by $(x_{n,l})_{n\in \N}$ the sequence in $[l,l+1]\times SU(2)$ given by $x_{n,l}=x_n+(l,0)$ (the addition is understood coordinate-wise). 
\begin{proof}[Proof of Proposition~\ref{techniczny} (i)]
Fix $n\in\N$. The set $\widehat{S}_n$ we are looking for will consists of elements of sequences $(x_{n,l})_{n\in\N}$, more precisely, we will have
$$
\widehat{S}_n=\left\{x_{n,l}\colon 1\leq n\leq n_0, l\in\Z \right\}\cap S_n,
$$
where $n_0\in\N$ will be some sufficiently large number (depending on $\vep_n$). Take $A,B\in\cA$ and all $a,b\in G$. Then
$$
A=\psi^{-1}_{l_A,i_A}\circ \psi^{-1}(C_A),\ B=\psi^{-1}_{l_B,i_B}\circ \psi^{-1}(C_B)
$$
for some $C_A,C_B\in\widetilde{\cA}$ and $l_A,l_B\in \Z$, $1\leq i_A,i_B\leq M$.

If $A^{-1}a\cap B^{-1}b\cap S_n=\varnothing$ then clearly
$$
\lambda_{S_n}(A^{-1}a\cap B^{-1}b)=\lambda_{\widehat{S}_n}(A^{-1}a\cap B^{-1}b)=0,
$$
no matter which finite subset $\widehat{S}_n\subset S_n$ we choose. Suppose now that $A^{-1}a\cap B^{-1}b\cap S_n\neq \varnothing$ and consider the following cases:
\begin{multicols}{2}
\begin{enumerate}[(a)]
\item $A^{-1}a\cup B^{-1}b\subset S_n$,
\item $A^{-1}a\cup B^{-1}b\not\subset S_n$.
\end{enumerate}
\end{multicols}
We will show how to proceed in case (b) (case (a) can be treated in a very similar way). Recall that $S_n=I^{\R}[(2n-1)\widetilde{a}_{n-1}]\times SU(2)$, so in view of the definition of $\cA$ one of the following holds:
\begin{enumerate}
\item[(b1)]
$A^{-1}a\cup B^{-1}b\subset (l-2,l+1)\times U_i$,
\item[(b2)]
$A^{-1}a\cup B^{-1}b\subset (l-1,l+2)\times U_i$,
\item[(b3)]
$A^{-1}a\cup B^{-1}b\subset (-l-2,-l+1)\times U_i$,
\item[(b4)]
$A^{-1}a\cup B^{-1}b\subset (-l-1,-l+2)\times U_i$,
\end{enumerate}
where $1\leq i\leq M$ and $l=-(2n-1)\widetilde{a}_{n-1}$. We will cover case (b1) (the other cases can be treated in the same way). We have
\begingroup
\allowdisplaybreaks
\begin{align}
\begin{split}\label{mar1}
\lambda_{S_n}(A^{-1}&a\cap B^{-1}b)=\frac{\lambda((l-1,l+2)\times SU(2))}{\lambda(S_n)}\\
&\cdot\lambda_{(l-1,l+2)\times SU(2)}(A^{-1}a\cap B^{-1}b\cap ((l,l+2)\times SU(2)))\\
=&\frac{3}{2|l|}\cdot\lambda_{(l-1,l+2)\times SU(2)}(A^{-1}a\cap B^{-1}b\cap ((l,l+2)\times SU(2)))\\
=&\frac{3}{2|l|}\cdot\lambda\left(\psi\circ \psi_{l,i}\circ g_a\circ g\circ \psi_{l_A,i_A}^{-1}\circ \psi^{-1}(C_A)\right.\\
&\left.\cap \psi\circ \psi_{l,i}\circ g_b\circ g\circ \psi_{l_B,i_B}^{-1}\circ \psi^{-1}(C_B)\cap \left(\left(1/3,1\right)\times (0,1)^3\right) \right)\\
=&\frac{3}{2|l|}\cdot\lambda (\Psi_A(C_A)\cap \Psi_B(C_B)\cap C),
\end{split}
\end{align}
\endgroup
where
\begin{align*}
&\Psi_A=\psi\circ \psi_{l,i}\circ g_a\circ g\circ \psi_{l_A,i_A}^{-1}\circ \psi^{-1},\\
&\Psi_B=\psi\circ \psi_{l,i}\circ g_b\circ g\circ \psi_{l_B,i_B}^{-1}\circ \psi^{-1},\\
&C=\left(1/3,1\right)\times (0,1)^3.
\end{align*}
Define $(y_n)_{n\in\N}\subset [0,1]^4$, by
\begin{equation*}
y_n=\begin{cases}
\psi\circ \psi_{l,i}(x_{(n+2)/3,l-1}),&\text{ if }n\equiv 0 \mod 3,\\
\psi\circ \psi_{l,i}(x_{(n+1)/3,l}),&\text{ if }n\equiv 1 \mod 3,\\
\psi\circ \psi_{l,i}(x_{n/3,l+1}),&\text{ if }n\equiv 2 \mod 3.
\end{cases}
\end{equation*}
Notice that $(y_n)_{n\in\N}$ is uniformly distributed in $[0,1]^4$ and the discrepancy of this sequence does not depend on $l\in\Z$. We have
\begingroup
\allowdisplaybreaks
\begin{align}
\begin{split}\label{mar2}
\frac{1}{N}\sum_{n\leq N}&\raz_{A^{-1}a\cap B^{-1}b\cap ((l,l+2)\times SU(2))}(x_{n,l-1})\\
&+\frac{1}{N}\sum_{n\leq N}\raz_{A^{-1}a\cap B^{-1}b\cap ((l,l+2)\times SU(2))}(x_{n,l})\\
&+\frac{1}{N}\sum_{n\leq N}\raz_{A^{-1}a\cap B^{-1}b\cap ((l,l+2)\times SU(2))}(x_{n,l+1})\\
=&3\cdot\frac{1}{3N}\sum_{n=1}^{3N}\raz_{\Psi_A(C_A)\cap \Psi_B(C_B)\cap C}(y_n)\\
\to& 3\lambda(\Psi_A(C_A)\cap \Psi_B(C_B)\cap C).
\end{split}
\end{align}
\endgroup
In view of Lemma~\ref{lemma1.3} the above convergence is uniform with respect to $A,B$ and $a,b$. It follows from~\eqref{mar1} and~\eqref{mar2} that $\widehat{S}_n$ defined in the following way:
$$
\widehat{S}_n=S_n\cap\left(\bigcup_{l\in Z}\bigcup_{n=1}^{n_0}x_{n,l}\right).
$$
satisfies (i) provided that $n_0$ is large enough.
\end{proof}
\begin{proof}[Proof of Proposition~\ref{techniczny} (ii)]
We claim that the family
\begin{multline}\label{263c}
\{f\colon G\times G\to \R\colon f(v,w)=\lambda_{F_n}(Aav\cap Bbw)\}\\
 \text{ is uniformly equi-continuous.}
\end{multline}
Fix $\vep>0$. Let $\delta\in(0,\delta_0)$ be sufficiently small, so that for $C\in\cA$ and $c\in G$ such that $\lambda(Cc)=\lambda(C)>\vep/2$ we can find a ball of radius $\delta$ being a subset of $Cc$. Such choice is possible in view of Lemma~\ref{lemma1.3}.
Take $A,B\in\cA$, $a,b\in G$, $n\in\N$ and let $f(v,w)=\lambda_{F_n}(Aav\cap Bbw)$. Take $v_1,v_2,w\in G$ with $d(v_1,v_2)<\delta$. We have
\begin{equation}\label{263a}
|f(v_1,w)-f(v_2,w)|\leq \lambda((Aav_1\triangle Aav_2)\cap Bbw).
\end{equation}
We will consider the following cases:
\begin{enumerate}
\item[(i)]
$(Aav_1\cup Aav_2)\cap Bbw =\varnothing$,
\item[(iia)]
$(Aav_1\cup Aav_2)\cap Bbw \neq\varnothing$, $\lambda(A)\leq\vep/2$,
\item[(iib)]
$(Aav_1\cup Aav_2)\cap Bbw \neq\varnothing$, $\lambda(A)>\vep/2$.
\end{enumerate}
Using~\eqref{263a} we obtain in case (i) $|f(v_1,w)-f(v_2,w)|=0$ and in case (iia) $|f(v_1,w)-f(v_2,w)|\leq \vep$. In case (iib), by the choice of $\delta$, it is clear that $Aav_1\cap Aav_2\neq \varnothing$. Therefore (see the definition of $\widetilde{\cA}$) there exist $l\in \Z$ and $1\leq i\leq M$ such that
$$
(Aav_1\cup Aav_2\cup Bbw)_{\delta_0}\cup (Aav_1\cup Aav_2\cup Bbw) \subset (l-1,l+2)\times U_i.
$$
Now we can use the homeomorphism $\psi_{l,i}^{-1}\circ \psi$ to ``transport'' the above set to $[0,1]^4$. Using Lemma~\ref{lemma1.3} and adjusting $\delta$ if necessary we arrive at~\eqref{263c}.

Let $K=(2n-1)\widetilde{a}_{n-1}$ and for $k\in [-K,K-1]$ let $y_{m,k}=x_m+(k,I)$. Moreover, for $1\leq i\leq M$ let $(y_{m,k}^i)_{m\in\N}$ be a subsequence of elements of $(y_{m,k})_{m\in\N}$ which are in $[k,k+1]\times U_i$ and let $N_i=\# \{ x_{m,k}\in U_i\colon 1\leq m\leq N\}$ (notice that this quantity is independent of $k$). Then for a fixed $w$ we have
\begin{align}\label{whole}
\begin{split}
\frac{1}{2KN}&\sum_{k=-K}^{K-1}\sum_{m=1}^{N}f(y_{m,k},w)\\
&=\frac{1}{2KN}\sum_{k}\sum_{m}\sum_{l}\sum_{i}\eta_l\times\rho_i(y_{m,k})f(y_{m,k},w)\\
&=\frac{1}{2KN}\sum_{i}\sum_{l}\sum_{k=l-1}^{l+1}\sum_{m}\eta_l\times \rho_i(y_{m,k})f(y_{m,k},w)\\
&=\frac{1}{2KN}\sum_{i}\frac{N_i}{N}\sum_{l}\frac{1}{N_i}\sum_{k=l-1}^{l+1}\sum_{m=1}^{N_i}\underbrace{\eta_l\otimes \rho_i(y_{m,k}^i)f(y_{m,k},w)}_{f_{l,i,m,k}}.
\end{split}
\end{align}
We have
$$
\frac{1}{3N_i}\sum_{k=l-1}^{l+1}\sum_{m=1}^{N_i}f_{l,i,m,k}(y_{m,k}^i)\to \int f_{l,i,m,k}\ d\lambda_{(l-1,l+2)\times SU(2)}
$$
and
$$
\frac{N_i}{N}\to \lambda_{SU(2)}(U_i),
$$
and the above convergences are uniform in view of Theorem~\ref{hlawka} and Lemma~\ref{lemma1.3}. Hence the expression in \eqref{whole} converges to
\begin{multline*}
\frac{1}{2K}\sum_{i=1}^{M}\sum_l\lambda_{SU(2)}(U_i)\cdot 3\cdot \int f_{l,i,m,k}\ d\lambda_{(l-1,l+2)\times U_i}\\
=\frac{1}{2K}\int \sum_l\sum_{i=1}^{M}3\lambda_{SU(2)}(U_i) \eta_l\otimes \rho_i\cdot f(\cdot, w)\ d\lambda_{(l-1,l+2)\times U_i}=\int f(\cdot, w) \ d\lambda_{S_n}.
\end{multline*}
To end the proof it suffices to use Fubini's theorem and take
$\widehat{S}_n:=S_n\cap \left(\bigcup_{k\in\Z}\bigcup_{n=1}^{n_0}x_{n,k} \right)$
for $n_0$ large enough. Notice that this is consistent with the final choice of $\widehat{S}_n$ in the proof of Proposition~\ref{techniczny} (i).
\end{proof}
\begin{uw}\label{tech}
Without loss of generality, in Proposition~\ref{techniczny} we may assume that $F_n\in\cA$ for $n\in\N$.
\end{uw}
\begin{uw}
The assertion of Proposition~\ref{techniczny} remains true if instead of sets $A,B\in\cA$ we consider sets $A',B'$ which are finite sums of translations of elements from $\cA$, provided that the number of elements in the sum is bounded for each $n$.
\end{uw}

\section{More proofs}\label{apB}
In this section we include other proofs of some of the result presented in the main part of the paper.
\subsection{Another proof of Proposition~\ref{pr:msj}}
We will need a classical result on group homomorphisms:
\begin{tw}[Mackey, see~\cite{MR776417}]\label{mackey}
Let $G$ be a locally compact, second countable group and let $J$ be a second countable group. Let $\pi\colon G\to J$ be a measurable group homomorphism. Then $\pi$ is continuous.
\end{tw}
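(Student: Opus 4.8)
The plan is to prove continuity by a standard Steinhaus--Weil measure argument. First I would reduce to continuity at the identity: since $\pi$ is a homomorphism, it suffices to show that $g_n\to e_G$ implies $\pi(g_n)\to e_J$, for then given arbitrary $g$ and $h_n\to g$ one writes $h_n=(h_ng^{-1})g$, notes $h_ng^{-1}\to e_G$, and uses $\pi(h_n)=\pi(h_ng^{-1})\pi(g)\to\pi(g)$. Equivalently, it suffices to prove that for every neighborhood $V$ of $e_J$ the preimage $\pi^{-1}(V)$ is a neighborhood of $e_G$.

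Second, fix such a $V$ and choose a symmetric open neighborhood $W$ of $e_J$ with $WW\subset V$. Because $J$ is second countable it is Lindel\"of, so it is covered by countably many left translates $J=\bigcup_n j_nW$. Pulling back, $G=\bigcup_n\pi^{-1}(j_nW)$, and each $\pi^{-1}(j_nW)$ is measurable, since $\pi$ is measurable and the sets $j_nW$ are Borel. Equip $G$ with a left Haar measure $\lambda$; as $G$ is locally compact and second countable, $\lambda$ is $\sigma$-finite and every nonempty open set has positive measure. Since the countable union covers $G$, at least one piece $E:=\pi^{-1}(j_nW)$ has positive measure, and intersecting with a compact neighborhood of positive measure we may assume $0<\lambda(E)<\infty$.

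Third --- and this is the crux --- I would invoke the Steinhaus--Weil theorem: for $E$ of positive finite Haar measure the set $E^{-1}E$ contains a neighborhood $U$ of $e_G$. The argument I have in mind is to show that $h(g):=\int_G \mathbf{1}_E(x)\,\mathbf{1}_E(xg)\,d\lambda(x)$ is continuous with $h(e_G)=\lambda(E)>0$; continuity follows from the strong continuity of right translation on $L^2(G,\lambda)$ (established first on $C_c(G)$ and then extended by density), so $h$ remains positive on a neighborhood $U$ of $e_G$, and $h(g)>0$ forces some $x$ with $x,xg\in E$, i.e. $g=x^{-1}(xg)\in E^{-1}E$. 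This is the main technical obstacle, since it requires care with the possibly non-unimodular Haar measure and with the continuity of translation in $L^2$.

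Finally I would assemble the pieces. For $g\in E^{-1}E$ write $g=e_1^{-1}e_2$ with $e_1,e_2\in E=\pi^{-1}(j_nW)$; then $\pi(e_1),\pi(e_2)\in j_nW$, so $\pi(g)=\pi(e_1)^{-1}\pi(e_2)\in (j_nW)^{-1}(j_nW)=W^{-1}W=WW\subset V$, using that $W$ is symmetric. Hence $U\subset E^{-1}E\subset\pi^{-1}(V)$, so $\pi^{-1}(V)$ is a neighborhood of $e_G$. This yields continuity at $e_G$, and by the first step continuity of $\pi$ everywhere, completing the proof.
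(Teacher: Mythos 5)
Your argument is correct, but there is nothing in the paper to compare it against: the statement is quoted there purely as a citation (``Mackey, see \cite{MR776417}'') and no proof is given, so you have supplied a proof where the author supplies a reference. What you wrote is the standard Steinhaus--Weil (Banach--Pettis) automatic-continuity argument, and it is essentially complete. The two places that genuinely require care are exactly the ones you flagged. First, since $J$ need not be abelian, the set $E^{-1}E$ (rather than $EE^{-1}$) is the right one to pair with a cover by \emph{left} translates $j_nW$, because only then does $\pi(e_1)^{-1}\pi(e_2)\in W^{-1}j_n^{-1}j_nW=WW\subset V$ collapse correctly; you made that choice. Second, $E^{-1}E$ is detected by $h(g)=\lambda(E\cap Eg^{-1})$, which involves \emph{right} translation, so for a left Haar measure the operator $R_g$ is not unitary and one must bound $\|R_g\|=\Delta(g)^{-1/2}$ locally before running the $C_c(G)$-density argument; this works since $\Delta$ is continuous, but if you want to avoid the modular function altogether you can instead cover $J$ by right translates $Wj_n$, use $E=\pi^{-1}(Wj_n)$ and the set $EE^{-1}\supset U$ obtained from the genuinely unitary, strongly continuous left regular representation via $g\mapsto\lambda(gE\cap E)$. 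One further small point: the implicit hypothesis that $J$ is a second countable \emph{topological} group is needed both for the choice of a symmetric $W$ with $WW\subset V$ and for the Lindel\"of covering step, and ``measurable'' must be read so that preimages of open (hence Borel) subsets of $J$ are Haar measurable; with those readings every step goes through.
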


\begin{proof}[Proof 2 of Proposition~\ref{pr:msj}]
Let $\cS$ be such that $T_1=S_1$. Then for any $t\in\R$ we have $S_t\in C(S_1)=C(T_1)=C(\cT)$ (the latter equality follows from Proposition~\ref{pr:simple}). By the assumption that $\cT$ has the MSJ property, it follows that there exists a function $a\colon \R\to \R$ such that $S_t=T_{a(t)}$. It is a group homomorphism. We claim that it is continuous. Indeed, $t\mapsto S_t$ is measurable, whence (by Theorem~\ref{mackey}) it is continuous.  Now, using the fact that $S_t=T_{a(t)}$ and the open mapping theorem we obtain that $t\mapsto a(t)$ is also continuous. Therefore we have $a(t)=at$ for some $a\in\R$. By taking $t=1$ we obtain that $a=1$, i.e. $T_t=S_t$ for all $t\in\R$.
\end{proof}

\subsection{Another proof Proposition~\ref{pr12a} and~\ref{pr12a1}}
In this section we will give a more direct proof (avoiding isometric extensions) of Proposition~\ref{pr12a} and~\ref{pr12a1} in case $\gr{\nu}=\nu$.
\subsubsection{Tools: separating sieves}
Let $T$ and $S$ be automorphisms of standard probability Borel spaces $\xbm$ and $\ycn$ respectively.  Suppose that $T$ is an ergodic extension of $S$. Let $\pi\colon X\to Y$ be the factoring map.
\begin{df}[\cite{MR0234443}]
Given $A\in\cB$, let 
$$
A\times_YA:=\{(x,x')\in A\times A\colon \pi(x)=\pi(x')\}.
$$
A sequence $A_1\supset A_2\supset \dots$ of sets in $\cB$ with $\mu(A_n)>0$ and $\mu(A_n)\to 0$, is called a \emph{separating sieve} over $Y$ if there exists a subset $X_0\subset X$ with $\mu(X_0)=1$ such that for every $x,x'\in X_0$ with $\pi(x)=\pi(x')$, the condition ``for every $n\in\N$ there exists $m\in \Z$ with $T^m x,T^m x'\in A_n$'' implies $x=x'$, i.e.
$$
\bigcap_{n=1}^{\infty}\left(\bigcup_{m\in\Z}(A_n\times_Y A_n) \right)\cap (X_0\times_Y X_0)\subset \{(x,x)\colon x\in X_0\}.
$$
We also say that the extension $\pi$ has a \emph{separating sieve}. 
\end{df}
\begin{uw}\label{zimm}
Zimmer~\cite{MR0409770} showed that the existence of a separating sieve is equivalent to distality of the considered extension.
\end{uw}

\begin{uw}\label{uw:zmniejszanie}
Notice that whenever $(A_n)_{n\in\N}$ forms a separating sieve for $\pi$, then also any subsequence $(A_{n_k})_{k\geq 1}$ is a separating sieve for $\pi$. Moreover, any decreasing sequence $(B_n)_{n\in\N}$ of sets of positive measure with $B_n\subset A_n$ forms a separating sieve for $\pi$.
\end{uw}

\begin{lm}\label{lm:sita}\footnote{An analogous result holds for other group actions -- the proof can be repeated word by word.}
Suppose that $T$ is an ergodic extension of $S$ with the factoring map $\pi$. The following conditions are equivalent:
\begin{itemize}
\item[(i)]
There exists a separating sieve for $\pi$.
\item[(ii)]
There exists $X_0\subset X$ with $\mu(X_0)=1$ and a sequence of sets $(B_n)_{n\in\N}$ with $\mu(B_n)>0$, $\mu(B_n)\to 0$ and such that for $x,x'\in X$ with $\pi(x)=\pi(x')$ whenever for every $n\in\N$ there exists $m_n\in\Z$ such that $T^{m_n}x$, $T^{m_n}x'\in B_n$ then $x=x'$.
\end{itemize}
Moreover, if additionally $\mu$ is non-atomic, the above conditions are equivalent to
\begin{itemize}
\item[(ii')]
There exists $X_0\subset X$ with $\mu(X_0)=1$ and a sequence of sets $(B_n)_{n\in\N}$, $\mu(B_n)>0$ and such that for $x,x'\in X$ with $\pi(x)=\pi(x')$ whenever for every $n\in\N$ there exists $m_n\in\Z$ such that $T^{m_n}x$, $T^{m_n}x'\in B_n$ then $x=x'$.
\end{itemize}
\end{lm}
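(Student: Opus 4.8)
The plan is to route everything through Zimmer's characterization recalled in Remark~\ref{zimm}: the extension $\pi$ admits a separating sieve if and only if it is relatively distal. Since (i) is by definition the existence of a separating sieve, it suffices to prove that (ii) is also equivalent to relative distality of $\pi$, and then to treat the equivalence of (ii) and (ii$'$) under non-atomicity on its own. The implication (i)$\Rightarrow$(ii) is immediate: a separating sieve $(A_n)$ is already a sequence of the kind required in (ii), one simply forgets that it is decreasing, the conditions $\mu(A_n)>0$, $\mu(A_n)\to 0$ and the separation property being verbatim those of (ii). Thus the whole weight of the first equivalence lies in showing (ii)$\Rightarrow$(i); for this I would prove the contrapositive ``$\pi$ not relatively distal $\Rightarrow$ (ii) fails'' and then invoke Remark~\ref{zimm}.

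For the contrapositive, suppose $\pi\colon\cB\to\cA$ is not relatively distal. Applying the Furstenberg--Zimmer theorem (Theorem~\ref{fz}) I obtain the intermediate factor $\cA\subset\cC\subset\cB$ with $\cB\to\cC$ relatively weakly mixing and $\cC\to\cA$ relatively distal; since $\pi$ is not distal we have $\cC\neq\cB$, so $\cB\to\cC$ is a \emph{proper} relatively weakly mixing extension. By the definition of relative weak mixing (Section~\ref{relsekcja1}), $T\times T$ acts ergodically on $(X\times X,\mu\otimes_\cC\mu)$, where $\mu\otimes_\cC\mu$ is the relatively independent self-joining over $\cC$; moreover, as $\cC\neq\cB$, this measure is not carried by the diagonal. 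For each $n$ the set $B_n\times_\cC B_n$ of pairs lying in $B_n$ with a common $\cC$-fibre has positive $\mu\otimes_\cC\mu$-measure, namely $\int \big(E(\raz_{B_n}\mid\cC)\big)^2\,d\mu\geq\mu(B_n)^2>0$. Hence, by ergodicity, for $\mu\otimes_\cC\mu$-almost every pair $(x,x')$ and every $n$ there is $m_n\in\Z$ with $T^{m_n}x,T^{m_n}x'\in B_n$, the two points sharing a common $\cC$-fibre and so in particular satisfying $\pi(x)=\pi(x')$. Intersecting over the countably many $n$, $\mu\otimes_\cC\mu$-almost every pair is ``bad''. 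Since $\mu(X_0)=1$ forces $\mu\otimes_\cC\mu(X_0\times_\cC X_0)=1$ while the diagonal is not conull for $\mu\otimes_\cC\mu$, I obtain a positive-measure set of bad pairs $(x,x')$ with $x,x'\in X_0$, $\pi(x)=\pi(x')$ and $x\neq x'$, contradicting (ii). I expect this step --- extracting the simultaneously recurrent off-diagonal pairs from the relatively weakly mixing part --- to be the main obstacle, the delicate points being the positivity of $\mu\otimes_\cC\mu(B_n\times_\cC B_n)$ and the production of a common $\cC$-fibre (not merely a common $\cA$-fibre), so that the resulting pairs genuinely violate the separation hypothesis over $\cA$.

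Finally, for the equivalence of (ii) and (ii$'$) when $\mu$ is non-atomic: (ii)$\Rightarrow$(ii$'$) is trivial, as (ii$'$) only drops the requirement $\mu(B_n)\to 0$. For (ii$'$)$\Rightarrow$(ii), given $(B_n)$ as in (ii$'$) I would use non-atomicity to choose, for each $n$, a subset $B_n'\subset B_n$ with $0<\mu(B_n')<1/n$, so that $\mu(B_n')\to 0$. Shrinking preserves the separation property in the needed direction: if a pair $(x,x')$ is bad for $(B_n')$ then, since $B_n'\subset B_n$, it is bad for $(B_n)$ as well, whence $x=x'$ by (ii$'$); thus $(B_n')$ witnesses (ii). The same monotonicity --- passing to subsets only shrinks the set of bad pairs --- is precisely what makes (ii)$\Rightarrow$(i) unattainable by the naive device of replacing $(B_n)$ by tail unions $\bigcup_{k\geq n}B_k$, which would instead enlarge the set of bad pairs and mix the levels at a common time; this is why the Furstenberg--Zimmer route above seems unavoidable.
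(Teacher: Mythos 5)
Your proof is correct, but it takes a genuinely different and much heavier route than the paper's. For the key implication (ii)$\Rightarrow$(i) the paper gives a two-line elementary construction: set $A_1=B_1$ and recursively $A_n=T^{k_n}B_n\cap A_{n-1}$, where $k_n$ is chosen by ergodicity of $T$ so that $\mu(T^{k_n}B_n\cap A_{n-1})>0$; this produces a decreasing sequence with $\mu(A_n)\le\mu(B_n)\to 0$, and any pair simultaneously recurrent to all $A_n$ is, after translating by $T^{-k_n}$, simultaneously recurrent to all $B_n$, so the separation property transfers. You instead route through Remark~\ref{zimm} (separating sieve $\iff$ relative distality) together with the Furstenberg--Zimmer theorem: if $\pi$ is not distal, the proper relatively weakly mixing part $\cB\to\cC$ yields an ergodic, off-diagonal relatively independent self-joining $\mu\otimes_\cC\mu$, and your recurrence argument (positivity of $\mu\otimes_\cC\mu(B_n\times B_n)=\int\bigl(E(\raz_{B_n}\mid\cC)\bigr)^2\,d\mu\ge\mu(B_n)^2$, invariance and hence fullness of the return sets, nullity of the diagonal by ergodicity) produces a conull set of bad off-diagonal pairs sharing an $\cA$-fibre, refuting (ii). Each step checks out, and the (ii)$\Leftrightarrow$(ii$'$) shrinking argument via non-atomicity agrees with the paper's. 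What your approach buys is a conceptual explanation of \emph{why} simultaneous recurrence detects the weakly mixing part; what it costs is reliance on Zimmer's characterization and the structure theorem, which is somewhat against the grain here, since this lemma lives in an appendix whose stated purpose is to give proofs avoiding exactly that machinery, and the paper's recursive construction also makes the footnoted claim (that the proof transfers verbatim to other group actions) immediate.
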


\begin{proof}
Clearly (i) implies (ii) and (ii').

We will show now that (ii) implies (i). Let us recursively define a sequence of sets $(A_n)_{n\in\N}$:
$$
A_1=B_1,\ A_n=T^{k_n}B_n\cap A_{n-1},
$$
where $k_n\in\Z$ is such that $\mu(A_n)=\mu(T^{k_n}B_n\cap A_{n-1})>0$. Such $k_n$ exists by ergodicity of $T$. We claim that $(A_n)_{n\in\N}$ is a separating sieve for $\pi$. Clearly $\mu(A_n)\to 0$. Let $x,x'\in X_0$ be such that for each $n\in\N$ there exists $m_n$ such that $T^{m_n}x,T^{m_n}x'\in A_n$. By (ii) this implies that $T^{-k_n+m_n}x,T^{-k_n+m_n}x'\in B_n$ whence $x=x'$ and we have proved that (i) follows from (ii).

To see that (ii') implies (i), it suffices to adjust the sets $A_n$ from the proof of the previous part, so that $\mu(A_n)\to 0$.
\end{proof}

\subsubsection{Proof}\label{drugidowod2}
Let $G$ be a locally compact, second countable abelian group and let $H$ be its co-compact subgroup. Suppose $G$ acts ergodically on $\xbm$. Let $\cA\subset\cB$ be a factor of $G$ such that the subaction of $H$ restricted to $(X,\cA,\mu|_\cA)$ is ergodic. There are two possibilities:
\begin{itemize}
	\item[($A_G$)] the $H$-sub-action is ergodic with respect to $\mu$,
	\item[($B_G$)] the $H$-sub-action is not ergodic with respect to $\mu$.
\end{itemize}
We will be interested in $G\in\{\R,\Z\}$ and $H=l\Z$ for $l\in\N$. In order to give an alternative proof of Proposition~\ref{pr12a} and Proposition~\ref{pr12a1} in case $\gr{\nu}=\nu$ we take the following path:
$$
(A_\Z) \rightarrow \left[(B_\R) \text{ and } (B_\Z)\right] \rightarrow (A_\R).
$$
Notice that
\begin{itemize}
\item
case ($A_\Z$) corresponds to Proposition~\ref{pr12a1} in case $\widetilde{\gr{\nu}}=\widetilde{\nu}$,
\item
case ($B_\R$) corresponds to Proposition~\ref{pr12a} in case $\widetilde{\gr{\nu}}\neq \widetilde{\nu}$,
\item
case ($B_\Z$) corresponds to Proposition~\ref{pr12a1} in case $\widetilde{\gr{\nu}}\neq \widetilde{\nu}$,
\item
case ($A_\R$) corresponds to Proposition~\ref{pr12a} in case $\widetilde{\gr{\nu}}=\widetilde{\nu}$,
\end{itemize}
\paragraph{Case ($A_\Z$)} 

\begin{proof}[Proof of Proposition~\ref{pr12a1} in case $\widetilde{\gr{\nu}}=\widetilde{\nu}$.]
We will use Remark~\ref{zimm} on page~\pageref{zimm} and use the notion of a separating sieve instead of the notion of relative distality.  It is clear that every separating sieve for $(T,\widetilde{\nu})\to(T|_\cA,\nu)$ is a separating sieve for $(T^l,\widetilde{\nu})\to ({T^l}|_\cA,\nu)$.
Let $A_1\supset A_2\supset \dots$ be a separating sieve for $(T^l,\widetilde{\nu})\to ({T^l}|_\cA,\nu)$ and let $X_0\subset X$ be the set of full measure taken from the definition of this separating sieve. Let $\pi$ be the map yielding the factor $\sigma$-algebra $\cA$. Define $\widetilde{X}$ as the intersection $\cap_{k\in\Z}T^k {X_0}$. Take $\widetilde{x},\widetilde{y}\in\widetilde{X}$ with $\pi(\widetilde{x})=\pi(\widetilde{y})$ and suppose that for all $n\in\N$ there exists $k_n\in\Z$ such that $T^{k_n}\widetilde{x},T^{k_n}\widetilde{y}\in A_n$. Let $l_n\in \{0,1,\dots,l-1\}$ be such that $k_n-l_n\in l\Z$. Then
$$
T^{k_n-l_n}T^{l_n}\widetilde{x},T^{k_n-l_n}T^{l_n}\widetilde{y}\in A_n.
$$
Since all $l_n$ belong to the finite set $\{0,1,\dots,l-1\}$ therefore along some subsequence $(n_k)_{k\in\N}$ we have $l_{n_k}$ constant. Moreover $\pi(T^{l_{n_k}}\widetilde{x})=\pi(T^{l_{n_k}}\widetilde{y})$ ($\pi$ is the factoring map) and $T^{l_{n_k}}\widetilde{x},T^{l_{n_k}}\widetilde{y}\in \widetilde{X}$. Our claim follows by the assumption that $A_1\supset A_2 \supset \dots$ is a separating sieve for $(T^l,\widetilde{\nu})\to ({T^l}|_\cA,\nu)$.
\end{proof}
\paragraph{Cases ($B_\R$) and ($B_\Z$)}
\begin{proof}[Proof of Proposition~\ref{pr12a} in case $\widetilde{\gr{\nu}}\neq\widetilde{\nu}$ (and $\gr{\nu}=\nu$)]
By Lemma~\ref{lm:rozkladdokl} the ergodic decomposition of $\widetilde{\gr{\nu}}$ for $T_1$ is of the form
\begin{equation*}
\widetilde{\gr{\nu}}=k\int_0^{1/k}\widetilde{\nu}\circ T_t\ dt
\end{equation*}
for some $k\in\N$ (the measures $\widetilde{\nu}\circ T_t$ are pairwise orthogonal for $t\in[0,1)$ and $\widetilde{\nu}$ is $T_{1/k}$-invariant). Moreover, $\widetilde{\nu}$ is ergodic for $T_{1/k}$ as it is ergodic for $T_1$. By Proposition~\ref{pr12a1} in case ($A_\Z$) proved above, the extension $(T_1,\widetilde{\nu})\to (T_1|_\cA,\nu)$ is distal if and only if the extension $(T_{1/k},\widetilde{\nu})\to (T_{1}|_\cA,\nu)$ is distal. Therefore we may assume without loss of generality that $k=1$ and
\begin{equation}\label{eq:mutu}
\text{the measures }\widetilde{\nu}\circ T_t\text{ are mutually singular for }t\in[0,1).
\end{equation}
Moreover, using Remark~\ref{u:zawie} on page~\pageref{u:zawie}, we may also assume that
\begin{multline}\label{eee}
(X,\cB,{\widetilde{\gr\nu}})=(X,\cB,\widetilde{\nu})\times [0,1],\\
 \text{ with the identification of }(x,1)\text{ with }(T_1x,0)\\
\text{and } T_t(x,s)=(x,s+t) \text{ for }s\in [0,1].
\end{multline}

Suppose that the extension $(T_1,\widetilde{\nu})\to ({T_1}|_\cA,\nu)$ is distal. We will show that also the extension $(\cT,\widetilde{\gr{\nu}})\to ({\cT}|_\cA,\gr{\nu})$ is distal. Since every ergodic factor of a non-ergodic group action is a factor of its almost every ergodic component we have $\pi(x,t)=\pi(x,s)$ for $0\leq t,s<1$, where $\pi$ is the factoring map. Let $X\times \{0\}\supset A_1\supset A_2\supset\dots$ be a separating sieve for $(T_1,\widetilde{\nu})\to ({T_1}|_\cA,\nu)$. Let ${X_0}$ be the set of full measure taken from the definition of the separating sieve for this extension. Take $\widetilde{x}=(x,r),\widetilde{y}=(y,s)\in {X_0}\times [0,1]$. Suppose that for all $n\in\N$ there exists $t_n\in\R$ such that
$$
T_{t_n}\widetilde{x},T_{t_n}\widetilde{y}\in B_n:=A_n\times [0,1/n].
$$
Then
$$
T_{t_n-r_n}\widetilde{x}=T_{t_n-r_n+r}(x,0),T_{t_n-s_n}\widetilde{y}=T_{t_n-s_n+s}(y,0)\in A_n\times \{0\}
$$
for some $r_n,s_n\in [0,1/n]$. It follows by~\eqref{eq:mutu} that $t_n-r_n+r,t_n-s_n+s\in\Z$, whence $r_n-s_n+s-r\in\Z$. Since $r_n-s_n\in [-1/n,1/n]$ and $s-r\in(-1,1)$ therefore there exists $n_0\in\N$ such that for $n\geq n_0$ we have $r_n-s_n+s-r=0$, i.e. $r_n-s_n=r-s=0$. Moreover, since $X\times \{0\}\supset A_1\supset A_2\supset\dots$ is a separating sieve for $(T_1,\widetilde{\nu})\to (T_1,\nu)$, $x=y$. Hence $\widetilde{x}=\widetilde{y}$ and $B_1\supset B_2\supset \dots$ is a separating sieve for $(\cT,\widetilde{\gr{\nu}})\to (\cT,\gr{\nu})$, which ends the first part of the proof.

Suppose now that the extension $(\cT,\widetilde{\gr{\nu}})\to ({\cT}|_\cA,\gr{\nu})$ is distal. We will show that also the extension $(T_1,\widetilde{\nu})\to ({T_1}|_\cA,\nu)$ is distal. By the assumption
$$
\mbox{there exists a separating sieve for }(\cT,\widetilde{\gr{\nu}})\to ({\cT}|_\cA,\gr{\nu}), 
$$
i.e. there exists a set $\widetilde{X}_0\subset X$ with $\widetilde{\gr{\nu}}(\widetilde{X}_0)=1$ and a sequence of sets $(A_n)_{n\in\N}$ with $\widetilde{\gr{\nu}}(A_n)>0$ and $\widetilde{\gr{\nu}}(A_n)\to 0$ such that whenever $x,y\in \widetilde{X}_0$ are such that $\pi(x)=\pi(y)$ and for every $n\in\N$ there exists $t_n\in\R$ with $T_{t_n}x,T_{t_n}y\in A_n$ then $x=y$.

We will use $(A_n)_{n\in\N}$ to find a separating sieve for $(T_1,\widetilde{\nu})\to (T_1,\nu)$. Without loss of generality we may assume that $\widetilde{X}_0$ is invariant under the flow $(T_t)_{t\in\R}$, whence $\widetilde{\nu}(X_0)=1$. 
Let
$$
B_n:=T_{t_n}A_n,\text{ where }t_n\in[0,1)\text{ is such that }\widetilde{\nu}(B_n)>0.
$$
We claim that $\widetilde{X}_0$ and $(B_n)_{n\in\N}$ satisfy the assumptions of Lemma~\ref{lm:sita}. Indeed, let $x,y\in\widetilde{X}_0$ be such that $\pi(x)=\pi(y)$ and for all $n\in\N$ there exists $k_n\in\Z$ with $T_{k_n}x,T_{k_n}y\in B_n$. Then
$$
T_{k_n-t_n}x,T_{k_n-t_n}y\in A_n,
$$
which implies that $x=y$. To end the proof it suffices to notice that the measure $\widetilde{\nu}$ is non-atomic ($\widetilde{\gr{\nu}}=\int_0^1 \widetilde{\nu}\circ T_t \ dt$ and $\widetilde{\gr{\nu}}$ is finite) and use Lemma~\ref{lm:sita}.
\end{proof}

In case ($B_\Z$) the proofs are the same as in case ($B_\R$).

\paragraph{Case ($A_\R$)}

\begin{proof}[Proof of Proposition~\ref{pr12a} in case $\widetilde{\gr{\nu}}=\widetilde{\nu}$]

It is clear that every separating sieve for $(\cT,\widetilde{\nu})\to ({\cT}|_\cA,\nu)$ is a separating sieve for $(T_1,\widetilde{\nu})\to ({T_1}|_\cA,\nu)$. Using Remark~\ref{zimm} we obtain the first part of the claim.

Suppose now that the extension $(T_1,\widetilde{\nu})\to ({T_1}|_\cA,\nu)$ is isometric. By the proof of Proposition~\ref{Tzwarte} (see~\cite{MR1930996}) it follows that there exists $\kappa\in \mathcal{J}^e_\infty \left((T_1,\widetilde{\nu});\cA \right)$ such that 
$$
(T_1^{\times \infty},\kappa)\to ({T_1}|_\cA,\nu)\text{ is a compact group extension.}
$$
On the space $X^{\times \infty}$ we have a natural action of $\cT^{\times \infty}$. It is however not necessarily true that the measure $\kappa$ is $\cT^{\times \infty}$-invariant. Hence we consider two cases:
\begin{multicols}{2}
\begin{itemize}
\item[(i)]
$\kappa$ is $\cT^{\times \infty}$-invariant,
\item[(ii)]
$\kappa$ is not $\cT^{\times\infty}$-invariant.
\end{itemize}
\end{multicols}
In case (i) we have the following two possibilities:
\begin{itemize}
\item[(i1)]
$\mathcal{J}^e_2((\cT^{\times\infty},\kappa);\cA)\subset \mathcal{J}^e_2((T_1^{\times \infty},\kappa);\cA)$,
\item[(i2)]
$\mathcal{J}^e_2((\cT^{\times\infty},\kappa);\cA)\not\subset \mathcal{J}^e_2((T_1^{\times \infty},\kappa);\cA)$.
\end{itemize}
In case (i1) by Proposition~\ref{pr:veech1} and Proposition~\ref{pr:veech2} we obtain that $(\cT^{\times\infty},\kappa)\to ({\cT}|_\cA,\nu)$ is a compact group extension. Therefore $(\cT,\widetilde{\nu})\to ({\cT}|_\cA,\nu)$ is an isometric extension.
In case (i2) take 
$$
\gr{\xi} \in \cJ^e_2((\cT^{\times \infty},\kappa);\cA)\setminus \cJ^e_2((T_1^{\times \infty},\kappa);\cA).
$$
Then
$$
\gr{\xi}=\int_0^1 \xi \circ \left(T_t^{\times \infty}\right)^{\times 2}\ dt, \text{ where }\xi\in \cJ^e_2((T_1^{\times \infty},\kappa);\cA).
$$
By Proposition~\ref{pr:veech1} the measure $\xi$ is a graph self-joining, i.e.
$$
(T_1^{\times \infty}\times T_1^{\times \infty},\xi)\simeq (T_1^{\times \infty},\kappa).
$$
Therefore
$$
(T_1^{\times \infty}\times T_1^{\times \infty},\xi) \to (T_1,\nu) \text{ is a compact group extension,}
$$
i.e. it is distal. It follows from Proposition~\ref{pr12a} in case ($B_\R$) proved above, that 
$$
(\cT^{\times \infty}\times \cT^{\times \infty},\gr{\xi}) \to ({\cT}|_\cA,\nu)\text{ is also distal.}
$$
Hence $(\cT,\widetilde{\nu})\to ({\cT}|_\cA,\nu)$ is also distal.

In case (ii) consider
$$
\gr{\kappa}:=\int_0^1 \kappa \circ T_t^{\times \infty}\ dt.
$$
By Proposition~\ref{pr12a} in case ($B_\R$) it follows that $(\cT^{\times \infty},\gr{\kappa})\to (\cT|_\cA,\nu)$ is distal, whence $(\cT,\widetilde{\nu})\to (\cT|_\cA,\nu)$ is also a distal extension.

If the extension $(T_1,\widetilde{\nu})\to ({T_1}|_\cA,\nu)$ is distal (and not isometric), it suffices to consider the maximal isometric extensions contributing to $(T_1,\widetilde{\nu})\to ({T_1}|_\cA,\nu)$ and use a similar ``maximizing'' procedure as in the proof of Proposition~\ref{pr12a} (page~\pageref{pr12a}).
\end{proof}

\subsection{Embeddability and 2-QS (special case) -- another proof}
We will give now an independent proof of the fact that 2-fold simplicity of an ergodic time automorphism of a flow implies that the whole flow is 2-fold quasi-simple.
\begin{lm}\label{lm:zawieszenie-skosny}
Let $\cT=(T_t)_{t\in\R}$ be an ergodic flow on $(X,\cB,\mu)$. Consider 
$$
S_t\colon (X,\cB,\mu)\otimes ([0,1),\lambda_{[0,1)})\to (X,\cB,\mu)\otimes ([0,1),\lambda_{[0,1)})
$$
which is a suspension over $T_1$. Then $\mathcal{S}$ is isomorphic to a skew product of $\cT$ with a rotation on the circle.
\end{lm}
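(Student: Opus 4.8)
The plan is to exhibit an explicit measure-theoretic isomorphism conjugating the suspension flow $\cS$ to the direct product of $\cT$ with the unit-speed rotation flow on $\T$, the latter being in particular a skew product of $\cT$ with a circle rotation. First I would write the suspension down concretely. Identifying the suspension space with $X\times[0,1)$ (gluing $(x,1)$ to $(T_1x,0)$) and using that $\cT$ is a flow, so that $T_1^n=T_n$, the time-$t$ map is
\[
S_t(x,s)=\left(T_{\lfloor s+t\rfloor}\,x,\ \{s+t\}\right),
\]
where $\lfloor\cdot\rfloor$ and $\{\cdot\}$ denote the integer and fractional parts. On the target side I take the flow $\cR=(R_t)_{t\in\R}$ on $X\times\T$ given by $R_t(y,\theta)=(T_t y,\theta+t)$, where $\T=\R/\Z$ carries Lebesgue measure; this is the product of $\cT$ with the rotation flow, and it projects onto $\cT$ with circle fibers on which it acts by rotation.

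The candidate conjugacy is the map
\[
\Phi\colon X\times[0,1)\to X\times\T,\qquad \Phi(x,s)=(T_s x,\,s),
\]
with inverse $\Phi^{-1}(y,\theta)=(T_{-\theta}y,\theta)$ for $\theta\in[0,1)$. I would first check that $\Phi$ is a measure-theoretic isomorphism: it is bimeasurable because the flow map $(x,t)\mapsto T_t x$ is measurable by assumption, and it preserves $\mu\otimes\lambda_{[0,1)}$ because it fixes the second coordinate while acting on the first, for each fixed $s$, by the $\mu$-preserving map $T_s$; invariance then follows by Fubini.

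The one computation to carry out is the intertwining relation $\Phi\circ S_t=R_t\circ\Phi$. On one hand, writing $n=\lfloor s+t\rfloor$ and $s'=\{s+t\}$ and using the group law $T_{s'}T_n=T_{s'+n}=T_{s+t}$,
\[
\Phi(S_t(x,s))=\Phi(T_n x,s')=(T_{s'}T_n x,\,s')=(T_{s+t}x,\,\{s+t\});
\]
on the other hand $R_t(\Phi(x,s))=R_t(T_s x,s)=(T_{t+s}x,\,\{s+t\})$, so the two agree. This is the only place where the wrap-around bookkeeping of the suspension enters, and it is precisely cancelled by letting the fibre coordinate act on the base through $T_s$; I expect this cancellation to be the crux (though still routine) of the argument. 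Concluding, $\Phi$ conjugates $\cS$ to $\cR$, and $\cR$ is a skew product of $\cT$ with a circle rotation, which proves the lemma.
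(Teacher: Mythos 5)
Your proof is correct and follows essentially the same route as the paper: the paper also conjugates the suspension to the product of $\cT$ with the unit-speed circle rotation via the map $(x,r)\mapsto(T_rx,r)$, merely stating the intertwining and measure-preservation facts that you verify explicitly.
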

\begin{proof}
Let the flow $\mathcal{S}=(S_t)_{t\in\R}\colon X\times[0,1)\to X\times[0,1)$ be given by$
S_t(x,r)=(T_{t}y,\{r+t\})$.
Then the map $\Psi\colon X\times[0,1)\to X\times[0,1)$ given by
$\Psi\colon (x,r)\mapsto (T_rx,r)$
yields an isomorphism between $\cT$ and $\cS$.
\end{proof}

\begin{pr}\label{pr2qs}
Let $\cT=(T_t)_{t\in\R}$ be a flow on $\xbm$ with $T_1$ ergodic. If $T_1$ is 2-fold simple then $\cT$ is 2-QS. In fact, for every $\gr{\xi}\in \cJ_2^e(\cT)\setminus \cJ_2^e(T_1)$ the flow $(\cT\times \cT,\gr{\xi})$ is isomorphic to a suspension flow over $(T_1,\mu)$.
\end{pr}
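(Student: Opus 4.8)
The plan is to start from an arbitrary $\gr{\xi}\in\cJ_2^e(\cT)\setminus\cJ_2^e(T_1)$ and first reduce it to a graph joining. By Remark~\ref{ro:jo} there is $\xi\in\cJ_2^e(T_1)$ with $\gr{\xi}=\int_0^1\xi\circ(T_t\times T_t)\,dt$, and since $T_1$ is 2-fold simple, $\xi\in\{\mu_S:S\in C(T_1)\}\cup\{\mu\otimes\mu\}$. The case $\xi=\mu\otimes\mu$ cannot occur: as $\mu\otimes\mu$ is $T_t\times T_t$-invariant it would force $\gr{\xi}=\mu\otimes\mu$, and this measure, lying in $\cJ_2^e(T_1)$, is already $T_1\times T_1$-ergodic, contradicting $\gr{\xi}\notin\cJ_2^e(T_1)$. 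Hence $\xi=\mu_S$ for some $S\in C(T_1)$ and $\gr{\xi}=\int_0^1\mu_S\circ(T_t\times T_t)\,dt$.

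Next I would record the suspension structure. Because $\mu_S$ is a graph, the first-coordinate projection $p_1\colon(x,Sx)\mapsto x$ is a measure isomorphism carrying $\mu_S$ to $\mu$ and intertwining $T_r\times T_r$ with $T_r$ for every $r$. Since $\gr{\xi}\notin\cJ_2^e(T_1)$, Lemma~\ref{lm:rozkladdokl} applied to the flow $\cT\times\cT$ and the $T_1\times T_1$-ergodic measure $\mu_S$ yields $k\geq 1$ with $\gr{\xi}=k\int_0^{1/k}\mu_S\circ(T_t\times T_t)\,dt$, the measures being mutually singular on $[0,1/k)$ and $\mu_S$ being $T_{1/k}\times T_{1/k}$-invariant (equivalently $S\in C(T_{1/k})$, using $\mu_S\circ(T_t\times T_t)=\mu_{T_{-t}ST_t}$). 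By Remark~\ref{u:zawie} the flow $(\cT\times\cT,\gr{\xi})$ is then the suspension of its cross-section $(T_{1/k}\times T_{1/k},\mu_S)\cong(T_{1/k},\mu)$ with constant return time $1/k$; for $k=1$ this is exactly the suspension over $(T_1,\mu)$. Running the computation of Lemma~\ref{lm:zawieszenie-skosny} verbatim with $1/k$ in place of $1$ identifies this suspension with the skew product $\widetilde{S}_t(x,r)=(T_tx,(r+t)\bmod\tfrac1k)$ on $X\times[0,\tfrac1k)$, i.e. with the product of $\cT$ and the rotation flow on the circle $\R/\tfrac1k\Z$.

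I would then read off the coordinate extensions. Tracing the identifications, a point $((x,Sx),r)$ of the suspension corresponds to $(T_rx,T_rSx)\in X\times X$, whence its first coordinate is $T_rx$, while in the $\cT\times\text{rot}$ picture it corresponds via $p_1$ to $(T_rx,r)$, whose $\cT$-coordinate is again $T_rx$. Thus the projection $(\cT\times\cT,\gr{\xi})\to(\cT,\mu)$ agrees with the projection $\cT\times\text{rot}\to\cT$, which is a compact (circle) group extension and in particular isometric; by the symmetry $S\leftrightarrow S^{-1}$ the same holds over the second coordinate. This proves the ``in fact'' statement and shows that every $\gr{\xi}\in\cJ_2^e(\cT)\setminus\cJ_2^e(T_1)$ is an isometric extension of each coordinate factor. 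To conclude ``$\cT$ is 2-QS'' it remains to treat $\gr{\xi}\in\cJ_2^e(\cT)\cap\cJ_2^e(T_1)$: by 2-fold simplicity of $T_1$ such a $\gr{\xi}$ is $\mu\otimes\mu$ (excluded, being the product) or a graph $\mu_W$, and $\cT\times\cT$-invariance forces $W\in C(\cT)$, so $(\cT\times\cT,\mu_W)\cong(\cT,\mu)$ is a $1\!:\!1$, hence isometric, extension.

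The main obstacle I anticipate is the bookkeeping around the period $k$: one must notice that $\gr{\xi}\notin\cJ_2^e(T_1)$ only forces $S\in C(T_{1/k})$ rather than $S\in C(\cT)$, so the genuine cross-section is $(T_{1/k},\mu)$ with roof $1/k$ (not $(T_1,\mu)$ with roof $1$), and then verify that under the isomorphism of Lemma~\ref{lm:zawieszenie-skosny} the coordinate projection really becomes the projection onto the $\cT$-factor. This matching is precisely what exhibits the extension as a group extension over $\cT$ and hence gives quasi-simplicity; the remaining steps (the reduction to $\mu_S$ and the suspension representation) are routine once Lemmas~\ref{lm:dec},~\ref{lm:rozkladdokl} and~\ref{lm:zawieszenie-skosny} are in hand.
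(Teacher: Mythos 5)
Your proof is correct and follows essentially the same route as the paper: reduce $\gr{\xi}$ to $\int_0^1\mu_S\circ(T_t\times T_t)\,dt$ using $2$-fold simplicity of $T_1$, identify $(\cT\times\cT,\gr{\xi})$ with a suspension, and apply Lemma~\ref{lm:zawieszenie-skosny} to see it as a circle-rotation skew product over $\cT$, hence an isometric extension of each coordinate factor. Your extra bookkeeping (the period $k$ from Lemma~\ref{lm:rozkladdokl}, the graph joinings already in $\cJ_2^e(T_1)$, and the explicit matching of coordinate projections) is exactly what the paper compresses into its ``without loss of generality'' and ``the claim follows,'' so it is a welcome elaboration rather than a departure.
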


\begin{proof}
Consider $\gr{\xi}\in \cJ_2^e(\cT)\setminus\{\mu\otimes \mu\}$. There exists $\xi\in \cJ_2^e(T_1)$ such that $\gr{\xi}=\int_0^1 \xi\circ (T_t\times T_t)\ dt$. Without loss of generality we may assume that this is the ergodic decomposition of $\gr{\xi}$ for $T_1\times T_1$. Then $\gr{\xi}$-almost every point of the space $X\times X$ is of the form $(T_rx,T_rSx)$, where $r\in [0,1)$ and $S\in C(T_1)$. Define $\Phi\colon X\times X \to X\times \T$ for such points by
$$
\Phi(T_rx,T_{r}Sx):=(x,r).
$$
This map is 1-1 $\gr{\xi}$-almost everywhere. Moreover, for Borel sets $A\subset X$, $I\subset \T$ we have
$$
\mu\otimes \lambda_\T(A\times I)=\mu(A)\cdot \lambda_\T(I).
$$
On the other hand
$$
\Phi^{-1}(A\times I)=\{ (T_rx,T_rSx)\colon x\in A,r\in I\},
$$
whence by the definition of $\gr{\xi}$ we obtain $\gr{\xi}(\Phi^{-1}(A\times I))=\mu(A)\cdot \lambda_\T(I)$. Let $S_t\colon X\times \T\to X\times \T$ be given by $S_t(x,r)=(T_{[t+r]}x,\{r+t\})$. Then
$$
\Phi \circ (T_t\times T_t) = S_t\circ \Phi \text{ for all }t\in\R,
$$
i.e. $\cT\times\cT\simeq\mathcal{S}$. The claim follows by Lemma~\ref{lm:zawieszenie-skosny}.
\end{proof}

\subsection*{Acknowledgements}
Research supported by Narodowe Centrum Nauki grant \linebreak DEC-2011/03/B/ST1/00407.

\footnotesize
\bibliography{cala}

\noindent
Joanna Ku\l aga-Przymus\\
\textsc{Institute of Mathematics, Polish Acadamy of Sciences,\\
\'{S}niadeckich 8, 00-956 Warszawa, Poland}\\
and\\
\textsc{Faculty of Mathematics and Computer Science, \\
Nicolaus Copernicus University,\\
Chopina 12/18, 87-100 Toru\'{n}, Poland}\par\nopagebreak
\noindent
\textit{E-mail address:} \texttt{joanna.kulaga@gmail.com}

\end{document}